\newcommand{\M}{\mathbb{M}}
\newcommand{\R}{\mathbb R}
\newcommand{\Z}{\mathbb Z}
\newcommand{\C}{\mathbb C}
\newcommand{\N}{\mathbb N}
\newcommand{\T}{\mathbb T}
\newcommand{\w}{\omega}
\newcommand{\e}{\varepsilon}
\newcommand{\g}{\gamma}
\newcommand{\p}{\varphi}
\newcommand{\s}{\psi}
\renewcommand{\a}{\alpha}
\renewcommand{\b}{\beta}
\renewcommand{\d}{\delta}
\newcommand{\mc}{\mathcal}
\newcommand{\mb}{\mathbb}
\def\set4{\mathcal I}
\def\tup14{(1,2,3,4)}
\newcommand\vwidehat[1]{\arraycolsep=0pt\relax%
\begin{array}{c}
\stretchto{
  \scaleto{
    \scalerel*[\widthof{\ensuremath{#1}}]{\kern-.5pt\bigwedge\kern-.5pt}
    {\rule[-\textheight/2]{1ex}{\textheight}} 
  }{\textheight} %
}{0.5ex}\\           
#1\\                 
\rule{-1ex}{0ex}
\end{array}
}
\newtheorem*{comm*}{Comment}
\newtheorem*{rmk}{Remark}
\newtheorem{definition}{Definition}
\newtheorem*{lemma*}{Lemma}
\newtheorem*{theorem*}{Theorem}
\newtheorem*{cor*}{Corollary}
\newtheorem{theorem}{Theorem}
\newtheorem{thm}{Theorem}[section]
\newtheorem{corollary}[thm]{Corollary}
\newtheorem{proposition}[thm]{Proposition}
\newtheorem*{proposition*}{Proposition}
\newtheorem{lemma}[thm]{Lemma}
\newcommand\widecheck[1]{%
\savestack{\tmpbox}{\stretchto{%
  \scaleto{%
    \scalerel*[\widthof{\ensuremath{#1}}]{\kern-.6pt\bigwedge\kern-.6pt}%
    {\rule[-\textheight/2]{1ex}{\textheight}}
  }{\textheight}%
}{0.5ex}}%
\stackon[1pt]{#1}{\scalebox{-1}{\tmpbox}}%
}
\newcommand{\supp}{\mathrm{supp\,}}
\begin{document}

\author{Larry Guth}
\address{Department of Mathematics\\
Massachusetts Institute of Technology\\
Cambridge, MA 02142-4307, USA}
\email{lguth@math.mit.edu}

\author{Dominique Maldague}
\address{Department of Mathematics\\
Massachusetts Institute of Technology\\
Cambridge, MA 02142-4307, USA}
\email{dmal@mit.edu}

\keywords{decoupling inequalities, Fourier restriction, square function estimate}
\subjclass[2020]{42B15, 42B20}

\date{}

\title{A sharp square function estimate for the moment curve in $\R^n$ }
\maketitle 

\begin{abstract}
    We use high-low frequency methods developed in the context of decoupling to prove sharp (up to $C_\e R^\e$) square function estimates for the moment curve $(t,t^2,\ldots,t^n)$ in $\R^n$. Our inductive scheme incorporates sharp square function estimates for auxiliary conical sets, 
    which allows us to fully exploit lower dimensional information. 
\end{abstract}

\section{Introduction}

Let $\g_n:[0,1]\to\R^n$ be the $n$-dimensional moment curve defined by $(t,t^2,\ldots,t^n)$ and consider functions whose Fourier transforms are supported in a neighborhood of the moment curve. Writing the neighborhood as a finitely-overlapping union $\cup\theta$, we study square function estimates of the form 
\begin{equation}\label{sqfnform}
\int_{\R^n}|\sum_\theta f_\theta|^p\le S(\cup\theta)\int_{\R^n}(\sum_\theta|f_\theta|^2)^{\frac{p}{2}} 
\end{equation}
for any Schwartz $f_\theta:\R^n\to\C$ with $\supp \widehat{f}_\theta\subset \theta$. We prove bounds of the form $S(\cup\theta)\le C_\e (\#\theta)^\e$ for any $\e>0$, for the optimal range of exponents $2\le p\le p_{n}$. When $p=2$, \eqref{sqfnform} follows directly from Plancherel's theorem. When $n=2$ and $p=4$, \eqref{sqfnform} follows from a counting argument \cite{feffL4,cordoba}. This approach was extended to higher dimensions in \cite{gpsqfn}, but does not apply for non-even integers and only gives the sharp range of exponents when $n=2$.

This manuscript builds on the work in \cite{locsmooth} for the cone in $\R^3$ and \cite{maldagueM3} for the moment curve in $\R^3$ by further developing tools from decoupling theory 
to prove sharp square function estimates. Although the critical exponent for the cone in $\R^3$ is $p=4$, counting arguments are not sufficient to prove the sharp square function estimate. Guth, Wang, and Zhang introduced tools like wave envelope estimates which gave very strong information about the cone in $\R^3$. However, there was and is the question whether these ideas apply to any other manifolds besides the $2$-dimensional cone in $\R^3$. This paper generalizes these ideas to all moment curves, cones over moment curves, and some other shapes. 

The proof is by induction on dimension.  In order to make the induction work, we need to work with not only moment curves and cones over moment curves but also more general shapes which we call $m${th} order Taylor cones.  Figuring out the right shapes to include to make the induction work is one of the main new ingredients in the paper.

For $R\in 2^{n\N}$, define the anisotropic neighborhood $\mc{M}^n(R)\subset\R^n$ of the $n$-dimensional moment curve by 
\begin{equation}\label{mmomcurve}
\mc{M}^n(R):=\big\{\g_n(t)+\sum_{j=1}^n\lambda_j\g_n^{(j)}(t):t\in[0,1],\quad|\lambda_j|\le R^{-\frac{j}{n}} \big\}. 
\end{equation}
Use $I_\theta$ to denote the intervals of length $R^{-\frac{1}{n}}$ with endpoints in $R^{-\frac{1}{n}}\Z\cap[0,1]$ and define $\Theta^n(R)$ to be the collection of $\theta:=\mc{M}^n(R)\cap\{t\in I_\theta\}$, so that $\mc{M}^n(R)=\bigcup_{\theta\in\Theta^n(R)}\theta$. Each $\theta$ is approximately the convex hull of the subarc of $\g_n$ corresponding to the interval $I_\theta$.

Let  $p_{n}=\frac{n(n+1)}{2}+1$. Our main theorem is the following.
\begin{theorem}\label{maintheorem} For any $\e>0$, there exists $C_\e\in(0,\infty)$ so that
\[ \int_{\R^n}|\sum_{\theta\in\Theta^n(R)} f_\theta|^p\le C_\e R^\e \int_{\R^n}|\sum_{\theta\in\Theta^n(R)}|f_\theta|^2|^{\frac{p}{2}} \]
for any $2\le p\le p_n$ and any Schwartz functions $f_\theta:\R^n\to\C$ with $\supp\widehat{f}_\theta\subset\theta$. 
\end{theorem}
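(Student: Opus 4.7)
The plan is to argue by induction on the dimension $n$. The base case $n=2$, $p=3=p_2$, follows from a standard $L^4$ orthogonality/counting argument for the parabola (Córdoba--Fefferman) combined with interpolation against Plancherel at $p=2$. At the inductive step one only needs the critical exponent $p=p_n=\tfrac{n(n+1)}{2}+1$; the full range $2\le p\le p_n$ then comes for free by Hölder interpolation with the trivial $L^2$ estimate. To make the induction close, it must be strengthened: one does not prove Theorem \ref{maintheorem} in isolation but rather proves it simultaneously with an analogous family of square function estimates for all $m$th order Taylor cones, indexed by a pair $(n,m)$. This enlarged class of shapes is necessary because, as I will explain below, the high-frequency piece of the wave envelope analysis does not reproduce a lower-dimensional moment curve but rather a cone built from a truncated Taylor frame.

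The central technical device, following Guth--Wang--Zhang, is a wave envelope estimate. For each $t_0\in[0,1]$ and each intermediate scale, one builds an anisotropic rectangular box from the osculating frame $\{\gamma_n^{(j)}(t_0)\}_{j=1}^n$; these wave envelopes tile $\R^n$ with finite overlap. I would first prove that, up to $C_\e R^\e$, the quantity $\|\sum_\theta f_\theta\|_{L^{p_n}}^{p_n}$ is controlled by an $\ell^2$-type aggregate over these wave envelopes. A broad/narrow or pigeonholing reduction then converts such a wave envelope bound into the square function inequality, since once one localizes to a single envelope only the blocks $\theta$ whose associated tangent box aligns with that envelope contribute significantly.

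To prove the wave envelope estimate itself I would implement the high-low frequency method developed in \cite{locsmooth}. Let $g=|\sum_\theta f_\theta|^2$ and write $g=g_{\mathrm{low}}+g_{\mathrm{high}}$ by a Fourier cutoff at an intermediate scale chosen to match the transversal width of the relevant wave envelope. The low part is, by construction, essentially locally constant on the envelope and hence is pointwise dominated by the $L^2$ square function, giving the required estimate for $g_{\mathrm{low}}$ immediately. For $g_{\mathrm{high}}$ one exploits that its Fourier transform is supported in the annular region swept out by the nontrivial differences $\widehat{f}_\theta\ast\widehat{\overline{f}_{\theta'}}$; slicing this set by a thin frequency slab produces, after a Lorentz rescaling of the type used in decoupling for the moment curve, a neighborhood of a lower-dimensional object in the Taylor cone family. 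The inductive hypothesis applied to that object delivers the bound on $g_{\mathrm{high}}$, and a standard $\e$-absorption argument closes the recursion.

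The main obstacle, as the authors themselves stress, is conceptual: identifying the correct family of auxiliary shapes and the correct quantitative statement for each so that every application of the high-low splitting lands inside the enlarged inductive hypothesis. Concretely, one must verify that whenever one slices an $n$-dimensional $m$th order Taylor cone transversally to a wave envelope, the result is again an $n'$th order Taylor cone with $(n',m')$ strictly smaller in the well-ordering on which the induction is run; only then does the recursion terminate. Once this bookkeeping is arranged correctly, the analytic machinery --- wave packet decomposition, the locally-constant property on envelopes, rescaling between scales $R$ and $R^{1/n}$, the high-low split, and interpolation against Plancherel --- proceeds along the lines of the $\R^3$ prototypes \cite{locsmooth,maldagueM3}. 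Managing the $C_\e R^\e$ losses through the many scales and auxiliary shapes is a secondary but nontrivial hurdle, handled by the standard trick of replacing $\e$ by $\e/N$ for some $N=N(n)$ throughout.
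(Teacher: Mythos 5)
Your overall architecture---an interleaved induction across dimensions with the $m$th order Taylor cones $\Gamma_m^{n+1}$ as the auxiliary family, with the high-frequency part of a square function identified after Lorentz rescaling with a piece of such a cone---does agree in outline with the paper's plan (Propositions \ref{coneinduct} and \ref{momcurveinduct}). But there are two places where your route would not go through.

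First, your argument passes through a wave envelope estimate for $\mc{M}^n(R)$ as the intermediate object. The paper explicitly avoids this: the introduction notes that the Taylor cones $\Gamma_m^{n+1}(R)$ do \emph{not} admit sharp wave envelope estimates, only square function estimates, and for this reason the entire inductive scheme is run with the square function constants $\M^k(R)$ and $\C_m^{k+1}(R)$ directly; no wave envelope estimate is ever stated or proved for $\mc{M}^n$ or for the $\Gamma_m^{n+1}$. In your plan the high-low step feeds in only a square function estimate for the Taylor cone, which bounds the high part of the square function by another square function; that alone does not produce the $\ell^2$-aggregate-over-envelopes output you claim, so there is a missing upgrade step, and it is exactly the step the paper judged to be unavailable.

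Second, a single high-low split followed by rescaling is too coarse to make the induction close. The paper's engine (Proposition \ref{algo}, built from Lemmas \ref{algo1} and \ref{algo2}, with the cone-side analogue in Lemmas \ref{multilem1}--\ref{multilem3}) iterates the high-low split along a sequence of even exponents $\tilde{p}_1<\tilde{p}_2<\cdots$, and at step $k$ the correct high set must be a dilated copy of $\Gamma_{m+k-1}^{n+1}$ with the $m$-parameter \emph{increasing}. This is forced: as the outer exponent drops from $p$ to $p/\tilde{p}_{k-1}$, the admissible range for the auxiliary square function estimate narrows, which requires a larger high set; simultaneously the low set must remain small enough that a lower-dimensional moment curve estimate applies pointwise. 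Balancing these two requirements across the cascade is exactly the ``key decisions'' discussion in \textsection\ref{intro2}, and it is what produces the whole Taylor cone hierarchy rather than a single cone from one slab.

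Two smaller corrections. The critical exponent at $n=2$ is $p_2=\frac{2\cdot 3}{2}+1=4$, not $3$. And although vector-valued Riesz--Thorin interpolation of the linear map $\{g_\theta\}\mapsto\sum_\theta g_\theta$ would recover the intermediate range $2<p<p_n$ from the endpoints, the paper needs the full range of $p$ \emph{inside} the induction (the algorithm repeatedly passes from $p$ to $p/\tilde{p}_k$, and the applicability of each $\C_m^{k+1}(\cdot)$ depends on which sub-interval the new exponent lands in), so restricting attention to $p=p_n$ at the inductive step does not actually simplify matters.
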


Theorem \ref{maintheorem} is the $n$-dimensional generalization of the $3$-dimensional result in \cite{maldagueM3}. Adapting the sharp example discussed in \cite{maldagueM3} to $n$-dimensions is straightforward, and shows that the range of $p$ in Theorem \ref{maintheorem} is sharp. The argument for $\mc{M}^3(R)$ in \cite{maldagueM3} used sharp wave envelope estimates (more refined versions of square function estimates) for the parabola in $\R^2$ and for the cone in $\R^3$. Our inductive scheme for $\mc{M}^n(R)$ involves analyzing more complicated shapes which do not admit wave envelope estimates, but do nevertheless satisfy sharp square function estimates. We provide some descriptions for the geometries and intuition for the inductive scheme in \textsection \ref{geointro}-\ref{intro2}.

Define $\phi_n:[0,1]\to\R^{n+1}$ by $\phi_n(t)=(1,\frac{t}{1!},\frac{t^2}{2!},\ldots,\frac{t^n}{n!})$. For $0\le m\le n-1$, define the $m$th order Taylor cone $\Gamma^{n+1}_m(R)\subset\R^{n+1}$ to be  
\begin{equation}\label{mcone} 
\Gamma_m^{n+1}(R):=\Big\{\sum_{i=0}^n\lambda_i\phi_n^{(i)}(t):t\in[0,1],\quad|\lambda_i|\le R^{-\frac{i}{n}}\quad\forall\,i,\quad \max_{0\le j\le m}|\lambda_j|2^{m+1-j}R^{\frac{j}{n}}\ge 1 \Big\}. \end{equation}
Write $\Gamma_m^{n+1}(R)=\bigcup_{\theta\in\Xi_m^{n+1}(R)}\theta$ where $\Xi_m^{n+1}(R)$ is the collection of subsets $\theta=\Gamma_m^{n+1}(R)\cap\{t\in J_\theta\}$, with $J_\theta$ varying over intervals of length $2^{-100n}R^{-\frac{1}{n}}$ with endpoints $2^{-100n}R^{-\frac{1}{n}}\Z\cap[0,1]$.

We define multi-scale constants which are part of the inductive scheme. For the moment curve, let $\mb{M}^n(R)$ denote that smallest constant such that 
\[ \int_{\R^{n}}|\sum_{\theta\in\Theta^n(R)}f_\theta|^{p}\le \mb{M}^n(R) \int_{\R^{n}}|\sum_{\theta\in\Theta^n(R)}|f_\theta|^2|^{\frac{p}{2}}\]
for any $2\le p\le p_n$ and any Schwartz functions $f_\theta:\R^{n}\to\C$ satisfying $\supp \widehat{f}_\theta\subset\theta\in\Theta^n(R)$. Similarly, for $0\le m<n-1$, define $\C_m^{n+1}(R)$ be the smallest constant such that
\[ \int_{\R^{n+1}}|\sum_{\theta\in\Xi_m^{n+1}(R)}g_\theta|^{p}\le \mb{C}_m^{n+1}(R) \int_{\R^{n+1}}|\sum_{\theta\in\Xi_m^{n+1}(R)}|g_\theta|^2|^{\frac{p}{2}}\]
for any $2\le p\le p_{n-m}$ and any Schwartz functions $g_\theta:\R^{n+1}\to\C$ satisfying $\supp \widehat{g}_\theta\subset\theta\in\Xi^{n+1}_m(R)$. If $m=n-1$, then define $\C_{n-1}^{n+1}(R)$ to be the smallest constant such that 
\[  \int_{\R^{n+1}}|\sum_{\theta\in\Xi_{n-1}^{n+1}(R)}g_\theta|^{p}\le \mb{C}_m^{n+1}(R) \int_{\R^{n+1}}\sum_{\theta\in\Xi_{n-1}^{n+1}(R)}|g_\theta|^p \]
for any $1\le p\le 2$ and any Schwartz functions $g_\theta:\R^{n+1}\to\C$ satisfying $\supp \widehat{g}_\theta\subset\theta\in\Xi^{n+1}_{n-1}(R)$.

Write $a\lesssim b$ to denote $a\le C b$ for some absolute constant $C$, meaning $C$ only depends on the fixed parameters of the set-up. The notation $a\sim b$ means $a\lesssim b$ and $b\lesssim a$. We write $a\lesssim_\e b$ to emphasize that the implicit constant depends on some parameter $\e$. 

The following two propositions together with the $L^4$ square function estimate for the parabola \cite{cordoba,feffL4} as a base case, imply Theorem \ref{maintheorem}. 

\begin{proposition}\label{coneinduct} Let $n\ge 2$. Suppose that for all $\e>0$ and all $R>1$,
\begin{align}
\M^k(R)&\lesssim_\e  R^\e\quad\text{for all}\quad 1\le k\le n \label{conecon1}\\
\C^{k+1}_m(R)&\lesssim_\e R^\e\quad\text{for all}\quad 1\le k\le n-1\quad\text{and}\quad 0\le m\le k-1.     \label{conecon2}
\end{align}
Then for all $\e>0$ and $R>1$, $\C_m^{n+1}(R)\lesssim_\e R^\e$ for any $0\le m\le n-1$. 
\end{proposition}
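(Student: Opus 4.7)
The plan is to establish $\C_m^{n+1}(R)\lesssim_\e R^\e$ by induction on the scale $R$ with $m$ fixed, generalizing the high-low frequency scheme developed for the cone in $\R^3$ in \cite{locsmooth} and for the moment curve in $\R^3$ in \cite{maldagueM3}. The role previously played by the sharp wave envelope estimate for the cone in $\R^3$ is now taken over by the package of auxiliary bounds in \eqref{conecon1}--\eqref{conecon2}. The endpoint $m=n-1$ is immediate: at $p=2$ it reduces to Plancherel together with the approximate Fourier-disjointness of the $\theta\in\Xi_{n-1}^{n+1}(R)$, at $p=1$ it is the triangle inequality with constant $1$, and interpolation fills in $1<p<2$. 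Hence fix $0\le m<n-1$ and $p\in[2,p_{n-m}]$, and assume inductively that $\C_m^{n+1}(R')\lesssim_\e (R')^\e$ for all $R'<R$.

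The main step is a high-low frequency decomposition at an intermediate scale $\rho$ with $1\ll\rho\ll R$. Cover $\Gamma_m^{n+1}(R)$ by medium caps $\tau\in\Xi_m^{n+1}(\rho)$ and regroup $g_\tau:=\sum_{\theta\subset\tau}g_\theta$. An affine (parabolic-type) rescaling centered at $\tau$ should identify $\tau$, up to controllable error, with a rescaled copy of $\Gamma_m^{n+1}(R/\rho)$; this is the key geometric lemma behind the whole scheme. Combined with the inductive hypothesis at scale $R/\rho$, it yields a square-function bound for each individual $g_\tau$.

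The remaining task is to assemble the cap-wise bounds into a bound for $\sum_\tau g_\tau$. Here one splits each $g_\tau$ by the magnitude of its Fourier transform at a cutoff frequency. The low-frequency piece is absorbed by the cap-wise estimate combined with Minkowski's inequality. The high-frequency piece is concentrated near a lower-dimensional slice of $\Gamma_m^{n+1}$---generically the $(n-m)$-dimensional neighborhood $\mc{M}^{n-m}$ of the moment curve $\g_{n-m}$ transverse to the top Taylor direction---and is controlled by invoking $\M^{n-m}(R)\lesssim_\e R^\e$ from \eqref{conecon1}. When the slice degenerates further, the estimates $\C_{m'}^{k+1}$ from \eqref{conecon2} take over. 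Assembling with H\"older and Minkowski in $L^p$ yields a self-improving recursion
\[
\C_m^{n+1}(R)\;\le\;C_\e R^{\delta(\e)}\,\C_m^{n+1}(R/\rho)\;+\;C_\e R^\e,
\]
with $\delta(\e)\to 0$ as $\e\to 0$, which iterates over doubling scales to give $\C_m^{n+1}(R)\lesssim_\e R^\e$.

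The principal difficulty I expect is the geometric identification step: verifying that an affinely-rescaled cap $\tau\in\Xi_m^{n+1}(\rho)$ really coincides, up to tolerable error, with a rescaled $\Gamma_m^{n+1}(R/\rho)$, and verifying that the high-frequency slices match the shapes $\mc{M}^{n-m}$ or $\Gamma_{m'}^{k+1}$ covered by the hypotheses. Because the defining constraint $\max_j|\lambda_j|2^{m+1-j}R^{j/n}\ge 1$ mixes non-trivially with the box bounds $|\lambda_i|\le R^{-i/n}$ under rescaling, and because reparametrizing the moment-curve argument $t$ around a basepoint $t_0$ introduces new Taylor cross-terms in $\phi_n$, this rescaling is substantially more delicate than in the classical parabola or cone settings. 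A secondary subtlety is matching $p$-ranges across steps: $p_{n-m}$ shrinks with $m$, so each auxiliary estimate must be invoked at a compatible exponent, and in particular $\M^{n-m}$ is used at exponents in $[2,p_{n-m}]$, which is exactly the range allowed by \eqref{conecon1}.
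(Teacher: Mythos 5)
Your sketch has the right ingredients in broad outline (high-low decomposition, rescaling, induction, lower-dimensional inputs), but the central step breaks down for $m>0$, and this is exactly the technical crux the paper is organized around. You propose to cover $\Gamma_m^{n+1}(R)$ by medium caps $\tau\in\Xi_m^{n+1}(\rho)$, regroup $g_\tau=\sum_{\theta\subset\tau}g_\theta$, and rescale each $\tau$ to a copy of $\Gamma_m^{n+1}(R/\rho)$. For $m=0$ this works because $\Gamma_0^{n+1}(R)\subset\Gamma_0^{n+1}(\rho)$, but for $m>0$ the sets $\Gamma_m^{n+1}(R)$ and $\Gamma_m^{n+1}(\rho)$ have a nontrivial symmetric difference (the excision constraint $\max_j|\lambda_j|2^{m+1-j}R^{j/n}\ge1$ depends on $R$ in an incompatible way): one has $\Gamma_m^{n+1}(R)\setminus\Gamma_m^{n+1}(\rho)\supset\{0\}^m\times R^{-m/n}\Gamma_0^{n-m+1}(R^{(n-m)/n})$ and vice versa. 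So the $\theta\in\Xi_m^{n+1}(R)$ do not sit inside the $\tau\in\Xi_m^{n+1}(\rho)$, the regrouping $g_\tau=\sum_{\theta\subset\tau}g_\theta$ is not well-defined, and the would-be rescaling of $\tau$ to $\Gamma_m^{n+1}(R/\rho)$ does not hold up to controllable error. Relatedly, global square function estimates for $\Gamma_m^{n+1}$ do \emph{not} self-improve to local ones when $m>0$, so the recursion $\C_m^{n+1}(R)\le C_\e R^{\delta(\e)}\C_m^{n+1}(R/\rho)+C_\e R^\e$ cannot be closed by induction on scale alone. You flag this as an expected delicacy, but without a concrete workaround the plan stalls precisely at the point where $m>0$ is genuinely harder than $m=0$.

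The paper's solution is to replace $\Gamma_m^{n+1}(R)$ by truncated families $\Xi_m^{n+1}(R,K,\mathbf{h})$ and $\Xi_m^{n+1}(R,\sigma)$ which \emph{do} behave well under parabolic rescaling and under spatial localization to balls of radius $R^{m/n}\rho$, and to run the multi-scale iteration on localized constants $S_K^{n,m}(r,\rho,R)$ and $S_\sigma^{n,m}(r,\rho,R)$. The primary induction is then on $n-m$ (because the multi-scale lemmas consume $\C_{m+k-1}^{n+1}$ with $m+k-1>m$), with a nested iteration on scales inside. Two smaller corrections: the paper's high/low split uses the cone estimates $\C_{m+k-1}^{n+1}$ on the high parts and the moment-curve estimates $\mathbb{M}^k$ pointwise on the low parts (roughly the opposite of what you assign), and the $m=n-1$ endpoint is not immediate from Plancherel and interpolation---the $\theta\in\Xi_{n-1}^{n+1}(R)$ are not finitely overlapping, so the $L^2$ case requires the local orthogonality result (Theorem \ref{locL2thm}), and closing the range $1\le p\le 2$ requires a wave packet pigeonholing argument as in Theorem \ref{smallpmn-1}.
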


\begin{proposition}\label{momcurveinduct} Let $n\ge 2$. Suppose that for all $\e>0$ and all $R>1$,
\begin{align*}
\M^k(R)&\lesssim_\e  R^\e\quad\text{for all}\quad 1\le k\le n-1  \\
\C^{k}_m(R)&\lesssim_\e R^\e\quad\text{for all}\quad 1\le k\le n\quad\text{and}\quad 0\le m\le k-1.     
\end{align*}
Then for all $\e>0$ and all $R>1$, $\M^n(R)\lesssim_\e R^\e$. 
\end{proposition}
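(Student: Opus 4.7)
The plan is to adapt the high-low multi-scale iteration developed for $\mc{M}^3$ to general dimension, with the hypothesized Taylor cone estimates $\mb{C}_m^k$ and lower-dimensional moment curve estimates $\M^k$ playing the central analytic roles at each scale.

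After the standard reductions (pigeonholing wave packet amplitudes, localizing spatially to a ball of radius $R^{O(1)}$, and approximating each $|f_\theta|$ by a constant multiple of the characteristic function of a union of tubes dual to $\theta$), I would form the squared sum $g = \sum_{\theta\in\Theta^n(R)} |f_\theta|^2$, whose Fourier transform is supported in $\bigcup_\theta(\theta-\theta)$. For $\theta$ centered at parameter $t_\theta$, the set $\theta - \theta$ is essentially the parallelepiped $\{\sum_{j=1}^n \lambda_j \g_n^{(j)}(t_\theta): |\lambda_j|\lesssim R^{-j/n}\}$. As $t_\theta$ ranges over $[0,1]$, the union of these parallelepipeds forms a neighborhood of the cone over the curve $t\mapsto \g_n^{(1)}(t)$, which is an affine reparametrization of the Taylor cone structure attached to $\phi_{n-1}:[0,1]\to\R^n$. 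The key geometric observation is that this union decomposes, dyadically in distance from the cone tip, into pieces that match (after an anisotropic rescaling by a parameter $r\in [1,R^{1/n}]$) the $m$-th order Taylor cone $\Gamma_m^n(r)$, with the order $m$ determined by the dyadic level and the scaling constraint $\max_{0\le j\le m}|\lambda_j| 2^{m+1-j}R^{j/n}\ge 1$ arising naturally from the distance-from-tip cutoff.

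The high-low iteration proceeds as follows. For each dyadic shell at scale $r$ and each associated order $m$, project $\widehat{g}$ onto the corresponding Taylor cone piece and apply the hypothesized bound $\mb{C}_m^n(r)\lesssim_\e r^\e$ to control its $L^{p/2}$ norm by a square function at a coarser spatial scale. The ``tip'' region of the decomposition, where the Taylor cone structure degenerates, has Fourier support essentially contained in a neighborhood of a hyperplane in $\R^n$; one then projects onto an $(n-1)$-dimensional subspace and invokes the hypothesized lower-dimensional estimates $\M^k(R')$ for $k\le n-1$ to bound the projected contribution. Summing the contributions across all $O(\log R)$ shells and orders yields a recursive inequality relating $\M^n(R)$ to $\M^n$ at a smaller scale, which closes by the standard bootstrap argument in $\e$.

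The main technical obstacles will be: (i) precisely matching the geometry of $\bigcup_\theta (\theta-\theta)$ at each dyadic shell to a rescaled Taylor cone $\Gamma_m^n(r)$, including verifying the side-length and cone-tip constraints in the definition of $\Gamma_m^n(r)$; (ii) bridging the exponent ranges, since each hypothesized estimate $\mb{C}_m^n(r)$ holds only for $p\le p_{n-m}$ while our target reaches the larger exponent $p=p_n$, which will require a careful H\"older interpolation to recombine contributions from different orders $m$ at the critical exponent; and (iii) ensuring that the telescoping over all $O(\log R)$ shells accumulates only a $(\log R)^{O(1)}$ loss, to be absorbed into $R^\e$ via the standard $\e$-doubling trick. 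The first of these points is where the geometric intuition promised in the introduction is genuinely needed, since the correct assignment of dyadic shells to Taylor cone orders is what forces the Taylor cone to be precisely the right auxiliary object.
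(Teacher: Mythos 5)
Your geometric intuition is on the right track --- the Fourier support of $\sum_\theta |f_\theta|^2$ does live in $\bigcup_\theta(\theta-\theta)$, and decomposing that set into pieces identified with rescaled Taylor cones $\Gamma_m$ (with the order $m$ rising as the exponent drops) is genuinely the mechanism by which the hypotheses $\C_m^k(R)\lesssim_\e R^\e$ and $\M^k(R)\lesssim_\e R^\e$ for $k\le n-1$ enter. But the last sentence of your outline --- ``a recursive inequality relating $\M^n(R)$ to $\M^n$ at a smaller scale, which closes by the standard bootstrap argument in $\e$'' --- is precisely the step that fails, and the paper devotes the opening of \textsection\ref{tools} to explaining why. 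If you naively iterate the three ingredients from \textsection\ref{intro1} (apply $\M^n(K)$, make type 2(b) progress, rescale, repeat), you accumulate $\log R/\log K$ factors of $\M^n(K)$. Since $p_n > p_{n-1}$ there is no a priori bound for $\M^n(K)$, so by induction you would get $\M^n(R)\lessapprox \tilde C_\e^{\log R/\log K} R^\e$, and $\tilde C_\e^{\log R/\log K}\gg\tilde C_\e$. The induction does not close. This is also the reason the paper treats $\C_m^{n+1}$ and $\M^n$ by completely different schemes: the cone constants have a usable base case (Proposition \ref{initscale}) that lets you iterate a two-scale quantity $S_K^{n,m}(r,\rho,R)$; the moment curve constants do not.

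What actually makes the argument for $\M^n(R)$ close is a combination of three structural devices, none of which appear in your proposal. First, a \emph{wave packet pruning} (Definition \ref{taukprune} and Lemma \ref{pruneprop}): one replaces $f$ by successive versions $f^k$ whose wave packets at scale $\tau_k$ have $L^\infty$ bounded by $K^3 A^{N-k+1}\beta/\alpha$, with the error controlled on the relevant level sets by Lemma \ref{ftofk}. This amplitude control is used critically in the high-frequency estimate to trade powers of $\|f_{\tau_k}^{k+1}\|_\infty$ for factors of $\beta/\alpha$. Second, a \emph{level-set plus stopping-time structure}: one approximates $\|f\|_p^p$ by $\alpha^p|U_{\alpha,\beta}|$, runs a high-low decomposition of $g_k=\sum_{\tau_k}|f^{k+1}_{\tau_k}|^2*\omega_{\tau_k,d_k}$ to locate a scale $k$ where the high part dominates (Corollary \ref{highdom}), and handles the remaining ``low'' set $L$ and the scale $k$ differently depending on whether $k$ is near $N_0$ or near $N$; only this dichotomy prevents the unbounded accumulation of $\M^n(K)$ factors. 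Third, a \emph{broad--narrow decomposition} (the inequality \eqref{brnar} and the broad estimate Proposition \ref{mainprop}): the narrow contribution is handled by rescaling to $\text{T}_{n,D}^w(R/K^n)$, and the broad contribution by a multilinear restriction input. Your proposal skips all three; in particular it never explains how to pay for the factors of $\M^n(K)$ that appear each time you rescale.

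One smaller but substantive gap: you suggest handling the exponent mismatch (target $p_n$ vs.\ the range $p\le p_{n-m}$ in $\C_m^n$) by ``H\"older interpolation.'' The paper instead uses a structured descent through the sequence of even exponents $\tilde p_l$ (Lemma \ref{pprops} and Lemmas \ref{algo1}--\ref{algo2}): as the exponent $p/\tilde p_l$ decreases past successive thresholds, the admissible order $m$ of the Taylor cone increases, and property (4) of Lemma \ref{pprops} is exactly what verifies that $p/\tilde p_l$ falls in range $[2, p_{n-l}]$ at each stage. This is not an interpolation but a discrete bookkeeping of when one can invoke each $\C_m$, and it interacts with the pointwise cylindrical $\M^k$ estimates (Lemma \ref{algo1}) in a way your proposal leaves unaddressed. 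The ``tip region'' in your proposal is not a separate projection step; it is interleaved with the iteration, appearing each time the dyadic parameter $\sigma$ hits its smallest admissible value.
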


The first half of the paper (\textsection\ref{prelimsec},\textsection\ref{conesec}) concerns Proposition \ref{coneinduct}. We define local, multi-scale constants and introduce a key iteration to control how different scales are related to one another. 
In the second half of the paper (\textsection\ref{tools}-\ref{mainsec}), we prove Proposition \ref{momcurveinduct} by following a high-low inductive scheme which also incorporates the key iteration. We describe relevant geometries in \textsection\ref{geointro} and features of a general multi-scale approach to proving square function estimates in \textsection\ref{intro1}. In \textsection\ref{intro2}, we describe the key iteration in a special case.


\textbf{Acknowledgements.} LG is supported by a Simons Investigator grant. DM is supported by the National Science Foundation under Award No. 2103249.

\subsection{Description of geometries \label{geointro}} 

Let $0\le m\le n-1$. The following are properties of the $m$th order Taylor cone $\Gamma_m^{n+1}(R)$:
\begin{enumerate}
    \item $\Gamma_m^{n+1}(R)$ is contained in a ball of radius $\sim_n1$ centered at the origin. 
    \item Each $\theta\in\Xi_m^{n+1}(R)$ is a subset of the convex set 
    \[ \overline{\theta}:=\{\sum_{i=0}^n\lambda_i\phi_n^{(i)}(a):|\lambda_i|\lesssim R^{-\frac{i}{n}}\quad\forall i\},\]
    where $\theta$ has associated $t$-interval $[a,a+2^{-100n}R^{-\frac{1}{n}}]$. 
    The displayed set is comparable to (meaning contains and is contained in a dilated copy of) a box of dimensions $1\times R^{-\frac{1}{n}}\times\cdots\times R^{-1}$. To see an element of $\Xi_0^{n+1}(R)$ from $\overline{\theta}$, imagine removing a box of dimensions $1/2\times CR^{-\frac{1}{n}}\times\cdots CR^{-1}$ from $\overline{\theta}$. If we remove a box of dimension $1/2\times R^{-\frac{1}{n}}/2\times\cdots\times R^{-\frac{m}{n}}/2\times CR^{\frac{m+1}{n}}\times\cdots\times CR^{-1}$ from $\overline{\theta}$, then we obtain an element of $\Xi_m^{n+1}(R)$.   
    \item $\Gamma_m^{n+1}(R)$ contains the following copies of $\Gamma_0^{n-m+1}(R^{\frac{n-m}{n}})$: 
    \begin{itemize}
        \item $\Gamma_0^{n-m+1}(R^{\frac{n-m}{n}})\times\{0\}^m$
        \item $\{0\}\times R^{-\frac{1}{n}}\Gamma_0^{n-m+1}(R^{\frac{n-m}{n}})\times \{0\}^{m-1}$ 
        \item $\{0\}^2\times R^{-\frac{2}{n}}\Gamma_0^{n-m+1}(R^{\frac{n-m}{n}})\times\{0\}^{m-2}$
    \newline $\vdots$
        \item $\{0\}^m\times R^{-\frac{m}{n}}\Gamma_{0}^{n-m+1}(R^{\frac{n-m}{n}})$ .
    \end{itemize}
    \item If $r<R$, then $\Gamma_0^{n+1}(R)\subset\Gamma_0^{n+1}(r)$. 
    \item If $r<R$ and $m>0$, then $\Gamma_m^{n+1}(R)$ and $\Gamma_m^{n+1}(r)$ have a nontrivial symmetric difference. See the beginning of \textsection\ref{conesec} for the technical significance of this observation. 

\end{enumerate}

\subsection{A basic inductive scheme for square function estimates \label{intro1}}

To demonstrate the usefulness of an inductive structure for square function estimates, we first review an earlier counting argument that applies for certain even exponents. Consider $\mc{M}^2(R)$ (the parabola), for which the critical exponent is $p=4$. If $f_\theta:\R^2\to\C$ has Fourier support in $\theta\in \Theta^2(R)$, we have by Plancherel's theorem 
\begin{equation}\label{L4exp} \int_{\R^2}|\sum_{\theta\in\Theta^2(R)}f_\theta|^4=\sum_{i=1}^4\sum_{\theta_i\in\Theta^2(R)}\int_{\R^2}\widehat{f}_{\theta_1}*\widehat{f}_{\theta_2}\overline{\widehat{f}_{\theta_1}*\widehat{f}_{\theta_2}}. \end{equation}
Since $\widehat{f}_{\theta_i}*\widehat{f}_{\theta_j}$ is supported in the sum-set $\theta_i+\theta_j$, bounding  the number of $\theta_l,\theta_k$ for which $(\theta_i+\theta_j)\cap(\theta_l+\theta_k)\not=\emptyset$ leads to a square function estimate. These observations were first made by Fefferman and later recorded by Cordoba \cite{feffL4,cordoba}. In \cite{gpsqfn}, the authors extended the counting approach to moment curves in all dimensions, for even exponents in a certain range. However, for exponents $p$ which are not even integers or which are too large, there is no counting approach available. It may also happen that the sets $\theta_i+\theta_j$ have large overlap, but there is other cancellation in \eqref{L4exp} which still leads to a square function estimate. This is the case for the cone in $\R^3$, which is partitioned into angular sectors $\theta$. The sharp square function estimate for the cone in $\R^3$ has critical exponent $p=4$ \cite{locsmooth}, but it does not follow from a counting argument. Now we describe the structure of an inductive set-up which applies to square function estimates for arbitrary $p$.

For $R>1$, consider an abstract subset $X(R)\subset\R^n$ which is partitioned into $\sqcup_{\theta\in \mc{S}(R)}\theta$. Suppose that if $r\le R$, $\theta\in\mc{S}(R)$, and $\tau\in\mc{S}(r)$, then either $\theta\subset \tau$ or $\theta\cap\tau=\emptyset$. If $g:\R^n\to\C$ is a function with Fourier support in $X(R)$, then for each $r\le R$, we have $g=\sum_{\tau\in\mc{S}(r)}g_\tau$, where $g_\tau=(\widehat{g}\chi_{\tau})^{\widecheck{\,\,\,}}$.

We are interested in bounding the smallest constant $D_{n,p}(R)$ satisfying 
\[ \int_{\R^n}|g|^p\le D_{n,p}(R) \int_{\R^n}(\sum_{\theta\in\mc{S}(R)}|g_\theta|^2)^{\frac{p}{2}} \]
for any $g$ with Fourier transform supported in $X(R)$. In particular, we would like to show that for any $\e>0$, there exists $C_\e>0$ such that $D_{n,p}(R)\le C_\e R^\e$ for all $R$. We will describe some ideas introduced by Guth, Wang, and Zhang to incorporate multi-scale analysis, though the following is not precisely how their argument is presented in \cite{locsmooth}. The idea is to study a two-parameter quantity $D_{n,p}(r,R)$ (for $1<r<R$), which is the smallest constant satisfying 
\[ \int_{\R^n}(\sum_{\tau\in\mc{S}(r)}|g_\tau|^2)^{\frac{p}{2}}\le D_{n,p}(r,R) 
 \int_{\R^n}(\sum_{\theta\in\mc{S}(R)}|g_\theta|^2)^{\frac{p}{2}}\]
for all $g$ with $\supp\widehat{g}\subset X(R)$. Note that $D_{n,p}(1,R)=D_{n,p}(R)$. Induction on scales says that to prove $D_{n,p}(R)\le C_\e R^\e$, we may assume that $D_{n,p}(r)\le C_\e r^\e$ for all $r<R/2$. Working instead with $D_{n,p}(r,R)$, it suffices to prove that $D_{n,p}(r,R)\le C_\e (R/r)^\e$ with the assumption that $D_{n,p}(s,S)\le C_\e (S/s)^\e$ for all $(S/s)<(R/r)/2$, which is a stronger inductive hypothesis.

One may also think of bounding $D_{n,p}(r,R)$ via an iterative argument. Let $K>0$ be a large parameter we will choose later. Let $\supp\widehat{g}\subset X(R)$. There are three key ingredients: 
\begin{enumerate}
    \item (Base case) For each $1\le r\le K^2$, $\int_{\R^n}|\sum_{\tau\in\mc{S}(r)}|g_\tau|^2|^{\frac{p}{2}}\lesssim A_\d K^\d \int_{\R^n}\big(\sum_{\tau'\in\mc{S}(K^2)}|g_{\tau'}|^2\big)^{\frac{p}{2}}$.  
    \item (Progress) If $K^3\le Kr\le R$, then one of the following holds:
    \begin{enumerate}
        \item $\int_{\R^n}(\sum_{\tau\in\mc{S}(r)}|g_\tau|^2)^{\frac{p}{2}}\le B_\d K^\d\int_{\R^n}(\sum_{\tau'\in\mc{S}(Kr)}|g_{\tau'}|^2)^{\frac{p}{2}}$, or 
        \item $\int_{\R^n}(\sum_{\tau\in\mc{S}(r)}|g_\tau|^2)^{\frac{p}{2}}\le B_\d K^\d \sum_{\tau'\in \mc{S}(r/K)}\int_{\R^n}(\sum_{\substack{\tau\in\mc{S}(r)\\ \tau\subset\tau'}}|g_{\tau}|^2)^{\frac{p}{2}}$. 
    \end{enumerate}
    \item (Rescaling) For any $\rho\le r\le R$, 
    \[ \sum_{\tau'\in\mc{S}(\rho)}\int_{\R^n}(\sum_{\substack{\tau\in\mc{S}(r)\\\tau\subset\tau'}}|g_\tau|^2)^{\frac{p}{2}} \le D_{n,p}(r/\rho,R/\rho)\int_{\R^n}(\sum_{\theta\in\mc{S}(R)}|g_\theta|^2)^{\frac{p}{2}}. \] 
\end{enumerate}
We can then show that $D_{n,p}(r,R)\le C_\e R^\e$. Indeed, by using the above ingredients in succession, we have for $K^3\le Kr\le R$ that
\[ D_{n,p}(r,R)\le A_{\d}B_{\d} K^{2\d} \left[ D_{n,p}(rK,R)+D_{n,p}(K^2,RK/r)\right]. \]
Iterating and choosing $\d>0$ and $K>0$ depending on $\e>0$ leads to the upper bound $D_{n,p}(r,R)\le C_\e R^\e$. 

Using lower dimensional results, a base case (ingredient (1)) is available for the cone-like regions $\Gamma_m^{n+1}(R)$, which enables analysis of multi-scale constants as described above. There is no similar base case for $\mc{M}^n(R)$, which is why the proof techniques bounding $\C_m^{n+1}(R)$ and $\M^n(R)$ are organized differently. See the beginning of \textsection\ref{tools} for more about the strategy to bound $\M^n(R)$. 

The main work of establishing Propositions \ref{coneinduct} and \ref{momcurveinduct} lies in justifying the second \emph{progress} ingredient above. We use algorithms adapted to the specific geometries being considered, in Lemmas \ref{multilem1}, \ref{ptwise}, and \ref{multilem3} for $\Gamma_m^n(R)$ and in Proposition \ref{algo} for $\mc{M}^n(R)$. In the following subsection, we describe a simplified example of the algorithm which also demonstrates how $m$th order Taylor cones naturally arise.

\subsection{A simplified explanation of the multi-scale algorithm and $m$th order Taylor cones \label{intro2} }

In addition to induction on the scale $R$, there is another important type of induction we employ: induction on the integers $m$ and $k$ in $\C_m^{k+1}(R)$ and $\M^k(R)$. One type of base case is when $m=k-1$, where we bound $\C_{k-1}^{k+1}(R)$ directly using $L^2$-based analysis. The other base case is the $L^4$ square function estimate for the parabola. Let $\mb{M}^k$ be the statement that $\mb{M}^k(R)\lesssim_\e R^\e$ for all $\e>0$ and $R>1$ (and similarly for $\C_m^{k+1}$). Then the sequence of implications is as follows, with the non-blue terms depending on all previous inputs:
\[ \textcolor{blue}{\M^2}\overset{\textcolor{blue}{\C_1^{2+1}}}{\implies} \C_0^{2+1}\implies {\M^3} \overset{\textcolor{blue}{\C_2^{3+1}}}{\implies} \C_1^{3+1}\implies \C_0^{3+1}\implies \M^4 \overset{\textcolor{blue}{\C_3^{4+1}}}{\implies}\C_2^{4+1}\implies\C_1^{4+1}\implies  \C_0^{4+1}\implies\M^5\overset{\textcolor{blue}{\C_4^{5+1}}}{\implies} \cdots  \]

For concreteness, let us consider how to prove $\M^4$. 
We focus just on the critical exponent $p_4=11$. Given an intermediate expression 
\begin{equation}\label{inteqn} 
\int_{\R^4}(\sum_{\tau\in\Theta^4(r)}|f_\tau|^2)^{\frac{11}{2}} \end{equation}
where $r<R$ and $\supp\widehat{f}\subset \mc{M}^4(R)$, we would like to make progress towards an upper bound of the form 
\begin{equation}\label{goalintro} 
C_\e R^\e\int_{\R^4}(\sum_{\theta\in\Theta^4(R)}|f_\theta|^2)^{\frac{11}{2}} , 
\end{equation}
where \emph{progress} means an inequality of type 2(a) or 2(b) from the key ingredients listed in \textsection\ref{intro1}. 
Consider the Fourier support of $\sum_{\tau\in\Theta^4(r)}|f_\tau|^2$ in \eqref{inteqn}. Each summand $|f_{\tau}|^2$ has Fourier transform $\widehat{f}_\tau*\widehat{\overline{f}}_{\tau}$, which is supported in $\tau-\tau$. If $\tau$ is associated to the $t$-interval $[a,a+r^{-\frac{1}{4}}]$, then $\tau-\tau$ is contained in 
\begin{align}\label{tau-tauintro} \{\sum_{i=1}^4\lambda_i\g_4^{(i)}(a):|\lambda_i|\lesssim r^{-\frac{i}{4}} \}. \end{align}
Note that the exponent $\frac{11}{2}$ lies in the range $2\le \frac{11}{2}\le 7$, which is the range of $p$ for which $\Gamma_0^{3+1}(r)$ satisfies a square function estimate. The subset of $\tau-\tau$ corresponding to $|\lambda_1|\sim r^{-\frac{1}{4}}$ is essentially an element of $\Xi_0^{3+1}(r^{\frac{3}{4}})$ dilated by a factor of $r^{-\frac{1}{4}}$. A high-low frequency decomposition and the triangle inequality bounds \eqref{inteqn} by
\begin{align}\label{hilo} C\int_{\R^4}|\sum_{\tau\in\Theta^4(r)}(\widehat{|f_{\tau}|^2}\eta_r)^{\widecheck{\,\,}}|^{\frac{11}{2}} +C\int_{\R^4}|\sum_{\tau\in\Theta^4(r)}(\widehat{|f_{\tau}|^2}(1-\eta_r))^{\widecheck{\,\,}}|^{\frac{11}{2}},\end{align}  
where $\eta_r$ is identically $1$ in the ball of radius $C_0^{-1}r^{-\frac{1}{4}}$ centered at the origin (we will be vague about the selection of a constant $C_0$ here). Label the case where \eqref{inteqn} is dominated by the first term in \eqref{hilo} as Case $(l_{11/2})$ to reflect that the integrand is dominated by low frequencies and the exponent is $11/2$. Case $(h_{11/2})$ is when \eqref{inteqn} is dominated by the second term in \eqref{hilo}. 

\noindent\underline{Case $(l_{11/2})$} The low-frequency integrand satisfies a favorable pointwise inequality by local $L^2$ orthogonality: 
\[ ||f_\tau|^2*\widecheck{\eta}_r|\lesssim \sum_{\substack{\tau'\in\Theta^4(C_0r)\\\tau'\subset\tau}} |f_{\tau'}|^2*|\widecheck{\eta}_r|\qquad\text{for each $\tau\in\Theta^4(r)$}. \]
Then by Young's convolution inequality, since $\|\widecheck{\eta}_r\|_1\sim 1$, conclude that \eqref{inteqn} is bounded by
\[ C\int_{\R^4}|\sum_{\tau'\in\Theta^4(C_0r)}|f_{\tau'}|^2|^{\frac{11}{2}}. \]
The refinement of $\tau$ into $\tau'$ with controlled implicit constants represents progress of type (2a) towards \eqref{goalintro}, and Case $(l_{11/2})$ concludes. 

\noindent\underline{Case $(h_{11/2})$} Assume that \eqref{inteqn} is bounded by the second, high-frequency term in \eqref{hilo}. Since the exponent $\frac{11}{2}$ is in the range $2\le \frac{11}{2}\le 7$ and the support of each $\widehat{|f_\tau|^2}(1-\eta_r)$ is identified with an element of $\Xi_0^{3+1}(r^{\frac{3}{4}})$, we have 
\[ \int_{\R^4}|\sum_{\tau\in\Theta^4(r)}(\widehat{|f_{\tau}|^2}(1-\eta_r))^{\widecheck{\,\,}}|^{\frac{11}{2}}\lesssim C_0^{3+1}(r^{\frac{3}{4}}) \int_{\R^4}|\sum_{\tau\in\Theta^4(r)}|(\widehat{|f_{\tau}|^2}(1-\eta_r))^{\widecheck{\,\,}}|^2|^{\frac{11}{4}}. \]
We are done using the high-frequency cutoff, and the integral on the right hand side is bounded using Cauchy-Schwarz and Young's inequality by 
\[ \int_{\R^4}(\sum_{\tau\in\Theta^4(r)}|f_{\tau}|^4)^{\frac{11}{4}}. \]
This is similar to the initial expression since the summands $|f_\tau|^4$ have approximately the same Fourier support \eqref{tau-tauintro} as $|f_\tau|^2$ (since $\widehat{|f_\tau|^4}$ may be written as a 4-fold convolution of two copies of $\widehat{f}_\tau$ and two copies of the reflection of $\widehat{f}_\tau$). However, we have made some progress by reducing the exponent from $\frac{11}{2}$ to $\frac{11}{4}$, which lies in the range $2\le \frac{11}{4}\le 4$. The next step is to iterate the previous argument by again performing a high-low decomposition of frequency space. Since we are working in $\R^4$ and in the range $[2,4]$, both the geometries $\Gamma_0^{3+1}$ and $\Gamma_1^{3+1}$ admit square function estimates. We will define the high frequency part of $\supp\widehat{|f_{\tau}|^4}$ so that it may be identified with an element of $\Xi_1^{3+1}(r^{\frac{3}{4}})$. See the comments about key decisions at the end of this subsection for an explanation of why we chose $\Xi_1^{3+1}(r^{\frac{3}{4}})$ instead of $\Xi_0^{3+1}(r^{\frac{3}{4}})$. If the $t$-interval associated to $\tau\in\Theta^4(r)$ has initial point $a$, we divide the Fourier support of $|f_\tau|^4$ into the high part, 
\begin{equation}\label{newhi} \tau_h:=\{\sum_{i=1}^4\lambda_i\g_4^{(i)}(a):|\lambda_i|\lesssim r^{-\frac{i}{n}}\,\forall i,\quad \max(|\lambda_1|C_1^{2}r^{\frac{1}{4}},|\lambda_2|C_1r^{\frac{2}{4}})\ge 1\}, \end{equation}
which is identified with an element of $\Xi_1^{3+1}(r^{\frac{3}{4}})$ dilated by $r^{-\frac{1}{4}}$, and the low part 
\begin{equation}\label{newlo} \tau_l:= \{\sum_{i=1}^4\lambda_i\g_4^{(i)}(a):|\lambda_i|\lesssim r^{-\frac{i}{n}}\,\forall i,\quad \max(|\lambda_1|C_1^2r^{\frac{1}{4}},|\lambda_2|C_1r^{\frac{2}{4}})\le 1\} , \end{equation}
where $C_1$ is a large constant that we do not specify. Let $\eta_{\tau_h}+\eta_{\tau_l}$ be smooth bump functions which are identically one on $\supp\widehat{|f_\tau|^4}$ and supported in $\tau_h$ and $\tau_l$ respectively. Consider the cases where the $\tau_h$ and $\tau_l$ frequencies dominate separately.


\noindent\underline{Case $(h_{11/2},l_{11/4})$} In this case, we have the inequality 
\[ \int_{\R^4}(\sum_{\tau\in\Theta^4(r)}|f_\tau|^2)^{\frac{11}{2}}   \le C \int_{\R^4}|\sum_{\tau\in\Theta^4(r)}|f_\tau|^4*\widecheck{\eta}_{\tau_l}|^{\frac{11}{4}}   . \]
Consider each summand one at a time. By Fubini's theorem, we have
\[  |f_{\tau}|^4*|\widecheck{\eta}_{\tau_l}|(x,y)= \int_{\R^2}\int_{\R^2}|f_{\tau}(x',y')|^4|\widecheck{\eta}_{\tau_l}|(x-x',y-y')dx' dy'\]
where $x,y\in\R^2$. 
We will consider upper bounds for the inner integral above. Because the Fourier transform on $\R^4$ factors into the composition of 2-dimensional Fourier transforms, we may regard $f_{\tau}$ as a function in the first two variables with Fourier support in $\mc{M}^2(R^{\frac{1}{2}})$. The weight function $|\widecheck{\eta}_{\tau_l}|$ is localized in the first two variables to a rectangle of dimensions $C_1^2r^{\frac{1}{4}}\times C_1r^{\frac{2}{4}}$. Invoking a local $L^4$ square function estimate for $\mc{M}^2(R^{\frac{1}{2}})$ then yields the pointwise inequality
\[ |f_{\tau}|^4*|\widecheck{\eta}_{\tau_l}|\lesssim \M^2(C_1) |\sum_{\substack{\tau'\in\Theta^4(C_1r)\\\tau'\subset\tau}}|f_{\tau'}|^2|^2*|\widecheck{\eta}_{\tau_l}|. \]
In summary, we have an upper bound for \eqref{inteqn} of the form 
\[ C_0 \C_0^{3+1}(r^{\frac{3}{4}})\M^2(C_1) \int_{\R^4}(\sum_{\tau\in\Theta^4(r)}|\sum_{\substack{\tau'\in\Theta^4(C_1r)\\\tau'\subset\tau}}|f_{\tau'}|^2|^2*|\widecheck{\eta}_{\tau_l}|)^{\frac{11}{4}}. \]
We remark that $C_1$ should be chosen to be significantly larger than $C_0$ in order to make progress of type 2(a). Also, since $\widecheck{\eta}_{\tau_l}$ depends on each $\tau$, we cannot simply invoke Young's convolution inequality to eliminate the auxiliary convolutions. This is a technical point, but by carefully defining the functions $\eta_{\tau_l}$, we show that it is safe to substitute the integral displayed above by 
\[ \int_{\R^4}(\sum_{\tau\in\Theta^4(r)}|\sum_{\substack{\tau'\in\Theta^4(C_1r)\\\tau'\subset\tau}}|f_{\tau'}|^2|^2)^{\frac{11}{4}}\le \int_{\R^4}( \sum_{\substack{\tau'\in\Theta^4(C_1r)}}|f_{\tau'}|^2)^{\frac{11}{2}}, \]
which completes the \emph{progress} step for this line of reasoning. 

\noindent\underline{Case $(h_{11/2},h_{11/4})$} The high-frequency part that we have not yet addressed is the case that 
\[  \eqref{inteqn} \lesssim \C_0^{3+1}(r^{\frac{3}{4}})\int_{\R^4}|\sum_{\tau\in\Theta^4(r)}|f_\tau|^4*\widecheck{\eta}_{\tau_h}|^{\frac{11}{4}}.  \]
Recall that each $\tau_h$ is identified with an (almost unique, depending on $C_1$) element of $\Xi_1^{3+1}(r^{\frac{3}{4}})$ dilated by a fixed factor, and so the right hand side above is 
\[ \lesssim \C_0^{3+1}(r^{\frac{3}{4}})\C_1^{3+1}(r^{\frac{3}{4}})\int_{\R^4}(\sum_{\tau\in\Theta^4(r)} ||f_\tau|^4*\widecheck{\eta}_{\tau_h}|^2)^{\frac{11}{8}} . \]
Although we can no longer cite standard inequalities like Cauchy-Schwarz or Young's inequality, we show that the integral above can be bounded by 
\begin{equation}\label{l8} \int_{\R^4}(\sum_{\tau\in\Theta^4(r)} |f_\tau|^8)^{\frac{11}{8}} . \end{equation}
At this stage, we see something different from the previous cases. One difference is that the exponent lies in the range $1\le \frac{11}{8}\le 2$, so we should consider using $\C_2^{3+1}(r^{\frac{3}{4}})$. Since $8$ is an even integer, the Fourier support of $|f_\tau|^8$ is roughly the same as the Fourier support of $|f_\tau|^2$, so we could perform one more high-low frequency decomposition 
\[ \sum_{\tau\in\Theta^4(r)}|f_\tau|^8*\widecheck{\eta}_{\tilde{\tau}_l}+\sum_{\tau\in\Theta^4(r)}|f_\tau|^8*\widecheck{\eta}_{\tilde{\tau}_h}. \]
The difficulty now is dealing with the low part. By regarding the convolution with $\widecheck{\eta}_{\tilde{\tau}_l}$ as an $L^8$ integral against a weight, we would like to invoke previously proven square function estimates for lower dimensional moment curves. The problem is that even for one lower dimension, $\mc{M}^3$ only admits a sharp square function estimate in the range $2\le p\le 7$, not $2\le p\le 8$. To address this issue, we use the fact that $\|\cdot\|_{\ell^p}\le \|\cdot\|_{\ell^q}$ whenever $p\ge q$ to bound \eqref{l8} by 
\[ \int_{\R^4}(\sum_{\tau\in\Theta^4(r)} |f_\tau|^6)^{\frac{11}{6}}. \]
If we had chosen to replace the $\ell^8$ expression by an $\ell^7$ expression, then the exponent for the low analysis is no longer an issue, but there is no simple formula for the Fourier transform of $|f_{\tau}|^7$ as there is for $|f_{\tau}|^6$. Furthermore, we remark that the exponent $\frac{11}{6}$ remains in the same range $1\le \frac{11}{8}\le \frac{11}{6}\le 2$ as the previous exponent $\frac{11}{8}$, so we should still identify high-frequency pieces which may be bounded using $\C_2^{3+1}(r^{\frac{3}{4}})$. We verify that similar arguments involving the exponents $p$ may be made in the general setting in Lemma \ref{pprops}. Define the high and low subsets of $\supp\widehat{|f_\tau|^6}$ as 
\[ \tilde{\tau}_h:=\{\sum_{i=1}^4\lambda_i\g_4^{(i)}(a):|\lambda_i|\lesssim r^{-\frac{i}{n}} \,\,\forall i,\quad \max(|\lambda_1|C_2^3r^{\frac{1}{4}},|\lambda_2|C_2^2r^{\frac{2}{4}},|\lambda_3|C_2r^{\frac{3}{4}}) \ge 1 \} \]
and 
\[ \tilde{\tau}_l:= \{\sum_{i=1}^4\lambda_i\g_4^{(i)}(a):|\lambda_i|\lesssim r^{-\frac{i}{n}} \,\,\forall i,\quad \max(|\lambda_1|C_2^3r^{\frac{1}{4}},|\lambda_2|C_2^2r^{\frac{2}{4}},|\lambda_3|C_2r^{\frac{3}{4}}) \le 1 \},\]
and let $\eta_{\tilde{\tau}_h}$ and $\eta_{\tilde{\tau}_l}$ be adapted to their respective sets. We consider this high-low frequency decomposition in the following final two cases.

\noindent\underline{Case $(h_{11/2},h_{11/4},l_{11/8})$} In the case that the $\tilde{\tau}_l$ frequencies dominate, we have the pointwise inequality 
\[ |\sum_{\tau\in\Theta^4(r)}|f_\tau|^6*\widecheck{\eta}_{\tilde{\tau}_l}|\lesssim \M^3(C_2)|\sum_{\tau\in\Theta^4(r)}|\sum_{\substack{\tau'\subset\tau \\\tau'\in\Theta^4(C_2r)}}|f_{\tau'}|^2|^3*|\widecheck{\eta}_{\tilde{\tau}_l}| . \]
A few more technical steps leads to the final inequality bounding \eqref{inteqn} by 
\[  \lesssim \C_0^{3+1}(r^{\frac{3}{4}})\C_1^{3+1}(r^{\frac{3}{4}})\M_3(C_2) \int_{\R^4}(\sum_{\tau'\in \Theta^4(C_2r)}|f_{\tau'}|^2)^{\frac{11}{2}}, \]
which is progress of type 2(a) towards \eqref{goalintro}. 

\noindent\underline{Case $(h_{11/2},h_{11/4},h_{11/8})$} Now we consider when the $\tilde{\tau}_h$ frequencies dominate. We have the inequality
\[ \eqref{inteqn}\lesssim \C_0^{3+1}(r^{\frac{3}{4}})\C_1^{3+1}(r^{\frac{3}{4}})\C_2^{3+1}(r^{\frac{3}{4}})\sum_{\tau\in\Theta^4(r)}\int_{\R^n}|f_\tau|^{11}. \]
This is the one case where we see progress of type 2(b), and the algorithm is concluded.

\vspace{1mm}
\noindent\underline{{\bf{Comments about key decisions}}} In each of the cases in our algorithm, we identify high and low subsets of the Fourier support of $\sum_\tau|f_\tau|^{2k}$ for some $k$, which is approximately contained in $\cup_\tau (\tau-\tau)$. The initial division is the simplest: 
\[ \tag{$\star$} \begin{cases}
    (\cup_\tau(\tau-\tau))_{\text{low}}:=(\cup_\tau(\tau-\tau))\cap\{|\xi|<C_0^{-1}r^{-\frac{1}{4}}\} \\
    (\cup_\tau(\tau-\tau))_{\text{high}}:=(\cup_\tau(\tau-\tau))\cap\{|\xi|\ge C_0^{-1}r^{-\frac{1}{4}}\}
\end{cases}. \]
While it would be convenient to use this definition in all of the cases, our criterion for a good high/low decomposition is changing at each stage, depending on the integer $k$ in the expression $(\sum_\tau|f_\tau|^{2k})^{\frac{11}{2k}}$. We have two requirements: 
\begin{enumerate}
    \item there is an $L^{\frac{11}{2k}}$ square function estimate for $(\sum_{\tau}|f_\tau|^{2k})_{\text{high}}$, and
    \item there is a pointwise $L^{2k}$ estimate for each $(|f_\tau|^{2k})_{\text{low}}$.
\end{enumerate}  
It is easier to satisfy (1) when the high set is smaller (for example, the high set from ($\star$)) and it is easier to satisfy (2) when the low set is smaller. Identifying copies of $\Gamma_1^{3+1}(r)$ and $\Gamma_2^{3+1}(r)$ within $\cup_\tau(\tau-\tau)$ as the high sets in the cases $k=2$ and $k=3$ respectively allows for both (1) and (2) to be satisfied. 

The set $\Gamma_1^{2+1}(r)$ was also studied in \cite{locsmooth}, but only an $L^2(R^3)$ estimate was required in that set-up. It is fairly easy to get a similar $L^2(\R^4)$ estimate for the set $\Gamma_1^{3+1}(r)$ by finite overlapping on the Fourier side, but this is no longer enough. At some point, we realized that $\Gamma_1^{3+1}(r)$ is closely related to $\Gamma_0^{2+1}(r)$ (the cone in $\R^3$) and there is an $L^p(\R^4)$ square function estimate for $2\le p\le 4$ that is similar to the one proven in \cite{locsmooth}. This allows us to use an $L^{\frac{11}{4}}(\R^4)$ square function estimate to bound the high part of $\sum_\tau|f_\tau|^4$, despite the rather large high-frequency set identified with $\Gamma_1^{3+1}(r)$.

\section{Preliminaries for the proof of Proposition \ref{coneinduct} \label{prelimsec}}

In this section, we prove preliminary technical results which are inputs to the proof of Proposition \ref{coneinduct}. In \textsection\ref{smallgen}, we describe some flexibility in the definition of the sets in $\Xi_m^{n+1}(R)$ which justifies later appeals to $\C_m^{n+1}(R)$ to bound functions which are only approximately supported in elements of $\Xi_m^{n+1}(R)$. In \textsection\ref{L2sec}, we directly prove key $L^2$ based inequalities which directly bound the constants $\C_{n-1}^{n+1}(R)$. We also introduce weight functions which will be used extensively to write local versions of inequalities. 

\subsection{Small generalizations of $\C_m^{n+1}(R)$ \label{smallgen}}  

We show that the definition we used for the elements of $\Xi_m^{n+1}(R)$ is not sensitive to changes by constant factors.

\begin{proposition}[A general $\C_m^{n+1}(R)$] \label{gencone} Let $n\ge 2$, $0\le m\le n-1$, and $L\ge 2$. Suppose that $\C_m^{n+1}(R)\lesssim_{n,\e}R^\e$. There is a constant $c_n>0$ so that the following holds. If for $l=0,\ldots,R^{1/n}-1$, $g_{R,L,l}:\R^n\to\C$ are Schwartz functions with  
\begin{align}\nonumber
\supp\widehat{g_{R,L,l}}\subset \{\sum_{i=0}^n\lambda_i&\phi_n^{(i)}(t):t\in[\frac{l}{100L2^nR^{\frac{1}{n}}},\frac{l+1}{100L2^nR^{\frac{1}{n}}}],\quad |\lambda_i|\le R^{-\frac{i}{n}}\,\forall \,i,\quad\max_{0\le j\le m}|\lambda_j|2^{m+1-j}R^{\frac{j}{n}}\ge\frac{1}{L}\}, 
\end{align} 
then 
\[ \int_{\R^{n+1}}|\sum_{l=0}^{R^{1/n}-1} g_{R,L,l}|^{p}\lesssim_{n,\e} L^{c_n} R^\e   \int_{\R^{n+1}}|\sum_{l=0}^{R^{1/n}-1}|g_{R,L,l}|^2|^{\frac{p} {2}}  \]
for any $2\le p\le p_{n-m}$. 
\end{proposition}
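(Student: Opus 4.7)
The plan is to reduce the generalized estimate to the standard $\C_m^{n+1}(R)$ estimate by decomposing each $g_{R,L,l}$ to handle the two differences from the standard definition. First, the $t$-interval length is $1/(100L2^n R^{1/n})$ rather than $2^{-100n}R^{-1/n}$. Second, the threshold for $\max_{0\le j\le m}|\lambda_j|2^{m+1-j}R^{j/n}$ is relaxed from $1$ to $1/L$. Each difference should cost only a polynomial factor in $L$. Setting $K := \lceil \log_2 L\rceil$, any $\log L$ losses that arise can be absorbed into $L^{c_n}$ at the end.

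I would first handle the relaxed max condition by a dyadic decomposition. Using a smooth Fourier-side partition of unity I write $g_{R,L,l} = g_{R,L,l}^{(0)} + \sum_{k=1}^{K} g_{R,L,l}^{(k)}$, where $g_{R,L,l}^{(0)}$ has Fourier support in the outer region $\{\max_{j\le m}|\lambda_j|2^{m+1-j}R^{j/n}\gtrsim 1\} \subset \Gamma_m^{n+1}(R)$ and $g_{R,L,l}^{(k)}$ in the $k$-th dyadic shell where $\max \sim 2^{-k}$. The triangle inequality $|a_0+\sum_k a_k|^p \le K^{p-1}(|a_0|^p+\sum_k|a_k|^p)$ reduces the problem to bounding $\int|\sum_l g_{R,L,l}^{(k)}|^p$ separately for each $k$.

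For the outer piece ($k=0$), the Fourier support is contained in a bounded union of standard elements $\theta \in \Xi_m^{n+1}(R)$; the $t$-interval mismatch contributes at most a covering factor depending on $n$ and $L$ (either many standard sub-intervals cover $I_l$ when $L < 2^{99n}/100$, or each $I_l$ lies in a bounded number of standard intervals otherwise). I would further decompose into $\theta$-pieces, apply the hypothesis $\C_m^{n+1}(R) \lesssim_\e R^\e$, and then pass back to $\sum_l |g_{R,L,l}|^2$ via local $L^2$-orthogonality on each generalized interval. For each shell ($k\ge 1$), I would apply an anisotropic affine rescaling that dilates the first $m+1$ coordinate directions of $\R^{n+1}$ by $2^k$, which maps the $k$-th shell to an outer shell at an effectively smaller scale $R_k \le R$; applying $\C_m^{n+1}(R_k) \lesssim (R_k)^\e \le R^\e$ and undoing the rescaling gives a bound with loss polynomial in $2^k$, and summing over $k=0,\ldots,K$ yields the promised polynomial-in-$L$ loss.

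The main obstacle is the shell step: the $k$-th shell's dimensions do not match a standard cone at any single scale across all $n+1$ directions, because the outer box $\{|\lambda_i|\le R^{-i/n}\}$ is unchanged for $i>m$ while the max condition involves only $i \le m$. Overcoming this requires either a bounded covering in the residual $n-m$ directions or a careful use of the moment curve's affine symmetry $\phi_n(\beta t)=D_\beta\phi_n(t)$, where $D_\beta = \op{diag}(1,\beta,\ldots,\beta^n)$, to realize the shell as an affine image of a standard cone piece. Careful bookkeeping through the decomposition yields the final bound $L^{c_n}R^\e$ for some $c_n$ depending only on $n$.
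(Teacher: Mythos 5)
Your high-level plan matches the paper's: decompose $\supp\widehat{g}_{R,L,l}$ dyadically into $O(\log L)$ shells indexed by the size of $\max_{0\le j\le m}|\lambda_j|2^{m+1-j}R^{j/n}$, rescale each shell to a configuration where the hypothesis $\C_m^{n+1}(\cdot)\lesssim_\e R^\e$ applies, and reassemble paying only $L^{c_n}$ (the paper pigeonholes to a dominant shell index rather than summing, a cosmetic difference). But two steps in your sketch are not actually established.

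First, the rescaling. You propose a uniform dilation by $2^k$ in \emph{the first $m+1$ coordinate directions of $\R^{n+1}$}. If you mean the ambient $x_0,\dots,x_m$, that map does not act diagonally on the $\lambda_i$, which are coefficients in the moving frame $\phi_n^{(i)}(t_l)$ and are what the constraints on $\supp\widehat g_{R,L,l}$ are written in. If instead you mean uniform dilation of $\lambda_0,\dots,\lambda_m$ in the frame at $t_l$, then in fact the $k$-th shell lands inside $\Gamma_m^{n+1}(R)$ at the \emph{same} scale $R$, not a smaller $R_k$: the post-dilation condition $\max_{j\le m}|\mu_j|2^{m+1-j}R^{j/n}\in[1,2]$ already forces $|\mu_j|\le R^{-j/n}$ for every $j\le m$, and the $i>m$ directions are untouched. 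So the mismatch in the residual $n-m$ directions that you flag as the main obstacle does not actually arise in that form; the genuine issue is that the dilation depends on $l$ and one has to produce a single set of Fourier cutoffs that makes the $\C_m^{n+1}$ hypothesis applicable across all $l$. The paper does this through the cutoffs $\eta_{l,j}$ of \eqref{auxbump}, whose scaling factor in coordinate $i\le m$ is $(3/4)^j 2^{j(m+1-i)}$ — a constant times the parabolic rescaling $D_{2^{-j}}$ that you mention only as an alternative. That idea needs to be the main argument, not an aside, and you need to check that the rescaled shells sit inside blocks of $\Xi_m^{n+1}$ at a single smaller scale.

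Second, and more substantially, after applying the hypothesis to a shell you arrive at an expression of the form $\int|\sum_l |g_l*\widecheck\eta_l^{(k)}|^2|^{p/2}$, and the proposition requires $\int|\sum_l |g_l|^2|^{p/2}$. You invoke local $L^2$-orthogonality in passing, but $\widecheck\eta_l^{(k)}$ is a genuine $l$-dependent, anisotropically scaled $L^1$-normalized kernel, not a smoothing back to a coarser block, and the inequality $\int|\sum_l|g_l*\widecheck\eta_l^{(k)}|^2|^{p/2}\lesssim_\e R^\e\int|\sum_l|g_l|^2|^{p/2}$ is exactly the content of Lemma \ref{remlem}, which the paper has to state as a separate input and prove with further work. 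Your sketch leaves this step at the level of a gesture; it is a real estimate that must be supplied.
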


\begin{proof} 
It suffices to assume that $R\ge L^C$ for a constant $C$ that we may choose. By Taylor expansion, the Fourier support of $g_{R,L,l}$ is contained in 
\[ \{\sum_{i=0}^n\lambda_i\phi_n^{(i)}((2^{100n}L)^{-1}lR^{-\frac{1}{n}}):|\lambda_i|\le (1+\frac{1}{100L})R^{-\frac{i}{n}},\quad\max_{0\le i\le m}|\lambda_i|2^{m+1-i}R^{\frac{i}{n}}\ge \frac{99}{100L}\}. \]
Let $s:\R\to[0,1]$ be a smooth bump function satisfying $s(t)=1$ when $|t|\le \frac{3}{2}$ and $\supp s\subset(-2,2)$. The function $\s(x_0,\ldots,x_n):=s(x_0)\cdots s(R^{\frac{m}{n}}x_m) s(x_{m+1})\cdots s(x_n)$ is identically equal to $1$ on the box $[-1,1]\times[-R^{-\frac{1}{n}},R^{-\frac{1}{n}}]\times\cdots\times[-R^{-\frac{m}{n}},R^{-\frac{m}{n}}]\times[-1,1]^{n-m}$. Let $A_l:\R^{n+1}\to\R^{n+1}$ be a linear map which, in $x_0,\ldots,x_m$, takes $\phi_{m}^{(i)}(l(2^{100n}L)^{-1}R^{-\frac{1}{n}})$ to the standard basis vector ${\bf{e}}_{i+1}$ (with a $1$ in the $(i+1)$st coordinate) for $i=0,\ldots,m$, and which is the identity in the remaining variables $x_{m+1},\ldots,x_n$. The function $\s\circ A_l$ is supported in 
\begin{equation*}
\{\sum_{i=0}^m\mu_i\phi_m^{(i)}((2^{100n}L)^{-1}lR^{-\frac{1}{n}}): |\mu_i|\le 2R^{-\frac{i}{n}}\}\times[-2,2]^{n-m} \end{equation*}
and identically one in 
\[ \{\sum_{i=0}^m\mu_i\phi_m^{(i)}((2^{100n}L)^{-1}lR^{-\frac{1}{n}}): |\mu_i|\le (3/2)R^{-\frac{i}{n}}\}\times[-2,2]^{n-m}. \]
For $j\in\N$, 
define $\eta_{l,j}(x_0,\ldots,x_n)$ to be the difference
\begin{align}
\label{auxbump} \eta_{l,j}(x_0,\ldots,x_n):=&\s\circ A_l\Big(\Big(\frac{3}{4}\Big)^j2^{j(m+1)}x_0,\ldots,\Big(\frac{3}{4}\Big)^j2^jx_m,x_{m+1},\ldots,x_n\Big)\\
&\qquad -\s\circ A_l\Big(\Big(\frac{3}{4}\Big)^{j+1}2^{(j+1)(m+1)}x_0,\ldots,\Big(\frac{3}{4}\Big)^{j+1}2^{j+1}x_m,x_{m+1},\ldots,x_n\Big). \nonumber
\end{align}
Note that $\eta_{l,j}$ is supported in 
\begin{equation}\label{ksupp2} \{\sum_{i=0}^m\mu_i\phi_m^{(i)}((2^{100n}L)^{-1}lR^{-\frac{1}{n}}): \max_{0\le i\le m}|\mu_i|[2(4/3)^j2^{-j(m+1)}]^{-1}2^{m-j}(R/2^j)^{\frac{i}{n}}\in[\frac{1}{2},1] \}\times[-2,2]^{n-m}. \end{equation}
For the support of $\eta_{l,j}$ to have nonempty intersection with the support of $\widehat{g_{R,L,l}}$, we have $3^j2^{jm}\lesssim L$. Since there are fewer than $C\log L$ many $j$, by dyadic pigeonholing, there is a $j\lesssim\log L$ which satisfies 
\begin{equation}\label{RHS}  
\int_{\R^{n+1}}|\sum_l g_{R,L,l}|^{p }\le (\log L)^{O(1)} \int_{\R^{n+1}}|\sum_{l} g_{R,L,l}*\widecheck{\eta_{l,j}}|^{p }. \end{equation}
Each $\widehat{g_{R,L,l}}\eta_{l,j}$ is supported in the $2\cdot\big(\frac{4}{3}\big)^j2^{-j(m+1)}$-dilation of an element of $\Xi_m^{n+1}(2^{-j}R)$. Using the hypothesis that $\C_m^{n+1}(R)\lesssim_\e R^\e$ and Cauchy-Schwarz, it follows that 
\[ \int_{\R^{n+1}}|\sum_{l} g_{R,L,l}*\widecheck{\eta_{l,j}}|^{p }\lesssim_\e R^\e L^{O(1)} \int_{\R^{n+1}}|\sum_{l} |g_{R,L,l}*\widecheck{\eta_{j,l}}|^2|^{\frac{p }{2}}. \]
It remains to prove 
\begin{lemma}\label{remlem}
    \[\int_{\R^{n+1}}|\sum_{l} |g_{R,L,l}*\widecheck{\eta}_{l,j}|^2|^{\frac{p}{2}}\lesssim_\e R^\e \int_{\R^{n+1}}|\sum_{l} |g_{R,L,l}|^2|^{\frac{p}{2}}. \]
\end{lemma}
We delay this proof until \textsection\ref{truncsec}.

\end{proof}

\begin{lemma}[Localized $\C_m^{n+1}(R)$] \label{loclem} Let $\s:\R^{n+1}\to\C$ be a smooth function with Fourier transform supported in a ball of radius $R^{-1}$ centered at the origin. If $\C_m^{n+1}(R)\lesssim_\e R^\e$, then
\[ \int_{\R^{n+1}}|\sum_{\theta\in\Xi_m^{n+1}(R)}f_\theta \s|^{p}\lesssim_\e R^\e \int_{\R^{n+1}} |\sum_{\theta\in\Xi_m^{n+1}(R)}|f_\theta \s|^2|^{\frac{p}{2}} \]
for any $2\le p\le p_{n-m}$ and any Schwartz functions $f_\theta$ which have Fourier support in $\theta$. 
\end{lemma}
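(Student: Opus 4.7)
The plan is to reduce directly to Proposition \ref{gencone}. By the convolution theorem, $\widehat{f_\theta\s}=\widehat{f}_\theta*\widehat{\s}$ is supported in $\theta+B(0,R^{-1})$. Now $\theta\in\Xi_m^{n+1}(R)$ is comparable to a parallelepiped with sides of length $1,R^{-1/n},\ldots,R^{-1}$ along the frame $\phi_n^{(0)}(a),\ldots,\phi_n^{(n)}(a)$, where $a$ is the left endpoint of $J_\theta$. Since the frame vectors are uniformly linearly independent over $a\in[0,1]$ and the smallest side of $\theta$ is already of order $R^{-1}$, a ball of radius $R^{-1}$ is contained in a $C_n$-multiple of this parallelepiped. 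Consequently $\theta+B(0,R^{-1})$ lies in a constant dilation of $\theta$ with respect to the tangent frame at $a$.

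Re-parameterizing via Taylor expansion of $\phi_n$ at $a$, and assuming $R\ge R_n$ for a threshold depending only on $n$ (the lemma is trivial otherwise), the enlarged set takes the form
\[ \Big\{\sum_{i=0}^n\lambda_i\phi_n^{(i)}(t):t\in J_\theta',\ |\lambda_i|\le 2R^{-i/n},\ \max_{0\le j\le m}|\lambda_j|2^{m+1-j}R^{j/n}\ge \tfrac{1}{2}\Big\}, \]
where $J_\theta'$ is a slight enlargement of $J_\theta$ of length $\lesssim R^{-1/n}$. This is precisely a set of the kind described in Proposition \ref{gencone} with $L=L_n$ a fixed constant depending only on $n$ (the bound $|\lambda_i|\le 2R^{-i/n}$ is absorbed after rescaling, and the threshold $1/2$ corresponds to $L=2$). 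I would group the $\theta\in\Xi_m^{n+1}(R)$ into $O_n(1)$-sized blocks $\mathcal{B}_l$ compatible with the coarser $l$-partition of Proposition \ref{gencone}, form $G_l=\sum_{\theta\in\mathcal{B}_l}f_\theta\s$ for each block, and apply Proposition \ref{gencone} to $\sum_l G_l$. This yields
\[ \int_{\R^{n+1}}\Big|\sum_\theta f_\theta\s\Big|^p\lesssim_{n,\e} L_n^{c_n} R^\e\int_{\R^{n+1}}\Big(\sum_l|G_l|^2\Big)^{p/2}. \]
A final Cauchy--Schwarz step, using $|\mathcal{B}_l|=O_n(1)$, gives $\sum_l|G_l|^2\lesssim_n \sum_\theta|f_\theta\s|^2$ pointwise, producing the desired right-hand side $\int(\sum_\theta|f_\theta\s|^2)^{p/2}$, with the $L_n^{c_n}$ factor absorbed into the implicit constant.

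The main obstacle is the first technical step: verifying via the Taylor expansion that $\theta+B(0,R^{-1})$ fits the exact form required by Proposition \ref{gencone}, especially that the max condition degrades only by an $O_n(1)$ factor (this uses that $R^{-1}$ times $R^{j/n}$ is small when $j<n$, and equals $1$ in the worst case $j=n$, but $j\le m\le n-1$ in the max). The remaining bookkeeping between the $\theta$-partition of $[0,1]$ and the coarser $l$-partition in Proposition \ref{gencone} is routine, and the final Cauchy--Schwarz step is immediate.
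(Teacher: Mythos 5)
Your proposal is correct and takes essentially the same route as the paper: observe that $\widehat{f_\theta\s}=\widehat{f_\theta}*\widehat{\s}$ sits in a constant dilate of $\theta$ (still of the form appearing in Proposition \ref{gencone} with $L$ a fixed dimensional constant), then invoke Proposition \ref{gencone} and finish with Cauchy--Schwarz. The paper states the same argument in two sentences, leaving the bookkeeping (fitting the $\theta$-partition into the coarser $l$-partition of Proposition \ref{gencone}, checking that the max condition degrades only by an $O_n(1)$ factor) implicit; you make that bookkeeping explicit, which is a reasonable thing to spell out but not a different method.
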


\begin{proof} Each summand $f_\theta \s$ has Fourier transform supported in 
\[ \{\sum_{i=0}^n\lambda_i \phi_n^{(i)}(l2^{-100n}R^{-\frac{1}{n}}):|\lambda_i|\le 2 R^{-\frac{i}{n}},\quad\max_{0\le j\le m}|\lambda_j|2^{m-j}R^{\frac{j}{n}}\ge \frac{1}{4}\}. \]
Therefore, Proposition \ref{gencone} applies.     
\end{proof}

\subsection{Weight functions and the locally constant property \label{wtsec}}

We introduce weight functions which will be used to localize the integrals. Let $\s_n:\R^{n}\to[0,\infty)$ be a radial, smooth bump function supported in $|\xi|\le 1$ and satisfying $|\widecheck{\s}_n(x)|>0$ for all $|x|\le 1$. For any $d\in\N^+$, define $W^{n,d}:\R^{n}\to[0,\infty)$ by 
\[ W^{n,d}(x)=\sum_{j=0}^\infty \frac{1}{2^{100n^2jd}}|\widecheck{\s}_n|^2(2^{-j}x). \]

Let $B_0$ be the unit ball centered at the origin in $\R^n$. For any set $U=T(B_0)$ where $T$ is an affine transformation $T:\R^n\to\R^n$, define
\[ W_{U}^{n,d}(x)=W^{n,d}(T^{-1}(x)). \]
Note that specifying either the spatial localization $U$ or its Fourier dual $U^*:=\{\xi\in\R^n:|\langle\xi,x\rangle|\le \frac{1}{2}\quad\forall x\in  U-U\}$ (assuming $W_U^{n,d}$ is centered at the origin) suffices to define the weight function $W_U^{n,d}$. We write $\tilde{W}^{n,d}_U$ to mean the $L^1$-normalized version of $W_U^{n,d}$, so $\tilde{W}_U^{n,d}(x):=\|W_U^{n,d}\|_1^{-1}W_U^{n,d}$. 

If $U\subset\R^n$ is a convex set, then we write $W_U^{n,d}$ to mean $W_{\tilde{U}}^{n,d}$, where $\tilde{U}=T(B_0)$ for some affine transformation $T$ and $\tilde{U}$ is comparable to $U$. This defines $W_U^{n,d}$ up to a bounded constant, which is sufficient for our arguments. 

The following are some important properties of $W^{n,d}$ that we will repeatedly use.
\begin{enumerate}
    \item The function $W^{n,d}(x)$ is $\sim 1$ if $|x|\le 1$ and $\sim |x|^{-100n^2d}$ if $|x|>1$. 
    \item The Fourier transform $\widehat{W^{n,d}}$ is supported in $|\xi|\le 2$. 
    \item If $A_1,A_2$ are affine transformations of the unit ball and $A_1\subset A_2$, then  the following hold:
    \begin{enumerate}
        \item $\tilde{W}_{A_1}^{n,d}*\tilde{W}_{A_1}^{n,d}\lesssim_{n,d} \tilde{W}_{A_1}^{n,d}$, 
        \item ${W}_{A_1}^{n,d}\le {W}_{A_2}^{n,d}$,
        \item $\tilde{W}_{A_1}^{n,d_1}*\tilde{W}_{A_2}^{n,d_2}\lesssim_{n,d_1,d_2} \tilde{W}_{A_2}^{n,\min(d_1,d_2-(100n)^{-1})}$.
    \end{enumerate} 
\end{enumerate}

Let $f:\R^n\to\C$ be Schwartz and suppose that $\widehat{f}$ is supported in $T(B_0)$ for some affine transformation $T$. By \emph{locally constant property}, we mean the pointwise inequality $|f(x)|\lesssim_{n,d}|f|*\tilde{W}_{(T(B_0))^*}^{n,d}(x)$ for each $d\ge 1$. This follows from writing $\widehat{f}=\widehat{f}(\s\circ T^{-1})$ where $\s$ is a fixed, appropriately chosen smooth bump function, and then using $|\widecheck{\s\circ T^{-1}}|\lesssim_{n,d}\tilde{W}_{(T(B_0))^*}^{n,d}$.

\subsubsection{A key local $L^2$ estimate \label{L2sec}}
Let $1\le r\le R$ with $r,R\in2^{n\N}$. For each $\theta\in\Xi_{n-1}^{n+1}(R)$, let $I(\theta)$ be the $t$-interval of length $2^{-100n}R^{-\frac{1}{n}}$ associated to $\theta$. We consider estimates of the form 
\[ \int_{\R^{n+1}}|\sum_{\theta\in\Xi_n^{n+1}(R)}f_\theta|^2W^{n+1,d}_{B_{\tilde{r}}} \lesssim_\e r^\e\int_{\R^{n+1}} \sum_{J\in\mc{J}(r^{-\frac{1}{n}})}|\sum_{\substack{I(\theta)\subset J\\ \theta\in\Xi_{n-1}^{n+1}(R)}}f_\theta|^2 W_{B_{\tilde{r}}}^{n+1,d},\]
for an appropriate radius $\tilde{r}$. We may use different radii $\tilde{r}$ depending on which part of the Fourier support within $\Gamma_{n-1}^{n+1}(R)$ is dominant. Consider the following subsets of $\Gamma_{n-1}^{n+1}(R)$: for a dyadic parameter $\lambda$, $R^{-\frac{n-1}{n}}\le\lambda\le 1$, let $\Xi_{n-1}^{n+1}(R,\lambda)$ be the collection of 
\[ \theta=\{\sum_{i=0}^n\lambda_i\phi_n^{(i)}(a):|\lambda_i|\le R^{-\frac{i}{n}}\quad\forall i,\quad\max_{0\le i\le n-1}|\lambda_i|\ge\lambda/2,\quad\max_{0\le i\le n-1}|\lambda_i|2^{n-i}R^{\frac{i}{n}}\ge 1 \}, \]
where $a\in 2^{-100n}R^{-\frac{1}{n}}\Z\cap[0,1]$ is the initial point of $I(\theta)$.

\begin{thm}[Local $L^2$ orthogonality] \label{locL2thm} For any $d\ge 1$ and ball $B\subset\R^{n+1}$of radius $\lambda^{-1}r$, we have 
\[ \int_{\R^{n+1}}|\sum_{\theta\in\Xi_{n-1}^{n+1}(R,\lambda)}f_\theta |^2W_B^{n+1,d}\lesssim  \int_{\R^{n+1}}\sum_{J\in\mc{J}(r^{-\frac{1}{n}})}|\sum_{\substack{I(\theta)\subset J\\\theta\in\Xi_{n-1}^{n+1}(R,\lambda)}}f_\theta |^2W_B^{n+1,d}.  \] 
\end{thm}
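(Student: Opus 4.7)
The plan is to use standard Fourier-side orthogonality. First, I would regroup by $J$: write $G_J := \sum_{\theta \in \Xi_{n-1}^{n+1}(R,\lambda),\,I(\theta)\subset J} f_\theta$, so that $G_J$ has Fourier support in $E_J := \bigcup_{I(\theta)\subset J}\theta \subset \Gamma_{n-1}^{n+1}(R,\lambda)$. The left-hand side becomes $\int|\sum_J G_J|^2 W_B^{n+1,d}$ and the right-hand side equals $\int\sum_J|G_J|^2 W_B^{n+1,d}$. Expanding the square, the diagonal contribution reproduces the right-hand side exactly, so everything reduces to controlling the off-diagonal cross terms.

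For each cross term $\int G_{J_1}\overline{G_{J_2}} W_B^{n+1,d}$ with $J_1 \ne J_2$, I would apply Parseval. The Fourier transform of $G_{J_1}\overline{G_{J_2}}$ is supported in the Minkowski difference $E_{J_1} - E_{J_2}$, while $\widehat{W_B^{n+1,d}}$ is supported in $B(0, 2\lambda/r)$ (since $B$ has radius $\lambda^{-1}r$ and $\widehat{W^{n+1,d}}$ is compactly supported in a ball of radius $\sim 1$, as is visible from the explicit formula in \textsection\ref{wtsec}). Hence the cross term vanishes unless $(E_{J_1}-E_{J_2})\cap B(0, 2\lambda/r) \ne \emptyset$.

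The crux of the proof is the geometric claim: there is a constant $C_n$ so that $\mathrm{dist}(E_{J_1}, E_{J_2}) > 2\lambda/r$ whenever $I(\theta_i) \subset J_i$ and the two $t$-intervals $J_1, J_2$ have centers separated by more than $C_n r^{-1/n}$. To verify this, I would parameterize $p_i \in \theta_i$ as $\sum_j \lambda_j^{(i)}\phi_n^{(j)}(t_i)$ (with $t_i$ the left endpoint of $I(\theta_i)$) and use Taylor expansion to write
\[
p_1 - p_2 = \sum_{k=0}^n c_k\, \phi_n^{(k)}(t_1), \qquad c_k = \lambda_k^{(1)} - \sum_{j\le k}\lambda_j^{(2)}\frac{(t_2-t_1)^{k-j}}{(k-j)!}.
\]
Since the basis $\{\phi_n^{(k)}(t_1)\}_{k=0}^n$ is uniformly well-conditioned for $t_1\in[0,1]$, we have $|p_1-p_2|\sim \max_k|c_k|$. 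Combining the coordinate bounds $|\lambda_j^{(i)}|\le R^{-j/n}$ with the large-coefficient hypothesis $\max_{0\le j\le n-1}|\lambda_j^{(i)}|\ge\lambda/2$ built into $\Xi_{n-1}^{n+1}(R,\lambda)$ (applied to both $i=1,2$), a case analysis based on the index $j^*$ realizing the maximum in $\theta_2$ should force $\max_k|c_k|\gtrsim\lambda/r$ as soon as $|t_1-t_2|> C_n r^{-1/n}$.

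With the geometric claim in hand, the off-diagonal terms vanish unless $J_1, J_2$ have $t$-distance at most $C_n r^{-1/n}$. Since the $J$'s tile $[0,1]$ in intervals of length $r^{-1/n}$, there are only $O_n(1)$ such $J_2$ per $J_1$, and I would handle these close pairs by Cauchy--Schwarz, losing only an $O_n(1)$ constant. The main obstacle I anticipate is the geometric claim itself: establishing the $2\lambda/r$ separation uniformly in $R$, $r$, and $\lambda$ requires exploiting several of the equations $c_k$ simultaneously (a single coordinate equation admits too much cancellation between the freely chosen $\lambda_j^{(i)}$), and the argument will depend on the regime of $\lambda\in[R^{-(n-1)/n},1]$ and on the index $j^*$ achieving the maximum in each $\theta_i$.
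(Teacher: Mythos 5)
Your overall framework is the same as the paper's: expand the square, apply Plancherel/Parseval, observe that the Fourier transform of the weight is supported in a ball of radius $\sim \lambda/r$, and deduce that off-diagonal cross terms vanish unless the two $J$-intervals are within $O_n(r^{-1/n})$ of each other. Your Taylor-expansion setup (writing $p_1 - p_2 = \sum_k c_k \phi_n^{(k)}(t_1)$ and reading off the constraints $|c_k| \lesssim \lambda r^{-1}$) is also exactly what the paper does. The gap is precisely in the "geometric claim," which you yourself flag as the obstacle, and here you are missing two ingredients that the paper uses to close the argument.

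First, a sign-fixing reduction. Each $\theta \in \Xi_{n-1}^{n+1}(R,\lambda)$ is a union over sign patterns: the constraint $\max_{0\le j\le n-1}|\lambda_j| \ge \lambda/2$ is satisfied on several pieces of $\theta$ where different indices dominate and the dominant $\lambda_j$ may have either sign. Your per-point "case analysis on the index $j^*$" is trying to handle all of this at once, which is messy because $p_1$ and $p_2$ may realize their large coordinate at different indices and with different signs. The paper first introduces bump functions $\s_{\bf k}$, $\bf{k}\in\{-1,0,1\}^n\setminus\{0\}$, that cut each $\theta$ down to the piece where $k_i\lambda_i \ge |k_i|\lambda/2$; by the triangle inequality over ${\bf k}$ (an $O(3^n)$ loss, and a harmless convolution-by-$\widecheck{\s}_{\bf k}$ step to undo at the end) one reduces to the case where all contributing points share a \emph{fixed} sign pattern ${\bf k}$. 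This is what makes the algebra tractable.

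Second, the algebraic conclusion $|t_1-t_2|\lesssim r^{-1/n}$ from the system $|c_k|\lesssim \lambda r^{-1}$ is genuinely nontrivial — it is isolated in the paper as Lemma~\ref{L2techlem} and proved by induction on $n$. The inductive step identifies the smallest index $l_0$ with $k_{l_0}\ne 0$, groups the leading-order terms into a single coefficient $\tilde H^2_{l_0+l,l_0}$ of size $\sim\lambda$, and recurses on the remaining $n - l_0$ equations. A direct case analysis as you sketch it is likely to founder on exactly the cancellations you worry about ("too much cancellation between the freely chosen $\lambda_j^{(i)}$"); the induction is designed to peel these off one layer at a time. I would recommend you first carry out the sign-fixing reduction, then formulate and prove the analogue of Lemma~\ref{L2techlem} by induction; without these, the geometric claim as you've stated it (a lower bound on $\mathrm{dist}(E_{J_1},E_{J_2})$ uniform over all sign patterns simultaneously) would still need the same case-by-case bookkeeping, just in a less organized form.
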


To prove Theorem \ref{locL2thm}, it suffices to consider functions with Fourier support contained in further specialized subsets of the elements of $\Xi_{n-1}^{n+1}(R,\lambda)$. By the triangle inequality, it suffices to work with $\theta\in\Xi_{n-1}^{n+1}(R,\lambda)$ with $I(\theta)$ all contained in a single $L^{-1}$-intervals (where $L$ is a large constant we are free to choose). For $0\not=(k_0,\ldots,k_{n-1})\in\{-1,0,1\}^n$, let $\Xi_{n-1}^{n+1}(R,\lambda,{\bf{k}})$ be the collection of 
\[ \theta=\{\sum_{i=0}^n\lambda_i\phi_n^{(i)}(a):|\lambda_i|\le \min(\lambda,R^{-\frac{i}{n}}),\quad k_i\lambda_i\ge |k_i|\lambda/2,\quad\max_{0\le i\le n-1}|\lambda_i|2^{n-i}R^{\frac{i}{n}}\ge 1 \}, \]
where $a\in 2^{-100n}R^{-\frac{1}{n}}\Z\cap[0,1]$ is the initial point of $I(\theta)$. There are bump functions $\s_{{\bf{k}}}$ with $\supp\s_{\bf{k}}\cap\theta$ equal to the displayed set for each $\theta$, and we may assume that for some $0\not={\bf{k}}$,
\[ \int_{\R^{n+1}}|\sum_{\theta\in\Xi_{n-1}^{n+1}(R,\lambda)}f_\theta |^2W_B^{n+1,d} \lesssim \int_{\R^{n+1}}|\sum_{\theta\in\Xi_{n-1}^{n+1}(R,\lambda)}f_\theta*\widecheck{\s}_{{\bf{k}}}  |^2W_B^{n+1,d} . \]
Suppose that the right hand side is bounded by 
\[ C\int_{\R^{n+1}}|\sum_{J\in\mc{J}(r^{-\frac{1}{n}})}|\sum_{\substack{I(\theta)\subset J\\ \theta\in\Xi_{n-1}^{n+1}(R,\lambda)}}f_\theta*\widecheck{\s}_{{\bf{k}}}  |^2W_B^{n+1,d}.  \]
Since the $\s_{{\bf{k}}}$ satisfies $|\widecheck{\s}_{{\bf{k}}}|\lesssim \tilde{W}_{\tilde{B}}^{n+1,d}$, where $\tilde{B}\subset\R^{n+1}$ is the ball centered at the origin of radius $\lambda^{-1}$. Then we have
\begin{align*} \int_{\R^{n+1}}|\sum_{J}|\sum_{\substack{I(\theta)\subset J\\ \theta\in\Xi_{n-1}^{n+1}(R,\lambda)}}f_\theta*\widecheck{\s}_{{\bf{k}}}  |^2W_B^{n+1,d}&\lesssim \int_{\R^{n+1}}|\sum_{J}|\sum_{\substack{I(\theta)\subset J\\ \theta\in\Xi_{n-1}^{n+1}(R,\lambda)}}f_\theta  |^2(\tilde{W}_{\tilde{B}}^{n+1,d} *W_B^{n+1,d})\\
    &\sim \int_{\R^{n+1}}|\sum_{J}|\sum_{\substack{I(\theta)\subset J\\ \theta\in\Xi_{n-1}^{n+1}(R,\lambda)}}f_\theta  |^2 W_B^{n+1,d}. 
\end{align*}

We use the following technical lemma in the proof of Theorem \ref{locL2thm}. 

\begin{lemma}\label{L2techlem} Let $C,c>0$. There is a constant $L=L(n,C,c)>0$ so that the following holds. Let $\lambda$ be a dyadic value in the range $R^{-\frac{n-1}{n}}\le \lambda\le 1$. Let $k_0\in\{0,\ldots,n-1\}$ and let $0\not={\bf{k}}\in\{-1,0,1\}^n$. Suppose that $|T|\le \frac{1}{L_n}$ and
\begin{equation}\label{assump2} 
|H_{k,k}^1-\sum_{i=0}^{k}H_{k,i}^2\frac{1}{(k-i)!}T^{k-i}|\le C \lambda r^{-1} \quad \text{for each $k=1,\ldots,n$} \end{equation}
where $|H_{k,i}^j|\le C\min(R^{-\frac{i}{n}},\lambda)$ for all $k,i,j$, $|H_{k,k_0}^j|\ge cR^{-\frac{k_0}{n}}$ for all $k,j$, and $H_{k,i}^jk_i\ge c|k_i|\lambda$ for each $i=0,\ldots,n-1$.
Then 
\[ |T|\lesssim_{n,C,c} r^{-\frac{1}{n}}. \]
\end{lemma}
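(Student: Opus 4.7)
The plan is to bound $|T|$ by extracting information from the equation \eqref{assump2} for $k=k_0+1$. Isolating the $H_{k_0+1,k_0}^2 T$ term and using $|H_{k_0+1,k_0}^2|\ge cR^{-k_0/n}$, $|H_{k_0+1,k_0+1}^j|\le C\min(R^{-(k_0+1)/n},\lambda)$, and $|H_{k_0+1,i}^2|\le CR^{-i/n}$ for $i<k_0$, dividing through by $cR^{-k_0/n}$ yields an inequality of the shape
\[
|T|\;\lesssim\; R^{-1/n} \;+\; |T|\sum_{j=1}^{k_0}\frac{(R^{1/n}|T|)^j}{(j+1)!} \;+\; \lambda R^{k_0/n}\,r^{-1},
\]
in which the middle sum collects the higher-order-in-$T$ contributions. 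The constant $L$ in the hypothesis $|T|\le 1/L$ is chosen large enough that, in the sub-regime $R^{1/n}|T|\le 1/2$, the middle sum is bounded geometrically by $O(R^{1/n}|T|^2)\le \tfrac{1}{2}|T|$, and hence may be absorbed into the left-hand side to give $|T|\lesssim R^{-1/n} + \lambda R^{k_0/n}r^{-1}$. The first of these two terms is at most $Cr^{-1/n}$ because $r\le R$, and the second is controlled using the compatibility constraint forced on $\lambda$ by $|H_{k,k_0}^j|\ge cR^{-k_0/n}$ together with $|H_{k,k_0}^j|\le C\min(R^{-k_0/n},\lambda)$.

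When instead $R^{1/n}|T|>1/2$, the $k=k_0+1$ equation alone is insufficient because the higher-order-in-$T$ terms may dominate, so I would turn to the equation for $k=n$, whose left-hand side is bounded by $2CR^{-1}+C\lambda r^{-1}$. Isolating the $T^{n-k_0}$ term, whose coefficient has magnitude $\ge cR^{-k_0/n}/(n-k_0)!$, one is led to an inequality of the form
\[
R^{-k_0/n}|T|^{n-k_0} \;\lesssim\; R^{-1} + \sum_{i\ne k_0,\,i<n}\frac{R^{-i/n}|T|^{n-i}}{(n-i)!} + \lambda r^{-1}.
\]
Here the sign constraints $H_{k,i}^j k_i\ge c|k_i|\lambda$ imposed by the nonzero vector $\mathbf{k}$ are essential: after fixing the sign of $T$, the $T^{n-i}$ terms for indices $i$ with $k_i\ne 0$ split into two groups of definite, opposite sign, and the requirement that the full sum be $O(R^{-1}+\lambda r^{-1})$ prevents those groups from simultaneously being much larger than the main $T^{n-k_0}$ term. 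For the remaining indices (those with $k_i=0$) I would control the corresponding $T^{n-i}$ terms by a secondary bootstrap that feeds the bounds from the intermediate equations $k=k_0+2,\ldots,n-1$ back into the $k=n$ equation. Combining this with the first case gives $|T|\lesssim R^{-1/n}+(\lambda R^{k_0/n}r^{-1})^{1/(n-k_0)}\lesssim r^{-1/n}$.

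The main obstacle I anticipate is the large-$|T|$ regime. The sign vector $\mathbf{k}$ generally pins down only some of the signs of the $H_{n,i}^2$, so cancellation among the $T^{n-i}$ terms in the $k=n$ equation must be handled through a case analysis on the support of $\mathbf{k}$, coordinated with an iterative use of the equations for intermediate $k$ to progressively eliminate the uncontrolled high-order-in-$T$ contributions. Ensuring that each absorption step closes without losing factors of $R^{\pm 1/n}$, and verifying that every regime of $(\lambda,R,r,k_0,\mathbf{k})$ compatible with the hypotheses is covered, will be the most delicate bookkeeping in the argument.
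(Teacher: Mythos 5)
The central gap is in your small-$|T|$ case. You solve for $T$ from the $k=k_0+1$ equation using $|H^2_{k_0+1,k_0}|\ge cR^{-k_0/n}$, but the error in \eqref{assump2} is normalized at scale $\lambda$, not $R^{-k_0/n}$, so after dividing you are left with the term $\lambda R^{k_0/n}r^{-1}$. The ``compatibility constraint'' you invoke only yields $\lambda\gtrsim R^{-k_0/n}$, a \emph{lower} bound, so it gives $\lambda R^{k_0/n}\gtrsim 1$, not $\lesssim 1$. Writing $l_0$ for the smallest index with $k_{l_0}\neq 0$, the hypotheses $|H^j_{k,l_0}|\ge c\lambda$ and $|H^j_{k,l_0}|\le C R^{-l_0/n}$ give $\lambda\lesssim R^{-l_0/n}$, and nothing forbids $\lambda$ from ranging over the whole interval $[R^{-k_0/n},R^{-l_0/n}]$ with $l_0<k_0$; there $\lambda R^{k_0/n}r^{-1}$ is not $\lesssim r^{-1/n}$. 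The missing idea is to solve for $T$ from the $k=l_0+1$ equation instead: its $T$-coefficient $H^2_{l_0+1,l_0}$ is bounded below by $c\lambda$, which matches the error normalization exactly, and the higher-order-in-$T$ terms then carry coefficients $\le C\lambda$, so after dividing by $c\lambda$ they become $O(|T|^2)$ and are absorbable using $|T|\le 1/L$ with no spurious powers of $R^{1/n}$. Notably your proposal never uses the index $l_0$ in the small-$|T|$ regime at all --- the sign constraint tied to the nonzero $\mathbf{k}$ appears only in your large-$|T|$ discussion --- whereas $l_0$ is the organizing datum of the entire argument.

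The large-$|T|$ step is also not yet a proof. For indices $i$ with $k_i=0$ the signs of $H^2_{n,i}$ are unconstrained, so the corresponding $T^{n-i}$ contributions can cancel exactly against the main term; the ``definite, opposite sign'' splitting you describe applies only on the support of $\mathbf{k}$, which may be a single index. The ``secondary bootstrap'' that is supposed to handle the rest, once made precise, is the paper's actual argument, and it is structured as an induction on $n$ rather than as a two-regime absorption: if $k_0<n-1$, pass to dimension $n-1$; if $k_0=n-1$ and $l_0>0$, rewrite the $k=l_0+l$ equations by pulling out $T^l$ and absorbing the $i\le l_0$ contributions into a single modified coefficient $\tilde H^2_{l_0+l,l_0}$, which has size $\sim\lambda$ precisely because $|T|\le 1/L$, and then invoke the lemma in dimension $n-l_0$; if $l_0=0$ one is at a base case. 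Your sketch has no analogue of that reduction, and I don't see how the $k=n$ equation together with the intermediate ones closes without it.
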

\begin{proof}
We induct on $n$. If $n=1$, then $k_0=0$, $\lambda\sim 1$, and \eqref{assump2} implies that $|H_{1,0}^2 T|\le C\lambda r^{-1}+|H_{1,1}^1|+|H_{1,1}^2|$. By assumption, we have $|H_{1,0}^2|\gtrsim 1$ and $|H_{1,i}^j|\lesssim R^{-1}$.  It follows that $|T|\lesssim r^{-1}$.

Now assume that $n>1$ and that \eqref{assump2} holds. If $k_0<n-1$, then the result follows from the $(n-1)$-dimensional case. Therefore, we assume that $k_0=n-1$.

Let $l_0\in\{0,\ldots,n-1\}$ be the the smallest index with $k_{l_0}\not=0$. If $l_0=0$, then the lemma is proved using the $n=1$ case. Suppose that $l_0>0$. Rewrite the hypotheses \eqref{assump2} for $k=l_0+1,\ldots,n$ as 
\begin{equation}\label{revhyp} |H_{l_0+l,l_0+l}^1-\frac{1}{l!}\tilde{H}_{l_0+l,l_0}^2 T^l+\sum_{i=1}^{l}H_{l_0+l,l_0+i}^2\frac{1}{(l-i)!}T^{l-i}|\le C\lambda r^{-1}\qquad\text{for all $l=1,\ldots,n-l_0$},\end{equation} 
where $\tilde{H}_{l_0+l,l_0}^2=\sum_{i=0}^{l_0}H_{l_0+l,i}^2\frac{l!}{(l_0+l-i)!}T^{l_0-i}$. Note that since $|T|\le \frac{1}{L}$, $|H_{l_0+l,j}^2|\le \lambda$ for all $j$, and $|H_{l_0+l,l_0}^2|\ge c\lambda$, we have $|\tilde{H}_{l_0+l,l_0}^2|\sim \lambda$ if $L$ is sufficiently large. Since $l_0>0$, we may invoke the $n-l_0$ case to conclude that for $L$ sufficiently large, \eqref{revhyp} implies that
\[ |T|\lesssim r^{-\frac{1}{n-l_0}}\lesssim r^{-\frac{1}{n}}, \]
as desired.

\end{proof}

\begin{proof}[Proof of Theorem \ref{locL2thm}] As discussed after the statement of Theorem \ref{locL2thm}, it suffices to consider functions $\sum_{\theta\in\Xi^{n+1}_{n-1}(R,\lambda,{\bf{k}})}f_\theta$. 
For each $J\in\mc{J}(r^{-\frac{1}{n}})$, the Fourier support of $\sum_{I(\theta)\subset J}f_\theta$ is contained in 
\begin{align*} 
\tau_J=\{\sum_{i=0}^n\lambda_i\phi_n^{(i)}(a+t)&:|\lambda_j|\le \min(\lambda,R^{-\frac{j}{n}})\quad\forall 0\le j\le n, \\
    &\qquad\qquad\qquad\qquad|t|\le 2^{-100n}r^{-\frac{1}{n}},\quad k_i\lambda_i\ge |k_i|\lambda/2\quad\forall 0\le i\le n-1\}.  \end{align*}
where $a$ is the initial point of $J$. 
By Plancherel's theorem,  
\[\int_{\R^{n+1}}|\sum_{\theta\in\Xi_n^{n+1}(R,\lambda,{\bf{k}})}f_\theta|^2W^{n+1,d}_B=\int_{\R^{n+1}}\big(\sum_{\theta,\theta'\in\Xi_n^{n+1}(R,\lambda,{\bf{k}})}\widehat{f_\theta}*\widehat{\overline{f}}_\theta\big)\widecheck{W}^{n+1,d}_B.  \]
By the triangle inequality (absorbing some factors of $L$), it suffices to count the overlap of the $\lambda r^{-1}$ neighborhood of the $\theta$ and $\theta'$, where $I(\theta)$ and $I(\theta')$ are within the same $L^{-1}$-interval. Suppose that $|t_1-t_2|\le L^{-1}$, and for each $j=1,2$, $|\mu_i^j|\le \min(\lambda,R^{-\frac{i}{n}})$, $k_i\mu_{i}^j|\gtrsim |k_i|\lambda$, $\max_{0\le i\le n-1}|\mu_i^j|R^{\frac{i}{n}}\gtrsim 1$, and 
\[ |\sum_{i=0}^n\mu_i^1\phi_n^{(i)}(t_1)-\sum_{i=0}^n\mu_i^2\phi_n^{(i)}(t_2)|\le \lambda r^{-1}.    \]
By Taylor expansion, this is equivalent to 
\[ |\sum_{i=0}^n\big(\mu_i^1-\sum_{j=0}^{i}\frac{1}{(i-j)!}\mu_j^1(t_2-t_1)^{i-j}\big)\phi_n^{(i)}(t_1)|\le \lambda r^{-1}.    \]
Since $\det[\phi_n^{(0)}(t_1)\cdots\phi_n^{(n)}(t_1)]\sim_n1$ uniformly in $t_1\in[0,1]$, by Cramer's rule, the previous displayed inequality implies that
\[ |\mu_i^1-\sum_{j=0}^{i}\frac{1}{(i-j)!}\mu_j^1(t_2-t_1)^{i-j}|\lesssim \lambda r^{-1}\qquad\text{for each }i=1,\ldots,n. \]
It follows from Lemma \ref{L2techlem} that $|t_1-t_2|\lesssim r^{-\frac{1}{n}}$. Then 
\[ \int_{\R^{n+1}}|\sum_{\theta\in\Xi_n^{n+1}(R,\lambda,{\bf{k}})}f_\theta |^2W^{n+1,d}_B\lesssim_{L} \int_{\R^{n+1}}\sum_{J\in \mc{J}(r^{-\frac{1}{n}})} |\sum_{I(\theta)\subset J}f_\theta|^2 W^{n+1,d}_B, \]
which finishes the proof. 

\end{proof}

\subsubsection{Global estimates for $L^p$, $1\le p\le 2$ \label{globL2}}

\begin{theorem}\label{smallpmn-1} For each $1\le p\le 2$, 
\[ \int_{\R^{n+1}}|\sum_{\theta\in\Xi_{n-1}^{n+1}(R)}f_\theta|^p\lesssim_\e R^\e \sum_{\theta\in\Xi_{n-1}^{n+1}(R)} \int_{\R^{n+1}}|f_\theta|^p. \] 
\end{theorem}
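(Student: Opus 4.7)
My plan is to prove the $p=2$ case by globalizing the local $L^2$ orthogonality of Theorem~\ref{locL2thm}, observe that $p=1$ is the triangle inequality, and interpolate by Riesz--Thorin to cover $1<p<2$.

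For the $p=2$ case, first decompose each $f_\theta$ using a smooth Fourier-side partition of unity into pieces $f_\theta=\sum_\lambda f_{\theta,\lambda}$ adapted to the dyadic values $\lambda\in[R^{-(n-1)/n},1]$, where $f_{\theta,\lambda}$ has Fourier support in $\theta\cap\Xi_{n-1}^{n+1}(R,\lambda)$. Since there are only $O(\log R)$ such scales, Cauchy--Schwarz absorbs the $\lambda$-sum into an $R^\e$ factor:
\[ \bigl\|\sum_\theta f_\theta\bigr\|_2^2\lesssim_\e R^\e \sum_\lambda \bigl\|\sum_\theta f_{\theta,\lambda}\bigr\|_2^2. \]
For each fixed $\lambda$, apply Theorem~\ref{locL2thm} with $r=R$, so that each $J\in\mc{J}(R^{-1/n})$ has length $R^{-1/n}$ and contains only $O(1)$ intervals $I(\theta)$; a trivial Cauchy--Schwarz on those $O(1)$-term inner sums collapses the right-hand side to $\int\bigl(\sum_\theta|f_{\theta,\lambda}|^2\bigr)W_B^{n+1,d}$. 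To remove the weight, sum the inequality over a tiling $\{B_i\}$ of $\R^{n+1}$ by balls of radius $\lambda^{-1}R$; the rapid decay of $W^{n+1,d}$ (for $d$ sufficiently large) gives $\sum_i W_{B_i}^{n+1,d}\sim 1$ uniformly, delivering the unweighted inequality $\|\sum_\theta f_{\theta,\lambda}\|_2^2\lesssim \sum_\theta\|f_{\theta,\lambda}\|_2^2$. Finally, Plancherel together with the bounded overlap of the $\lambda$-annuli inside each fixed $\theta$ yields $\sum_\lambda\sum_\theta\|f_{\theta,\lambda}\|_2^2\lesssim \sum_\theta\|f_\theta\|_2^2$, closing the $p=2$ case.

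The $p=1$ bound is Minkowski. For $1<p<2$, view the linear map $T\colon(f_\theta)_\theta\mapsto\sum_\theta f_\theta$ on the mixed-norm space $L^p(\Xi_{n-1}^{n+1}(R)\times\R^{n+1})$ (with counting measure on the finite index set), restricted to the closed subspace of Schwartz sequences whose $\theta$-component is Fourier-supported in $\theta$. The $p=1$ and $p=2$ estimates read $\|T\|_{L^1\to L^1}\le 1$ and $\|T\|_{L^2\to L^2}\lesssim_\e R^{\e/2}$; Riesz--Thorin interpolation then gives $\|T\|_{L^p\to L^p}\lesssim_\e R^\e$ for $1\le p\le 2$, and raising to the $p$th power is exactly the stated inequality. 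The main obstacle is the first step: extracting a clean unweighted global $L^2$ bound from the weighted local estimate in Theorem~\ref{locL2thm}. The essential point is that although the radius $\lambda^{-1}R$ of the localization balls depends on $\lambda$, the uniform summability $\sum_i W_{B_i}^{n+1,d}\sim 1$ holds at every tiling scale once $d$ is large, so one accrues no scale-dependent loss beyond the $O(\log R)$ from the $\lambda$-decomposition, which is absorbed into $R^\e$. Steps $p=1$ and the interpolation are then routine.
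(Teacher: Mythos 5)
You take a genuinely different route from the paper. The paper's proof first decomposes each $\theta$ into pieces $\theta^j$ according to which $|\lambda_j|$ attains its critical size, reduces the $j<n-1$ cases to cylindrical lower-dimensional versions of the theorem, and then for $\theta^{n-1}$ performs a wave packet decomposition with pigeonholed amplitudes $|c_T|\sim A$ together with a level-set argument that manually interpolates between $p=2$ (Plancherel) and $p=1$ by tracking the factor $(A/\a)^{2-p}$. You instead obtain the $p=2$ case directly by globalizing Theorem~\ref{locL2thm} through a dyadic $\lambda$-decomposition and a tiling-by-balls argument, and then interpolate against the triangle inequality. The $p=2$ derivation is sound, and your approach is more streamlined in that it avoids both the $\theta^j$ reduction and the wave packet machinery entirely.

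The one gap is in the Riesz--Thorin step. The three-lines proof of Riesz--Thorin constructs, for each input $(f_\theta)_\theta$, an analytic family $(f_{\theta,z})_\theta$ with components of the form $|f_\theta|^{p((1-z)/p_0+z/p_1)}\operatorname{sgn}(f_\theta)$, and applies the endpoint bounds to the operator evaluated on this family along the two boundary lines. These modified functions do not have Fourier support in $\theta$, so the estimates you have verified only on the Fourier-supported subspace cannot be invoked; Riesz--Thorin does not automatically apply to an operator restricted to an arbitrary closed subspace. The standard repair is to extend $T$ to all of $\ell^p(L^p)$: let $\psi_\theta$ be a smooth cutoff equal to $1$ on $\theta$ and supported in a mild enlargement $\theta'$, and set $\tilde T\big((g_\theta)_\theta\big)=\sum_\theta g_\theta*\widecheck{\psi}_\theta$. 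Since each $\theta\in\Xi_{n-1}^{n+1}(R)$ is comparable to a box with a smaller box removed, $\psi_\theta$ can be taken as a difference of two bump functions adapted to convex sets, hence $\|\widecheck{\psi}_\theta\|_{L^1}\lesssim 1$ uniformly and $\|\tilde T\|_{\ell^1(L^1)\to L^1}\lesssim 1$; the $\ell^2(L^2)\to L^2$ bound transfers because $g_\theta*\widecheck{\psi}_\theta$ is Fourier-supported in $\theta'$ and your globalization of Theorem~\ref{locL2thm} is stable under such enlargements. Then $\tilde T$ is an honest linear operator on the full mixed-norm space agreeing with $T$ on Fourier-supported input, Riesz--Thorin now applies, and your argument closes. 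Alternatively, the paper's manual level-set interpolation sidesteps the subspace issue from the start, since once the wave packet amplitudes are pigeonholed, the $p$-dependence of $\int|f_\theta|^p$ passes through the single explicit quantity $A^p\#\T_\theta|\theta^*|$.
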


\begin{proof}  For each $\theta\in\Xi_{n-1}^{n+1}(R)$ and $j=0,\ldots,n-1$, let $\theta^j=\{\sum_{i=0}^n\lambda_i\phi_n^{(i)}(a)\in\theta:|\lambda_j|2^{m-j}R^{\frac{j}{n}}\sim 1\}$ and note that $\theta=\cup_{j=0}^{n-1}\theta^j$. There are smooth bump functions $\s_{\theta^j}$ which satisfy $\sum_j\s_{\theta_j}\equiv 1$ on $\theta$ and $\supp\s_{\theta_j}\subset\theta_j$. Suppose that 
\[ \int_{\R^{n+1}}|\sum_{\theta}f_\theta|^{p}\lesssim_n \int_{\R^{n+1}}|\sum_{\theta}f_{\theta}*\widecheck{\s}_{\theta^j}  |^p. \]
If $j<n-1$, then since $\theta^j\subset\tau \times\R^{n-j-1}$ for an element $\tau\in\Xi_{j+1}^{j+2}(R^{\frac{j+1}{n}})\times\R^{n-j-1}$, the theorem follows from a cylindrical version of the $n=j+1$ dimensional theorem (see Theorem 4.1 from \cite{maldagueM3} for a precise cylindrical version of square function estimate; the proof also applies to general geometries). Therefore, it suffices to consider the case where $\theta^{n-1}$ dominates for each $\theta$. 
It also suffices to show that 
\[ \int_{\R^{n+1}}|\sum_{\theta}f_{\theta}*\widecheck{\s}_{\theta^{n-1}}  |^p\lesssim_\e R^\e \int_{\R^{n+1}}\sum_{\theta}|f_{\theta}*\widecheck{\s}_{\theta^{n-1}}  |^p\]
since for each $\theta$, by Young's inequality,
\[ \int_{\R^{n+1}}|f_\theta*\widecheck{\s}_{\theta^{n-1}}|^p\lesssim \int_{\R^{n+1}}|f_\theta|^p. \]
We will slightly abuse notation by using $f_\theta$ to denote $f_\theta*\widecheck{\s}_{\theta^{n-1}}$ for the remainder of the proof. Each function $f_\theta$ then has a wave packet decomposition adapted to translates $T$ of a dual set $\theta^*$ which tile $\R^{n+1}$.  By a 
standard pigeonholing procedures outlined, for example, in \cite{maldagueM3}, it suffices to prove that for $\a>0$, $A>0$, and each ball $B_R\subset\R^{n+1}$ of radius $R$,
\begin{equation}\label{p=2case}
\a^p|\{x\in B_R:|f(x)|\sim\a\}|\lesssim_\e R^\e \sum_{\theta\in\Xi_{n-1}^{n+1}(R)}\int_{\R^{n+1}}|f_\theta|^p,
\end{equation}
where $f_\theta=\sum_{T\in\T_\theta}c_T\s_T$ is a wave packet decomposition with the properties that $\T_\theta$ is a finite set, $|c_T|\sim A$ for all $T$, and $\s_T$ are $L^\infty$-normalized wave packets subordinate to $T$ from a tiling of $\R^{n+1}$ by translates of $\theta^*$. The $p=2$ case of \eqref{p=2case} follows directly from Plancherel's theorem. It follows that  
\begin{align*} 
\a^p|\{x\in\R^{n+1}:|f(x)|\sim\a\}|&\lesssim_\e R^\e \sum_{\theta\in\Xi_{n-1}^{n+1}(R)}\a^{p-2}\int_{\R^{n+1}}|f_\theta|^2 \\
&\lesssim_\e R^\e \sum_{\theta\in\Xi_{n-1}^{n+1}(R)}\frac{A^{2-p}}{\a^{2-p}}A^p\#\T_\theta|\theta^*|\lesssim_\e R^\e \sum_{\theta\in\Xi_{n-1}^{n+1}(R)}\frac{A^{2-p}}{\a^{2-p}}\int_{\R^{n+1}}|f_\theta|^p. \end{align*}
If $\frac{A}{\a}\le 1$, then we are done. Suppose now that $\a\le A$. Then 
\begin{align*} 
\a^p|\{x\in\R^{n+1}:|f(x)|\sim\a\}|&\le A^{p-1} \sum_{\theta\in\Xi_{n-1}^{n+1}(R)} \int_{\R^{n+1}}|f_\theta| \\
&\lesssim_\e R^\e A^{p-1}\sum_{\theta\in\Xi_{n-1}^{n+1}(R)}A\#\T_\theta|\theta^*|\lesssim_\e R^\e \sum_{\theta\in\Xi_{n-1}^{n+1}(R)}\int_{\R^{n+1}}|f_\theta|^p.  
\end{align*}

\end{proof}

\section{Proof of Proposition \ref{coneinduct} \label{conesec}}

We induct on the difference $n-m$. The base case of $m=n-1$ is treated in \textsection\ref{globL2}. The main work of the proof is to analyze a multi-scale constant for $0\le m\le n-2$. A significant component of the analysis uses local versions of the square function estimates defining $\C_m^{n+1}(R)$. Local estimates follow from global estimates for $\Gamma_0^{n+1}(R)$, but not for $\Gamma_m^{n+1}(R)$ for $m>0$. This is because the sets $\Gamma_m^{n+1}(R)$, indexed by $R$, are not nested. For comparison, $\Gamma_0^{n+1}(R)\subset \Gamma_0^{n+1}(r)$ and also the $r^{-1}$-neighborhood of each $\theta\in\Xi_0^{n+1}(R)$ is approximately contained in one $\tau\in\Xi_0^{n+1}(r)$. If we localize the spatial side to a ball $B_r$ of radius $r$ using a weight function $W_{B_r}^{n+1,d}$ (see \textsection\ref{wtsec} for a precise definition) with $\supp\widehat{W_{B_r}^{n+1,d}}$ contained in a ball of radius $r^{-1}$, then for $2\le p\le p_n$,
\[ \int_{\R^{n+1}}|\sum_{\theta\in\Xi_0^{n+1}(R)}f_\theta W_{B_r}^{n+1,d}|^p\lesssim \C_0^{n+1}(r)\int_{\R^{n+1}}|\sum_{J\in\mc{J}(r^{-\frac{1}{n}})}|\sum_{\substack{I(\theta)\subset J\\ \theta\in\Xi_0^{n+1}(R)}}f_\theta W_{B_r}^{n+1,d}|^2)^{\frac{p}{2}},  \]
so $\C_0^{n+1}(\cdot)$ also bounds local square function estimates. This is no longer true for $\Gamma_m^{n+1}(R)$ when $m>0$ since $\Gamma_m^{n+1}(R)$ and $\Gamma_m^{n+1}(r)$ have nontrivial symmetric difference. Indeed, we have $\Gamma_m^{n+1}(R)\cap\Big(\{0\}^m\times\R^{n-m+1}\Big)=\{0\}^m\times R^{-\frac{m}{n}}\Gamma_0^{n-m+1}(R^{\frac{n-m}{n}})$ and it follows that 
    \begin{itemize}
        \item $\Gamma_m^{n+1}(R)\setminus\Gamma_m^{n+1}(r)\supset \{0\}^m\times R^{-\frac{m}{n}}\Gamma_0^{n-m+1}(R^{\frac{n-m}{n}})$,  
        \item $\Gamma_m^{n+1}(r)\setminus\Gamma_m^{n+1}(R)\supset \{0\}^m\times r^{-\frac{m}{n}}\Gamma_0^{n-m+1}(R^{\frac{n-m}{n}})$. 
    \end{itemize}
Since global estimates do not imply local estimates when $m>0$, we explicitly analyze multi-scale constants which are localized to spatial balls of some scale. In order to show that these constants are bounded, we first treat a special case. The special case deals with functions with refined, truncated Fourier support that is described in the following section.

\subsection{\label{multidef}Truncated Fourier support and a local multi-scale quantity}

To prove that $\C_m^{n+1}(R)\lesssim_\e R^\e$, we work with localized multi-scale quantities and various truncated versions of $\Gamma_m^{n+1}(R)$. We use the weight functions $W^{n+1,d}_B$ and $\tilde{W}_B^{n+1,d}$ introduced in \textsection\ref{wtsec} to localize the integrals.


We introduce the first truncated version of $\Gamma_m^{n+1}(R)$. Define $\mc{H}_m^n(R,K)$ to be the tuples ${\bf{h}}=(h_0,\ldots,h_m)\in (2^{-\N})^{m+1}$ satisfying the following properties: 
\begin{enumerate}
    \item $2^{-m-1}R^{-\frac{m}{n}}\le |h_m|\le R^{-\frac{m}{n}}$.
    \item $K^{-1}R^{-\frac{m}{n}}\le |h_i|\le R^{-\frac{m}{n}}$ for all $i$.
\end{enumerate}
Define the truncated $m$-cone $\Gamma_m^{n+1}(R,K,{\bf{h}})\subset\R^{n+1}$ to be the union of 
\begin{align}\label{Gammah} 
\theta_a=\big\{\sum_{i=0}^{n}\lambda_i\phi_n^{(i)}(a):&\,\,|\lambda_i-h_i|\le  K^{-\b}|h_i|\quad\text{for all }0\le i\le m,\,\, |\lambda_j|\le R^{-\frac{j}{n}}\,\, \text{if}\,\, m+1\le j\le n \}
\end{align}
where $a\in 2^{-200n}R^{-\frac{1}{n}}\Z\cap[0,1]$ and we choose $\b$ in Proposition \ref{initscale}. 
Let $\Xi_m^{n+1}(R,K,{\bf{h}})$ denote the collection of these $\theta_a$. For a scale $s\in2^{-\N}$, let $\mc{J}(s)=\{[l2^{-100n}s,(l+1)2^{-100n}s):l=0,\ldots,2^{100n}s^{-1}-1\}$. 
Define $S_{K}^{n,m}(r,\rho,R)$ to be the smallest constant such that for any vector ${\bf{h}}\in\mc{H}_m^n(R,K)$, ball $B\subset\R^{n+1}$ of radius $R^{\frac{m}{n}}\rho$, $d\ge 1$, and any $2\le p\le p_{n-m}$,
\begin{align*} 
\int_{\R^{n+1}}|\sum_{J\in\mc{J}(r^{-\frac{1}{n}})}|\sum_{\substack{I(\theta)\subset J\\\theta\in\Xi_m^{n+1}(R,K,{\bf{h}})}}&
f_\theta|^2|^{\frac{p}{2}} W_B^{n+1,d}   \\
\le S_{K}^{n,m}(r,\rho,R)&\times\int_{\R^{n+1}}|\sum_{I\in\mc{J}(\rho^{-\frac{1}{n}})}|\sum_{\substack{I(\theta)\subset I\\ \theta\in\Xi_m^{n+1}(R,K,{\bf{h}})}} f_\theta|^2|^{\frac{p}{2}} W_B^{n+1,d} 
\end{align*}
for all Schwartz functions $f_\theta:\R^{n}\to\C$ with $\supp \widehat{f}_\theta\subset \theta\in\Xi_m^{n+1}(R,K,{\bf{h}})$.

\begin{proposition} \label{SnmmKbd} Let $\e>0$. Suppose that $R\ge C_K$ and $K$ is sufficiently large depending on $\e$. Then $S_{K}^{n,m}(r,\rho,R)\lesssim_\e \rho^\e$. 
\end{proposition}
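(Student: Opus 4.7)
The plan is to prove the estimate by strong induction on the dyadic scale ratio $\rho/r$, following the three-step template (base case, progress, rescaling) from \textsection\ref{intro1}. Fix $\delta = \e/C_n$ for a dimensional constant $C_n$ to be chosen and take $K$ large depending on $\delta$, so that $R \ge C_K$ as permitted by the hypothesis. The target recursion has the form
\[
S_K^{n,m}(r,\rho,R) \le A_\delta K^{C_n\delta}\bigl[S_K^{n,m}(Kr,\rho,R) + (\text{rescaled tail})\bigr],
\]
which after $O(\log_K(\rho/r))$ iterations yields $S_K^{n,m}(r,\rho,R) \lesssim_\delta \rho^{C_n\delta} = \rho^\e$.

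The base case $\rho/r \le K^{C}$ is handled by Cauchy--Schwarz, inserting the number of $J \in \mc{J}(r^{-1/n})$ inside each $I \in \mc{J}(\rho^{-1/n})$, which costs at most a factor of $K^{Cp/2n}$. For $r \le \rho/K^{C}$, I would perform a high-low frequency decomposition of the $r$-scale square function
\[
F_r(x) := \sum_{J\in\mc{J}(r^{-1/n})} \Big|\sum_{I(\theta)\subset J} f_\theta\Big|^2(x).
\]
The Fourier support of $F_r$ lies in the union of sumsets $\theta - \theta'$ where $\theta,\theta'\in\Xi_m^{n+1}(R,K,{\bf h})$ sit in a common $J$; because the narrow-band truncation $|\lambda_i - h_i|\le K^{-\beta}|h_i|$ from \eqref{Gammah} forces near-cancellation in the first $m+1$ Taylor coefficients, this support is approximately contained in a dilated copy of the cone $\Gamma_0^{n-m+1}(r^{(n-m)/n})$ sitting in the affine span of $\phi_n^{(m+1)}(a),\ldots,\phi_n^{(n)}(a)$. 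Split $F_r = F_r * \widecheck{\eta}_r + F_r^{\mathrm{high}}$, where $\eta_r$ is a smooth bump identically one on the ball of radius $c_0 r^{-1/n} R^{-m/n}$ about the origin.

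In the \emph{low case}, a pointwise inequality analogous to $F_r * |\widecheck{\eta}_r| \lesssim F_{Kr} * |\widecheck{\eta}_r|$, obtained by a rescaled application of the $L^2$-orthogonality Theorem \ref{locL2thm} adapted to the $(n-m)$-dimensional cone geometry, refines the $r$-scale square function to the $Kr$-scale square function. Combined with the weight convolution properties of \textsection\ref{wtsec} (part (3)(c)), this yields progress of type 2(a) and reduces to $S_K^{n,m}(Kr,\rho,R)$, which the induction hypothesis controls. In the \emph{high case}, the Fourier support of $F_r^{\mathrm{high}}$ is contained in the annular region of the dilated cone and partitions into sectors indexed by $J\in\mc{J}(r^{-1/n})$, each comparable to an $R^{-m/n}$-dilate of an element of $\Xi_0^{n-m+1}(r^{(n-m)/n})$. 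Applying $\mb{C}_0^{n-m+1}(r^{(n-m)/n}) \lesssim_\delta r^\delta$ (available from the outer induction in Proposition \ref{coneinduct}, since $\mb{C}_{m'}^{n+1}$ has been established for $m' > m$ and the $(n-m+1)$-dimensional cone case is contained in those inputs) produces a square function over the $J$'s; this, combined with the rescaling ingredient (3) of \textsection\ref{intro1}, gives progress of type 2(b) and contributes the rescaled tail term.

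The main obstacle will be verifying that the high-frequency set $\supp \widehat{F_r^{\mathrm{high}}}$ genuinely fits, modulo controllable constants, into the union of $R^{-m/n}$-dilates of elements of $\Xi_0^{n-m+1}(r^{(n-m)/n})$, so that the cone square function estimate applies with only $r^\delta$ loss. This requires that the truncation scale $K^{-\beta}$ in the $\phi_n^{(i)}(a)$ directions for $i\le m$ be small enough relative to $r^{-(m+1)/n}$ for the transverse cone structure to dominate over residual contributions from the first $m+1$ coordinates; this is exactly what motivates the precise choice of $\beta$ made in Proposition \ref{initscale} and the assumption $R \ge C_K$. The remaining work is bookkeeping: combining the two cases into the displayed recursion, iterating, and choosing $K$ as a function of $\delta$ so that the accumulated $K^{C_n\delta}$ factors are absorbed into $\rho^\e$.
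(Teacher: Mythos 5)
Your proposal captures the right high-level template (induction on scales, a progress step, a rescaling step), but the progress step you sketch is too weak and fails for the range of exponents the proposition must cover. The central issue is that you perform a \emph{single} binary high-low split of the square function $F_r$ and then apply one cone square function estimate. After invoking a cone estimate on $\int |F_r^{\mathrm{high}}|^{p/2}$ you arrive at an expression of the form $\int \big(\sum_J |\sum_{I(\theta)\subset J}f_\theta|^4\big)^{p/4}$; to convert this into type 2(b) progress you need $\|\cdot\|_{\ell^{p/4}}\le \|\cdot\|_{\ell^1}$, i.e.\ $p\le 4$. But the proposition requires all $2\le p\le p_{n-m}$, and $p_{n-m}\ge 7$ once $n-m\ge 3$. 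The paper resolves this by replacing your single split with a cascade (Lemma \ref{multilem1}) that cycles through the exponents $\tilde p_1,\tilde p_2,\ldots,\tilde p_{n_p}$ and, at stage $k$, identifies the high-frequency annulus with an element of $\Xi_{m+k-1}^{n+1}$; the order of the Taylor cone increases as the exponent drops, ending with the $L^2$-based estimate of Theorem \ref{locL2thm} once $p/\tilde p_{n_p-1}\le 2$. Lemma \ref{pprops} is precisely the bookkeeping that makes this exponent cascade consistent. Without it, your argument has no way to descend to an $\ell^1$ expression when $p>4$.

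There are two secondary problems. First, you appeal to $\C_0^{n-m+1}$, but for $m=0$ this is $\C_0^{n+1}$, which is exactly the target of Proposition \ref{coneinduct}, not an available input; the hypotheses \eqref{conecon2} only provide $\C_0^{k+1}$ for $k\le n-1$, and the internal induction on $n-m$ gives $\C_{m'}^{n+1}$ only for $m'>m$. The paper deliberately uses $\C_{m+k-1}^{n+1}$ with $k\ge 2$ (so $m+k-1>m$) in Lemma \ref{multilem1}, which sidesteps this circularity. Second, your description of $\supp\widehat{F_r}$ as a dilated $\Gamma_0^{n-m+1}$ sitting transverse to $\phi_n^{(0)},\ldots,\phi_n^{(m)}$ because of ``near-cancellation in the first $m+1$ Taylor coefficients'' is not what the computation gives: the paper's \eqref{disabo} shows the Fourier support is a full box $\{|\lambda_i|\le R^{-m/n}r^{(m-i)/n}\}$ whose low-index entries are large, not small, so the dominant structure is not confined to the $\phi_n^{(m+1)},\ldots,\phi_n^{(n)}$ span. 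This is also why the decomposition must track a multi-parameter $\sigma$ across the first $m+k$ coordinates rather than a single radial cutoff. Finally, a scheme of induction purely on $\rho/r$ does not obviously close for the rescaling term $S_K^{n,m}(1,\rho/r,R/r)$, since its ratio is unchanged; the paper's induction is on $R$ (so $R/r<R_0/K$ when $r\ge K$), with the iteration on the other terms increasing $r$ geometrically, and uses Proposition \ref{initscale} to enter the iteration when $r\le K$.
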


We prove Proposition \ref{SnmmKbd} in \textsection\ref{SnmmKbdsec}.  
After bounding $S_K^{n,m}(r,\rho,R)$, we will work with a more general local multi-scale constant. We do not prove a local estimate in the full generality of functions with Fourier support in $\Gamma_m^{n+1}(R)$ because we require some truncation to justify a base case for the induction on scales. Since our truncations are somewhat complex (compared to truncating to balls or annuli), it is not easy to prove that local square function estimates for truncated functions imply local square function estimates for non-truncated functions with Fourier support in all of $\Gamma_m^{n+1}(R)$. However, we show that local estimates for truncated functions imply global estimates for non-truncated functions, which is sufficient to prove Proposition \ref{coneinduct}.  

We define a more general truncated version of $\Gamma_m^{n+1}(R)$, for $1\le m\le n-2$. Let $\sigma=(\sigma_0,\ldots,\sigma_m)\in2^{\Z}$ with $\sigma_m=1$ and $\sigma_k\in[\sigma_{k+1}^{-1}\cdots\sigma_m^{-1}R^{-\frac{1}{n}},1]$ for all $k<m$. It is also notationally convenient to define $\sigma_{m+1}:=1$. Define $\Xi_m^{n+1}(R,\sigma)$ to be the union of sets
\begin{align*}
\theta_a(\sigma)&=\Big\{\sum_{i=0}^n\lambda_i\phi_n^{(i)}(t):a\le t\le a+2^{-100n} R^{-\frac{1}{n}},\quad|\lambda_i|\le \sigma_0^{\max(0,1-i)}\cdots\sigma_m^{\max(0,m+1-i)}R^{-\frac{i}{n}}\text{ for all $i$},
\\
    &\quad\text{for each $0\le k\le m$,} \begin{cases}
        \underset{0\le i\le k}{\max}2^{-i}|\lambda_i|\sigma_k^{-k-1+i}\cdots\sigma_m^{-m-1+i}R^{\frac{i}{n}}\ge 2^{-k-1} &\,\, \text{if}\,\,\sigma_k>\sigma_{k+1}^{-1}\cdots\sigma_{m+1}^{-1}R^{-\frac{1}{n}} \\
        \underset{0\le i\le k}{\max}|\lambda_i|\le \sigma_{k+1}\cdots\sigma_{m}^{m-k}R^{-\frac{k+1}{n}} &\,\, \text{if}\,\,\sigma_k=\sigma_{k+1}^{-1}\cdots\sigma_{m+1}^{-1}R^{-\frac{1}{n}}
    \end{cases} \Big\}, 
\end{align*}
where $a\in 2^{-100n} R^{-\frac{1}{n}}\Z\cap[0,1]$. Although the above definition looks complicated, we describe how to obtain a truncation from functions with Fourier support in $\Gamma_m^{n+1}(R)$ in the proof of Proposition \ref{coneinduct} in \textsection\ref{truncsec}. 
For each $2\le r\le \rho\le R$ and $\sigma$, let $S_{\sigma}^{n,m}(r,\rho,R)$ be the corresponding local multi-scale quantity defined to be the smallest constant such that for any ball $B$ of radius $R^{\frac{m}{n}}\rho$, $d\ge 1$, and any $2\le p\le p_{n-m}$,
\begin{align*} 
\int_{\R^{n+1}}|\sum_{J\in\mc{J}(r^{-\frac{1}{n}})}|\sum_{\substack{I(\theta)\subset J\\\theta\in\Xi_m^{n+1}(R,\sigma)}}
f_\theta|^2|^{\frac{p}{2}} W_B^{n+1,d}  \le S_{\sigma}^{n,m}(r,\rho,R)&\int_{\R^{n+1}}|\sum_{I\in\mc{J}(\rho^{-\frac{1}{n}})}|\sum_{\substack{I(\theta)\subset I\\ \theta\in\Xi_m^{n+1}(R,\sigma)}} f_\theta|^2|^{\frac{p}{2}} W_B^{n+1,d} 
\end{align*}
for all Schwartz functions $f_\theta:\R^{n}\to\C$ with $\supp \widehat{f}_\theta\subset \theta\in\Xi_m^{n+1}(R,\sigma)$.

\subsection{Estimate for an initial scale}

The following proposition is based on the Pramanik-Seeger approximation of cones by neighborhoods of cylinders over curves, which was also used by Bourgain-Demeter in the proof of decoupling for the cone.

\begin{proposition}[Initial scale for $0\le m\le n-2$] \label{initscale} Suppose that the hypotheses of Proposition \ref{coneinduct} hold. Suppose that $\b$ is a large enough dimensional constant. Let $\s:\R^{n+1}\to\R^{n+1}$ be a smooth function with Fourier transform supported in a ball of radius $R^{\frac{m}{n}} K^{-\b}$. For any $\d>0$, $R\ge C_K$, and $2\le p\le p_{n-m}$,
   \[ \int_{\R^{n+1}}|\sum_{\theta\in\Xi_m^{n+1}(R,K,{\bf{h}})}g_\theta\s|^{p}\lesssim_\d K^\d \int_{\R^{n+1}}|\sum_{J\in\mc{J}(K^{-1})}|\sum_{\substack{I(\theta)\subset J\\ \theta\in\Xi_m^{n+1}(R,K,{\bf{h}})}}g_\theta\s|^2|^{\frac{p}{2}}. \] 
\end{proposition}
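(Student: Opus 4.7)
My plan is to apply a Pramanik--Seeger type approximation to identify, modulo the smearing from $\sigma$, the Fourier support of each block $F_J := \sum_{I(\theta)\subset J} g_\theta \sigma$ (for $J\in\mathcal{J}(K^{-1})$) with a cylindrical tube over a short arc of the $(n-m)$-dimensional moment curve $\gamma_{n-m}$, and then to invoke the hypothesis $\mathbb{M}^{n-m}(K^{n-m})\lesssim_\delta K^{(n-m)\delta}\lesssim_\delta K^\delta$ in its cylindrical form.

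First, for each $J\in\mathcal{J}(K^{-1})$ with left endpoint $t_J$, I would Taylor expand $\phi_n^{(i)}(t_J+s)=\sum_{j\ge 0}\frac{s^j}{j!}\phi_n^{(i+j)}(t_J)$. Expressing points of $\theta_a$ (for $a=t_J+s$ with $s\in[0,2^{-100n}K^{-1}]$) in the frame $\{\phi_n^{(k)}(t_J)\}_{k=0}^n$, the $k$-th coordinate equals $\sum_{i=0}^k\lambda_i\frac{s^{k-i}}{(k-i)!}$. The ${\bf{h}}$-truncation $|\lambda_i-h_i|\le K^{-\beta}|h_i|$ for $i\le m$ pins this near the center curve $c(s):=\sum_{i=0}^m h_i\phi_n^{(i)}(t_J+s)$. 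Its derivatives $c^{(j)}(t_J)=\sum_{i=0}^m h_i\phi_n^{(i+j)}(t_J)$ for $j=1,\ldots,n-m$ are linearly independent in $\R^{n+1}$ (by a lower-triangular determinant argument using $h_0\ne 0$), and in the basis $\{c^{(j)}(t_J)/j!\}_{j=1}^{n-m}$ of the $(n-m)$-dimensional affine subspace $V_J^{\mathrm{aff}}:=c(t_J)+\mathrm{span}\{c^{(j)}(t_J)\}_{j=1}^{n-m}$, the degree-$(n-m)$ Taylor polynomial of $c$ at $t_J$ is exactly $s\mapsto(s,s^2,\ldots,s^{n-m})$, i.e., the moment curve $\gamma_{n-m}$.

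The Taylor remainder beyond degree $n-m$ has size $\lesssim K^{-(n-m+1)}R^{-m/n}$. Choosing $\beta$ to be a sufficiently large dimensional constant (e.g., $\beta=n+1$) and $C_K$ large enough, we ensure that for all $R\ge C_K$ this remainder, the $K^{-\beta}|h_i|$-windows, and the cross contributions from the $\lambda_j$'s with $j>m$ all fit inside the ball of radius $R^{m/n}K^{-\beta}$ supporting $\widehat\sigma$. A global linear change of coordinates depending only on ${\bf{h}}$ then maps $V_J^{\mathrm{aff}}$ consistently onto $\R^{n-m}\times\{0\}^{m+1}\subset\R^{n+1}$, and each $F_J$ becomes Fourier-localized to a cylindrical extension of the element of $\Theta^{n-m}(K^{n-m})$ corresponding to the subinterval $J-t_J\subset[0,K^{-1}]$; the partition scales match since $(K^{n-m})^{-1/(n-m)}=K^{-1}$.

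Applying the cylindrical version of the $(n-m)$-dimensional moment curve square function estimate at scale $K^{n-m}$ (cf.\ Theorem~4.1 of \cite{maldagueM3}) together with $\mathbb{M}^{n-m}(K^{n-m})\lesssim_\delta K^\delta$ yields the claimed inequality for $2\le p\le p_{n-m}$. The hard part will be the careful bookkeeping in the Pramanik--Seeger approximation: after the global linear change of coordinates adapted to ${\bf{h}}$, one must verify that each $F_J$ has truly cylindrical thickness (independent of $J$) in the $m+1$ transverse directions, and that all Taylor remainders, ${\bf{h}}$-window errors, and higher-order Jacobian distortions stay inside the $R^{m/n}K^{-\beta}$-ball. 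The lower bound $|h_i|\ge K^{-1}R^{-m/n}$ from the definition of $\mathcal{H}_m^n(R,K)$ is essential here, as it keeps the triangular basis $\{c^{(j)}(t_J)\}_{j=1}^{n-m}$ well-conditioned uniformly in $J$.
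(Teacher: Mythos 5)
Your proposal adopts the same high-level strategy as the paper's proof: a Pramanik--Seeger type approximation that recognizes $\Gamma_m^{n+1}(R,K,{\bf{h}})$ as a thin (cylindrical) neighborhood of an $(n-m)$-dimensional moment curve, followed by the cylindrical version of $\M^{n-m}(K^{n-m})\lesssim_\d K^\d$. But there are two genuine gaps in the execution.

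First, the statement that ``a global linear change of coordinates depending only on ${\bf{h}}$ then maps $V_J^{\mathrm{aff}}$ consistently onto $\R^{n-m}\times\{0\}^{m+1}$'' is geometrically incorrect. The affine subspaces $V_J^{\mathrm{aff}} = c(t_J) + \mathrm{span}\{c^{(j)}(t_J)\}_{j=1}^{n-m}$ genuinely rotate with $t_J$: the center curve $c(t)=\sum_{i=0}^m h_i\phi_n^{(i)}(t)$ is a degree-$n$ polynomial curve with nonvanishing torsion (since $c^{(n)}=h_0\phi_n^{(n)}\ne 0$), so its osculating $(n-m)$-planes vary with $t_J$, and no fixed linear map can flatten them all onto a single subspace. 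What the argument actually requires — and what the paper does — is a fixed, ${\bf{h}}$-dependent lower-triangular map $A$ satisfying $A\big(\sum_i h_i\phi_n^{(i)}(t)\big)=h_0\phi_n(t)$ \emph{for all} $t$. After applying this single $A$ and normalizing by $1/h_0$, the entire truncated cone is \emph{contained in} $\R\times\mc{N}_\ell\{\gamma_{n-m}\}\times\R^m$ for suitably small $\ell$; the cylindrical $\R\times\R^m$ factor is $J$-independent, which is exactly what makes the cylindrical square function estimate applicable. Your formulation, read literally, does not produce a single product structure spanning all $J$, and so does not set up the cylindrical estimate.

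Second, your remark that $\b=n+1$ could serve as the ``sufficiently large dimensional constant'' is quantitatively off. The entries of the lower-triangular $A$ above are polynomials in the ratios $h_i/h_0$ of degree up to $n$, and since $|h_m|/|h_0|$ can be as large as $K$, the operator norm of $A$ can reach $K^{O(n)}$. After applying $A$ and dividing by $h_0\sim K^{-1}R^{-m/n}$, the thickness of the transformed set around $\gamma_{n-m}$ is of order $K^{\,n+1-\b}$, and this must be $\lesssim K^{-(n-m)}$ for $\M^{n-m}(K^{n-m})$ to apply. This already forces $\b\ge 2n-m+1$; the paper, which additionally truncates the $h_i$ to $\tilde h_i$ and iterates over which $\tilde h_i$ dominates, ends up taking $\b=(n-m)(n+2)^n$, exponential in $n$. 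The truncation/iteration you omit is precisely the paper's device for handling the boundary case $|h_i|=K^{-1}R^{-m/n}$ (where dropping $h_i$ incurs an $O(K^{-1}R^{-m/n})$ error that cannot be absorbed by a large $\b$ alone); your single-pass approach using the untruncated curve $c$ can avoid this, but you should then explicitly track the $K^{O(n)}$ conditioning and set $\b$ accordingly rather than asserting the basis is ``well-conditioned uniformly in $J$.''
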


\begin{proof} Each element of $\Xi_m^{n+1}(R,K,{\bf{h}})$ is within a distance $\lesssim K^{-B}R^{-\frac{m}{n}}$ of a point $\sum_{i=0}^m\tilde{h}_i\phi_n^{(i)}(t)$, for some $t\in[0,1]$ and $\tilde{h}_i=h_i$ if $|h_i|> K^{-1}R^{-\frac{m}{n}}$ and $\tilde{h}_i=0$ if $|h_i|\le K^{-1}R^{-\frac{m}{n}}$. We initiate an algorithm that allows us to use lower dimensional square function estimates to certain neighborhoods of one of curves $\g_n^{(0)}(t),\ldots,\g_n^{(m)}(t)$. The function $\s$ is harmless since it has the effect of taking a $K^{-\b}R^{-\frac{m}{n}}$-neighborhood on the Fourier side. We will choose constants $B_0=\b>(n+1)B_1>(n+1)^2B_2>\cdots>(n+1)^{k} B_{k}$ at the end of the proof. 

Suppose that $|\tilde{h}_0|\ge K^{-B_1}|h_m|$. Let $A_0$ be inverse of a lower triangular $(n+1)\times (n+1)$ matrix with $1$'s along the diagonal satisfying 
\[ \sum_{i=0}^m\tilde{h}_iA_0\phi_n^{(i)}(t)=\tilde{h}_0\phi_n(t)\quad\text{for all}\quad t\in[0,1]. \]
Note that by Cramer's rule, the entries of $A_0$ have norm bounded by $\lesssim K^{nB_1}$.
Therefore, 
\[ A_0\big(\Gamma_m^{n+1}(R,K,{\bf{h}})\big)\subset\mc{N}_{C|h_m|K^{nB_1-B_0}}\{\tilde{h}_0\phi_n(t):t\in[0,1]\}, \]
where $|\tilde{h}_0|\ge K^{-B_1}|h_m|$. After rescaling the set on the right hand side by $\frac{1}{\tilde{h}_0}$, it is contained in 
\[ \R\times \mc{N}_{CK^{(n+1)B_1-B_0}}\{\g_{n-m}(t):t\in[0,1]\}\times \R^{m}. \]
We will choose $B_1$ and $B_0$ so that $CK^{(n+1)B_1-B_0)}\le K^{-(n-m)}$, so that the proposition follows from $\M^{n-m}(K^{n-m})\lesssim_\d K^\d$.

Now suppose that $|\tilde{h}_0|\le K^{-B_1}|h_m|$ and $|\tilde{h}_1|\ge K^{-B_2}|h_m|$. Then 
\[\Gamma_m^{n+1}(R,K,{\bf{h}})\subset\mc{N}_{CK^{-B_1}|h_m|}\{\sum_{i=1}^m\tilde{h}_iR^{-\frac{i}{n}}\phi_n^{(i)}(t):t\in[0,1]\}.\]
Similar to the previous case, we can define an $(n+1)\times (n+1)$ matrix $A_1$ satisfying  
\[ A_1\Big(\sum_{i=1}^m\tilde{h}_i\phi_n^{(i)}(t)\Big)= \tilde{h}_1\phi_n^{(1)}(t)\quad\text{for all}\quad t\in[0,1] \]
and $A_1\big(\Gamma_m^{n+1}(R,K,{\bf{h}})\big)\subset\R\times \mc{N}_{CK^{nB_2-B_1}|h_m|}\{\tilde{h}_1\g_{n-m+1}^{(1)}(t):t\in[0,1]\}\times \R^{m-1}$. After dilating by $\frac{1}{\tilde{h}_1}$, we have
\[ \frac{1}{\tilde{h}_1}A_1\big(\Gamma_m^{n+1}(R,K,{\bf{h}})\big)\subset \mc{N}_{CK^{(n+1)B_2-B_1}}\{\phi_{n}^{(1)}(t):t\in[0,1]\}. \]
The curve $\phi_{n}^{(1)}(t)$ is contained in the product of $\R^{m+1}$ with an anisotropic rescaling of $\g_{n-m}(t)$, so if $(n+1)B_1-B_2\le -(n-m)$, then the proposition again follows from $\M^{n-m}(K^{n-m})\lesssim_\d K^\d$.

We iterate this procedure until either the proposition is proved, or we are in the final case that $|\tilde{h}_i|\ge K^{-B_i}|h_m|$ for each $i=0,\ldots,m-1$. Then 
\[ \Gamma^{n+1}(R,K,{\bf{h}})\subset \mc{N}_{K^{-B_{k-1}}|h_m|}\{{h}_m\phi_n^{(m)}(t):t\in[0,1]\}.\] 
Again, use an affine transformation $A_m$ which transforms 
\[ A_m\big(\Gamma^{n+1}(R,K,{\bf{h}})\big)\subset\mc{N}_{CK^{-B_{k-1}}|h_m|}\{h_m\phi_n^{(m)}(t):t\in[0,1]\}. \]
Since $\phi_n^{(k)}$ contains an anisotropically rescaled version of $\p_{n-m}$, we are done as long as $B_k\ge n-m$. 

The choice $B_0=(n-m)(n+2)^n$ and $B_i=(n+2)^{n-i}(n-m)$ satisfies the necessary properties.

\end{proof}

\subsubsection{The multi-scale inequality}

We prove a key multi-scale inequality in Proposition \ref{conemulti}. Note that when $m=n-1$, we have a bound for $S_{K}^{n,n-1}(r,\rho,R)$ from the $L^2$-based analysis in \textsection\ref{L2sec}. 

\begin{proposition}\label{L2conekakloc} Let $1\le n$. Then for $K$ larger than a dimensional constant, $S_{K}^{n,n-1}(r,\rho,R)\lesssim_\e \rho^\e$. 
\end{proposition}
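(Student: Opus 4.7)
The plan is to reduce to the single exponent $p=2$ and then invoke the local $L^2$ orthogonality of Theorem \ref{locL2thm} on each coarse interval $J \in \mc{J}(r^{-1/n})$ to split it into the finer intervals $I \in \mc{J}(\rho^{-1/n})$ it contains. Since $p_{n-m} = p_1 = 2$ when $m = n-1$, only $p = 2$ needs to be addressed. For this exponent the outer $|\cdot|^{p/2}$ collapses, and writing $g_J := \sum_{I(\theta)\subset J} f_\theta$ and $g_I := \sum_{I(\theta)\subset I} f_\theta$, the required inequality reads
\[ \sum_{J \in \mc{J}(r^{-1/n})} \int_{\R^{n+1}} |g_J|^2 W_B^{n+1,d} \,\lesssim_\e\, \rho^\e \sum_{I \in \mc{J}(\rho^{-1/n})} \int_{\R^{n+1}} |g_I|^2 W_B^{n+1,d}. \]
Since $r \le \rho$, each $I$ is dyadically contained in a unique $J$, so the right side factors as $\sum_J \sum_{I \subset J}$, and it suffices to prove the pointwise $J$-by-$J$ estimate $\int|g_J|^2 W_B^{n+1,d} \lesssim \sum_{I\subset J}\int|g_I|^2 W_B^{n+1,d}$.

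The latter would follow from applying Theorem \ref{locL2thm} to $g_J$ with the theorem's interval-scale parameter taken to be $\rho$ rather than $r$. For $\mathbf{h}\in\mc{H}_{n-1}^n(R,K)$ one has $|h_{n-1}|\sim R^{-(n-1)/n}$, so the appropriate dyadic parameter is $\lambda\sim R^{-(n-1)/n}$; the required ball radius $\lambda^{-1}\rho$ then equals $R^{(n-1)/n}\rho$, matching the radius of $B$ in the hypothesis of Proposition \ref{L2conekakloc}. Summing the resulting local $L^2$ decoupling inequality over $J$ yields $S_K^{n,n-1}(r,\rho,R)\lesssim 1$, which is much stronger than the claimed $\rho^\e$ bound.

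The only subtlety, and the main place the argument requires care, is a formal mismatch: Theorem \ref{locL2thm} is stated for functions Fourier-supported in $\Xi_{n-1}^{n+1}(R,\lambda)$, whereas $g_J$ is supported in a subset of the $\mathbf{h}$-truncated collection $\Xi_{n-1}^{n+1}(R,K,\mathbf{h})$, whose elements need not satisfy the cone lower bound $\max_i|\lambda_i|2^{n-i}R^{i/n}\ge 1$ literally. Inspecting the proof of Theorem \ref{locL2thm}, however, the only use of that lower bound is to furnish an index $k_0$ with $|H_{k,k_0}^j|\gtrsim R^{-k_0/n}$ when applying Lemma \ref{L2techlem}. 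In our setting this is provided directly by the constraint $|h_{n-1}|\ge 2^{-n}R^{-(n-1)/n}$ from the definition of $\mc{H}_{n-1}^n(R,K)$, with $k_0 = n-1$ and absolute constant $c = 2^{-n}(1-K^{-\beta})$, so the Plancherel plus overlap-counting argument of Theorem \ref{locL2thm} transfers verbatim and yields the required local $L^2$ inequality on each $J$.
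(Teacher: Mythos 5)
Your argument is correct and is essentially the same one the paper intends — the paper's entire proof is the sentence ``This follows immediately from Theorem \ref{locL2thm},'' and you have spelled out precisely what ``immediately'' covers: the collapse to $p=2$ since $p_{n-m}=p_1=2$ when $m=n-1$, the $J$-by-$J$ application of Theorem \ref{locL2thm} with the theorem's scale parameter set to $\rho$, and the matching of the ball radius $\lambda^{-1}\rho=R^{(n-1)/n}\rho$ with $\lambda\sim R^{-(n-1)/n}$. One small remark on your ``formal mismatch'' discussion: the elements of $\Xi_{n-1}^{n+1}(R,K,\mathbf{h})$ in fact satisfy the cone lower bound $\max_i|\lambda_i|2^{n-i}R^{i/n}\gtrsim 1$ up to a dimensional constant (because $|\lambda_{n-1}|\ge(1-K^{-\beta})|h_{n-1}|\ge(1-K^{-\beta})2^{-n}R^{-(n-1)/n}$), so one can simply absorb that constant into the statement of Theorem \ref{locL2thm} rather than re-enter the proof of Lemma \ref{L2techlem}; your route is also fine, and reaches the same conclusion.
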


\begin{proof} This follows immediately from Theorem \ref{locL2thm}. 
\end{proof}

\begin{proposition}\label{conemulti} Suppose that the hypotheses of Proposition \ref{coneinduct} hold and that $0\le m\le n-2$. Suppose also that $S_K^{n,k}(r,\rho,R)\lesssim_\d \rho^\d$ for each $m+1\le k\le n-1$. Then is an absolute constant $C>0$ such that for any $\d_0\in(0,2^{-100n})$, 
\[ S_{K}^{n,m}(r,\rho,R)\lesssim_{\d_0} r^{\d_0} S_{K}^{n,m}(1,\rho/r,R/r)+\sum_{k=1}^{n-m} r^{C\d_0^{n-k+1}}\Big[S_{K}^{n,m}(\min(\rho,r^{1+\d_0^{n-k}}),\rho,R) \Big]. \]
\end{proposition}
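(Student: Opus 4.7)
The plan is to run an $(n-m)$-step high-low frequency cascade on the integrand defining $S_K^{n,m}(r,\rho,R)$, in the style of the algorithm described in Section \ref{intro2} but adapted to the truncated setting $\Xi_m^{n+1}(R,K,\mathbf{h})$. Writing $F_J := \sum_{I(\theta) \subset J} f_\theta$ for $J \in \mc{J}(r^{-1/n})$, the left-hand side of the defining inequality is $\int |\sum_J |F_J|^2|^{p/2} W_B^{n+1,d}$, and the Fourier support of $|F_J|^2$ sits inside $\tau_J - \tau_J$; after dilation by $r^{1/n}$ this difference set is essentially a Taylor cone cell at scale $(R/r)^{(n-1)/n}$, which is the geometric input that makes the cascade possible.

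At step $k=1,\ldots,n-m$, the expression to be bounded has the form $\int |\sum_J |F_J|^{2k}|^{p/(2k)} W_B^{n+1,d}$, and between steps I would use an elementary $\ell^q$-monotonicity on $\tau$-sums to stay in the admissible exponent range for lower-dimensional square function estimates (as in the $|f_\tau|^8 \to |f_\tau|^6$ passage of Section \ref{intro2}). Each $|F_J|^{2k}$ is split by Fourier cutoffs $\eta_{\tau_h^{(k)}} + \eta_{\tau_l^{(k)}}$ into high and low parts at threshold $r^{-\d_0^{n-k}/n}$, so that via Proposition \ref{gencone} the high piece is identified with a dilated element of $\Xi_{k-1}^{n+1}$. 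In the \emph{low} case, the locally constant property dual to $\eta_{\tau_l^{(k)}}$ together with a local $L^{2k}$ square function estimate (supplied by $\M^{n-m+k-1}$ or a rescaled $\C_\cdot^\cdot$, both available from the hypotheses of Proposition \ref{coneinduct}) refines the caps $J$ to subcaps of size $r^{-(1+\d_0^{n-k})/n}$, yielding the term $r^{C\d_0^{n-k+1}} S_{K}^{n,m}(\min(\rho,r^{1+\d_0^{n-k}}),\rho,R)$. In the \emph{high} case, the inductive hypothesis $S_K^{n,k-1}\lesssim \rho^\d$ (or Proposition \ref{L2conekakloc} when $k-1=n-1$) extracts another square factor and moves the cascade to step $k+1$. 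When the high side wins at every one of the $n-m$ steps, the remaining expression collapses to a $\sum_\tau \int |f_\tau|^{(\cdot)}$ form which, after the spatial rescaling $x \mapsto rx$, reproduces exactly the multi-scale quantity at parameters $(1,\rho/r,R/r)$; the accumulated cascade losses contribute the factor $r^{\d_0}$, producing the first term of the claimed bound.

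The principal obstacle is the $\tau$-dependence of the low-frequency cutoffs $\eta_{\tau_l^{(k)}}$: since they vary with $J$, Young's inequality cannot be applied globally, and one must argue locally and absorb overlap losses into the constants $r^{C\d_0^{n-k+1}}$, exactly as in the Case $(h_{11/2},l_{11/4})$ discussion of Section \ref{intro2}. A second technical point is that the high sets at step $k$ are slightly dilated or translated copies of $\Xi_{k-1}^{n+1}$-cells rather than literal cells, which is precisely why Proposition \ref{gencone} was proven; one also checks that the rescaling $x\mapsto rx$ sends $\Xi_m^{n+1}(R,K,\mathbf{h})$-functions to $\Xi_m^{n+1}(R/r,K,\mathbf{h}')$-functions and the weight $W_B^{n+1,d}$ to $W_{B'}^{n+1,d}$ at a ball $B'$ of radius $R^{m/n}(\rho/r)$. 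The exponents $\d_0^{n-k}$ in $r^{1+\d_0^{n-k}}$ are calibrated so that the $n-m$ cascade losses sum geometrically to $O(r^{\d_0})$ on the rescaling term.
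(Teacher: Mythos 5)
Your cascade skeleton — iterating a high/low frequency split on $|F_J|^{2\cdot}$, pigeonholing into a chain of even exponents $\tilde p_k$ with $\ell^q$-monotonicity to stay in range, and parabolic rescaling the all-high branch to land on $S_K^{n,m}(1,\rho/r,R/r)$ — is indeed the shape of the paper's argument (Lemma~\ref{multilem1} drives the cascade, Lemma~\ref{ptwise} handles the low case, and the rescaling at the end of the proof of Proposition~\ref{conemulti} gives the first term). However, two of your ingredient identifications are off. For the \emph{high} side at step $k$, the paper does not invoke the multi-scale constant $S_K^{n,k-1}$ directly: after identifying the high piece of the Fourier support with a dilated element of $\Xi_{m+k-1}^{n+1}(\sigma^n r)$, it applies the \emph{global} Taylor-cone estimate $\C_{m+k-1}^{n+1}$ together with Lemma~\ref{loclem} to localize it (this is what requires the careful definition of $\Xi_m^{n+1}$ with the $\max_j|\lambda_j|2^{m+1-j}R^{j/n}\ge 1$ constraint). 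For the \emph{low} side, Lemma~\ref{ptwise} does not reduce to a moment-curve estimate $\M^{n-m+k-1}$; after the change of variables that squashes the $J$-block, the Fourier support lands in $\Gamma_m^{m+k}((R/r)^{(m+k-1)/n},K,\tilde{\mathbf h})$, i.e.\ a \emph{lower-dimensional truncated Taylor cone}, so the correct input is $S_K^{m+k-1,m}(1,\cdot,\cdot)$ at the lower dimension $m+k-1<n$. These are not cosmetic substitutions: the geometry in the low case really is a Taylor cone (not a moment curve), and in the high case one needs the localized square-function bound to interact correctly with the weight $W_B^{n+1,d}$.

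The more substantive omission is what you label "absorb overlap losses into the constants $r^{C\d_0^{n-k+1}}$." In the paper this is Lemma~\ref{multilem3}, and it is not a routine overlap estimate. After Lemma~\ref{ptwise} one is left with an expression of the form $\int|\sum_J |\sum_{I_k\subset J}|\sum_{I(\theta)\subset I_k}f_\theta|^2|^{\tilde p_{k-1}/2}*\tilde W_{J,m+k,r^{-\d_k}}^{n+1,d_k}|^{p/\tilde p_{k-1}}W$, where the $\tilde W$ weights depend on $J$ and sit \emph{inside} the outer $\ell^{p/\tilde p_{k-1}}$ sum, so they cannot be stripped by a single Young's inequality. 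Lemma~\ref{multilem3} runs a further multi-step Fourier decomposition (with its own chain of Taylor-cone estimates $\C_{m+k}^{n+1},\C_{m+k_2}^{n+1},\ldots$ and a terminal $\C_{n-1}^{n+1}$/local-$L^2$ step) to peel the weights off, and this iteration is comparable in complexity to Lemma~\ref{multilem1} itself. Without this lemma, the low-case branch of your cascade does not close: you would be stuck with $J$-dependent convolutions that cannot be recombined into the square function $\sum_{I_k}|\sum_{I(\theta)\subset I_k} f_\theta|^2$ that defines $S_K^{n,m}(\min(\rho,r^{1+\d_0^{n-k}}),\rho,R)$. You should also note Lemma~\ref{pprops} (the arithmetic of $\tilde p_k$ vs.\ $p_{n-k}$), which is what licenses your "$\ell^q$-monotonicity" step in all dimensions simultaneously.
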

The implicit constant is uniform in ${\bf{h}}\in\mc{H}_m^{n}(R,K)$. 
We work with a sequence of exponents $\tilde{p}_n$ which is an even version of the sequence $p_n$. Let $\tilde{p}_1=2$ and for $n\ge2$, let
\[ \tilde{p}_n:=\begin{cases}\sum_{i=1}^ni+1\quad&\text{if} \quad n\equiv1\text{ or }2\pmod{4}\\
\sum_{i=1}^ni\quad&\text{if} \quad n\equiv0\text{ or }3\pmod{4}
\end{cases} . \]
Before proving Proposition \ref{conemulti}, we need to verify some properties about the sequence of exponents $\tilde{p}_n$, which are recorded in the following lemma. 
\begin{lemma}\label{pprops}
\begin{enumerate} For each $n\ge 2$, we have
    \item $2\le\tilde{p}_n\le p_n$,
    \item $\tilde{p}_n\le \tilde{p}_{n+1}$,
    \item $\frac{\tilde{p}_{n}}{\tilde{p}_{n-1}}\le 2$, 
    \item for $n\ge k\ge 1$, if $\frac{p_n}{\tilde{p}_k}>2$, then $\frac{p_n}{\tilde{p}_k}\le p_{n-k}$. 
\end{enumerate}    
\end{lemma}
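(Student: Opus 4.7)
The plan is to prove the four claims one at a time by direct algebraic computation, using the explicit formula $p_n = \frac{n(n+1)}{2}+1$ and the two-sided bound
\[ \frac{n(n+1)}{2}\le \tilde p_n\le \frac{n(n+1)}{2}+1\qquad (n\ge 2), \]
with the separate base value $\tilde p_1=2$. In particular, for $n\ge 2$ we always have $\tilde p_n\in\{p_n-1,\,p_n\}$; specifically $\tilde p_n=p_n$ when $n\equiv 1,2\pmod 4$ and $\tilde p_n=p_n-1=\frac{n(n+1)}{2}$ when $n\equiv 0,3\pmod 4$.

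Claims (1), (2), (3) are essentially one-line consequences of these bounds. For (1), $\tilde p_n\le p_n$ is immediate, and $\tilde p_n\ge 2$ follows from $\tilde p_1=2$ and $\frac{n(n+1)}{2}\ge 3$ for $n\ge 2$. For (2),
\[ \tilde p_{n+1}-\tilde p_n\ge \frac{(n+1)(n+2)}{2}-\left(\frac{n(n+1)}{2}+1\right)=n\ge 0. \]
For (3), I will combine the upper bound on $\tilde p_n$ and the lower bound on $\tilde p_{n-1}$ to get $\tilde p_n-2\tilde p_{n-1}\le\frac{-n^2+3n+2}{2}$, which is $\le 0$ for $n\ge 4$; the remaining cases $n=2,3$ are verified directly ($\tilde p_2/\tilde p_1=2$ and $\tilde p_3/\tilde p_2=3/2$).

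The main work is claim (4). The key ingredient will be the algebraic identity
\[ p_k p_{n-k}-p_n=\frac{k(n-k)\bigl[(k+1)(n-k+1)-4\bigr]}{4}, \]
obtained by first noting that $k(k+1)+(n-k)(n-k+1)-n(n+1)=-2k(n-k)$ and then expanding $p_k p_{n-k}$. The right side is nonnegative whenever $n\ge k+1\ge 2$, so $p_n\le p_k p_{n-k}$ throughout the relevant range. When $\tilde p_k=p_k$ (that is, $k\equiv 1,2\pmod 4$, including $k=1$), this immediately yields $p_n/\tilde p_k\le p_{n-k}$, without any use of the hypothesis $p_n/\tilde p_k>2$.

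The one substantive case is $\tilde p_k=p_k-1$, which requires $k\ge 3$. Here the hypothesis $p_n>2\tilde p_k=k(k+1)$ first forces $n-k\ge 2$: for $n-k\in\{0,1\}$ one checks by expansion that $p_n\le k(k+1)$, contradicting the hypothesis. With $n-k\ge 2$ established, the desired inequality $(p_k-1)p_{n-k}\ge p_n$ rearranges, via the identity above, into
\[ (n-k)\bigl[(n-k+1)(k-1)(k+2)-4k\bigr]\ge 4, \]
where I used $k(k+1)-2=(k-1)(k+2)$. Since $n-k\ge 2$ and $k\ge 3$, the bracket is at least $3(k-1)(k+2)-4k=3k^2-k-6\ge 18$, so the whole expression is bounded below by $36$. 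The main obstacle, although a modest one, is simply organizing the case split cleanly so that the hypothesis $p_n/\tilde p_k>2$ is invoked only where truly needed, and handling the small-$k$ boundary cases without error.
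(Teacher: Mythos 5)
Your proof is correct. Claims (1)--(3) follow the same elementary route as the paper (bound $\tilde p_n$ between $\frac{n(n+1)}{2}$ and $p_n$, use these bounds for $n\ge 4$, check small $n$ by hand).

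For claim (4), however, your argument is genuinely different from, and cleaner than, the paper's. The paper fixes $k$, treats both $p_n-2\tilde p_k>0$ and $p_{n-k}\tilde p_k-p_n\ge 0$ as quadratics in $n$, computes their roots (with square roots such as $\sqrt{8k^2+8k-7}$), and verifies that the larger root of the second is dominated by the threshold of the first via a multi-line chain of inequalities. You instead observe the factorization
\[
p_k\,p_{n-k}-p_n=\frac{k(n-k)\bigl[(k+1)(n-k+1)-4\bigr]}{4},
\]
which I checked is correct (the middle cancellation $k(k+1)+(n-k)(n-k+1)-n(n+1)=-2k(n-k)$ is the key step), and from which $p_n\le p_k p_{n-k}$ is immediate on the entire range $n\ge k\ge 1$. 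This disposes of the case $\tilde p_k=p_k$ with no use of the hypothesis at all. For $\tilde p_k=p_k-1$ (so $k\ge 3$), you correctly show the hypothesis $p_n>k(k+1)$ rules out $n-k\in\{0,1\}$, and for $j=n-k\ge 2$ the target inequality $(p_k-1)p_j\ge p_n$ reduces via the identity to $j\bigl[(j+1)(k-1)(k+2)-4k\bigr]\ge 4$, whose left side is $\ge 36$. I verified each of these reductions. Your approach buys a proof with no square roots and no root-location comparison, at the cost of noticing the factorization; the paper's approach is more mechanical but considerably longer and harder to audit.
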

\begin{proof} The first two properties are clear. To verify property (3), first observe that that $\frac{2}{1}$, $\frac{4}{2}$, and $\frac{6}{4}$ are all $\le 2$. Therefore, it suffices to check that $\frac{k^2+k+2}{k^2-k}\le 2$ for $k\ge 4$. This is equivalent to $0\le k^2-3k-2$, which is true when $k\ge 4$.

Now we verify the last property. The cases $k=1,2$ are easy to check, so we assume that $k\ge 3$. We need to show that if $p_n-2\tilde{p}_k>0$, then $p_{n-k}\tilde{p}_k-p_n\ge 0$. The inequality $p_n-2\tilde{p}_k>0$ is a quadratic inequality in $n$ with a positive and a negative root. Since $n\ge 0$, the inequality is equivalent to the condition that
\begin{align}
    \label{equiv1} n> -\frac{1}{2}+\frac{1}{2}\sqrt{8 k^2 + 8 k - 7}.
\end{align}
The inequality we need to verify is also quadratic in $n$: $0\le \frac{1}{2}(k^2+k-2)n^2+[\frac{1}{2}(k^2+k)(-2k+1)-1]n+\frac{1}{2}k^4+\frac{1}{2}k^2+k-2$. The roots of this inequality are 
\[ \frac{2 k^3 + k^2 - k + 2\pm \sqrt{k^4 - 6 k^3 + 21 k^2 + 28 k - 28}}{2 (k^2 + k - 2)}. 
\]
It suffices to verify that the larger root is less than or equal to the right hand side of \eqref{equiv1}. Indeed, we have for $k\ge 3$, 
\begin{align*}
-(k^2+k-2)+(k^2+k-2)&\sqrt{8k^2+8k-8}-\big[2k^3+k^2-k+2+\sqrt{k^4-6k^3+21k^2+28k-28}\big] \\
    &= (k^2+k-2)\sqrt{8k^2+8k-8}-2k^3-2k^2-\sqrt{k^4-6k^3+21k^2+28k-28} \\
    &\ge(2\sqrt{2}-2)k^3+(k-2)\sqrt{8k^2+8k-8}-2k^2-\sqrt{k^4+3k^2+28k} \\
    &\ge(2\sqrt{2}-2)k^3+(k-2)2\sqrt{2}k-(2+\frac{8}{3\sqrt{3}})k^2\\
    &\ge k\left[(2\sqrt{2}-1)k^2-(2+\frac{8}{3\sqrt{3}}-2\sqrt{2})k-2\sqrt{2} \right]\\
    &\ge k\left[\frac{8}{5}k-2\sqrt{2} \right]\\
    &\ge 0.    
\end{align*}
    
\end{proof}

Next, we prove some key lemmas that we will use in the proof of Proposition \ref{conemulti}. Let $2\le p\le p_{n-m}$. We describe the set-up needed to state the lemmas. Consider the left hand side of the defining inequality for $S_{K}^{n,m}(r,\rho,R)$:
\begin{equation}\label{now2} 
\int_{\R^{n+1}} |\sum_{J\in\mc{J}(r^{-\frac{1}{n}})}|\sum_{\substack{I(\theta)\subset J\\ \theta\in\Xi_m^{n+1}(R,K,{\bf{h}})}} f_\theta|^2|^{\frac{p}{2}}W, \end{equation}
where $W=W_{B}^{n+1,d}$ for a ball $B\subset\R^{n+1}$ a ball of radius $R^{\frac{m}{n}}\rho$.

Let $n_{p}=\min\{l\ge 2:1\le \frac{p}{\tilde{p}_{l-1}}\le 2\}$. Our iteration will be indexed by a parameter $k$ in the range $2\le k\le n_p$.

Consider $J\in \mc{J}(r^{-\frac{1}{n}})$ with initial point $a\in[0,1]$. The Fourier support of $|\sum_{I(\theta)\subset J}f_\theta|^2$ is contained in 
\begin{equation}\label{claim} 
\{\sum_{i=0}^n\lambda_i\phi_n^{(i)}(a):|\lambda_i|\le R^{-\frac{m}{n}}r^{\frac{m-i}{n}} \}.\end{equation}
We define auxiliary functions which we will use to decompose the above Fourier support. Let $s:\R\to[0,1]$ be the bump function used in \eqref{auxbump} and note that the function 
\[ \s^k(x_0,\ldots,x_n):=s(R^{\frac{m}{n}}r^{-\frac{m}{n}}x_0)s(R^{\frac{m}{n}}r^{\frac{1-m}{n}} x_1)\cdots s(R^{\frac{m}{n}}r^{\frac{k-1}{n}}x_{m+k-1})s(x_{m+k})\cdots s(x_n)\] 
is identically $1$ on the rectangle $[-R^{-\frac{m}{n}}r^{\frac{m}{n}},R^{-\frac{m}{n}}r^{\frac{m}{n}}]\times\cdots\times[-R^{-\frac{m}{n}}r^{-\frac{k-1}{n}},R^{-\frac{m}{n}}r^{-\frac{k-1}{n}}]\times[-1,1]^{n+1-m-k}$. Let $A^k_{J}:\R^{n+1}\to \R^{n+1}$ be a linear map satisfying 
\begin{align} 
A_{J}^k\big([-R^{-\frac{m}{n}}r^{\frac{m}{n}},R^{-\frac{m}{n}}r^{\frac{m}{n}}]&\times\cdots\times[-R^{-\frac{m}{n}}r^{-\frac{k-1}{n}},R^{-\frac{m}{n}}r^{\frac{m}{n}}r^{-\frac{k-1}{n}}]\times[-1,1]^{n+1-m-k}\big) \nonumber\\
\label{etaFsupp} &=\{\sum_{i=0}^{m+k-1}\lambda_i\phi_{m+k-1}^{(i)}(a):|\lambda_i|\le R^{-\frac{m}{n}}r^{\frac{m-i}{n}}\}\times[-1,1]^{n+1-m-k} . 
\end{align}
Define $\s_{J_{m+1}}^k(x_0,\ldots,x_n)$ by $\s^k\circ (A_{J}^k)^{-1}$. For each dyadic $\sigma\in[r^{-1/n},1]$, define 
\[ \s_{J,\sigma}^k(x_0,\ldots,x_n)=\s_{J_{m+1}}^k(\sigma^{-m-k}x_0,\sigma^{-m-k+1}x_1,\ldots,\sigma^{-1}x_{m+k-1},x_{m+k},\ldots,x_n) \]
and note that 
\[ \s_{J}^k=\s_{J,r^{-\d_k}}^k+\sum_{r^{-\d_k}<\sigma\le 1}(\s_{J,\sigma}^k-\s_{J,\sigma/2}^k)=:\eta_{J,r^{-\d_k}}^k+\sum_{r^{-\d_k}<\sigma\le 1}\eta_{J,\sigma}^k.  \]

In the following lemmas, $\theta$ always indexes the sets in $\Xi_m^{n+1}(R,K,{\bf{h}})$. Our iteration involves convolving weight functions, which loses a factor in the decay rate. There are at most $n$ many steps in the iteration, so we let $d_k=4(n-k)d$. For each $J=[a,a+2^{-100n}r^{-\frac{1}{n}})$, let $\tilde{W}_{J,m+k-1}^{n+1,d_k}$ be an $L^1$-normalized weight function (as was introduced in \textsection\ref{multidef}) which is Fourier supported in 
\[ \{\sum_{i=0}^{m+k-2}\lambda_i\phi_{m+k-2}^{(i)}(a):|\lambda_i|\le R^{-\frac{m}{n}} r^{\frac{m-i}{n}}\}\times [-2,2]^{n+2-m-k}. \]
Let $\tilde{W}_{J,m+k,r^{-\d_k}}^{n+1,d_k}$ be a weight function that is centered at the origin and that has the same Fourier support as $\widecheck{\eta}_{J,r^{-\d_k}}^k$. Let $0<\d_0$ and define $\d_k=\d_0^{n-k}$.
\begin{lemma}\label{multilem1} The integral \eqref{now2} is bounded by 
\begin{align*} 
\sum_{2\le k\le n_p} &C_{\d_{k-1}}r^{C\d_{k-1}}\int_{\R^{n+1}}|\sum_{J_{m+1}}|\sum_{\substack{I(\theta)\subset J
}}f_\theta|^{\tilde{p}_{k-1}}*\tilde{W}_{J,m+k-1}^{n+1,d_k}*\widecheck{\eta}_{J,r^{-\d_k}}^k|^{\frac{p}{\tilde{p}_{k-1}}}W\\
    &+C_{\d_{n_p}}r^{\d_{n_p}}\int_{\R^{n+1}}\sum_{J_{m+1}}|\sum_{\substack{I(\theta)\subset J_{m+1}
    }}f_\theta|^p W. \end{align*}    
\end{lemma}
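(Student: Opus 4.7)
\textbf{Proof plan for Lemma \ref{multilem1}.}

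The strategy is an iterated high--low frequency decomposition, cycling $k = 2, 3, \ldots, n_p$, with each iteration raising the relevant exponent from $\tilde p_{k-1}$ to $\tilde p_k$. The starting observation is that every $\tilde p_j$ is even (check the two residue cases in the definition), so for each $J \in \mc J(r^{-1/n})$ the function $\big|\sum_{I(\theta) \subset J} f_\theta\big|^{\tilde p_{k-1}}$ has Fourier support in a $\tilde p_{k-1}$-fold sum--difference of the $\theta$'s. By Taylor expansion this set sits inside the box \eqref{claim}, and therefore inside the larger rectangular set on the right of \eqref{etaFsupp}. Since $\s_J^k$ equals $1$ on that rectangle, the locally constant property of \S\ref{wtsec} lets me freely replace the $\tilde p_{k-1}$-th power by its convolution with $\tilde W^{n+1, d_k}_{J, m+k-1}$ and simultaneously insert a convolution with $\widecheck{\s}^k_J$.

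At stage $k$, write $\s_J^k = \eta^k_{J, r^{-\d_k}} + \sum_{r^{-\d_k} < \sigma \le 1} \eta^k_{J,\sigma}$ and apply the triangle inequality in $L^{p/\tilde p_{k-1}}$. The fully-low summand contributes the $k$-th term of the claimed bound. For each of the $O_{\d}(\log r)$ high summands at dyadic scale $\sigma \in (r^{-\d_k}, 1]$, a direct check from the definition of $\eta^k_{J,\sigma}$ and the anisotropic $\sigma$-rescaling shows that, after the affine map $A_J^k$, its Fourier support is an affine image of a truncated $(m+k-1)$th order Taylor-cone set at the appropriate scale, so the hypothesis $S_K^{n, m+k-1}(\,\cdot\,, \cdot, \cdot) \lesssim_\d \rho^\d$ applies and yields an $\ell^2$-square function bound over $\{\theta : I(\theta) \subset J\}$. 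The nested-$\ell^q$ embedding together with Lemma \ref{pprops}(3) (which gives $\tilde p_k / \tilde p_{k-1} \le 2$) then upgrades $\ell^2$ to $\ell^{\tilde p_k / \tilde p_{k-1}}$, which rewrites the high pieces as $\big|\sum f_\theta\big|^{\tilde p_k}$ against a weight, setting up stage $k+1$.

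The iteration terminates at $k = n_p$: by definition of $n_p$ the ratio $p / \tilde p_{n_p - 1}$ lies in $[1,2]$, so after the final high-frequency step the remaining $\ell^2$-square function can be replaced by an $\ell^p$-sum via Theorem \ref{smallpmn-1} (localized against the outer weight), producing the trailing $\sum_J \big|\sum f_\theta\big|^p$ term with the declared $r^{\d_{n_p}}$ loss. The principal obstacle is the bookkeeping at each stage: first, verifying that $\widehat{\eta}^k_{J, \sigma}$ really is a rescaled truncated Taylor cone of order $m + k - 1$, so that the correct local multi-scale constant can be invoked; second, propagating the $J$-dependent weights $\tilde W^{n+1, d_k}_{J, m+k-1}$ through the outer $L^{p/\tilde p_{k-1}}$ norm via Minkowski and the weight-convolution inequality \S\ref{wtsec}(3c) without destroying decay. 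The schedule $d_k = 4(n-k)d$ is calibrated precisely so that each iteration consumes exactly one unit of decay, leaving the outer weight $W_B^{n+1, d}$ intact throughout.
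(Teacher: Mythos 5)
Your high-level plan matches the paper's: iterated high--low decomposition of $\s_J^k$ using the $\eta^k_{J,\sigma}$ pieces, parity of $\tilde p_j$ to control Fourier supports, Lemma \ref{pprops}(3) ($\tilde p_k/\tilde p_{k-1}\le 2$) to upgrade the exponent, and the schedule $d_k$ to absorb weight convolutions. But the two tools you cite to close the high-frequency pieces do not fit the geometry that arises, and this is not merely a cosmetic substitution.

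\textbf{The main gap.} At stage $k$ with dyadic $\sigma>r^{-\d_k}$ you propose to invoke $S_K^{n,m+k-1}(\cdot,\cdot,\cdot)\lesssim_\d\rho^\d$. But $S_K^{n,m+k-1}$ only controls functions whose Fourier supports live in the $\mathbf{h}$\nobreakdash-truncated $\Xi^{n+1}_{m+k-1}(R,K,\mathbf{h})$, i.e.\ inside a fixed multiplicative annulus prescribed by a tuple $\mathbf{h}\in\mc{H}^n_{m+k-1}(R,K)$. What actually emerges after the map $A^k_J$ is that the support of $|\sum_{I(\theta)\subset J}f_\theta|^{\tilde p_{k-1}}*\tilde W_{J,m+k-1}^{n+1,d_{k-1}}*\widecheck{\eta}^k_{J,\sigma}$ is a dilate of a full element of $\Xi^{n+1}_{m+k-1}(\sigma^n r)$ — all $\lambda$ obeying the ``$\max\ge 1$'' constraint, no $\mathbf{h}$-pinning. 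To apply $S_K$ you would first have to slice into $\mathbf{h}$-truncations, which is exactly the extra work that the definition of $S_K$ was designed to avoid and would cost additional logarithmic factors that your bookkeeping does not account for. The correct tool is the untruncated constant $\C^{n+1}_{m+k-1}(\cdot)$ combined with Lemma \ref{loclem} to handle the ball weight $W$: inside the induction on $n-m$ run by Proposition \ref{coneinduct}, $\C^{n+1}_{m'}(r)\lesssim_\d r^\d$ is available for all $m'>m$, and $m+k-1>m$ holds since $k\ge 2$.

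\textbf{A secondary gap.} At the terminal stage $k=n_p$ you propose Theorem \ref{smallpmn-1}, which is a \emph{global} $L^p$ ($1\le p\le 2$) estimate for $\Gamma^{n+1}_{n-1}$. The integral at that stage carries the weight $W_B^{n+1,d}$ and you cannot simply wrap the weight inside the functions without destroying the Fourier support hypothesis (and the proof of Theorem \ref{smallpmn-1} via wave-packet pigeonholing does not localize in any obvious way). The paper instead decomposes the first $n$ coordinates via $\eta^{n-m}_{J,\sigma}$ and invokes the \emph{local} $L^2$ orthogonality Theorem \ref{locL2thm}, which is stated directly with a weight $W_B^{n+1,d}$, then uses H\"older to convert the outer exponent. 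Your phrase ``localized against the outer weight'' is doing a lot of work here and would need to be backed by a genuine localization argument; as written it is not a step one can verify.

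Outside of these two tool choices the proposal is in good shape: the initial locally-constant step, the even-exponent reduction, the dyadic pigeonholing, and the use of $\|\cdot\|_{\ell^2}\le\|\cdot\|_{\ell^{\tilde p_k/\tilde p_{k-1}}}$ followed by pointwise Cauchy--Schwarz in the convolution (rather than Minkowski) all match the actual proof's structure.
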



\begin{lemma}\label{ptwise} For each $1\le k\le n_p$ and $J\in\mc{J}(r^{-\frac{1}{n}})$, 
we have
\[ ||\sum_{\substack{I(\theta)\subset J_{m+1}
}}f_\theta|^{\tilde{p}_k}*\widecheck{\eta}_{J,r^{-\d_k}}^k|\lesssim_\d r^\d |\sum_{I_k \subset J_{m+1}}|\sum_{I(\theta)\subset I_k}f_\theta|^2|^{\frac{\tilde{p}_{k-1}}{2}}*\tilde{W}_{J,{m+k},r^{-\d_k}}^{n+1,d_k}, \]
where $I_k\in\mc{J}\big(\max(\rho^{-\frac{1}{n}},r^{-\frac{(1+\d_k)}{n}})\big)$. 
\end{lemma}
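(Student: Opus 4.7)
The plan is to apply a local $L^{\tilde p_{k-1}}$ square function estimate on the spatial region dual to the Fourier support of $\eta_{J,r^{-\d_k}}^k$, and to produce the claimed pointwise-after-convolution bound by combining this with local $L^2$ orthogonality. Since the sequence $\tilde p_j$ is even by construction, $|F_J|^{\tilde p_k}=(|F_J|^2)^{\tilde p_k/2}$ (writing $F_J:=\sum_{I(\theta)\subset J_{m+1}}f_\theta$), and similarly $|F_J|^{\tilde p_k-\tilde p_{k-1}}=(|F_J|^2)^{(\tilde p_k-\tilde p_{k-1})/2}$; both are honest polynomials in $f_\theta,\bar f_\theta$, which makes their Fourier supports tractable and allows the low-frequency cutoff $\eta$ to be analyzed as a restriction on sums/differences of the $\theta$'s.

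First I would establish the pointwise dominance $|\widecheck{\eta}_{J,r^{-\d_k}}^k|\lesssim \tilde W_{J,m+k,r^{-\d_k}}^{n+1,d_k}$. By construction, $\eta_{J,r^{-\d_k}}^k=\s_{J_{m+1}}^k(\sigma^{-(m+k-i)}x_i,\ldots)$ with $\sigma=r^{-\d_k}$, so its support is an anisotropically rescaled copy of $\supp\s_{J_{m+1}}^k$, and the dual spatial region is a rectangle with long axes $\sim r^{\d_k(m+k-i)}R^{m/n}r^{-(m-i)/n}$ along $\phi_n^{(i)}(a)$ for $0\le i\le m+k-1$ and length $\sim 1$ in the remaining $n+1-m-k$ directions; the claimed weight is adapted precisely to this rectangle, so the inequality follows from the rapid decay of $\widecheck{s}$. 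This reduces the proof of the lemma to bounding $|F_J|^{\tilde p_k}$ pointwise in an averaged sense against $\tilde W_{J,m+k,r^{-\d_k}}^{n+1,d_k}$.

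Next, on each translate of the spatial rectangle, a Taylor expansion of $\phi_n^{(i)}(t)$ around the center of $J$, truncated at order $m+k-1$ (the cutoff $\eta$ discards higher-order corrections), identifies the local geometry: $F_J$ behaves like a function whose Fourier transform lives in the neighborhood of a $(k-1)$-dimensional moment curve (or, if $m>0$, a slice of an appropriate Taylor cone) at the rescaled scale $r^{\d_k}$. By the hypotheses of Proposition \ref{conemulti} and the outer induction on $n-m$, the associated geometry satisfies an $L^{\tilde p_{k-1}}$ square function estimate with constant $\lesssim_\d r^{\d_k\d}\le r^\d$; admissibility of the exponent $\tilde p_{k-1}$ relative to the critical exponent is guaranteed by Lemma \ref{pprops}(4). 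Applying this local estimate on the rectangle controls $|F_J|^{\tilde p_{k-1}}$ (averaged against the weight) by $(\sum_{I_k}|F_{I_k}|^2)^{\tilde p_{k-1}/2}$ (averaged), where $I_k$ refines to scale $r^{-(1+\d_k)/n}$. The residual factor $|F_J|^{\tilde p_k-\tilde p_{k-1}}=(|F_J|^2)^{(\tilde p_k-\tilde p_{k-1})/2}$ is absorbed by iterating local $L^2$ orthogonality (Theorem \ref{locL2thm}) on the same rectangle, which converts each factor $|F_J|^2$ into $\sum_{I_k}|F_{I_k}|^2$ up to weights; the bound $\tilde p_k\le 2\tilde p_{k-1}$ from Lemma \ref{pprops}(3) ensures the number of such steps is at most one. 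Finally, property (3)(c) of the weights in \textsection\ref{wtsec} is used to collapse the resulting iterated convolutions into a single convolution with $\tilde W_{J,m+k,r^{-\d_k}}^{n+1,d_k}$, with decay losses absorbed into the gap $d_k=4(n-k)d$ vs.\ lower $d$'s.

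The main obstacle is the geometric identification in the previous paragraph: one must verify that, on the spatial rectangle produced by $\widecheck\eta$, the local Fourier structure of $F_J$ is truly captured (up to admissible errors) by a known lower-dimensional geometry, so that $\M^{k-1}$ or one of the $\C_{\cdot}^{\cdot}$ constants with smaller parameters can be invoked. A closely related subtlety is the exponent bookkeeping between $\tilde p_k$ and $\tilde p_{k-1}$: without Lemma \ref{pprops}(3), the absorption of $|F_J|^{\tilde p_k-\tilde p_{k-1}}$ by local $L^2$ orthogonality would fail in a single step, forcing a more delicate iteration. Once these are handled, the remaining estimates are standard manipulations of weights and Taylor remainders.
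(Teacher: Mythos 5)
The framework you propose --- dominate $|\widecheck{\eta}_{J,r^{-\d_k}}^k|$ by $\tilde{W}_{J,m+k,r^{-\d_k}}^{n+1,d_k}$, interpret the convolution as a local $L^p$ average, rescale, and invoke a lower-dimensional square function estimate --- is broadly the right strategy, but two of the load-bearing steps do not hold up. First, the paper's own proof of this lemma, and its sole application in the proof of Proposition \ref{conemulti}, both carry the exponent $\tilde{p}_{k-1}$ (not $\tilde p_k$) on the left-hand side; the printed $\tilde p_k$ is a misprint. Taking it at face value, you introduce an ``absorption'' of the residual factor $|F_J|^{\tilde p_k-\tilde p_{k-1}}$ (with $F_J=\sum_{I(\theta)\subset J}f_\theta$) via Theorem \ref{locL2thm}, but this cannot work: local $L^2$ orthogonality is an integral inequality, not a pointwise bound of $|F_J|^2$ by $\sum_{I_k}|F_{I_k}|^2$ convolved with a weight, and even granting such a bound, the output would carry an extra power of $\sum_{I_k}|F_{I_k}|^2$ that the claimed right-hand side $(\sum_{I_k}|F_{I_k}|^2)^{\tilde p_{k-1}/2}*\tilde{W}_{J,m+k,r^{-\d_k}}^{n+1,d_k}$ simply does not have. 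With $\tilde p_{k-1}$ on the left, no absorption is needed.

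Second, the square function estimate you invoke is not $\M^{k-1}$ or a global $\C_{\cdot}^{\cdot}$ constant after a Taylor expansion heuristic; it is the \emph{local, truncated} multi-scale constant $S_K^{m+k-1,m}(1,\cdot,\cdot)$, assumed bounded by the outer induction. The explicit rescaling $A$ maps the Fourier support of each $f_\theta$ (in the first $m+k$ coordinates) into an element of $\Xi_m^{m+k}((R/r)^{(m+k-1)/n},K,\tilde{\bf{h}})$ --- a truncated $m$th order Taylor cone in $\R^{m+k}$, not a moment-curve neighborhood --- while the weight $\tilde{W}_{J,m+k,r^{-\d_k}}^{n+1,d_k}$ localizes the spatial side to balls of a specific finite radius. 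Since $\Gamma_m^{n+1}(R)$ and $\Gamma_m^{n+1}(r)$ are not nested when $m>0$, global bounds for $\C_m^{\cdot}$ do not automatically give local bounds; the constants $S_K^{\cdot,\cdot}$ were introduced precisely to carry out this local step. (The admissibility condition needed is $\tilde p_{k-1}\le p_{k-1}$, from Lemma \ref{pprops}(1), not part (4).) Your proposal does not engage with the truncation or the localization, which are the two genuine difficulties this lemma must overcome.
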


\begin{lemma}\label{multilem3} 
Let $I_k$ vary over intervals in $\mc{J}\big(\max(\rho^{-\frac{1}{n}},r^{-\frac{1+\d_k}{n}})\big)$. 
Then 
\begin{align*} 
\int_{\R^{n+1}}|\sum_{J}|\sum_{I_k\subset J}|\sum_{\substack{I(\theta)\subset I_{k}
}}f_\theta|^2|^{\frac{\tilde{p}_{k-1}}{2}} *\tilde{W}_{J,{m+k},r^{-\d_k}}^{n+1,d_k}|^{\frac{p}{\tilde{p}_{k-1}}}W\lesssim_\d r^\d \int_{\R^{n+1}}|\sum_{I_{k}}|\sum_{\substack{I(\theta)\subset I_{k}
}}f_\theta|^2|^{\frac{p}{2}}W. 
\end{align*}

\end{lemma}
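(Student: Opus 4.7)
Writing $G_{I_k}:=\big|\sum_{I(\theta)\subset I_k}f_\theta\big|^2$, $F_J:=\big(\sum_{I_k\subset J}G_{I_k}\big)^{\tilde p_{k-1}/2}$, and $q:=p/\tilde p_{k-1}$, the inequality to prove reads
\[
\int_{\R^{n+1}}\Big|\sum_J F_J*\tilde W_{J,m+k,r^{-\d_k}}^{n+1,d_k}\Big|^{q}W\;\lesssim_\d\;r^\d\int_{\R^{n+1}}\Big(\sum_{I_k}G_{I_k}\Big)^{p/2}W.
\]
The definition of $n_p$ together with Lemma~\ref{pprops} ensures $q\ge 1$ and $\tilde p_{k-1}/2\ge 1$. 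The basic algebraic ingredient I plan to use is the pointwise subadditivity $\sum_i a_i^{s}\le(\sum_i a_i)^s$ for $s\ge 1$ and $a_i\ge 0$: applying it with $s=\tilde p_{k-1}/2$ and then raising the resulting inequality to the $q$-th power (using $q\cdot\tilde p_{k-1}/2=p/2$) yields the pointwise bound
\[
\Big(\sum_J F_J\Big)^q\;\le\;\Big(\sum_{I_k}G_{I_k}\Big)^{p/2},
\]
so if the $J$-dependent convolutions were absent the target inequality would be immediate.

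The main work is therefore to absorb the convolutions with $\tilde W_{J,m+k,r^{-\d_k}}^{n+1,d_k}$ at a cost of at most $r^\d$. My plan is to check that, for each $J$, the Fourier support of $F_J$ is contained in a region on which $\widehat{\tilde W_{J,m+k,r^{-\d_k}}^{n+1,d_k}}\equiv 1$, so that the convolution recovers $F_J$ up to a Schwartz tail. Since $\tilde p_{k-1}$ is an even integer (by inspection of the definition preceding Lemma~\ref{pprops}), $F_J$ expands as a finite sum of products $\prod_{j=1}^{\tilde p_{k-1}/2}|f_{I_k^{(j)}}|^2$, and the Fourier support of each such product is contained in the Minkowski sum of the sets $I_k^{(j)}-I_k^{(j)}$ with $I_k^{(j)}\subset J$. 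Taylor expanding about the initial point of $J$, in the spirit of the proof of Theorem~\ref{locL2thm}, one verifies that this Minkowski sum lies inside the image $A_J^k$ applied to a box comparable to the one defining $\sigma^k_{J,r^{-\d_k}}$ in Section~\ref{multidef}; this is precisely the reason for tailoring the cutoffs $\eta^k_{J,r^{-\d_k}}$ as in that section. By the locally constant property, one then obtains the pointwise comparison
\[
F_J*\tilde W_{J,m+k,r^{-\d_k}}^{n+1,d_k}\;\lesssim\;F_J*\tilde W^\star
\]
for a single $L^1$-normalised Schwartz weight $\tilde W^\star$ independent of $J$.

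Once a common $J$-independent weight $\tilde W^\star$ is available, the remaining steps are routine. Using $\tilde W^\star*W\lesssim W$, which follows from property~(3)(c) of Section~\ref{wtsec} because $\tilde W^\star$ is at much smaller spatial scale than the radius $R^{m/n}\rho$ of the ball associated to $W$, together with Jensen's inequality (valid because $q\ge 1$), one obtains
\[
\int\Big|\sum_J F_J*\tilde W_{J,m+k,r^{-\d_k}}^{n+1,d_k}\Big|^qW\;\lesssim_\d\;r^\d\int\Big(\sum_J F_J\Big)^qW\;\le\;r^\d\int\Big(\sum_{I_k}G_{I_k}\Big)^{p/2}W,
\]
completing the proof. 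The main obstacle I anticipate is the Fourier-support verification in the previous paragraph: one must track how Taylor-expansion errors accumulate when forming $(\tilde p_{k-1}/2)$-fold products of $G_{I_k}$'s with slightly different base points, and confirm that these errors stay within the $r^{-\d_k}$-scaled region. The hierarchy $\d_k=\d_0^{n-k}$ fixed in the paragraph defining $\tilde W_{J,m+k,r^{-\d_k}}^{n+1,d_k}$ is designed to supply enough slack for this.
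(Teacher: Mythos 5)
Your plan fails at the step where you claim the convolution with $\tilde W_{J,m+k,r^{-\d_k}}^{n+1,d_k}$ acts as an approximate identity on $F_J$. The containment goes the wrong way. The Fourier support of $F_J=(\sum_{I_k\subset J}G_{I_k})^{\tilde p_{k-1}/2}$ is, by the Minkowski-sum argument you sketch, contained in a constant dilate of the \emph{full} box
\[
\{\sum_{i=0}^n\lambda_i\phi_n^{(i)}(a):|\lambda_i|\lesssim R^{-\frac{m}{n}}r^{\frac{m-i}{n}}\},
\]
where $a$ is the initial point of $J$. By contrast, the weight $\tilde W_{J,m+k,r^{-\d_k}}^{n+1,d_k}$ is defined to have the same Fourier support as $\widecheck{\eta}_{J,r^{-\d_k}}^{\,k}$, namely (see the display \eqref{a&e} in the proof of Lemma~\ref{ptwise})
\[
\big\{\sum_{i=0}^{m+k-1}\lambda_i\phi_{m+k-1}^{(i)}(a):|\lambda_i|\le R^{-\frac{m}{n}}r^{\frac{m-i}{n}}(r^{-\d_k})^{m+k-i}\big\}\times[-2,2]^{n+1-m-k},
\]
which is \emph{strictly smaller} in the first $m+k$ coordinates by factors $(r^{-\d_k})^{m+k-i}<1$. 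So $\widehat{\tilde W_{J,m+k,r^{-\d_k}}^{n+1,d_k}}$ cannot be $\equiv 1$ on $\supp\widehat{F_J}$; rather it is a genuine low-pass filter, and $F_J*\tilde W_{J,m+k,r^{-\d_k}}^{n+1,d_k}$ is a nontrivially smoothed version of $F_J$, not $F_J$ plus a Schwartz tail. Consequently there is no single $J$-independent weight $\tilde W^\star$ that you can substitute at cost $O(r^\d)$ by this route, and Jensen's inequality does not close the argument.

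This is not a minor fix: if the lemma reduced to your pointwise subadditivity plus approximate-identity argument, it would hold without any of the hypotheses of Proposition~\ref{coneinduct}, which cannot be the case. The paper's proof instead treats the convolution seriously: it dyadically decomposes the Fourier support of each summand $F_J*\tilde W_{J,m+k,r^{-\d_k}}^{n+1,d_k}$ with the bump functions $\eta^{k}_{J,\sigma_k}$, invokes the hypothesized cone estimates $\C_{m+k-1}^{n+1}(\cdot)$ (via Lemma~\ref{loclem}) on the high-frequency pieces to gain an $\ell^2$-square function, and uses local $L^2$-orthogonality (Theorem~\ref{locL2thm}) and the low pieces to terminate; the passage to a $J$-independent weight only occurs at the bottom dyadic scale $\sigma_k=r^{-1/n}$, where the Fourier support has been cut down far enough to be contained in $\supp\widehat{\tilde W_{J,m+k,r^{-\d_k}}^{n+1,d_k}}$. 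Your write-up has the correct outer scaffolding (the $\|\cdot\|_{\ell^s}\le\|\cdot\|_{\ell^1}$ step for $s\ge1$, the weight-absorption $\tilde W*W\lesssim W$, Jensen for $q\ge1$), but it is missing the entire high-low/iteration core that makes the lemma true.
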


We remark that we are free to choose any $\d>0$ in the notation $\lesssim_\d r^\d$ from Lemmas \ref{ptwise} and \ref{multilem3}, and this parameter is independent of $\d_0>0$. Now we show how Lemmas \ref{multilem1}, \ref{ptwise}, and \ref{multilem3} imply Proposition \ref{conemulti}. 

\begin{proof}[Proof of Proposition \ref{conemulti}]  
Our objective is to bound \eqref{now2}. Begin by using Lemma \ref{multilem1}. There are two possible outcomes. Suppose first that \eqref{now2} is bounded by 
\[ C_{\d_{n_p}}r^{\d_{n_p}}\int_{\R^{n+1}}\sum_{J_{m+1}}|\sum_{\substack{I(\theta)\subset J
    }}f_\theta|^p W^p.  \]
Let $J$ be the interval with initial point $a$ and of length $2^{-100n}r^{-\frac{1}{n}}$. Let $A$ be the affine map mapping $\phi_{n}^{(i)}(a)\mapsto r^{\frac{i}{n}}\phi_n^{(i)}(0)$ for all $i$. Then the function $f_\theta\circ (A^{-1})^t$ has Fourier transform supported in $A(\theta)\in\Xi_m^{n+1}(R/r,K,{\bf{\tilde{h}}})$, where $\tilde{h}=(\tilde{h}_0,\ldots,\tilde{h}_m)$ is defined by $\tilde{h}_i=r^{\frac{i}{n}}h_i$ if $r^{\frac{i}{n}}|h_i|\ge K^{-1}r^{\frac{m}{n}}R^{-\frac{m}{n}}$ and $\tilde{h}_i=K^{-1}r^{\frac{m}{n}},R^{-\frac{m}{n}}$ otherwise. Note that ${\bf{\tilde{h}}}$ is an element of $\mc{H}_m^n(R/r,K)$. The weight function is localized to an ellipsoid which may be approximated with finitely overlapping balls of radius $R^{\frac{m}{n}}r^{-\frac{m}{n}}(\rho/r)$, so we have 
\begin{align*} 
\int_{\R^{n+1}}|\sum_{\substack{I(\theta)\subset J
    }}&f_\theta\circ(A^{-1})^t|^p W\circ(A^{-1})^t\\
    &\lesssim S^{n,m}_{K}(1,\rho/r,R/r)\int_{\R^{n+1}}|\sum_{\substack{I\in\mc{J}(\rho^{-\frac{1}{n}})\\ I\subset J}} |\sum_{\substack{I(\theta)\subset I
    }}f_\theta\circ(A^{-1})^t|^2|^{\frac{p}{2}} W\circ(A^{-1})^t. 
\end{align*}
Summing over $J$ and using $\|\cdot\|_{\ell^{p/2}}\le\|\cdot\|_{\ell^1}$, conclude in this case that \eqref{now2} is bounded by 
\[C_{\d_{n_p}}r^{\d_{n_p}} S^{n,m}_{K}(1,\rho/r,R/r)\int_{\R^{n+1}}|\sum_{\substack{I\in\mc{J}(\rho^{-\frac{1}{n}})}} |\sum_{\substack{I(\theta)\subset I
    }}f_\theta|^2|^{\frac{p}{2}} W. \]

The remaining case is if for some $2\le k\le n_p$, \eqref{now2} is bounded by 
\[ C_{\d_{k-1}}r^{C\d_{k-1}}\int_{\R^{n+1}}|\sum_{J}|\sum_{\substack{I(\theta)\subset J}}f_\theta|^{\tilde{p}_{k-1}}*\tilde{W}_{J,m+k-1}^{n+1,d_{k-1}}*\widecheck{\eta}_{J,r^{-\d_k}}^k|^{\frac{p}{\tilde{p}_{k-1}}}W . \]
Then apply Lemma \ref{ptwise} pointwise to bound the above expression by 
\[ C_{\d_{k-1}}r^{C\d_{k-1}}\int_{\R^{n+1}}|\sum_{J_{m+1}}|\sum_{I_k\subset J_{m+1}}|\sum_{\substack{I(\theta)\subset I_k}}f_\theta|^2|^{\frac{\tilde{p}_{k-1}}{2}}*\tilde{W}_{J,m+k,r^{-\d_k}}^{n+1,d_k}|^{\frac{p}{\tilde{p}_{k-1}}}W  \]
where $I_k\in\mc{J}\big(\max(\rho^{-\frac{1}{n}},r^{-\frac{(1+\d_k)}{n}})\big)$ and $\tilde{W}_{J,{m+k},r^{-\d_k}}^{n+1,d_k}$ has the same Fourier support as $\widecheck{\eta}_{J_{m+1},r^{-\d_k}}^k$. Finally, by Lemma \ref{multilem3}, we have bounded \eqref{now2} by
\[ C_{\d_{k-1}}r^{C\d_{k-1}}\int_{\R^{n+1}}|\sum_{I_k}|\sum_{\substack{I(\theta)\subset I_k}}f_\theta|^2|^{\frac{p}{2}}W.  \]
Conclude in this case that 
\[ S_{K}^{n,m}(r,\rho,R)\le C_{\d_{k-1}}r^{C\d_{k-1}} S_K^{n,m}(\min(\rho,r^{1+\d_k}),\rho,R). \]

\end{proof}

It remains to prove the lemmas. 

\begin{proof}[Proof of Lemma \ref{multilem1}]
We initiate an iteration to bound \eqref{now2}. 

\noindent\fbox{Initial step:} 
For each $J$ with $J=[a,a+2^{-100n}r^{-\frac{1}{n}})$, the Fourier support of $|\sum_{\substack{I(\theta)\subset J\\ \theta\in\Xi_m^{n+1}(R,K,{\bf{h}})}} f_\theta|^2$ is contained in 
\begin{align} 
& \label{disabo}   \big\{\sum_{i=0}^{n}\lambda_i\phi_n^{(i)}(a):|\lambda_i|\le R^{-\frac{m}{n}}r^{\frac{m-i}{n}} \big\}\subseteq \{\sum_{i=0}^m\lambda_i\phi_{m}^{(i)}(a):|\lambda_i|\le R^{-\frac{m}{n}} r^{\frac{m-i}{n}}\}\times [-2,2]^{n-m} . 
\end{align}
It follows from the locally constant property, \eqref{now2} is bounded by 
\[\int_{\R^{n+1}} |\sum_{J\in\mc{J}(r^{-\frac{1}{n}})}|\sum_{\substack{I(\theta)\subset J\\ \theta\in\Xi_m^{n+1}(R,K,{\bf{h}})}} f_\theta|^2*\tilde{W}_{J,{m+1}}^{n+1,d_1}|^{\frac{p}{2}}W, 
 \]
where $\tilde{W}_{J,{m+1}}^{n+1,d}$ is an $L^1$-normalized weight function that is centered at the origin and Fourier supported in the right hand side of \eqref{disabo}. Let $n_{p}=\min\{l\ge 2:1\le \frac{p}{\tilde{p}_{l-1}}\le 2\}$. Let $2\le k\le n_p$ and define $\d_k=\d_0^{n-k}$. The input for step $k$ is a bound for \eqref{now2} of the form 
\begin{equation}\label{stepkeqn} C_{\d_{k-1}} r^{C\d_{k-1}} \int_{\R^{n+1}}|\sum_{J\in\mc{J}(r^{-\frac{1}{n}})}|\sum_{I(\theta)\subset J}f_\theta|^{\tilde{p}_{k-1}}*\tilde{W}_{J,{m+k-1}}^{n+1,d_{k-1}}|^{\frac{p}{\tilde{p}_{k-1}}} W, \end{equation}
where $\tilde{W}_{J,m+k-1}^{n+1,d_{k-1}}$ is Fourier supported in 
\[\{\sum_{i=0}^{m+k-2}\lambda_i\phi_{m+k-2}^{(i)}(a):|\lambda_i|\le R^{-\frac{m}{n}} r^{\frac{m-i}{n}}\}\times [-2,2]^{n+2-m-k}.  \]

\vspace{2mm}
\noindent\fbox{Step $k$, $k<n_p$:} Suppose that $2\le k< n_p$, so $2\le \frac{p}{\tilde{p}_{k-1}}$, and that \eqref{now2} is bounded by \eqref{stepkeqn}. 
We will use the auxiliary functions described before Lemma \ref{multilem1} to decompose the Fourier support of each summand $|\sum_{I(\theta)\subset J} f_\theta|^{\tilde{p}_{k-1}}*\tilde{W}_{J,{m+k-1}}^{n+1,d_{k-1}}$. Since $\tilde{p}_{k-1}$ is even, the Fourier support of $|\sum_{I(\theta)\subset J}f_\theta|^{\tilde{p}_{k-1}}$ is contained in the left hand side of \eqref{disabo} (dilated by a harmless constant factor). If \eqref{stepkeqn} is bounded by 
\begin{equation}
CC_{\d_{k-1}}r^{C\d_{k-1}}\int_{\R^{n+1}}|\sum_{J \in\mc{J}(r^{-\frac{1}{n}})}|\sum_{I(\theta)\subset J}f_\theta|^{\tilde{p}_{k-1}}*\tilde{W}_{J,{m+k-1}}^{n+1,d_{k-1}}*\widecheck{\eta}_{J,r^{-\d_k}}^k|^{\frac{p}{\tilde{p}_{k-1}}} W, \end{equation}
then the lemma is proved and the iteration halts. Otherwise, assume that \eqref{stepkeqn} is bounded by 
\begin{equation}\label{displayedabovek} 
(\log r)^CC_{k-1}r^{C\d_{k-1}} \int_{\R^{n+1}}|\sum_{J\in\mc{J}(r^{-\frac{1}{n}})}|\sum_{I(\theta)\subset J}f_\theta|^{\tilde{p}_{k-1}}*\tilde{W}_{J,{m+k-1}}^{n+1,d_{k-1}}*\widecheck{\eta}_{J,\sigma}^k|^{\frac{p}{\tilde{p}_{k-1}}} W  \end{equation}
for some dyadic $\sigma$, $r^{-\d_k}\le \sigma\le 1$. The function $\eta_{J,\sigma}^k$ is supported in 
\begin{align} 
\label{J-Jsigma}
\big\{\sum_{i=0}^{m+k-1}\lambda_i\phi_{m+k-1}^{(i)}(a):\max_{0\le i\le m+k-1}&|\lambda_i|2^{-i}\sigma^{i-m-k}R^{\frac{m}{n}}r^{\frac{i-m}{n}}\in[2^{-m-k},1]\big\}\times[-2,2]^{n+1-m-k}.  \end{align}
Therefore, the Fourier support of each $|\sum_{I(\theta)\subset J }f_\theta|^{\tilde{p}_{k-1}}*\tilde{W}_{J,{m+k-1}}^{n+1,d_{k-1}}*\widecheck{\eta}_{J,\sigma}^k$ is contained in 
\[ \{\sum_{i=0}^n\lambda_i\phi_n^{(i)}(a):|\lambda_i|\le \sigma^{m+k-i}R^{-\frac{m}{n}}r^{\frac{m-i}{n}},\quad\max_{0\le i\le m+k}|\lambda_i|2^{-i}\sigma^{i-m-k}R^{\frac{m}{n}}r^{\frac{i-m}{n}}\in[2^{-m-k},1]\},\]
which is contained in a unique element $\tau\in\Xi_{m+k-1}^{n+1}(\sigma^{n}r)$ dilated by a factor of $\sigma^{m+k}R^{-\frac{m}{n}}r^{\frac{m}{n}}$. There are $\sim\sigma^{-1}$ many adjacent intervals $J_{m+1}$ whose corresponding sets \eqref{J-Jsigma} are identified with a single $\tau$. Note that after rescaling the spatial side by a factor of $\sigma^{m+k}R^{-\frac{m}{n}}r^{\frac{m}{n}}$, $W$ becomes a weight function localized to a ball of radius $\sigma^{m+k}r^{\frac{m}{n}}\rho\ge \sigma^n r$. We assumed in this case that $2\le \frac{p}{\tilde{p}_{k-1}}$ and by (4) of Lemma \ref{pprops}, we have $\frac{p}{\tilde{p}_{k-1}}\le p_{n-m-k+1}$. Therefore, we may apply the hypothesis that $\C_{m+k-1}^{n+1}(r)\lesssim_\d r^\d$ and Lemma \ref{loclem} to bound \eqref{displayedabovek} by 
\[ C_{\d_k} r^{C\d_k}  \int_{\R^{n+1}} |\sum_{J } ||\sum_{I(\theta)\subset J }f_\theta|^{\tilde{p}_{k-1}}*\tilde{W}_{J,{m+k-1}}^{n+1,d_{k-1}}*\widecheck{\eta}_{J,\sigma}^k|^2|^{\frac{p}{2\tilde{p}_{k-1}}}
W.
\]
Since $2\ge \frac{\tilde{p}_k}{\tilde{p}_{k-1}}$, use $\|\cdot\|_{\ell^2}\le \|\cdot\|_{\ell^{\tilde{p}_k/\tilde{p}_{k-1}}}$ to bound the above displayed expression by 
\[ C_{\d_k} r^{C\d_k}  \int_{\R^{n+1}} |\sum_{J} ||\sum_{I(\theta)\subset J}f_\theta|^{\tilde{p}_{k-1}}*\tilde{W}_{J,{m+k-1}}^{n+1,d_{k-1}}*\widecheck{\eta}_{J,\sigma}^k|^{\frac{\tilde{p}_k}{\tilde{p}_{k-1}}} |^{\frac{p}{\tilde{p}_k}}
W.
\]
Since $\sigma\ge r^{-\d_k}$, for each $J$, we have $|\widecheck{\eta}_{J,\sigma}^k|\lesssim r^{C\d_k}\tilde{W}_{J,{m+k}}^{n+1,d_k}$. The weight functions $\tilde{W}_{J,{m+k}}^{n+1,d_k}$ are $L^1$ normalized and satisfy  $\tilde{W}_{J,{m+k-1}}^{n+1,d_{k-1}}*\tilde{W}_{J,{m+k}}^{n+1,d_k}\lesssim \tilde{W}_{J,{m+k}}^{n+1,d_k}$, so by Cauchy-Schwarz (applied pointwise to the integral from the convolution), the previous displayed expression is bounded by 
\[ C_{\d_k} r^{C\d_k}  \int_{\R^{n+1}} |\sum_{J} |\sum_{I(\theta)\subset J}f_\theta|^{\tilde{p}_{k}}*\tilde{W}_{J,{m+k}}^{n+1,d_k}|^{\frac{p}{\tilde{p}_k}}
W.\]
This concludes Step $k$.

\vspace{2mm}
\noindent\fbox{Step $n_p$:} The iteration reaches step $n_p$ if it has produced a bound of \eqref{now2} by \eqref{stepkeqn} with $k=n_p$. Note that by the definition of $n_p$, we have $1\le \frac{p}{\tilde{p}_{n_p-1}}\le 2$. As in Step $k$, we decompose the Fourier supports of each summand $|\sum_{I(\theta)\subset J}f_\theta|^{\tilde{p}_{n_p-1}}*\tilde{W}_{J,{m+n_p-1}}^{n+1,d_{n_p-1}}$ into $\lesssim (\log r)$ many pieces. However, instead of performing this decomposition in the first $(m+n_p)$ many coordinates, we decompose the first $n$ many coordinates according to the dyadic parameter $\sigma$, $r^{-\d_{n_p}}\le \sigma\le 1$, into sets
\begin{equation}\label{sets} 
\{\sum_{i=0}^n\lambda_i\phi_n^{(i)}(a):\max_{0\le i\le n-1}|\lambda_i|\sigma^{i-n}R^{\frac{m}{n}}r^{\frac{i-m}{n}}\in[2^{-n},1],\quad|\lambda_n|\le R^{-\frac{m}{n}}r^{\frac{m-n}{n}}\} \end{equation}
(and $\{\sum_{i=0}^n\lambda_i\phi_n^{(i)}(a):|\lambda_i|\le R^{-\frac{m}{n}}r^{\frac{m-n}{n}}\}$). Suppose that \eqref{stepkeqn} is bounded by 
\begin{equation}\label{fromhere} 
(\log r)^CC_{n_p-1}r^{C\d_{n_p-1}}\int_{\R^{n+1}}|\sum_{J}|\sum_{I(\theta)\subset J}f_\theta|^{\tilde{p}_{n_p-1}}*\tilde{W}_{J,{m+n_p-1}}^{n+1,d_{n_p-1}}*\widecheck{\eta}_{J,\sigma}^{n-m}|^{\frac{p}{\tilde{p}_{n_p-1}}} W. \end{equation}
If $\sigma=r^{-\d_{n_p}}$, then the lemma is proved and the iteration halts. The alternative is that $\sigma>r^{-\d_{n_p}}$. The sets \eqref{sets} (for a fixed $\sigma$ and $a$ varying over $r^{-\frac{1}{n}}\Z\cap[0,1]$) may be organized into elements $\tau\in\Xi_{n-1}^{n+1}(\sigma^nr)$ dilated by $\sigma^nR^{-\frac{m}{n}}r^{\frac{m}{n}}$. Then invoke Theorem \ref{locL2thm} to bound \eqref{fromhere} by 
\begin{equation}\label{almostdone} 
(\log r)^CC_{\d_{n_p-1}}r^{C\d_{n_p-1}}\sum_{I\in\mc{J}(\sigma^{-1}r^{-\frac{1}{n}})}\int_{\R^{n+1}}|\sum_{J\subset I}|\sum_{I(\theta)\subset J}f_\theta|^{\tilde{p}_{n_p-1}}*\tilde{W}_{J,{m+n_p-1}}^{n+1,d_{n_p-1}}*\widecheck{\eta}_{J,\sigma}^{n-m}|^{\frac{p}{\tilde{p}_{n_p-1}}}W . \end{equation}
For each $J$, we have $\tilde{W}_{J,m+n_p-1}^{n+1,d_{n_p-1}}*|\widecheck{\eta}_{J,\sigma}^{n-m}|*W\lesssim r^{C\d_{n_p}}W$. Therefore, by H\"{o}lder's inequality (applied to the integral from the convolution) and using the fact that $\sigma>r^{-\d_{n_p}}$, the previous displayed expression is bounded by 
\begin{align*} 
(\log R)^C&C_{\d_{n_p}} r^{C\d_{n_p}}\sum_{J}  \int_{\R^{n+1}} |\sum_{I(\theta)\subset J}f_\theta|^{p}W. 
\end{align*}

\end{proof}

\begin{proof}[Proof of Lemma \ref{ptwise}] 
Begin by using $|\widecheck{\eta}_{J,r^{-\d_k}}^k|\lesssim \tilde{W}_{J,{m+k},r^{-\d_k}}^{n+1,d_k}$, where $\tilde{W}_{J,{m+k},r^{-\d_k}}^{n+1,d_k}$ is Fourier supported in 
\begin{equation}\label{a&e} 
\big\{\sum_{i=0}^{m+k-1}\lambda_i\phi_{m+k-1}^{(i)}(a):|\lambda_i|\le R^{-\frac{m}{n}}r^{\frac{m-i}{n}}(r^{-\d_k})^{m+k-i}\big\}\times[-2,2]^{n+1-m-k}.\end{equation}
We have 
\[ ||\sum_{\substack{I(\theta)\subset J\\ \theta\in\Xi_m^{n+1}(R,K,{\bf{h}})}}f_\theta|^{\tilde{p}_{k-1}}*\widecheck{\eta}_{J,r^{-\d_k}}^k|\lesssim |\sum_{\substack{I(\theta)\subset J\\ \theta\in\Xi_m^{n+1}(R,K,{\bf{h}})}}f_\theta|^{\tilde{p}_{k-1}}*\tilde{W}_{J,{m+k},r^{-\d_k}}^{n+1,d_k}. \]
Write the integral on the right hand side evaluated at a point $x\in\R^{n+1}$:
\begin{equation}\label{conv} \int_{\R^{n+1}}|\sum_{I(\theta)\subset J}f_\theta(y)|^{\tilde{p}_{k-1}}\tilde{W}_{J,m+k,r^{-\d_k}}^{n+1,d_k}(x-y)dy. \end{equation}
This integral looks like a local $L^{\tilde{p}_{k-1}}$ expression of a function with Fourier support in $\cup_{I(\theta)\subset J}\theta$. The next step is to rescale this integral. 

    Let $J$ be the interval with initial point $a$ and of length $2^{-100n}r^{-\frac{1}{n}}$. Let $A:\R^{n+1}\to\R^{n+1}$ be the linear map acting as $\phi_{m+k-1}^{(i)}(a)\mapsto r^{\frac{i}{n}}\phi_{m+k-1}^{(i)}(0)$ in the first $m+k$ coordinates and as the identity in the remaining coordinates. Applying $A$ to the Fourier support of $\tilde{W}_{J,m+k,r^{-\d_k}}^{n+1,d_k}$ \eqref{a&e} yields
\begin{equation}\label{AJ-J}
\big\{\sum_{i=0}^{m+k-1}\lambda_i\phi_{m+k-1}^{(i)}(0): |\lambda_i|\le R^{-\frac{m}{n}}r^{\frac{m}{n}}(r^{-\d_k})^{m+k-i} \quad\forall i\big\}\times[-2,2]^{n+1-m-k}.
\end{equation}
This set is contained in $B\times \R^{n+1-m-k}$, where $B\subset\R^{m+k}$ is a ball of radius $R^{-\frac{m}{n}}r^{\frac{m}{n}}r^{-\d_k}$ centered at the origin. It follows that in the first $m+k$ coordinates, $\tilde{W}_{J,{m+k},r^{-\d_k}}^{n+1,d_k}$ is localized to a finitely overlapping dual balls $B^*$ of radius $R^{\frac{m}{n}}r^{-\frac{m}{n}}r^{\d_k}$. For each $\theta$ with $I(\theta)\subset J$, $A(\theta)\subset \tau_\theta\times[-2,2]^{n+1-m-k}$ where $\tau_\theta$ is a unique element of $\Xi_m^{m+k}((R/r)^{\frac{m+k-1}{n}},K,{\bf{\tilde{h}}})$, where $\tilde{h}_i=h_ir^{\frac{i}{n}}$ if $|h_i|r^{\frac{i}{n}}\ge K^{-1}R^{-\frac{m}{n}}r^{\frac{m}{n}}$ and $\tilde{h}_i=R^{-\frac{m}{n}}r^{\frac{m}{n}}$ if $|h_i|r^{\frac{i}{n}}<K^{-1}R^{-\frac{m}{n}}r^{\frac{m}{n}}$. Thus $A$ maps $\cup_{I(\theta)\subset J}\theta$  into a cylindrical neighborhood of $\Gamma_m^{m+k}((R/r)^{\frac{m+k-1}{n}},K,{\bf{\tilde{h}}})$. 
The rescaled integral we wish to bound is 
\begin{equation*}
\int_{\R^{n+1}}|\sum_{I(\theta)\subset J_{m+1}}f_\theta(A^ty)|^{\tilde{p}_{k-1}}\tilde{W}_{J,m+k,r^{-\d_k}}^{n+1,d_k}(x-A^ty)dy.
\end{equation*}
For each element $y\in\R^{n+1}$, write $y=(y_{m+k},y')$ where $y_{m+k}\in\R^{m+k}$ and $y'\in\R^{n+1-m-k}$. Rewrite the rescaled integral above as 
\begin{equation}\label{rescaled}
\int_{\R^{n+1-m-k}}\int_{\R^{m+k}}|\sum_{I(\theta)\subset J}f_\theta(A^t(y_{m+k},y'))|^{\tilde{p}_{k-1}}W_{J,{m+k},r^{-\d_k}}^{n+1,d_k}(x-A^t(y_{m+k},y'))dy_{m+k}dy'.
\end{equation}
For each $\theta$, we have by Fourier inversion and Fubini's theorem that
\begin{align*}    f_\theta(A^ty)&=\int_{\R^{n+1}}\widehat{f}_\theta(\xi) e(y\cdot A\xi)d\xi = |\det A|^{-1}\int_{\R^{n+1}}\widehat{f}_\theta(A^{-1}\xi) e(y\cdot \xi)d\xi  \\
    &= \int_{\tau_\theta} \left[ |\det A|^{-1}\int_{\R^{n+1-k}}\widehat{f}_\theta(A^{-1}(\xi_{m+k},\xi')) e(y'\cdot \xi')d\xi' \right] e(y_{m+k}\cdot\xi_{m+k})d\xi_{m+k} 
\end{align*}
where $\tau_\theta$ is the associated element of $\Xi_m^{m+k}((R/r)^{\frac{m+k-1}{n}},K,{\bf{\tilde{h}}})$. Using the displayed equation, for each $y'$, we may view $f_\theta(A^t(y_{m+k},y'))$ as a function with Fourier transform supported in $\tau_\theta$.

Recall that for each $y'\in \R^{n+1-m-k}$ and $x\in\R^{n+1}$, the weight function $\tilde{W}_{J,m+k,r^{-\d_k}}^{n+1,d_k}(x-A^t(y_{m+k},y'))$ is localized to a union of translates of $B^*$, which has radius $R^{\frac{m}{n}}r^{-\frac{m}{n}}r^{\d_k}$. Since $2\le \tilde{p}_{k-1}\le p_{k-1}$, we may use 
\[ S_K^{m+k-1,m}(1,\min(r^{\d_k(m+k-1)/n},(\rho/r)^{(m+k-1)/n}),(R/r)^{(m+k-1)/n})\lesssim_\d r^\d \] 
to bound \eqref{rescaled} by 
\[ C_\d r^\d \int_{\R^{n+1-m-k}}\int_{\R^{m+k}}|\sum_{\substack{I_k\subset J}}|\sum_{I(\theta)\subset I_k} f_\theta(A^t(y_{m+k},y'))|^2|^{\frac{\tilde{p}_{k-1}}{2}}\tilde{W}_{J,{m+k},r^{-\d_k}}^{n+1,d_k}(x-A^t(y_{m+k},y'))dy_{m+k}dy',\]
where $I_k$ varies over intervals in $\mc{J}(\max(\rho^{-1/n},r^{-(1+\d_k)/n}))$ contained in $J$. 
Undoing the change of variables finishes the proof. 

\end{proof}

\begin{proof}[Proof of Lemma \ref{multilem3}] Recall that $I_k$ denotes intervals in $\mc{J}\big(\max(\rho^{-\frac{1}{n}},r^{-\frac{(1+\d_k)}{n}})\big)$. Our goal is to show that
\begin{align} \label{goal3}
\int_{\R^{n+1}}|\sum_{J}|\sum_{I_k\subset J}|\sum_{\substack{I(\theta)\subset I_{k}
}}f_\theta|^2|^{\frac{\tilde{p}_{k-1}}{2}} *\tilde{W}_{J,{m+k},r^{-\d_k}}^{n+1,d_k}|^{\frac{p}{\tilde{p}_{k-1}}}W\lesssim_\d r^\d \int_{\R^{n+1}}|\sum_{I_{k}}|\sum_{\substack{I(\theta)\subset I_{k}
}}f_\theta|^2|^{\frac{p}{2}}W. 
\end{align}
Our strategy is to use an iteration that is similar to the proof of Lemma \ref{multilem1}. Begin by understanding the Fourier support of each $|\sum_{I_k\subset J} |\sum_{\substack{I(\theta)\subset I_{k}
}}f_\theta|^2|^{\frac{\tilde{p}_{k-1}}{2}} *\tilde{W}_{J,{m+k},r^{-\d_k}}^{n+1,d_k}$. For each $J$ with initial point $a$, $\sum_{I_k\subset J}|\sum_{I(\theta)\subset I_k  } f_\theta|^2$ is Fourier supported in 
\[ \{\sum_{i=0}^n\lambda_i\phi_n^{(i)}(a):|\lambda_i|\le R^{-\frac{m}{n}}r^{\frac{m-i}{n}} \}. \]
Since $\tilde{p}_{k-1}$ is even, $|\sum_{I_k\subset J_{m+1}}|\sum_{\substack{I(\theta)\subset I_{k}
}}f_\theta|^2|^{\frac{\tilde{p}_{k-1}}{2}}$ is Fourier supported in 
\[ \{\sum_{i=0}^n\lambda_i\phi_n^{(i)}(a):|\lambda_i|\le (\tilde{p}_{k-1}/2)R^{-\frac{m}{n}}r^{\frac{m-i}{n}} \}. \]
The intersection of this set with the Fourier support of $\tilde{W}_{J,{m+k},r^{-\d_k}}^{n+1,d_k}$ is contained in 
\[ \{\sum_{i=0}^n\lambda_i\phi_n^{(i)}(a):|\lambda_i|\lesssim R^{-\frac{m}{n}}r^{\frac{m-i}{n}}(r^{\d_k})^{m+k-i} \}. \]
\noindent\fbox{$2<\frac{p}{\tilde{p}_{k-1}}$ case:} Use the auxiliary functions $\sum_{r^{-\frac{1}{n}}\le \sigma_k\le r^{-\d_k}}\eta^k_{J,\sigma_k}$ to decompose the above Fourier support and suppose that the left hand side of \eqref{goal3} is bounded by 
\begin{align}\label{case1it} 
C(\log r)^C \int_{\R^{n+1}}|\sum_{J}|\sum_{I_k\subset J}|\sum_{\substack{I(\theta)\subset I_{k}
}}f_\theta|^2|^{\frac{\tilde{p}_{k-1}}{2}} *\tilde{W}_{J,{m+k},r^{-\d_k}}^{n+1,d_k}*\widecheck{\eta}_{J,\sigma_k}^k|^{\frac{p}{\tilde{p}_{k-1}}}W. \end{align}
If $\sigma_k=r^{-\frac{1}{n}}$, then $\eta_{J,r^{-\frac{1}{n}}}^k$ is a smooth bump function supported in $\{\sum_{i=0}^{m+k-1}\lambda_i\phi_{m+k-1}^{(i)}(a):|\lambda_i|\lesssim R^{-\frac{m}{n}}r^{-\frac{k}{n}} \}\times[-1,1]^{n+1-m-k}$, which is contained in the Fourier support of each $\tilde{W}_{J,{m+k},r^{-\d_k}}^{n+1,d_k}$. If \[B_k=\{(R^{\frac{m}{n}}r^{\frac{k}{n}}x_0)^2+\cdots+(R^{\frac{m}{n}}r^{\frac{k}{n}}x_{m+k-1})^2+x_{m+k}^2+\cdots+x_n^2\le 1\},\]
then the weight function $\tilde{W}_{B_k}^{n+1,d_{k+1}}$ satisfies $|\tilde{W}_{J,{m+k},r^{-\d_k}}^{n+1,d_k}*\widecheck{\eta}_{J,\sigma}^k|\lesssim \tilde{W}_{B_k}^{n+1,d_{k+1}}$ for each $J$. Finally, by Cauchy-Schwarz, \eqref{case1it} is bounded by 
\[ C(\log r)^C \int_{\R^{n+1}}|\sum_{J}|\sum_{I_k\subset J}|\sum_{\substack{I(\theta)\subset I_{k}
}}f_\theta|^2|^{\frac{\tilde{p}_{k-1}}{2}}|^{\frac{p}{\tilde{p}_{k-1}}}\tilde{W}_{B_k}^{n+1,d_{k+1}}*W.\]
Since $R^{\frac{m}{n}}\rho\ge R^{\frac{m}{n}}r^{\frac{k}{n}}$, we have $\tilde{W}_{B_k}^{n+1,d_{k+1}}*W\lesssim W$. It remains to note that since $\|\cdot\|_{\ell^{\tilde{p}_{k-1}/2}}\le\|\cdot\|_{\ell^1}$, the previous displayed expression is bounded by 
\[ C(\log r)^C \int_{\R^{n+1}}|\sum_{I_k} |\sum_{\substack{I(\theta)\subset I_{k}}} f_\theta|^2|^{\frac{p}{2}}W, \]
which proves the lemma and halts the iteration. 

Next, consider the case that \eqref{case1it} holds with $\sigma_k>r^{-\frac{1}{n}}$. As in the proof of Lemma \ref{multilem1}, there are $\sim \sigma_k^{-1}$ many neighboring $J$ for which the Fourier support of 
\[ |\sum_{I_k\subset J}|\sum_{\substack{I(\theta)\subset I_{k}
}}f_\theta|^2|^{\frac{\tilde{p}_{k-1}}{2}} *\tilde{W}_{J_{m+1},r^{-\d_k}}^{n+1,d_k}*\widecheck{\eta}_{J_{m+1},\sigma_k}^k \] 
is contained in an element $\tau\in\Xi_{m+k}^{n+1}(\sigma_k^nr)$ dilated by a factor of $R^{-\frac{m}{n}}r^{\frac{m}{n}}\sigma_k^{m+k}$. If we dilate frequency side by $R^{\frac{m}{n}}r^{-\frac{m}{n}}\sigma_k^{-(m+k)}$, then the weight function $W$ gets dilated by a factor of $R^{-\frac{m}{n}}r^{\frac{m}{n}}\sigma_k^{m+k}$, and so is localized to a ball of radius $r^{\frac{m}{n}}\sigma_k^{m+k}\rho\ge \sigma_k^nr $. Using the hypothesis that $\C_{m+k-1}^{n+1}(\sigma_k^n r)\lesssim_\d (\sigma_k^n r)^\d$ and Lemma \ref{loclem}, \eqref{case1it} is bounded above by 
\[  C_{\d}r^{\d} \int_{\R^{n+1}}|\sum_{J_k}|\sum_{J\subset J_k}|\sum_{I_k\subset J}|\sum_{\substack{I(\theta)\subset I_{k}
}}f_\theta|^2|^{\frac{\tilde{p}_{k-1}}{2}} *\tilde{W}_{J,{m+k},r^{-\d_k}}^{n+1,d_k}*\widecheck{\eta}_{J,\sigma_k}^k|^2|^{\frac{p}{2\tilde{p}_{k-1}}}W   , \]
where $J_k$ are intervals in $\mc{J}(\sigma_k^{-1}r^{-\frac{1}{n}})$. There are $L^1$-normalized weight functions $\tilde{W}_{J_k,\sigma_k}^{n+1,d_k-1}$ with Fourier transform supported in 
\[ \{\sum_{i=0}^{m+k-1}\lambda_i\phi_{m+k-1}^{(i)}(a):|\lambda_i|\lesssim R^{-\frac{m}{n}}r^{\frac{m-i}{n}}\sigma_k^{m+k-i} \}\times[-1,1]^{n+1-m-k} \]
(where $a$ is the initial point of $J_k$) which satisfy $|\tilde{W}_{J,{m+k},r^{-\d_k}}^{n+1,d_k}*\widecheck{\eta}_{J_{m+1},\sigma_k}^k|\lesssim \tilde{W}_{J_k,\sigma_k}^{n+1,d_k-1}$. Thus \eqref{case1it} is bounded by 
\[  C_{\d}r^{\d} \int_{\R^{n+1}}|\sum_{J_k}|\sum_{J\subset J_k}|\sum_{I_k\subset J}|\sum_{\substack{I(\theta)\subset I_{k}
}}f_\theta|^2|^{\frac{\tilde{p}_{k-1}}{2}}*\tilde{W}_{J_{k},\sigma_k}^{n+1,d_k-1} |^2|^{\frac{p}{2\tilde{p}_{k-1}}}W . \]
The Fourier support of each summand $|\sum_{J\subset J_k}|\sum_{I_k\subset J}|\sum_{\substack{I(\theta)\subset I_{k}
}}f_\theta|^2|^{\frac{\tilde{p}_{k-1}}{2}} *\tilde{W}_{J_{k},\sigma_k}^{n+1,d_k-1}|^2$ is contained in 
\[ \{\sum_{i=0}^{n}\lambda_i\phi_{n}^{(i)}(a):|\lambda_i|\lesssim R^{-\frac{m}{n}}r^{\frac{m-i}{n}}\sigma_k^{m+k-i} \},\]
where $a$ is the initial point of $J_k$. If $1\le \frac{p}{2\tilde{p}_{k-1}}\le 2$, then use $\eta_{J_k}^{n-m}=\sum_{\sigma_k^{-1}r^{-\frac{1}{n}}\le \sigma\le 1}\eta_{J_k,\sigma}^{n-m}$ to decompose the Fourier support into subsets
\[ \{\sum_{i=0}^{n}\lambda_i\phi_{n}^{(i)}(a):|\lambda_i|\lesssim R^{-\frac{m}{n}}r^{\frac{m-i}{n}}\sigma_k^{m+k-i}\sigma^{n-i} \} \]
and $\{\sum_{i=0}^{n}\lambda_i\phi_{n}^{(i)}(a):|\lambda_i|\lesssim R^{-\frac{m}{n}}r^{\frac{m-n}{n}}\sigma_k^{m+k-n}\}$. Then since $\C_{n-1}^{n+1}(\sigma_k^n\sigma^nr)\lesssim_\d (\sigma_k^n\sigma^nr)^\d$, \eqref{case1it} is bounded by 
\[ C_{\d}r^{\d}\sum_{J_{n-m}}\int_{\R^{n+1}}|\sum_{J_k\subset J_{n-m}}|\sum_{J\subset J_k}|\sum_{I_k\subset J}|\sum_{\substack{I(\theta)\subset I_{k}
}}f_\theta|^2|^{\frac{\tilde{p}_{k-1}}{2}}*\tilde{W}_{J_{k},\sigma_k}^{n+1,d_k-1} |^2*\widecheck{\eta}_{J_k,\sigma}^{n-m}|^{\frac{p}{2\tilde{p}_{k-1}}}W   \]
where the initial sum is over $J_{n-m}\in\mc{J}(\sigma^{-1}\sigma_k^{-1}r^{-\frac{1}{n}})$. It remains to observe that there are weight functions $\tilde{W}_{J_{n-m}}^{n+1,d_{k+1}}$ satisfying $\tilde{W}_{J_k,\sigma_k}^{n+1,d_k-1}*|\widecheck{\eta}_{J_k,\sigma}^{n-m}|\lesssim \tilde{W}_{J_{n-m}}^{n+1,d_{k+1}}$ for all $J_k\subset J_{n-m}$ and 
\begin{align*}
\sum_{J_{n-m}}\int_{\R^{n+1}}|\sum_{J_k\subset J_{n-m}}|&\sum_{J\subset J_k}|\sum_{I_k\subset J}|\sum_{\substack{I(\theta)\subset I_{k}
}}f_\theta|^2|^{\frac{\tilde{p}_{k-1}}{2}}*\tilde{W}_{J_{k},\sigma_k}^{n+1,d_k-1} |^2*\widecheck{\eta}_{J_k,\sigma}^{n-m}|^{\frac{p}{2\tilde{p}_{k-1}}}W  \\
    &\le \sum_{J_{n-m}}\int_{\R^{n+1}}|\sum_{J_k\subset J_{n-m}}|\sum_{J\subset J_k}|\sum_{I_k\subset J}|\sum_{\substack{I(\theta)\subset I_{k}}} f_\theta|^2|^{\frac{\tilde{p}_{k-1}}{2}} |^2*\tilde{W}_{J_{k},\sigma_k}^{n+1,d_k-1}*|\widecheck{\eta}_{J_k,\sigma}^{n-m}||^{\frac{p}{2\tilde{p}_{k-1}}}W \\
    &\lesssim \sum_{J_{n-m}}\int_{\R^{n+1}}|\sum_{J_k\subset J_{n-m}}|\sum_{J\subset J_k}|\sum_{I_k\subset J}|\sum_{\substack{I(\theta)\subset I_{k}}} f_\theta|^2|^{\frac{\tilde{p}_{k-1}}{2}} |^2*\tilde{W}_{J_{n-m}}^{n+1,d_{k+1}}|^{\frac{p}{2\tilde{p}_{k-1}}}W\\
    &\lesssim \sum_{J_{n-m}}\int_{\R^{n+1}}|\sum_{J_k\subset J_{n-m}}|\sum_{J\subset J_k}|\sum_{I_k\subset J}|\sum_{\substack{I(\theta)\subset I_{k}}} f_\theta|^2|^{\frac{\tilde{p}_{k-1}}{2}} |^2|^{\frac{p}{2\tilde{p}_{k-1}}}(\tilde{W}_{J_{n-m}}^{n+1,d_{k+1}}*W)\\
    &\lesssim \sum_{J_{n-m}}\int_{\R^{n+1}}|\sum_{J_k\subset J_{n-m}}|\sum_{J\subset J_k}|\sum_{I_k\subset J}|\sum_{\substack{I(\theta)\subset I_{k}}} f_\theta|^2|^{\frac{\tilde{p}_{k-1}}{2}} |^2|^{\frac{p}{2\tilde{p}_{k-1}}}W\\
    &\lesssim \int_{\R^{n+1}}|\sum_{I_k}|\sum_{\substack{I(\theta)\subset I_{k}}} f_\theta|^2|^{\frac{p}{2}}W,
\end{align*}
which proves the lemma. 

Now suppose that $\frac{p}{2\tilde{p}_{k-1}}> 2$. Let $p_{n-m-k_2}$ be the minimal exponent such that $2\le\frac{p}{2\tilde{p}_{k-1}}\le p_{n-m-k_2}$ and $n-m\ge k_2> k$. Use $\eta^{k_2}_{J_k,\sigma_{k_2}}$ to decompose the Fourier supports of $|\sum_{J\subset J_k}|\sum_{I_k\subset J}|\sum_{\substack{I(\theta)\subset I_{k}
}}f_\theta|^2|^{\frac{\tilde{p}_{k-1}}{2}} *\tilde{W}_{J_k,\sigma_k}^{n+1,d_k-1}|^2$ into 
\begin{align*} 
\Big\{\sum_{i=0}^n\lambda_i\phi_n^{(i)}(a):&|\lambda_i|\le R^{-\frac{m}{n}}r^{\frac{m-i}{n}}\sigma_k^{m+k-i}\sigma^{m+k_2-i}\quad\forall i,\\
&\qquad |\lambda_j|\ge 2^{-m-k_2+j}R^{-\frac{m}{n}}r^{\frac{m-j}{n}}\sigma_k^{m+k-j}\sigma^{m+k_2-j}\quad\text{for some }j \text{ with } 0\le j\le m+k_2-1 \Big\} \end{align*}
and $\{\sum_{i=0}^n\lambda_i\phi_n^{(i)}(a):|\lambda_i|\le R^{-\frac{m}{n}}r^{-\frac{k_2}{n}} \sigma_k^{k-k_2}\quad\forall i\}$. Using that $\C_{m+l}^{n+1}(\sigma_{k_2}^n\sigma_k^nr)\lesssim_\d (\sigma_{k_2}^n\sigma_k^nr)^\d$, \eqref{case1it} is then bounded by 
\begin{align*} 
C_{\d}r^{\d} \int_{\R^{n+1}}&|\sum_{J_{k_2}}|\sum_{J_k\subset J_{k_2}}|\sum_{J\subset J_k}|\sum_{I_k\subset J}|\sum_{\substack{I(\theta)\subset I_{k}}} f_\theta|^2|^{\frac{\tilde{p}_{k-1}}{2}} *\tilde{W}_{J_{k},\sigma_k}^{n+1,d_k-1}|^2*\widecheck{\eta}_{J_k,\sigma_{k_2}}^{k_2}|^2|^{\frac{p}{2^2\tilde{p}_{k-1}}}W\\
&\lesssim C_{\d}r^{\d} \int_{\R^{n+1}}|\sum_{J_{k_2}}|\sum_{J_k\subset J_{k_2}}|\sum_{J\subset J_k}|\sum_{I_k\subset J}|\sum_{\substack{I(\theta)\subset I_{k}}} f_\theta|^2|^{\frac{\tilde{p}_{k-1}}{2}} *\tilde{W}_{J_{k},\sigma_k}^{n+1,d_k-1}|^2*\tilde{W}_{J_{k_2},\sigma_{k_2}}^{n+1,d_{k_2}+1}|^2|^{\frac{p}{2^2\tilde{p}_{k-1}}}W
\end{align*}
where the initial sum is over $J_{k_2}\in\mc{J}(\sigma_k^{-1}\sigma_{k_2}^{-1}r^{-\frac{1}{n}})$. Iterate this process $j-2$ more times until either the lemma has been proved (corresponding to a low frequency case) or the outermost exponent $\frac{p}{2^j\tilde{p}_{k-1}}$ satisfies $1\le \frac{p}{2^j\tilde{p}_{k-1}}\le 2$  and, after repeating the case $1\le \frac{p}{2\tilde{p}_{k-1}}\le 2$ argument, \eqref{case1it} is bounded by 
\begin{equation}\label{ben} 
C_{\d_k}r^{C\d_k} \sum_{J_{k_{j+1}}}\int_{\R^{n+1}}|\sum_{J_{k_j}\subset J_{k_{j+1}}}|F_{J_{k_j}}|^2*\tilde{W}_{J_{k_{j+1}},\sigma_{j+1}}^{n+1,d_{k_{j+1}+1}}|^{\frac{p}{2^j\tilde{p}_{k-1}}}W\end{equation}
where $k=k_1$, $\sigma_{k_i}\in[(\sigma_{k_1}\cdots\sigma_{k_{i-1}})^{-1}r^{-\frac{1}{n}},1]$, $J_{k_i}\in\mc{J}((\sigma_{k_1}\cdots\sigma_{k_{i-1}})^{-1}r^{-\frac{1}{n}})$, and 
\[F_{J_i}=\sum_{J_{i-1}\subset J_i}|F_{J_{i-1}}|^2*W_{J_i,\sigma_i}^{n+1,d_{k_i}+1}\qquad\text{with}\qquad F_{J_{k_1}}=\sum_{J\subset J_{k_1}} |\sum_{I_{k_1}\subset J}|\sum_{\substack{I(\theta)\subset I_{k_1}}} f_\theta|^2|^{\frac{\tilde{p}_{k_1-1}}{2}} *\tilde{W}_{J_{k_1},\sigma_{k_1}}^{n+1,d_{k_1}-1}. \] 
The weight functions satisfy the property that if $J_{k_i}\subset J_{k_{i+l}}$, then $\tilde{W}_{J_{k_i},\sigma_i}^{n+1,d_{k_i}-1}*\tilde{W}_{J_{k_{i+l}},\sigma_{i+l}}^{n+1,d_{k_{i+l}}+1}\sim \tilde{W}_{J_{k_{i+l}},\sigma_{i+l}}^{n+1,d_{k_{i+l}}}$. Therefore, by applying Cauchy-Schwarz pointwise in the convolutions, we have that
\begin{align*}
F_{J_{k_2}}=\sum_{J_{k_1}\subset J_{k_2}}|F_{J_{k_1}}|^2*\tilde{W}_{J_{k_2},\sigma_{k_2}}^{n+1,d_{k_2}+1} &\lesssim \sum_{J_{k_1}\subset J_{k_2}}|\sum_{J\subset J_{k_1}} |\sum_{I_{k_1}\subset J}|\sum_{\substack{I(\theta)\subset I_{k_1}}} f_\theta|^2|^{\frac{\tilde{p}_{k_1-1}}{2}} |^2*\tilde{W}_{J_{k_2},\sigma_{k_2}}^{n+1,d_{k_2}}\\
    &\lesssim |\sum_{I_k\subset J_{k_2}}|\sum_{\substack{I(\theta)\subset I_{k}}} f_\theta|^2|^{\tilde{p}_{k-1}} *\tilde{W}_{J_{k_2},\sigma_{k_2}}^{n+1,d_{k_2}}
\end{align*}
and by induction, 
\begin{align*}
\sum_{ J_{k_{i-1}}\subset J_{k_i} }|F_{J_{k_{i-1}}}|^2*\tilde{W}_{J_{k_i},\sigma_{k_i}}^{n+1,d_{k_i}+1} &\lesssim \sum_{ J_{k_{i-1}}\subset J_{k_i}}||\sum_{I_k\subset J_{k_{i-1}}} |\sum_{\substack{I(\theta)\subset I_{k}}} f_\theta|^2|^{2^{i-3}\tilde{p}_{k-1}} *\tilde{W}_{J_{k_{i-1}},\sigma_{i-1}}^{n+1,d_{k_{i-1}}}|^2*\tilde{W}_{J_{k_i},\sigma_{k_i}}^{n+1,d_{k_i}+1}\\
    &\lesssim \sum_{ J_{k_{i-1}}\subset J_{k_i}}|\sum_{I_k\subset J_{k_{i-1}}} |\sum_{\substack{I(\theta)\subset I_{k}}} f_\theta|^2|^{2^{i-2}\tilde{p}_{k-1}}*\tilde{W}_{J_{k_i},\sigma_{k_i}}^{n+1,d_{k_i}}\\
    &\lesssim |\sum_{I_k\subset J_{k_{i}}} |\sum_{\substack{I(\theta)\subset I_{k}}} f_\theta|^2|^{2^{i-2}\tilde{p}_{k-1}}*\tilde{W}_{J_{k_i},\sigma_{k_i}}^{n+1,d_{k_i}}.   
\end{align*}
Conclude that \eqref{ben} is therefore bounded by 
\begin{align*}
C_{\d_k}r^{C\d_k} \sum_{J_{k_{j+1}}}\int_{\R^{n+1}}&||\sum_{I_k\subset J_{k_{j+1}}}|\sum_{I(\theta)\subset I_k}|f_\theta|^2|^{2^{j-1}\tilde{p}_{k-1}}*\tilde{W}_{J_{k_{j+1}},\sigma_{j+1}}^{n+1,d_{j+1}}|^{\frac{p}{2^j\tilde{p}_{k-1}}}W\\
    &\lesssim C_{\d_k}r^{C\d_k} \sum_{J_{k_{j+1}}}\int_{\R^{n+1}}|\sum_{I_k\subset J_{k_{j+1}}}|\sum_{I(\theta)\subset I_k}|f_\theta|^2|^{\frac{p}{2}}W\\
    &\lesssim C_{\d_k}r^{C\d_k} \int_{\R^{n+1}}|\sum_{I_k}|\sum_{I(\theta)\subset I_k}|f_\theta|^2|^{\frac{p}{2}}W,
\end{align*}
as desired. 

\vspace{2mm}
\noindent\fbox{$1\le \frac{p}{\tilde{p}_{k-1}}\le 2$ case:} 
This final case is analogous to the beginning of the $2< \frac{p}{2\tilde{p}_{k-1}}$ case. We use auxiliary functions $\sum_{r^{-\frac{1}{n}}\le\sigma\le r^{-\d_k}}\eta_{J,\sigma_k}^{k}$ to decompose the Fourier supports. By using a cylindrical version of the bound $\C_{m+k-2}^{m+k}(\sigma_k^{m+k-1}r^{\frac{m+k-1}{n}})\lesssim_\d (\sigma_k^{m+k-1}r^{\frac{m+k-1}{n}})^\d$, the left hand side of \eqref{goal3} is bounded by
\begin{align*}
    C_\d r^\d \sum_{J_k\in\mc{J}(\sigma_k^{-1}r^{-\frac{1}{n}})} \int_{\R^{n+1}}|\sum_{J\subset J_k}|\sum_{I_k\subset J}|\sum_{\substack{I(\theta)\subset I_{k}
}}f_\theta|^2|^{\frac{\tilde{p}_{k-1}}{2}} *\tilde{W}_{J,{m+k},r^{-\d_k}}^{n+1,d_k}*\widecheck{\eta}_{J,\sigma_k}^k|^{\frac{p}{\tilde{p}_{k-1}}}W.
\end{align*}
Since $\tilde{W}_{J,m+k,r^{-\d_k}}^{n+1,d_k-1}*|\widecheck{\eta}_{J,\sigma_k}^k|\lesssim \tilde{W}_{J_k,\sigma_k}^{n+1,d_{k+1}}$, where $\tilde{W}_{J_k,\sigma_k}^{n+1,d_{k+1}}$ is $L^1$-normalized, and $\tilde{W}_{J_k,\sigma_k}^{n+1,d_{k+1}}*W\lesssim W$, the previous displayed math is bounded by 
\[ C_\d r^\d \sum_{J_k\in\mc{J}(\sigma_k^{-1}r^{-\frac{1}{n}})} \int_{\R^{n+1}}|\sum_{J\subset J_k}|\sum_{I_k\subset J}|\sum_{\substack{I(\theta)\subset I_{k}
}}f_\theta|^2|^{\frac{\tilde{p}_{k-1}}{2}} |^{\frac{p}{\tilde{p}_{k-1}}}W.\]
Finally, observing using that $\|\cdot\|_{\ell^{\tilde{p}_{k-1}/2}}\le \|\cdot\|_{\ell^1}$ and that $\|\cdot\|_{\ell^{p/2}}\le \|\cdot\|_{\ell^1}$ finishes the proof. 

\end{proof}

The argument used to prove Lemma \ref{multilem3} actually justifies a more general proposition, which we state here for future use. Let $\tau\in\Xi_m^{n+1}(r)$ and consider $r^{-\frac{1}{n}}\le \sigma\le 1$. For each $ k\ge 1$ and $d\ge 1$, let $\tilde{W}^k_{\tau,\sigma}$ be the $L^\infty$-normalized weight function with decay factor $d$ (this dependence is suppressed because it is not important) which is centered at the origin and Fourier supported in 
\[ \{\sum_{i=0}^{k-1}\lambda_i\phi_{k-1}^{(i)}(a):|\lambda_i|\le \sigma^{k-i} r^{\frac{-i}{n}} \}\times[-2,2]^{n+1-k} \]
where $a$ is the initial point of $I(\tau)$. 

\begin{lemma}\label{multilem3pf} Suppose that $\C^{n'+1}_{m'}(r)\lesssim_\e r^\e$ for all $0\le m'\le n'-1<n-1$ and $\C^{n+1}_{m'}(r)\lesssim_\e r^\e$ for all $m<m'\le n-1$. Let $p_{n,m}(k)=p_{n-k}$ for $k=1,\ldots,m+1$ and $p_{n,m}(k)=p_{n+1-k}$ for $k=m+2,\ldots,n$. For any $2\le p\le p_{n,m}(k)$ and any $r^{-\frac{1}{n}}\le \sigma\le 1$, 
\[ \int_{\R^{n+1}}|\sum_{\tau\in\Xi_m^{n+1}(r)} G_\tau*\tilde{W}^{k}_{\tau,\sigma}|^{p}\lesssim_\d r^\d \int_{\R^{n+1}}|\sum_{\tau\in\Xi_m^{n+1}(r)} G_\tau |^{p}\]    
whenever $G_{\tau}:\R^n\to[0,\infty)$ has Fourier transform supported in $\tau-\tau$.  
\end{lemma}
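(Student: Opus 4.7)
The plan is to mimic the iterative shell-decomposition argument from the proof of Lemma \ref{multilem3}, now applied to the single weight $\tilde{W}^k_{\tau,\sigma}$. Decompose its Fourier support dyadically by writing $\tilde{W}^k_{\tau,\sigma} \approx \tilde{W}^k_{\tau,\sigma}|_{\sigma}^{\mathrm{low}} + \sum_{\sigma<\sigma'\le 1} \eta^k_{\tau,\sigma'}$, where each $\eta^k_{\tau,\sigma'}$ is a smooth bump supported in the dyadic shell $\max_{0\le i<k}|\lambda_i|\sigma'^{i-k}r^{i/n}\sim 1$ in the moving $\phi_n^{(i)}(a)$ frame, and the low-frequency piece handles the core $\sigma'=\sigma$. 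By the triangle inequality, absorbing an $O(\log r)$ factor into $r^\delta$, it suffices to control each shell contribution (and the core) separately.

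For a fixed shell scale $\sigma'$, group $\sim\sigma'^{-1}$ neighboring $\tau\in\Xi_m^{n+1}(r)$ into super-caps $\tau'$, so that the Fourier supports of $G_\tau*\eta^k_{\tau,\sigma'}$ for $\tau\subset\tau'$ combine to fit inside a dilation of a cap of a Taylor cone at scale $\sigma'^n r$. For $k\ge m+2$ this cap is an element of $\Xi_{k-1}^{n+1}(\sigma'^n r)$, and the hypothesis $\C_{k-1}^{n+1}(\sigma'^n r)\lesssim_\delta r^\delta$ (available since $k-1\ge m+1>m$) gives the square function estimate for $p\le p_{n+1-k}=p_{n,m}(k)$. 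For $1\le k\le m+1$ the cap is essentially cylindrical, sitting inside $\Xi_0^{n-k+1}(\sigma'^n r)\times\R^k$ in a suitable frame, so the cylindrical analog of $\C_0^{n-k+1}(\sigma'^n r)\lesssim_\delta r^\delta$ (a lower-dimensional hypothesis, and hence available) applies for $p\le p_{n-k}=p_{n,m}(k)$.

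After applying the chosen square function estimate, use the pointwise domination $|\widecheck{\eta}^k_{\tau,\sigma'}|\lesssim \tilde{W}_{\tau'}$ where $\tilde{W}_{\tau'}$ is an $L^1$-normalized weight adapted to the dual of $\tau'$; since $G_\tau\ge 0$, this gives $\sum_{\tau\subset\tau'}G_\tau*\eta^k_{\tau,\sigma'}\lesssim(\sum_{\tau\subset\tau'}G_\tau)*\tilde{W}_{\tau'}$ pointwise. Then collapse the square function using $\|\cdot\|_{\ell^2}\le\|\cdot\|_{\ell^1}$, valid by non-negativity, and use the weight algebra $\tilde{W}_{\tau'}*W\lesssim W$ together with Young's convolution inequality to absorb the remaining convolutions, yielding the desired bound by $\int|\sum_\tau G_\tau|^p$. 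The core piece $\tilde{W}^k_{\tau,\sigma}|_{\sigma}^{\mathrm{low}}$ is handled directly: it is essentially $L^1$-normalized on a dual ball that is finer than any other relevant scale, so Young's inequality alone suffices. The main obstacle is identifying the correct Taylor cone (or cylindrical version) whose caps tile the combined Fourier supports in each range of $k$; the transition at $k=m+1\leftrightarrow k=m+2$ reflects whether the ambient truncation of $\Gamma_m^{n+1}(r)$ remains effective after intersection with the narrower shell, which is precisely why $p_{n,m}(k)$ is defined piecewise.
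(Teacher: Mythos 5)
Your sketch correctly identifies the two ingredients that drive the paper's argument for this lemma (dyadic shell decomposition of $\tilde{W}^k_{\tau,\sigma}$ in frequency, and identification of the shell pieces with caps of $\Xi_{k-1}^{n+1}(\sigma'^n r)$ or cylindrical $\Xi_0^{n-k+1}(\sigma'^n r)$ so that the hypothesized $\C$-bounds apply at the right range of $p$), and your analysis of the core (low-frequency) piece and the threshold at $k=m+1\leftrightarrow m+2$ is consistent with the paper. However, there is a genuine gap in the closing step.

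After applying one square function estimate (say $\C^{n+1}_{k-1}$) you land on $\int\bigl(\sum_{\tau'}A_{\tau'}^2\bigr)^{p/2}$ with $A_{\tau'}=(\sum_{\tau\subset\tau'}G_\tau)*\tilde W_{\tau'}$. Your plan is to use $\|\cdot\|_{\ell^2}\le\|\cdot\|_{\ell^1}$ and then "weight algebra plus Young's inequality." But the resulting quantity $\sum_{\tau'}(\sum_{\tau\subset\tau'}G_\tau)*\tilde W_{\tau'}$ is not a single convolution: the $L^1$-normalized weights $\tilde W_{\tau'}$ live on planks of the same volume but different orientations as $\tau'$ varies, so there is no fixed kernel $\tilde W$ with $\|\tilde W\|_1\lesssim 1$ dominating all the $\tilde W_{\tau'}$ pointwise. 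Consequently $\int(\sum_{\tau'}A_{\tau'})^p\lesssim\int(\sum_\tau G_\tau)^p$ does not follow from Young, and the Minkowski route loses a factor of $\#\tau'$. (Relatedly, you invoke the identity $\tilde W_{\tau'}*W\lesssim W$, but the statement here is global — there is no localizing weight $W$.) This step is exactly where the proof of Lemma~\ref{multilem3} does substantially more work: when $p/2>2$ it iterates, decomposing the Fourier support of the inner squares again and applying further $\C$-bounds with increasing index, until the outer exponent drops into $[1,2]$; at that final stage the applicable bound ($\C_{n-1}^{n+1}$, i.e.\ local $L^2$-orthogonality via Theorem~\ref{locL2thm}) produces an honest \emph{sum} over the coarsest caps rather than an $\ell^2$, and only then can the weights be absorbed one cap at a time using $(\lvert f\rvert*\tilde W)^q\le\lvert f\rvert^q*\tilde W$ and the nesting $\tilde W_{J_{k_i},\sigma_i}*\tilde W_{J_{k_{i+l}},\sigma_{i+l}}\sim\tilde W_{J_{k_{i+l}},\sigma_{i+l}}$. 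Your sketch performs a single iteration and then attempts the collapse, which is insufficient whenever $p>4$ (equivalently $n-k\ge 2$ or $n+1-k\ge 3$), a range the lemma is required to cover. To fix this you would need to reproduce the nested-weight iteration from the $2<p/\tilde p_{k-1}$ branch of Lemma~\ref{multilem3}, not just its first step.
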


\subsection{Inductive proof of Proposition \ref{SnmmKbd} \label{SnmmKbdsec}}

\begin{proof}[Inductive proof of Proposition \ref{SnmmKbd} ]
We will show that for any $\e>0$, there exists $\tilde{C}_\e$ such that $S_{K}^{n,m}(r,\rho,R)\le \tilde{C}_\e \rho^\e$ for all $1\le r\le\rho\le R$. We are permitted to choose $K$ to be a constant depending on $\e$. 

\noindent\fbox{$R\le C_K$} By Cauchy-Schwarz, if $R\le C_K$, then $S_{K}^{n,m}(r,\rho,R)\lesssim_K 1$. 
\vspace{2mm}
\newline\noindent\fbox{Inductive hypothesis} Suppose that for any $1\le r\le \rho\le R$ satisfying $R<R_0/K$, we have $S_{K}^{n,m}(r,\rho,R)\le \tilde{C}_\e \rho^\e$. We will show that for any $1\le r\le \rho\le R$ satisfying $R<R_0$, $S_{K}^{n,m}(r,\rho,R)\le \tilde{C}_\e \rho^\e$. It suffices to consider $\rho\ge C_K$. 

If $r\le K$, then by Proposition \ref{initscale}, $S_{K}^{n,m}(r,\rho,R)\le C_{\d_1} K^{\d_1}  S_{K}^{n,m}(K,\rho,R)$, where we may choose any $\d_1>0$. Assume from now on that $r\ge K$. By Proposition \ref{conemulti}, 
\[S_{K}^{n,m}(r,\rho,R)\le C_{\d_1}K^{\d_1}C_{\d_2}\left[r^{\d_2}S_{K}^{n,m}(1,\rho/r,R/r)+\sum_{k=1}^{n-m} r^{\d_2^{n-k+1}}\Big[S_{K}^{n,m}(\min(\rho,r^{1+\d_2^{n-k}}),\rho,R) \Big]\right],\]
where we may choose any $\d_2>0$. We will perform an iteration that depends on which term dominates the right hand side. 

Suppose that 
\[ S_{K}^{n,m}(r,R)\le C_{\d_1}K^{\d_1}nC_{\d_2}r^{\d_2}S_{K}^{n,m}(1,\rho/r,R/r). \]
Note that by assumption, $\frac{R}{r}< \frac{R_0}{K}$, so the inductive hypothesis applies and we have
\begin{equation} 
S_{K}^{n,m}(r,R)\le \big(nC_{\d_1}K^{\d_1}C_{\d_2}r^{\d_2}r^{-{\e}}\big)\tilde{C}_\e \rho^\e ,
\end{equation}
which halts our iteration.

The other case is that for some $1\le k\le n-m$,
\[ S_{K}^{n,m}(r,\rho,R)\le C_{\d_1}K^{\d_1}nC_{\d_2}r^{\d_2^{n-k+1}}S_{K}^{n,m}(\min(\rho,r^{1+\d_2^{n-k}}),\rho,R).\] 
If $r^{1+\d_2^{n-k}}\ge \rho$, then $S_{K}^{n,m}(\rho,\rho,R)=1$ and we have
\begin{equation}
S_{K}^{n,m}(r,\rho,R)\le C_{\d_1}K^{\d_1}nC_{\d_2}r^{\d_2^{n-k+1}}\le \big(C_{\d_1}K^{\d_1}C_{\d_2}r^{\d_2^{n-k+1}}\rho^{-\e}\big)\tilde{C}_\e \rho^\e \end{equation}
and again, our iteration halts. Finally, suppose that $r^{1+\d_0^{n-k}}<\rho$. Write $\eta_1=\d_2^{n-k}$ so that we have 
\[ S_{K}^{n,m}(r,\rho,R)\le C_{\d_1}K^{\d_1}nC_{\d_2}r^{\d_2\eta_1}S_{K}^{n,m}(r^{(1+\eta_1)},\rho,R). \]
Now proceed to the second step of the iteration. 

The input for step $k$ of the iteration (supposing that it has not halted at a previous step) is the inequality
\[ S_{K}^{n,m}(r,\rho,R)\le C_{\d_1}K^{\d_1}(nC_{\d_2})^{k-1}r^{\d_2[\eta_1+\cdots+\eta_{k-1}\prod_{i=1}^{k-2}(1+\eta_i)]} S_{K}^{n,m}(r^{\prod_{i=1}^{k-1}(1+\eta_i)},\rho,R). \]
Apply Proposition \ref{coneinduct} again to bound $S_{K}^{n,m}(r^{\prod_{i=1}^{k-1}(1+\eta_i)},\rho,R)$ on the right hand side above by  
\begin{align*} 
C_{\d_2}&\left[r^{\d_2\prod_{i=1}^{k-1}(1+\eta_i)}S_{K}^{n,m}(1,\rho/r^{\prod_{i=1}^{k-1}(1+\eta_i)},R/r^{\prod_{i=1}^{k-1}(1+\eta_i)})\right.\\
    &\qquad\qquad\qquad\left.+\sum_{l=1}^{n-m} r^{\d_2^{n-l+1}\prod_{i=1}^{k-1}(1+\eta_i)}S_{K}^{n,m}(\min(\rho,r^{(1+\d_2^{n-l})\prod_{i=1}^{k-1}(1+\eta_i)}),\rho,R)\right]. \end{align*}
If the first term dominates, then by the inductive hypothesis, 
\[ S_{K}^{n,m}(r,\rho,R)\le (C_{\d_1}K^{\d_1}(nC_{\d_2})^{k-1}r^{\d_2[\eta_1+\cdots+\eta_{k-1}\prod_{i=1}^{k-2}(1+\eta_i)]} C_{\d_2}r^{\d_2\prod_{i=1}^{k-1}(1+\eta_i)}r^{-\e\prod_{i=1}^{k-1}(1+\eta_i)})\tilde{C}_\e \rho^\e ,\]
and the iteration halts. If one of the other terms dominates, then for some $1\le l\le n-m$, $\eta_k:=\d_2^{n-l}$ satisfies 
\[ S_{K}^{n,m}(r,\rho,R)\le C_{\d_1}K^{\d_1}(nC_{\d_2})^{k}r^{\d_2[\eta_1+\cdots+\eta_{k}\prod_{i=1}^{k-1}(1+\eta_i)]}S_{K}^{n,m}(\min(\rho,r^{\prod_{i=1}^k(1+\eta_i)}),\rho,R). \]
If $\rho\le r^{\prod_{i=1}^k(1+\eta_i)}$, then the iteration halts with the inequality
\[   S_{K}^{n,m}(r,\rho,R)\le C_{\d_1}K^{\d_1}(nC_{\d_2})^{k}r^{\d_2[\eta_1+\cdots+\eta_{k}\prod_{i=1}^{k-1}(1+\eta_i)]}. \]
If $\rho>r^{\prod_{i=1}^k(1+\eta_i)}$, then 
\[ S_{K}^{n,m}(r,\rho,R)\le C_{\d_1}K^{\d_1}(nC_{\d_2})^{k}r^{\d_2[\eta_1+\cdots+\eta_{k}\prod_{i=1}^{k-1}(1+\eta_i)]}S_{K}^{n,m}(r^{\prod_{i=1}^k(1+\eta_i)},\rho,R) \]
and we proceed to step $k+1$. 
\vspace{1mm}

\noindent\fbox{Analysis of halting criterion} There are two possible outcomes of the iteration. The first is that for some $k\ge 1$ with $r^{\prod_{i=1}^{k-1}(1+\eta_i)}\le \rho$, we have
\[ S_{K}^{n,m}(r,\rho,R)\le C_{\d_1}K^{\d_1}(nC_{\d_2})^{k}r^{\d_2[\eta_1+\cdots+\eta_{k-1} \prod_{i=1}^{k-2}(1+\eta_i)]}r^{\d_2\prod_{i=1}^{k-1}(1+\eta_i)-\e\prod_{i=1}^{k}(1+\eta_i)}\tilde{C}_\e \rho^\e. \]
It suffices to verify that 
\[ C_{\d_1}K^{\d_1}(nC_{\d_2})^{k}r^{(2\d_2-\e)\prod_{i=1}^{k-1}(1+\eta_i)}\le 1. \]
Choose $\d_1=\frac{\e}{8}$ and $\d_2=\frac{\e}{4}$ so that it suffices to verify that
\[ C_{\e/8}K^{\e/8}(nC_{\e/4})^k r^{-\e/2(1+(\e/4)^n)^k}\le 1 .\]
This follows from $K\le r$ and choosing $K=K(\e)$ large enough so that the above inequality holds for all $k\ge 1$. 

The second possible outcome of the iteration is that for $r^{\prod_{i=1}^{k-1}(1+\eta_i)}\le \rho\le r^{\prod_{i=1}^k(1+\eta_i)}$, we have the inequality
\[ S_{K}^{n,m}(r,\rho,R)\le C_{\e/8}K^{\e/8}(nC_{\e/4})^{k}r^{(\e/4)[\eta_1+\cdots+\eta_k\prod_{i=1}^{k-1}(1+\eta_i)]}, \]
where we inputted the definitions of $\d_1$ and $\d_2$. The exponent of $r$ satisfies $\eta_1+\cdots+\eta_k\prod_{i=1}^{k-1}(1+\eta_i)\le (1+\max_{1\le j\le k}\eta_j)\prod_{i=1}^{k-1}(1+\eta_i)$, so it suffices to verify that
\[C_{\e/8}K^{\e/8}(nC_{\e/4})^{k}\rho^{(\e/4)(1+\e/4)} \le \tilde{C}_\e \rho^\e.\] 
Using that $r^{(1+(\e/4)^n)^k}\le \rho$ and $K\le \rho$, it further suffices to verify that 
\[ C_{\e/8}(\log \rho)^{4^n\e^{-n}\log (nC_{\e/4})}\rho^{(5\e/8)(1+\e/4)} \le \tilde{C}_\e \rho^\e. \]
It is no loss of generality to assume that $\tilde{C}_\e$ and $K$ are large enough so that the above inequality is true, which concludes the proof. 

\end{proof}


\subsection{Bounding $S_{\sigma}^{n,m}(r,\rho,R)$ for $m>0$}

We use the boundedness of $S_K^{n,m}(r,\rho,R)$ proved in Proposition \ref{SnmmKbd} to bound $S_\sigma^{n,m}(r,\rho,R)$. Call the $(m+1)$-tuples $\sigma$ considered in the definition of $\Gamma_m^{n+1}(R,\sigma)$ \emph{admissible} and write $\mc{S}_R^{n,m}$ for the collection of admissible $\sigma$.

\begin{proposition}\label{L2conekaklocgen} Let $1\le n$. Then $S_{\sigma}^{n,n-1}(r,\rho,R)\lesssim_\e \rho^\e$ uniformly in $\sigma\in\mc{S}^{n,m}_R$. 
\end{proposition}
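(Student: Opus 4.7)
The plan is to deduce Proposition~\ref{L2conekaklocgen} by adapting the proof of Theorem~\ref{locL2thm} so that the overlap count is run at the target scale $\rho$ instead of at $r$. Since $m=n-1$ forces $p=p_{n-m}=p_1=2$, the required estimate reduces to the local $L^2$ inequality
\[
\int_{\R^{n+1}}\sum_{J\in\mc{J}(r^{-1/n})}|F_J|^2\,W_B^{n+1,d}\;\lesssim_\e\;\rho^\e\int_{\R^{n+1}}\sum_{I\in\mc{J}(\rho^{-1/n})}|F_I|^2\,W_B^{n+1,d},
\]
where $F_J=\sum_{\theta\in\Xi_{n-1}^{n+1}(R,\sigma),\,I(\theta)\subset J}f_\theta$, $F_I$ is defined analogously, and $B\subset\R^{n+1}$ has radius $R^{(n-1)/n}\rho$.

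First I would dyadically decompose $\Xi_{n-1}^{n+1}(R,\sigma)$ according to the size $\lambda$ of $\max_{0\le i\le n-1}|\lambda_i|$. Reading off the $k=n-1$ admissibility clause (and using $\sigma_{n-1}=1$, $\sigma_n:=1$), every $\theta$ satisfies $\max_{0\le i\le n-1}2^{-i}|\lambda_i|R^{i/n}\ge 2^{-n}$, which forces $\lambda\gtrsim R^{-(n-1)/n}$ and hence leaves only $O(\log R)$ dyadic choices. Introducing smooth cutoffs, applying the triangle inequality, and performing the ${\bf k}$-sign-pattern reduction used in the proof of Theorem~\ref{locL2thm}, the problem is further reduced to Fourier supports in a single piece $\Xi_{n-1}^{n+1}(R,\sigma,\lambda,{\bf k})$ with at most a $(\log R)^{O(1)}$ overhead absorbed into $\rho^\e$.

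Second, for each fixed $J\in\mc{J}(r^{-1/n})$, I would apply Plancherel's theorem to write
\[
\int_{\R^{n+1}} |F_J|^2\, W_B^{n+1,d}=\sum_{I,I'\subset J}\int_{\R^{n+1}}\bigl(\widehat{F_I}*\widehat{\overline{F_{I'}}}\bigr)\,\widecheck{W}_B^{n+1,d}.
\]
The off-diagonal terms vanish unless $\supp\widehat{F_I}-\supp\widehat{F_{I'}}$ meets $B^*$, which has radius $\lesssim R^{-(n-1)/n}\rho^{-1}\le C\lambda\rho^{-1}$ by the lower bound on $\lambda$ from step one. This is precisely the tolerance required by Lemma~\ref{L2techlem} with the scale parameter set to $\rho$, whose conclusion is $|t_I-t_{I'}|\lesssim\rho^{-1/n}$; hence $I$ and $I'$ are forced to be $O(1)$-neighbors in $\mc{J}(\rho^{-1/n})$. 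A Cauchy--Schwarz summation over such neighboring pairs gives $\int|F_J|^2 W_B^{n+1,d}\lesssim\sum_{I\subset J}\int|F_I|^2 W_B^{n+1,d}$, and summing over $J$ completes the estimate.

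The main obstacle will be the bookkeeping in step one: verifying that, after the dyadic and sign-pattern decomposition, the truncated Fourier supports still satisfy the full hypotheses of Lemma~\ref{L2techlem} (in particular the lower bound $|H^j_{k,k_0}|\ge cR^{-k_0/n}$ and the one-sidedness $H^j_{k,i}k_i\ge c|k_i|\lambda$). This is essentially the same reduction already carried out immediately before the proof of Theorem~\ref{locL2thm}, so no genuinely new ingredient is required; once it is in place, step two is a verbatim adaptation of the overlap count in Theorem~\ref{locL2thm} applied at scale $\rho$ rather than $r$, and one recovers exactly the $\rho^\e$ bound claimed.
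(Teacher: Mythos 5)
Your approach is the same as the paper's: the paper's proof is the one-line ``This follows immediately from Theorem~\ref{locL2thm},'' and your plan is to re-run the proof of that theorem (Plancherel plus the overlap count from Lemma~\ref{L2techlem}) at output scale $\rho$ with ball radius $R^{(n-1)/n}\rho$. Your identification of the dual scale $B^*\sim R^{-(n-1)/n}\rho^{-1}$, the lower bound $\lambda\gtrsim R^{-(n-1)/n}$ coming from the $k=n-1$ admissibility clause, and the resulting conclusion $|t_I-t_{I'}|\lesssim\rho^{-1/n}$ are all correct and exactly mirror the paper's mechanism.

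There is, however, a gap in the way you handle step one. You decompose by the dyadic size $\lambda$ of $\max_i|\lambda_i|$ and then ``apply the triangle inequality,'' which costs a factor of the number of shells, namely $(\log R)^{O(1)}$. You then assert this is ``absorbed into $\rho^\e$.'' That absorption fails: $(\log R)^{O(1)}$ is not $\lesssim_\e\rho^\e$ uniformly in $R$ once $\rho$ is small (say bounded) and $R\to\infty$, whereas Proposition~\ref{L2conekaklocgen} asserts exactly such a uniform $\rho^\e$ bound (and indeed Theorem~\ref{locL2thm} itself gives an absolute constant, not a power of $R$). The repair is cheap and sits right in front of you, since $p=2$: the distinct $\lambda$-shells $\{\max_i|\lambda_i|\sim\lambda\}$ are pairwise separated in Fourier space by $\gtrsim R^{-(n-1)/n}$, which dwarfs the radius $\sim R^{-(n-1)/n}\rho^{-1}$ of the Fourier support of $W_B^{n+1,d}$; hence by the same Plancherel argument the cross terms between shells vanish, and
\[
\int |F_J|^2\,W_B^{n+1,d}\;\sim\;\sum_{\lambda}\int |F_{J,\lambda}|^2\,W_B^{n+1,d},
\]
with no logarithmic loss. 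After that lossless decomposition, your step two goes through verbatim and you recover the absolute-constant bound that the paper obtains by quoting Theorem~\ref{locL2thm} directly. (Alternatively, one can skip the $\lambda$-decomposition entirely and just observe $\Xi_{n-1}^{n+1}(R,\sigma)$ sits inside $\Xi_{n-1}^{n+1}(R,\lambda)$ for $\lambda\sim R^{-(n-1)/n}$, at which point the ball radius $R^{(n-1)/n}\rho$ is exactly the one Theorem~\ref{locL2thm} requires, matching the paper's citation.)
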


\begin{proof} This follows immediately from Theorem \ref{locL2thm}. 
\end{proof}

\begin{proposition}\label{multiconek} Assume that the hypotheses of Proposition \ref{coneinduct} hold. Also assume that $S^{n',m'}(r,\rho,R)\lesssim_\e \rho^\e$ for each $0\le m'\le n'-1$ when $n'<n$, and $S^{n,m'}(r,\rho,R)\lesssim_\e \rho^\e$ for each $m< m'\le n-1$. Then $S^{n,m}_{\sigma}(r,\rho,R)\lesssim_\e \rho^\e$ uniformly in $\sigma\in\mc{S}^{n,m}_R$.
\end{proposition}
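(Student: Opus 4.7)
The plan is to mirror the proof of Proposition \ref{SnmmKbd}, establishing analogs of the initial-scale estimate (Proposition \ref{initscale}) and the multi-scale inequality (Proposition \ref{conemulti}) for the finer truncation $\Xi_m^{n+1}(R,\sigma)$, and then running the same iteration on scales. The case $m=n-1$ is already handled by Proposition \ref{L2conekaklocgen}, so we may assume $0 \le m \le n-2$, and we induct on the scale $R$ with the trivial base case $R \le C_K$ handled by Cauchy--Schwarz.

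For the initial-scale step, I would adapt the Pramanik--Seeger cascade of Proposition \ref{initscale} as follows. At scale $r \le K$, the set $\Gamma_m^{n+1}(R,\sigma)$ lies inside a small anisotropic neighborhood of some $(n-m)$-dimensional curve $\{\phi_n^{(k^\ast)}(t):t\in[0,1]\}$, where the index $0 \le k^\ast \le m$ is selected as the smallest $k$ for which the $\sigma_k$-constraint is strictly above its boundary value $\sigma_{k+1}^{-1}\cdots\sigma_{m+1}^{-1}R^{-1/n}$. An affine change of variables, analogous to the lower-triangular maps $A_0,\ldots,A_m$ built in Proposition \ref{initscale}, collapses the remaining directions into the product of $\R^{k^\ast}$ with a thin neighborhood of an anisotropically rescaled $\g_{n-m}$. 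Invoking $\M^{n-m}(K^{n-m}) \lesssim_\d K^\d$ from the hypotheses of Proposition \ref{coneinduct}, together with the same choice $B_i=(n+2)^{n-i}(n-m)$ of cascade constants, yields the required initial-scale bound.

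For the multi-scale inequality, I would reuse the high-low decomposition of Proposition \ref{conemulti} and the supporting Lemmas \ref{multilem1}, \ref{ptwise}, and \ref{multilem3}. The Fourier support of $|\sum_{I(\theta)\subset J}f_\theta|^2$ for $\theta \in \Xi_m^{n+1}(R,\sigma)$ is still contained in the product set on the right of \eqref{disabo}, so the dyadic frequency decomposition via the functions $\eta_{J,\sigma_k}^k$ applies without change. The high-frequency pieces at scale $\sigma_k$ are identified, up to a fixed dilation, with elements of $\Xi_{m+k-1}^{n+1}(\sigma_k^n r)$, and the cylindrical local cone estimates $\C_{m+k-1}^{n+1}(\cdot) \lesssim_\d (\cdot)^\d$ available from the hypotheses of Proposition \ref{coneinduct} and from the earlier cases $m'>m$ of the present proposition apply verbatim; the low-frequency endpoint is closed via Theorem \ref{locL2thm} exactly as in Lemma \ref{multilem1}. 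The output is the same schematic inequality
\[ S_\sigma^{n,m}(r,\rho,R) \lesssim_{\d_0} r^{\d_0}\, S_\sigma^{n,m}(1,\rho/r,R/r) + \sum_{k=1}^{n-m} r^{C\d_0^{n-k+1}}\, S_\sigma^{n,m}\bigl(\min(\rho,r^{1+\d_0^{n-k}}),\rho,R\bigr). \]

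With these two ingredients the same iterative argument as in the proof of Proposition \ref{SnmmKbd} (choosing $\d_1=\e/8$, $\d_2=\e/4$, and $K=K(\e)$ sufficiently large, then analyzing the halting cases) delivers $S_\sigma^{n,m}(r,\rho,R) \lesssim_\e \rho^\e$ uniformly in $\sigma \in \mc{S}^{n,m}_R$. The hard part will be the initial-scale step: because the $\sigma$-truncation has conditional constraints whose form depends on whether each $\sigma_k$ is strictly greater than or equal to its boundary value, one must carefully track which constraint is active in every configuration and verify that in each case the affine reduction outputs a neighborhood of an anisotropically rescaled $\g_{n-m}$ that is amenable to $\M^{n-m}$. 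The multi-scale estimate and the iteration are then routine adaptations of the corresponding arguments in Propositions \ref{conemulti} and \ref{SnmmKbd}.
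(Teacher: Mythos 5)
Your proposal has a genuine gap, and it stems from treating $S_\sigma^{n,m}$ as if it behaved like $S_K^{n,m}$ under rescaling, which it does not.

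The first issue is the proposed initial-scale estimate. Proposition \ref{initscale} works for the $(K,\mathbf{h})$-truncation precisely because the constraints $|\lambda_i - h_i| \le K^{-\b}|h_i|$ pin every coordinate $\lambda_i$ to a narrow annular shell around $h_i$, so $\Gamma_m^{n+1}(R,K,\mathbf{h})$ sits inside a thin tube around a single rescaled copy of $\phi_n^{(k^\ast)}$, and the Pramanik--Seeger cascade collapses it onto $\g_{n-m}$. The $\sigma$-truncation imposes only anisotropic box bounds plus a one-sided lower bound on a maximum; it does not pin down any single coordinate, so the set $\Gamma_m^{n+1}(R,\sigma)$ is not contained in a thin neighborhood of any one curve $\phi_n^{(k^\ast)}(t)$. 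Selecting ``the smallest $k$ for which the $\sigma_k$-constraint is strictly above its boundary value,'' as you propose, does not isolate a dominant direction the way the $\mathbf{h}$-pinning does, and the affine reduction to $\M^{n-m}$ does not go through. The paper never claims or needs such an initial-scale estimate for $\Xi_m^{n+1}(R,\sigma)$.

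The second, related issue is the multi-scale inequality. You write the rescaling term as $S_\sigma^{n,m}(1,\rho/r,R/r)$ with the \emph{same} $\sigma$, but the $\sigma$-truncation is not scale-invariant: after rescaling a $J$-block by $r$, the admissible tuple changes to $\sigma(r)$ defined by $\sigma_i(r) = \max(\sigma_i, \sigma_{i+1}^{-1}\cdots\sigma_m^{-1}(R/r)^{-1/n})$. The paper's proof (Lemma \ref{lem3k}) tracks this carefully and produces $S_{\sigma(r)}^{n,m}(1,\rho/r,R/r)$. This is not a cosmetic point: the paper's entire strategy for bypassing the initial-scale estimate is to observe that $(\sigma(r))_{m-1} \le K(R/r)^{-1/n}$ always holds, placing $\sigma(r)$ in the regime $\sigma_{m-1}^{-1}R^{-1/n} > K^{-1}$ which is handled directly by an annular pigeonholing reduction to $S_K^{n,m}$ (established in Proposition \ref{SnmmKbd}). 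Thus the first term of the multi-scale inequality can be bounded \emph{immediately} without any induction, and no analog of Proposition \ref{initscale} is ever needed. Additionally, the paper first disposes of the case $\sigma_{m-1}^{-1}R^{-1/n} > K^{-1}$ outright, and in the remaining case uses Lemma \ref{lem1k} (a cylindrical lower-dimensional $S^{n-1,m-1}$ input, itself an instance of the present proposition at lower dimension) to first decouple into $J$-intervals of length $\sigma_{m-1}^{-1}R^{-1/n}$ before running the high-low decomposition. Your proposal skips both of these reductions, and without them the structure of the argument does not reduce to the iteration of Proposition \ref{SnmmKbd}.
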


If $m=0$, then it is easy to show that $S^{n,m}_\sigma(r,\rho,R)\le K^{O(1)}S_K^{n,m}(r,\rho,R)$, so Proposition \ref{multiconek} follows from Proposition \ref{SnmmKbd}. For the rest of this section, assume that $0<m$. 

We describe how to modify the iteration used to bound $S_{K}^{n,m}(r,\rho,R)$ in Proposition \ref{conemulti} to treat the $S_{\sigma}^{n,m}(r,\rho,R)$. To prove Proposition \ref{multiconek}, consider a Schwartz function $f=\sum_{\theta\in\Xi_{m}^{n+1}(R,\sigma)} f_\theta$ with $\supp\widehat{f}_\theta\subset \theta$, $W=W_B^{n+1,d}$ for a ball of radius $R^{\frac{m}{n}}\rho$, and $2\le p\le p_{n-m}$. 

The following lemma uses a cylindrical version of a lower-dimensional inequality to obtain an initial square function estimate into intervals of length $|J|=2^{-100n} \sigma_{m-1}^{-1}R^{-\frac{1}{n}}$. 
\begin{lemma} \label{lem1k} Assume that $0<m$. For each $1\le r\le \min(\rho,\sigma^nR)$ We have
\begin{align*}
\int_{\R^{n+1}}(\sum_{J\in\mc{J}(r^{-\frac{1}{n}})} |\sum_{\substack{\theta\in\Xi_{m}^{n+1}(R,\sigma)\\I(\theta)\subset J}}  f_\theta&|^2)^{\frac{p}{2}}W\lesssim S_{\sigma'}^{n-1,m-1}(1,\min(\rho,\sigma^nR)^{\frac{n-1}{n}},\min(\rho,\sigma^n R)^{\frac{n-1}{n}}) \\
    &\times \int_{\R^{n+1}}|\sum_{J\in\mc{J}(\max(\rho^{-\frac{1}{n}},\sigma^{-1}R^{-\frac{1}{n}}))} |\sum_{\substack{I(\theta)\subset J}\theta\in\Xi_{m}^{n+1}(R,\sigma)}f_\theta|^2|^{\frac{p}{2}}W. 
\end{align*}
where $\sigma'$ is an $m$-tuple with $\sigma_i'=\sigma_i$ for $i<m-1$ and $\sigma_{m-1}'=1$    
\end{lemma}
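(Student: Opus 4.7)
The plan is to reduce, for each coarse interval $J \in \mc{J}(r^{-1/n})$, the analysis of $F_J := \sum_{I(\theta) \subset J} f_\theta$ to a cylindrical version of the multi-scale constant $S_{\sigma'}^{n-1,m-1}(1, S, S)$ at the scale $S := \min(\rho, \sigma^n R)^{(n-1)/n}$. Under the hypothesis $r \le \sigma^n R$, one has $S^{-1/(n-1)} = \min(\rho,\sigma^n R)^{-1/n} = \max(\rho^{-1/n}, \sigma^{-1}R^{-1/n})$, so the finest scale in the reduced $(n-1)$-dimensional problem matches the refined partition scale appearing on the right-hand side of the lemma.

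First, I would fix $J = [a, a + 2^{-100n}r^{-1/n})$ and apply the linear map $A_a : \R^{n+1} \to \R^{n+1}$ sending $\phi_n^{(j)}(a)$ to the standard basis vector $\mathbf{e}_{j+1}$. Expanding $\phi_n^{(i)}(a+s) = \sum_{k \ge 0}\tfrac{s^k}{k!}\phi_n^{(i+k)}(a)$ and inspecting the admissible ranges of the $\lambda_i$ in the definition of $\Xi_m^{n+1}(R,\sigma)$, the straightened Fourier supports $\{A_a(\tau_{J'})\}_{J' \subset J}$ should match, up to a fixed affine rescaling, a cylindrical family $\{\tau' \times I_{\mathrm{cyl}}\}$ where $\tau'$ ranges over $\Xi_{m-1}^n(S,\sigma')$ and $I_{\mathrm{cyl}} \subset \R$ corresponds to $\mathbf{e}_1$ (the image of $\phi_n(a)$). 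The drop from $\sigma$ to $\sigma' = (\sigma_0, \ldots, \sigma_{m-2}, 1)$ occurs because the $t$-variation across $J$, of length $r^{-1/n} \ge \sigma^{-1}R^{-1/n}$ by hypothesis, smears the $\sigma_{m-1}$-thickness along the cylindrical direction and trivializes that scale in the reduced geometry.

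With this geometric identification in hand, by Fubini in the $\mathbf{e}_1$-direction together with the definition of $S_{\sigma'}^{n-1,m-1}(1,S,S)$ applied slicewise, I would obtain the per-$J$ estimate
\[
\int_{\R^{n+1}}|F_J|^p\,W\;\lesssim\;S_{\sigma'}^{n-1,m-1}(1,S,S)\int_{\R^{n+1}}\Big(\sum_{J'\subset J}|F_{J'}|^2\Big)^{p/2}W,
\]
after verifying that the pushforward of $W_B^{n+1,d}$ is compatible with the weight required by the definition of $S_{\sigma'}^{n-1,m-1}$ in the reduced $\R^n$-problem. To convert this into the full statement, one still needs to combine per-$J$ bounds into the square function on the left-hand side. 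My approach would be to first use local $L^2$ orthogonality (as in Theorem \ref{locL2thm}) to pass from the scale-$r^{-1/n}$ square function to one at the Fourier-support scale of $W$, and then apply the per-$J$ cylindrical estimate to descend further to $\max(\rho^{-1/n},\sigma^{-1}R^{-1/n})$.

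The main obstacle is the cylindrical identification in the second paragraph, namely verifying that the defining multi-index inequalities of $\Xi_m^{n+1}(R,\sigma)$ exactly unfold --- after pushforward by $A_a$ and projection away from $\mathbf{e}_1$ --- into those of $\Xi_{m-1}^n(S,\sigma')$. In particular, one must carefully handle the dichotomy in the definition of $\Xi_m^{n+1}(R,\sigma)$ (the condition $\sigma_k > \sigma_{k+1}^{-1}\cdots\sigma_m^{-1}R^{-1/n}$ versus its complement) and confirm that the hypothesis $r \le \sigma^n R$ is precisely what is needed to eliminate the $\sigma_{m-1}$-scale in the reduced problem.
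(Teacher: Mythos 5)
Your geometric picture is essentially right: after straightening with $A_a$, the pieces of $\Xi_m^{n+1}(R,\sigma)$ over a short $t$-interval assemble into cylinders over $\Xi_{m-1}^n(\cdot,\sigma')$ in $\R^n$, with the $\sigma_{m-1}$-scale trivialized; this is exactly the identification the paper uses. But your overall structure has a gap in the combining step, and the paper avoids that gap by a different opening move.

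The problem is that you set up a per-$J$ estimate at scale $r^{-1/n}$ and then try to aggregate $L^p$ bounds on each $F_J=\sum_{I(\theta)\subset J}f_\theta$ into the square function $\int(\sum_J|F_J|^2)^{p/2}W$. For $p\ge 2$, that aggregation fails in the relevant direction: $\int(\sum_J|F_J|^2)^{p/2}W$ is \emph{not} controlled by $\sum_J\int|F_J|^pW$, since $\ell^2\hookrightarrow\ell^p$ gives the opposite inequality. Your proposed fix --- invoke Theorem \ref{locL2thm} to ``pass to the Fourier-support scale of $W$'' and then apply the per-$J$ estimate --- does not address this; local $L^2$ orthogonality refines a square function further (coarse to fine under a suitable weight), it does not convert a collection of single-block $L^p$ integrals into a mixed-norm $\ell^2L^p$ estimate. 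The paper sidesteps the issue entirely by first applying \emph{Khintchine's inequality}: since each $F_J$ is a sum of arbitrary Fourier-localized $f_\theta$'s, one has $\int(\sum_J|F_J|^2)^{p/2}W\sim\mathbb{E}_\epsilon\int|\sum_J\epsilon_J F_J|^pW$, and the randomized sum is again of the form $\sum_\theta f'_\theta$ with $\supp\widehat{f'_\theta}\subset\theta$. This reduces the LHS of the lemma to a single $L^p$ integral $\int|\sum_\theta f_\theta|^pW$ with no square function, so the scale $r^{-1/n}$ disappears from the analysis before the cylindrical step is taken. The paper then regroups at scale $\sigma_{m-1}^{-1}R^{-1/n}$ (not $r^{-1/n}$), which is the length at which each grouped Fourier support is exactly one dilated cylinder $\sigma^m\tau\times\R$ with $\tau\in\Xi_{m-1}^n(\sigma^{n-1}R^{(n-1)/n},\sigma')$, and applies the cylindrical $S_{\sigma'}^{n-1,m-1}(1,S,S)$ to the full sum to produce the RHS square function at scale $\max(\rho^{-1/n},\sigma^{-1}R^{-1/n})$ in one stroke. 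In short: add Khintchine up front, drop the per-$J$ framing, and work at the scale $\sigma_{m-1}^{-1}R^{-1/n}$ where the cylinder identification is exact rather than approximate.
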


\begin{lemma} \label{lem2k} Assume the hypotheses of Proposition \ref{multiconek} and that $0<m$. For each $r^{-\frac{1}{n}}\le \sigma_{m-1}^{-1}R^{-\frac{1}{n}}$, let $J$ denote intervals in $\mc{J}( r^{-\frac{1}{n}})$ and let $J'$ denote intervals in $\mc{J}( \rho^{-\frac{1}{n}})$. Then for any $\d_0\in(0,1)$, 
\begin{align*} 
\int_{\R^{n+1}}|\sum_{J} |\sum_{\substack{I(\theta)\subset J\\\theta\in\Xi_{m}^{n+1}(R,\sigma)}}&f_\theta|^2|^{\frac{p}{2}}W\lesssim_{\d_0} r^{\d_0}\sum_{J\in\mc{J}(r^{-\frac{1}{n}})} \int_{\R^{n+1}} |\sum_{\substack{I(\theta)\subset J\\\theta\in\Xi_{m}^{n+1}(R,\sigma)}}f_\theta|^pW \\
    & + \sum_{k=1}^{n-m} r^{\d_0^{n-k+1}}\Big[S_{\sigma}^{n,m}(\min(\rho,r^{1+\d_0^{n-k}}),\rho,R) \Big]\int_{\R^{n+1}}|\sum_{J'} |\sum_{\substack{I(\theta)\subset J'\\\theta\in\Xi_{m}^{n+1}(R,\sigma)}}f_\theta|^2|^{\frac{p}{2}}W. 
\end{align*}
\end{lemma}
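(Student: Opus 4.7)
The plan is to adapt the proof of Proposition~\ref{conemulti} to the $\sigma$-truncated setting, invoking $\sigma$-analogues of Lemmas~\ref{multilem1}, \ref{ptwise}, and \ref{multilem3} in place of the $(K,\mathbf{h})$-versions. These three lemmas rely only on (i) the Fourier support of $\sum_{I(\theta)\subset J} f_\theta$ being contained in the convex set $\{\sum_i \lambda_i \phi_n^{(i)}(a): |\lambda_i| \le R^{-m/n} r^{(m-i)/n}\}$ determined by $J$, a property inherited by $\Xi_m^{n+1}(R,\sigma) \subset \Xi_m^{n+1}(R)$, and (ii) the inductive hypotheses on $\C_{m'}^{n'+1}$ and the lower-dimensional $S_{\sigma'}^{n',m'}$, both of which are available under the standing assumptions of Proposition~\ref{multiconek}. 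The essential difference from Proposition~\ref{conemulti} is that we omit the final affine rescaling that converted an $\ell^p$-bound at scale $r^{-1/n}$ into $S_K^{n,m}(1,\rho/r,R/r)$ times an $\ell^2$-bound at scale $\rho^{-1/n}$; this non-rescaling is precisely what produces the first term $r^{\delta_0} \sum_J \int |\sum_{I(\theta)\subset J} f_\theta|^p W$ in the claimed inequality.

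Concretely, I would apply the $\sigma$-analogue of Lemma~\ref{multilem1} to the left-hand side, producing either a halting term at step $n_p$ or an intermediate term at some $2 \le k \le n_p$. At the halting step, the output is $C_{\delta_{n_p}} r^{\delta_{n_p}} \sum_J \int |\sum_{I(\theta)\subset J} f_\theta|^p W$; since $\delta_{n_p} = \delta_0^{n-n_p} \le \delta_0$ and $r \ge 1$, this is dominated by the first term of the conclusion. For each intermediate $k$, I would apply Lemma~\ref{ptwise} pointwise to reduce the $\ell^{\tilde p_{k-1}}$-sum to an $\ell^2$-sum at the refined scale $\max(\rho^{-1/n}, r^{-(1+\delta_{n-k})/n})$, then invoke Lemma~\ref{multilem3} to eliminate the weighted convolution. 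Recognizing the resulting expression as bounded by $S_\sigma^{n,m}(\min(\rho, r^{1+\delta_0^{n-k}}), \rho, R)$ times $\int |\sum_{J'} |\sum_{I(\theta)\subset J'} f_\theta|^2|^{p/2} W$ yields the $k$-th summand in the second term.

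The main obstacle is verifying that the rescaling step inside Lemma~\ref{ptwise} still yields an admissible truncated cone. In the original argument, rescaling maps each $\theta \subset J$ to a cylindrical neighborhood of some $\tau_\theta \in \Xi_m^{m+k}((R/r)^{(m+k-1)/n}, K, \tilde{\mathbf{h}})$, which permits the invocation of $S_K^{m+k-1,m}$. For $\Xi_m^{n+1}(R,\sigma)$ the analogous rescaling should transform the $\sigma$-constraints on the first $m+1$ coordinates into a new tuple $\sigma''$ that remains in the admissible class $\mc{S}^{m+k-1,m}$, so that the inductive hypothesis $S^{m+k-1,m}_{\sigma''} \lesssim_\delta \rho^\delta$ from Proposition~\ref{multiconek} applies. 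Tracking this transformation of $\sigma$ under rescaling---verifying admissibility of $\sigma''$ and that the implicit constants depend only on dimension---is the one nontrivial check; the rest is a mechanical replacement of $\mathbf{h}$ by $\sigma$ throughout the proofs of Lemmas~\ref{multilem1}, \ref{ptwise}, and \ref{multilem3}.
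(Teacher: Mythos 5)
Your proposal is correct and takes essentially the same route as the paper: the paper's proof states that Lemma~\ref{lem2k} ``is a small variation of Lemma~\ref{multilem1}'' and then bundles the $\sigma$-analogues of Lemmas~\ref{multilem1}, \ref{ptwise}, and \ref{multilem3} (via Lemma~\ref{multilem3pf}) into a single argument, tracking the $\sigma\mapsto\sigma(r)$ transformation under rescaling and invoking the hypothesized boundedness of $S_{\sigma'}^{m+k,m}$ in the low-frequency cases---exactly the structure you outline, including your correct identification that the first term of the conclusion arises from \emph{not} rescaling the $L^p$-halting term as in Proposition~\ref{conemulti}. The one check you flag as nontrivial (admissibility of the rescaled tuple $\sigma''=\sigma(r)$) is precisely the point the paper addresses explicitly, with $\sigma_i(r)=\max(\sigma_i,\sigma_{i+1}^{-1}\cdots\sigma_m^{-1}(R/r)^{-1/n})$ and $\sigma_m(r)=1$.
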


\begin{lemma} \label{lem3k} Assume 
that $r^{-\frac{1}{n}}\le \sigma_{m-1}^{-1}R^{-\frac{1}{n}}$. For each $J\in\mc{J}(r^{-\frac{1}{n}})$, 
\begin{align*} 
\int_{\R^{n+1}} |\sum_{\substack{I(\theta)\subset J\\\theta\in\Xi_{m}^{n+1}(R,\sigma)}}f_\theta|^pW&\lesssim  S_{\sigma(r)}^{n,m}(1,\rho/r,R/r) \int_{\R^{n+1}} |\sum_{J'\subset J}|\sum_{\substack{I(\theta)\subset J\\\theta\in\Xi_{m}^{n+1}(R,\sigma)}}f_\theta|^2|^{\frac{p}{2}}W
\end{align*}
where $\sigma(r)$ is an $(m+1)$-tuple with $\sigma_m(r)=1$ and $\sigma_i(r)=\max(\sigma_i,\sigma_{i+1}^{-1}\cdots\sigma_m^{-1}(R/r)^{-1/n})$ for $i=0,\ldots,m-1$. 
\end{lemma}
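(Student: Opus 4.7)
The plan is to prove Lemma \ref{lem3k} as a rescaling: the restricted problem at scale $R$ on an interval $J$ of length $r^{-1/n}$ is equivalent (after change of variables) to the full problem at scale $R/r$ on $[0,1]$ with truncation tuple $\sigma(r)$. First, I would fix $J\in\mc{J}(r^{-1/n})$ with left endpoint $a$ and define the linear map $A:\R^{n+1}\to\R^{n+1}$ by $A\phi_n^{(i)}(a)=r^{i/n}\phi_n^{(i)}(0)$ for $i=0,\ldots,n$, which is well-defined since $\{\phi_n^{(i)}(a)\}_{i=0}^n$ is a basis. Expanding $\phi_n^{(i)}(a+s)=\sum_{j\ge i}\tfrac{s^{j-i}}{(j-i)!}\phi_n^{(j)}(a)$, a direct computation gives
\[ A\Big(\sum_{i=0}^n\lambda_i\phi_n^{(i)}(a+s)\Big)=\sum_{i=0}^n(r^{i/n}\lambda_i)\,\phi_n^{(i)}(r^{1/n}s), \]
so $A$ intertwines the substitution $\mu_i=r^{i/n}\lambda_i$, $s'=r^{1/n}s$.

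Next, I would check that each $\theta\in\Xi_m^{n+1}(R,\sigma)$ with $I(\theta)\subset J$ is mapped by $A$ into an element $\theta'\in\Xi_m^{n+1}(R/r,\sigma(r))$ whose associated $t$-interval lies inside $[0,2^{-100n}]$. Under $\mu_i=r^{i/n}\lambda_i$, the constraint $|\lambda_i|\le\sigma_0^{\max(0,1-i)}\cdots\sigma_m^{\max(0,m+1-i)}R^{-i/n}$ becomes $|\mu_i|\le\sigma_0^{\max(0,1-i)}\cdots\sigma_m^{\max(0,m+1-i)}(R/r)^{-i/n}$, and since $\sigma(r)_j\ge\sigma_j$ for each $j$, this is consistent with admissibility at scale $R/r$. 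The choice $\sigma(r)_k=\max(\sigma_k,\sigma_{k+1}^{-1}\cdots\sigma_m^{-1}(R/r)^{-1/n})$ is precisely the smallest admissible value at scale $R/r$ dominating $\sigma$, so a routine case analysis shows that both the ``high'' and ``low'' case conditions on the $\lambda_i$ transport into the corresponding conditions on the $\mu_i$.

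Having verified the rescaling on the Fourier side, I would set $\tilde f_\theta(y):=f_\theta(A^ty)$, whose Fourier transform is supported in $A(\theta)$. Performing the change of variables $x=A^ty$ transforms the left-hand side of the target into
\[ |\det A|\int_{\R^{n+1}}\Big|\sum_{I(\theta)\subset J}\tilde f_\theta\Big|^p (W\circ A^t)\,dy. \]
Since all caps $A(\theta)$ have $t$-intervals inside the single element $[0,2^{-100n}]\in\mc{J}(1)$, the outer sum in the defining inequality for $S^{n,m}_{\sigma(r)}(1,\rho/r,R/r)$ collapses and its left-hand side reads $|\sum\tilde f_\theta|^p$. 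The transported weight $W\circ A^t$ is adapted to the ellipsoid $(A^t)^{-1}(B)$ with semi-axes $r^{-i/n}R^{m/n}\rho$, which can be dominated pointwise by a sum $\sum_\alpha W_{B'_\alpha}^{n+1,d}$ over finitely overlapping balls $B'_\alpha$ of radius $(R/r)^{m/n}(\rho/r)$, using the rapid decay of $W^{n+1,d}$ as in the analogous step in the proof of Proposition \ref{conemulti}. Applying $S^{n,m}_{\sigma(r)}(1,\rho/r,R/r)$ on each $B'_\alpha$, summing over $\alpha$, and undoing the change of variables converts $W\circ A^t$ back to $W$ and maps each rescaled interval $I\in\mc{J}((\rho/r)^{-1/n})$ to a $J'\in\mc{J}(\rho^{-1/n})$ with $J'\subset J$, yielding the stated bound.

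The main technical hurdle will be the bookkeeping needed to verify that the case structure in the definition of $\Xi_m^{n+1}(R,\sigma)$ maps compatibly into that of $\Xi_m^{n+1}(R/r,\sigma(r))$ under the anisotropic substitution $\mu_i=r^{i/n}\lambda_i$; the explicit definition of $\sigma(r)$ is tailored to make this go through without loss. Once this compatibility and the standard weight approximation by balls of radius $(R/r)^{m/n}(\rho/r)$ are in place, the remainder is a clean rescaling combined with the defining inequality for $S^{n,m}_{\sigma(r)}(1,\rho/r,R/r)$.
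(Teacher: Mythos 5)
Your proposal follows the same rescaling strategy as the paper's proof: fix $J$, conjugate by the anisotropic linear map that sends $\phi_n^{(i)}(a)\mapsto r^{i/n}\phi_n^{(i)}(0)$, verify that the rescaled Fourier supports land in $\Xi_m^{n+1}(R/r,\sigma(r))$, approximate the transported weight by finitely overlapping balls of radius $(R/r)^{m/n}(\rho/r)$, and invoke the definition of $S^{n,m}_{\sigma(r)}(1,\rho/r,R/r)$. The paper's proof is considerably terser, leaving the conjugation formula, the identification of the correct truncation tuple $\sigma(r)$, and the case analysis for the $\Xi$-membership implicit; your proposal fills in these details correctly, including the clean identity $A\bigl(\sum_i\lambda_i\phi_n^{(i)}(a+s)\bigr)=\sum_i(r^{i/n}\lambda_i)\phi_n^{(i)}(r^{1/n}s)$ coming from the Taylor shift.
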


First we show how the previous lemmas imply Proposition \ref{multiconek}. 
\begin{proof}[Proof of Proposition \ref{multiconek}] Let $\sigma\in\mc{S}^{n,m}_R$. Assume that $\sigma_{m-1}^{-1}R^{-\frac{1}{n}}>K^{-1}$. In this case, $S_\sigma^{n,m}(r,\rho,R)\lesssim K^CS_K^{n,m}(r,\rho,R)$. This is because after using the triangle inequality and rescaling (for the cost of $K^C$), it suffices to suppose that each $f_\theta$ is supported in 
\[ \{\sum_{i=0}^n\lambda_i\phi_n^{(i)}(a_\theta):R^{-\frac{m}{n}}\ge |\lambda_m|\ge R^{-\frac{m}{n}}/2,\quad |\lambda_i|\le \min(K^{-1}R^{-\frac{m}{n}},R^{-\frac{i}{n}})\quad\forall i\not=m\}. \]
By dyadic pigeonholing, there is a bump function $\eta_{h_m}$ supported in an annulus $||\xi|-h_m|<K^{-\b}R^{-\frac{m}{n}}$, for some $R^{-\frac{m}{n}}/2\le |h_m|\le R^{-\frac{m}{n}}$ which satisfies 
\[ \int_{\R^{n+1}}|\sum_{\theta\in\Xi_m^{n+1}(R,\sigma)}f_\theta*\widecheck{\eta}_{h_m}|^pW. \]
The Fourier support of each $f_\theta*\widecheck{\eta}_{h_m}$ is contained in an element of $\Xi_m^{n+1}(R,K,{\bf{h}})$ for ${\bf{h}}=(K^{-1}R^{-\frac{m}{n}},\ldots,K^{-1}R^{-\frac{m}{n}},h_m)$. Then by Proposition \ref{SnmmKbd}, 
\[ \int_{\R^{n+1}}|\sum_{\substack{\theta\in\Xi_m^{n+1}(R,\sigma)}} f_{\theta}|^{p}W\lesssim_\e  K^{O(1)}\rho^\e \int_{\R^{n+1}}(\sum_{J\in\mc{J}(\rho^{-\frac{1}{n}})}|\sum_{\substack{\theta\in\Xi_m^{n+1}(R,\sigma)\\I(\theta)\subset J}} f_{\theta}*\widecheck{\eta}_{h_m}|^2)^{\frac{p}{2}}W. \]
By Cauchy-Schwarz and properties of weight functions, the integrand is $\lesssim (\sum_{\theta\in\Xi_m^{n+1}(R,\sigma)}|f_\theta|^2)^{\frac{p}{2}}W$. Since $K$ is a constant depending on $\e$, the proposition is proved in this case. 

It remains to consider the $\sigma\in\mc{S}^{n,m}_R$ satisfying $\sigma_{m-1}^{-1}R^{-\frac{1}{n}}<K^{-1}$. By Lemma \ref{lem1k}, it suffices to consider $r^{-\frac{1}{n}}\le \sigma_{m-1}^{-1}R^{-\frac{1}{n}}$. Then the combination of Lemmas \ref{lem2k} and \ref{lem3k} implies that for each $0<\d_0<1$, 
\begin{align*} 
S_{\sigma}^{n,m}(r,\rho,R)&\lesssim_{\d_0} r^{\d_0} S_{\sigma(r)}^{n,m}(1,\rho/r,R/r) +\sum_{k=1}^{n-m} r^{\d_0^{n-k+1}}\Big[S_{\sigma}^{n,m}(\min(\rho,r^{1+\d_0^{n-k}}),\rho,R) \Big]. 
\end{align*}
This is similar to the multi-scale inequality satisfied by $S_{K}^{n,m}(r,\rho,R)$. In this set-up, we may bound the first term immediately since $(\sigma(r))_{m-1}\le K(R/r)^{-\frac{1}{n}}$. It thus suffices to suppose that 
\[ S_{\sigma}^{n,m}(r,\rho,R)\lesssim_{\d_0} \sum_{k=1}^{n-m} r^{\d_0^{n-k+1}}\Big[S_{\sigma}^{n,m}(\min(\rho,r^{1+\d_0^{n-k}}),\rho,R) \Big]. \]
This implies the desired bound for $S_\sigma^{n,m}(r,\rho,R)$ by the same argument as in the proof of Proposition \ref{SnmmKbd}. 
\end{proof}

It remains to verify Lemmas \ref{lem1k}, \ref{lem2k}, and \ref{lem3k}. 

\begin{proof}[Proof of Lemma \ref{lem1k}] By Khintchine's inequality, it suffices to bound 
\[ \int_{\R^{n+1}}|\sum_{\theta\in\Xi_m^{n+1}(R,\sigma)}f_\theta|^pW. \]
Organize the $f_\theta$ into subcollections based on $I(\theta)$ being in the same $2^{-100n}\sigma_{m-1}^{-1}R^{-\frac{1}{n}}$-interval. If $J\in\mc{J}(\sigma^{-1}R^{-\frac{1}{n}})$, then $\sum_{I(\theta)\subset J}f_\theta$ is Fourier supported in a set $\sigma^m\tau\times \R$ for some $\tau\in\Xi_{m-1}^{n}(\sigma^{n-1}R^{\frac{n-1}{n}},\sigma')$. Here, $\sigma'$ is an $m$-tuple with $\sigma_i'=\sigma_i$ for $i<m-1$ and $\sigma_{m-1}'=1$. After dilating the spatial side by a factor of $\sigma^m$, the weight function $W$ becomes localized to a ball of radius $\sigma^mR^{\frac{m}{n}}\rho\ge \sigma^{m-1}R^{\frac{m-1}{n}}\min(\rho,\sigma^nR)^{\frac{n-1}{n}}$. Therefore, the lemma follows from a cylindrical version of the defining inequality for \newline $S_{\sigma'}^{n-1,m-1}(1,\min(\rho,\sigma^{n}R)^{\frac{n-1}{n}},\min(\rho,\sigma^{n}R))^{\frac{n-1}{n}}$. 

\end{proof}

\begin{proof}[Proof of Lemma \ref{lem2k}] Lemma \ref{lem2k} is a small variation of Lemma \ref{multilem1}, so we only describe how to adapt the proof of Lemma \ref{multilem1}. We consider $\sigma^nR\le r\le \rho\le R$. For each $J$, the summand
$|\sum_{\substack{I(\theta)\subset J\\\theta\in\Xi_{m}^{n+1}(R,\sigma)}}f_\theta|^2$ is Fourier supported in a set of the form
\begin{equation}\label{typeof} \{\sum_{i=0}^n\lambda_i\phi_n^{(i)}(a):|\lambda_i|\le R^{-\frac{m}{n}}r^{\frac{m-i}{n}}\} \end{equation}
for some $a\in 2^{-100n}r^{-\frac{1}{n}}\Z\cap[0,1]$. 
As before, let $\d_k=\d_0^{n-k}$ and $d_k=4(n-k)d$. As in the proof of Lemma \ref{multilem1}, we dyadically decompose the Fourier supports into subsets
\[ \{\sum_{i=0}^n\lambda_i\phi_n^{(i)}(a):|\lambda_i|\le\mu^{m+2-i}R^{-\frac{m}{n}}r^{\frac{m-i}{n}}\}\setminus \{\sum_{i=0}^n\lambda_i\phi_n^{(i)}(a):|\lambda_i|\le(\mu/2)^{m+2-i}R^{-\frac{m}{n}}r^{\frac{m-i}{n}} \} , \]
where $r^{\d_2}\le \mu\le 1$ and we take the smallest set to be $\{\sum_{i=0}^n\lambda_i\phi_n^{(i)}(a):|\lambda_i|\le (r^{-\d_2})^{m+2-i}R^{-\frac{m}{n}}r^{\frac{m-i}{n}}\}$. Suppose that $\eta^{2}_{J,\mu}$ are bump functions associated to the above sets and which satisfy 
\[ \int_{\R^{n+1}}|\sum_{J} |\sum_{\substack{I(\theta)\subset J\\\theta\in\Xi_{m}^{n+1}(R,\sigma)}}f_\theta|^2|^{\frac{p}{2}}W\lesssim (\log r)^c \int_{\R^{n+1}}|\sum_{J} |\sum_{\substack{I(\theta)\subset J\\\theta\in\Xi_{m}^{n+1}(R,\sigma)}}f_\theta|^2*\widecheck{\eta}^{2}_{J,\mu}|^{\frac{p}{2}}W . \]
If $\mu>r^{-\d_2}$, then since $r^{\frac{m}{n}}\rho\ge  \mu^nr$, we have by $\C^{n+1}_{m+1}(r)\lesssim_\d r^\d$ and Lemma \ref{loclem} that 
\begin{align*} 
\int_{\R^{n+1}}|\sum_{J} |\sum_{\substack{I(\theta)\subset J\\\theta\in\Xi_{m}^{n+1}(R,\sigma)}}f_\theta|^2*\widecheck{\eta}^{2}_{J,\mu}|^{\frac{p}{2}}W &\lesssim_{\d_0}r^{C\d_2} \int_{\R^{n+1}}|\sum_{J} ||\sum_{\substack{I(\theta)\subset J\\\theta\in\Xi_{m}^{n+1}(R,\sigma)}}f_\theta|^2*\widecheck{\eta}^{2}_{J,\mu}|^2|^{\frac{p}{4}}W \\
    &\lesssim_{\d_0} r^{C\d_2} \int_{\R^{n+1}}|\sum_{J} |\sum_{\substack{I(\theta)\subset J\\\theta\in\Xi_{m}^{n+1}(R,\sigma)}}f_\theta|^4*\tilde{W}_{J,{m+2}}^{n+1,d_2}|^{\frac{p}{4}}W . 
\end{align*}
Since the Fourier support of each summand $|\sum_{\substack{I(\theta)\subset J\\\theta\in\Xi_{m}^{n+1}(R,\sigma)}}f_\theta|^4*\tilde{W}_{J,{m+2}}^{n+1,d_2}$ is contained in the same type of set as \eqref{typeof}, we may iterate this procedure. If $1\le \frac{p}{4}\le 2$, then decompose the first $n$ many coordinates. If $2\le \frac{p}{4}\le \tilde{p}_{n-m-2}$, then decompose the first $m+3$ many coordinates. If each step of the iteration produces high-frequency dominating cases, we will obtain the inequality of the form 
\[ \int_{\R^{n+1}}|\sum_J |\sum_{\substack{I(\theta)\subset J\\\theta\in\Xi_{m}^{n+1}(R,\sigma)}}f_\theta|^2|^{\frac{p}{2}}W\lesssim_{\d_0}r^{\d_0}\sum_J\int_{\R^{n+1}}|\sum_{\substack{I(\theta)\subset J\\\theta\in\Xi_{m}^{n+1}(R,\sigma)}}f_\theta|^p W ,\]
which proves the lemma. If one of the steps of the iteration is low-frequency dominating, then we use a pointwise bound. We now describe one of those cases. 

In the case that $\mu=r^{-\d_2}$, which we call a low-frequency case, we use an argument analogous to Lemma \ref{ptwise}. We bound each summand 
\[ ||\sum_{\substack{I(\theta)\subset J\\\theta\in\Xi_{m}^{n+1}(R,\sigma)}}f_\theta|^2*\widecheck{\eta}^{2}_{J,\mu}(x)|\lesssim |\sum_{\substack{I(\theta)\subset J\\\theta\in\Xi_{m}^{n+1}(R,\sigma)}}f_\theta|^2*\tilde{W}^{n+1,d_2}_{J,m+2,\mu}(x)\]
pointwise. After rescaling, the expression on the right hand side becomes 
\[ |\sum_{\substack{I(\theta)\subset J\\\theta\in\Xi_{m}^{n+1}(R,\sigma)}}\underline{f_\theta}|^2*\underline{\tilde{W}}^{n+1,d_2}_{J,m+2,\mu}(x) .\]
If we write $x=(x_2,x')\in\R^{m+2}\times\R^{n-m-1}$, then for fixed $x'$, $\underline{\tilde{W}}^{n+1,d_2}_{J,\mu}(\cdot,x')$ is a weight function localized to a union of balls of radius $r^{\d_2}R^{\frac{m}{n}}r^{-\frac{m}{n}}$ and each $\underline{f}_\theta(\cdot,x')$ is Fourier supported in an element of $\Xi_m^{m+2}((R/r)^{\frac{m+1}{n}},\sigma(r))$, where $\sigma(r)$ is an $(m+1)$-tuple with $\sigma_m(r)=1$ and $\sigma_i(r)=\max(\sigma_i,\sigma_{i+1}^{-1}\cdots\sigma_m^{-1}(R/r)^{-1/n})$ for $i=0,\ldots,m-1$. Then by Proposition \ref{L2conekaklocgen}, the previous displayed expression is bounded by 
\[ C_\d r^\d \sum_{J''\subset J}|\sum_{\substack{I(\theta)\subset J'\\\theta\in\Xi_{m}^{n+1}(R,\sigma)}}\underline{f_\theta}|^2*\underline{\tilde{W}}^{n+1,d_2}_{J,m+2,\mu}(x), \]
where $J''\in\mc{J}(\max(\rho^{-\frac{1}{n}},r^{-\frac{1+\d_2}{n}}))$. (Note that in other low-frequency dominating cases that appear elsewhere in this algorithm, we would invoke the hypothesized boundedness of $S_{\sigma'}^{m+k,m}(\cdot,\cdot,\cdot)$ instead of Proposition \ref{L2conekaklocgen} to obtain pointwise bounds.) The summary so far of this low-frequency case is that 
\[\int_{\R^{n+1}}|\sum_{J} |\sum_{\substack{I(\theta)\subset J\\\theta\in\Xi_{m}^{n+1}(R,\sigma)}}f_\theta|^2|^{\frac{p}{2}}W\lesssim C_{\d_1}r^{O(\d_1)} \int_{\R^{n+1}}|\sum_J\sum_{J''\subset J} |\sum_{\substack{I(\theta)\subset J''\\\theta\in\Xi_{m}^{n+1}(R,\sigma)}}f_\theta|^2*\tilde{W}_{J,m+2,r^{-\d_2}}^{n+1,d_2}|^{\frac{p}{2}}W. \]
By Lemma \ref{multilem3pf}, the integral on the right hand side is bounded above by 
\[ C_{\d_1}r^{\d_1} \int_{\R^{n+1}}|\sum_{J''} |\sum_{\substack{I(\theta)\subset J''\\\theta\in\Xi_{m}^{n+1}(R,\sigma)}}f_\theta|^2|^{\frac{p}{2}}W , \]
which we may then bound using $S_\sigma^{n,m}(\min(\rho,r^{1+\d_0^{n-2}}),\rho,R)$ to prove the lemma. 

\end{proof}

\begin{proof}[Proof of Lemma \ref{lem3k}] We perform a rescaling analogous to the proof of Proposition \ref{conemulti}. Fix $J\in\mc{J}(r^{-\frac{1}{n}})$. After rescaling, the integral 
\[ \int_{\R^{n+1}}|\sum_{\substack{I(\theta)\subset J\\ \theta\in\Xi_m^{n+1}(R,\sigma)}}f_\theta|^pW \]
becomes
\[ \int_{\R^{n+1}}|\sum_{\substack{I(\theta)\subset J\\ \theta\in\Xi_m^{n+1}(R,\sigma)}}\underline{f}_\theta|^p\underline{W} \]
where $\underline{f}_\theta$ has Fourier support in an element of $\Xi_m^{n+1}(R/r,\sigma(r))$. The weight function $W$ which is initially adapted to a ball of radius $R^{\frac{m}{n}}\rho$ is dilated by factors of $1,r^{-\frac{1}{n}},\ldots,r^{-1}$ in $n$ many directions determined by $J$. The rescaled weight function $\underline{W}$ may be approximated by a sum of weight functions which are localized to finitely overlapping $(R/r)^{\frac{m}{n}}(\rho/r)$-balls. It follows that 
\[ \int_{\R^{n+1}}|\sum_{\substack{I(\theta)\subset J\\ \theta\in\Xi_m^{n+1}(R,\sigma)}}\underline{f}_\theta|^p\underline{W}\lesssim S_{\sigma(r)}^{n,m}(1,\rho/r,R/r) \int_{\R^{n+1}}|\sum_{\substack{J'\subset J\\ J'\in\mc{J}(\rho^{-\frac{1}{n}}}} |\sum_{\substack{I(\theta)\subset J'\\ \theta\in\Xi_m^{n+1}(R,\sigma)}}\underline{f}_\theta|^p\underline{W}. \]
Undoing the change of variables proves the lemma. 
\end{proof}

\subsection{Proof of Proposition \ref{coneinduct} \label{truncsec}}

We prove Proposition \ref{coneinduct} by reducing to the set-up from $S_\sigma^{n,m}(1,R,R)$ for some $\sigma$. 

\begin{proof}[Proof of Proposition \ref{coneinduct}] We will show that $\C_m^{n+1}(R)$ satisfies 
\[ \C_m^{n+1}(R)\le (\log R)^C\sup_{\sigma\in\mc{S}^{n,m}_R}S_\sigma^{n,m}(1,R,R) . \]
Then Proposition \ref{coneinduct} follows from Proposition \ref{multiconek}. By definition, for each $\theta\in\Xi_m^{n+1}(R)$, $f_\theta$ has Fourier support contained in 
\begin{equation}\label{thetaapprox} 
\{\sum_{i=0}^m\lambda_i\phi_n^{(i)}(a_\theta): |\lambda_i|\le 2R^{-\frac{i}{n}}\,\,\forall i,\quad \max_{0\le i\le m}|\lambda_i|2^{m-i}R^{\frac{i}{n}}\ge \frac{1}{4}\} \end{equation}
for some $a_\theta=2^{-100n}R^{-\frac{1}{n}}\Z$. We use an analogous construction as in the proof of Proposition \ref{gencone}. For $\sigma\in 2^\Z\cap [2^{-1}R^{-\frac{1}{n}},1]$, let $\eta_{\theta,\sigma}^{m-1}$ be bump functions supported on 
\[ \{\sum_{i=0}^m\lambda_i\phi_n^{(i)}(a_\theta): |\lambda_i|\le \sigma^{m-i} (R/2)^{-\frac{i}{n}}\,\,\forall i\}\setminus \{\sum_{i=0}^m\lambda_i\phi_n^{(i)}(a_\theta): |\lambda_i|\le (\sigma/2)^{m-i}(R/2)^{-\frac{i}{n}}\,\,\forall i\}, \]
and $\eta_{\theta,2^{-1}R^{-1/n}}$ supported in $\{\sum_{i=0}^m\lambda_i\phi_n^{(i)}(a_\theta): |\lambda_i|\le R^{-\frac{m}{n}}\}$. By the triangle inequality, there is some tuple $\sigma$ which satisfies 
\[  \int_{\R^{n+1}}|\sum_\theta f_\theta|^{p}\le (\log R)^C \int_{\R^{n+1}}|\sum_{\substack{\theta}} f_{\theta}*\widecheck{\eta}_{\theta,\sigma_0}^{0}\cdots*\widecheck{\eta}_{\theta,\sigma_{m-1}}^{m-1}|^{p}. \]
The summands $f_{\theta}*\widecheck{\eta}_{\theta,\sigma_0}^{0}\cdots*\widecheck{\eta}_{\theta,\sigma_{m-1}}^{m-1}$ are supported on elements of $\Xi_m^{n+1}(R,\sigma)$, where $\sigma=(\sigma_0,\ldots,\sigma_{m-1},1)$. In this case, we have 
\[ \int_{\R^{n+1}}|\sum_\theta f_\theta|^{p}\le (\log R)^CS_\sigma^{n,m}(1,R,R) \int_{\R^{n+1}}(\sum_{\substack{\theta}}| f_{\theta}*\widecheck{\eta}_{\theta,\sigma_0}^{0}\cdots*\widecheck{\eta}_{\theta,\sigma_{m-1}}^{m-1}|^2)^{\frac{p}{2}} ,\]
so it suffices to prove that the integral on the right hand side is bounded above by $C_\d r^\d \int_{\R^{n+1}}|\sum_{\theta}|f_\theta|^2|^{\frac{p}{2}}$. 

The auxiliary functions satisfy $|\widecheck{\eta}_{\theta,\sigma_i}^i|\lesssim \tilde{W}_{I(\theta),i+1,\sigma_i}^{n,1}$ for each $i$. By Cauchy-Schwarz, it suffices to show that 
\[ \int_{\R^{n+1}}|\sum_{\substack{\theta}} |f_{\theta}|^2*\tilde{W}_{I(\theta),1,\sigma_0}^{n,1}*\cdots*\tilde{W}_{I(\theta),m,\sigma_{m-1}}^{n,1}*\tilde{W}_{I(\theta),m+1}^{n,1}|^{\frac{p}{2}} \lesssim_\d R^\d \int_{\R^{n+1}}|\sum_{\substack{\theta}} |f_{\theta}|^2|^{\frac{p}{2}}. \]
This follows directly from $m$ applications of Lemma \ref{multilem3pf}, so we are done.

\end{proof}


\section{High-low analysis set-up for Proposition \ref{momcurveinduct} \label{tools}}

Recall from \textsection\ref{intro1} that there are three key ingredients to a general multi-scale analysis proving a square function estimate. To see the role of the first ingredient (a \emph{base case}), consider the following special case of the iteration. Our goal is to prove
\[ \int_{\R^n}|\sum_{\theta\in\Theta^n(R)}f_\theta|^{p_n}\le C_\e R^\e\int_{\R^n}|\sum_{\theta\in\Theta^n(R)}|f_\theta|^2|^{\frac{p_n}{2}} \]
for any Schwartz functions with $\supp\widehat{f_\theta}\subset\theta$. Temporarily use $\lessapprox$ to hide implicit constants that we are not worrying about here. One iteration of the key ingredients from \textsection\ref{intro1} uses $\M^n(K)$, a special version of type 2(b) progress, and rescaling, yielding
\begin{align*}
(\text{def. of $\M^n(K)$}) \qquad \qquad \int_{\R^n}|\sum_{\theta\in\Theta^n(R)}f_\theta|^{p_n}&\le \M^n(K)\int_{\R^n}(\sum_{\tau\in\Theta^n(K)}|\sum_{\substack{\theta\in\Theta^n(R)\\\theta\subset\tau}}f_\theta|^2)^{\frac{p_n}{2}}\\
(\text{type 2(b) progress})\qquad\qquad\qquad    
    &\lessapprox \M^n(K)\sum_{\tau\in\Theta^n(K)}\int_{\R^n}|\sum_{\substack{\theta\in\Theta^n(R)\\\theta\subset\tau}}f_\theta|^{p_n} \\
(\text{rescaling $+$ $\M^n(K)$})\qquad\qquad\qquad    
    &\lessapprox [\M^n(K)]^2\sum_{\tau\in\Theta^n(K)}\int_{\R^n}(\sum_{\substack{\tau'\in\Theta^n(K^2)\\\tau'\subset\tau}}|\sum_{\substack{\theta\in\Theta^n(R)\\\theta\subset\tau'}}f_\theta|^2)^{\frac{p_n}{2}} .
\end{align*}    
If these steps are iterated, then we obtain
\begin{align*}
\int_{\R^n}|\sum_{\theta\in\Theta^n(R)}f_\theta|^{p_n} \lessapprox [\M^n(K)]^s \sum_{\tau\in\Theta^n(K^s)} \int_{\R^n}|\sum_{\substack{\theta\in\Theta^n(R)\\\theta\subset\tau}}f_\theta|^{p_n}. 
\end{align*}
If $K^s=R$ (and $p\ge 2$), then 
\[ \sum_{\tau\in\Theta^n(K^s)} \int_{\R^n}|\sum_{\substack{\theta\in\Theta^n(R)\\\theta\subset\tau}}f_\theta|^{p_n}=\sum_{\theta\in\Theta^n(R)} \int_{\R^n}|f_\theta|^{p_n} \le \int_{\R^n}(\sum_{\theta\in\Theta^n(R)}|f_\theta|^2)^{\frac{p_n}{2}}.\]
We have accumulated $\log R/\log K$ many factors of the factor $\M^n(K)$. There is no \emph{a priori} bound for $\M^n(K)$ since $p_n>p_{n-1}$. We could try induction on scales. Our goal is to show that $\M^n(R)\le \tilde{C}_\e R^\e$. By induction, suppose that $\M^n(K)\le \tilde{C}_\e K^\e$. Then the above algorithm accumulates a bound of the form $\M^n(R)\lessapprox (\tilde{C}_\e K^\e)^{\frac{\log R}{\log K}}=\tilde{C}_\e^{\frac{\log R}{\log K}} R^\e$ and the induction does not close since $\tilde{C}_\e ^{\frac{\log R}{\log K}}\gg \tilde{C}_\e$. The high-low frequency argument will give us a way to bound $\M^n(R)$ by an expression that does not have too many factors of $\M^n(K)$, which will allow us to close the induction. 

A highly abbreviated version of the high-low frequency analysis is as follows. First we approximate the $L^p$ integral with a level set
\[ \int_{\R^n}|\sum_{\theta\in\Theta^n(R)}f_\theta|^{p_n}\approx \a^{p_n}|U_\a|,\]
where $U_\a=\{x\in\R^n:|\sum_{\theta\in\Theta^N(R)}f_\theta|\sim \a\}$. Then there is a stopping time algorithm which produces a scale $1<r<R$ satisfying
\begin{enumerate}
    \item $\sum_{\tau\in\Theta^n(r)}|\sum_{\substack{\theta\in\Theta^n(R)\\\theta\subset\tau}} f_\theta|^2\sim \sum_{\theta\in\Theta^n(R)}|f_\theta|^2$ on most of $U_\a$, and 
    \item the inequality
    \[ \a^{p_n}|U_\a|\lessapprox \int_{\R^n}|\sum_{\tau\in\Theta^n(r)}|\sum_{\substack{\theta\in\Theta^n(R)\\\theta\subset\tau}}f_\theta|^2|^{\frac{p_n}{2}}. \]
\end{enumerate}
If $r$ is small, say $r\le R^\d$ for some $\d>0$, then we have a good bound just using (1) and Cauchy-Schwarz:
\[ \a^{p_n}|U_\a|\lesssim R^{\d\frac{p_n}{n}}\int_{U_\a}(\sum_{\tau\in\Theta^n(r)}|\sum_{\substack{\theta\in\Theta^n(R)\\\theta\subset\tau}}f_\theta|^2)^{\frac{p_n}{2}} \lesssim R^{\d\frac{p_n}{n}}\int_{U_\a}(\sum_{\theta\in\Theta^n(R)}|f_\theta|^2)^{\frac{p_n}{2}}. \] 
If $r$ is large, so $r>R^\d$, we use the inequality from (2). If we consider the special iteration discussed above, then the right hand side of the inequality in (2) is bounded by 
\begin{align*}
(\text{type 2(b) progress}) \qquad \qquad \int_{\R^n}|\sum_{\tau\in\Theta^n(r)}|\sum_{\substack{\theta\in\Theta^n(R)\\\theta\subset\tau}}f_\theta|^2|^{\frac{p_n}{2}}&\lessapprox \sum_{\tau\in\Theta^n(r)}\int_{\R^n}|\sum_{\substack{\theta\in\Theta^n(R)\\\theta\subset\tau}}f_\theta|^{p_n}\\
(\text{rescaling $+$ $\M^n(K)$})\qquad\qquad\qquad\qquad\qquad\qquad\qquad\qquad    
    &\lessapprox \M^n(K)\sum_{\tau\in\Theta^n(r)}\int_{\R^n}(\sum_{\substack{\tau'\in\Theta^n(rK)\\\tau'\subset\tau}}|\sum_{\substack{\theta\in\Theta^n(R)\\\theta\subset\tau'}}f_\theta|^2)^{\frac{p_n}{2}}     \\
    &\lessapprox \cdots \lessapprox [\M^n(K)]^s \sum_{\tau\in\Theta^n(rK^s)} \int_{\R^n}|\sum_{\substack{\theta\in\Theta^n(R)\\\theta\subset\tau}}f_\theta|^{p_n}. 
\end{align*}
Assuming by induction that $\M^n(K)\le \tilde{C}_\e K^\e$, the above argument gives the inequality $\M^n(R)\lessapprox (\tilde{C}_\e K^\e)^{\frac{\log(R/r)}{\log K}} =(\tilde{C}_\e)^{\frac{\log (R/r)}{\log K}}r^{-\e}R^\e$. If $r$ is large enough, then $(\tilde{C}_\e)^{\frac{\log (R/r)}{\log K}}r^{-\e}\le \tilde{C}_\e$. Finally, we choose $\d$ and $K$ so that both cases of the high-low analysis allow us to close the induction. This is carried out carefully in the proof of Proposition \ref{S1bd}. 

Now we begin laying out the set-up for the high-low analysis, beginning with a detailed definition of the anisotropic neighborhoods. The set-up is analogous to the one used in \cite{gmw}. The following alternative definition of canonical moment curve blocks has the advantage that the blocks at various scales are nested. 

\begin{definition}[Canonical moment curve blocks]
For $R\in 2^{n\N}$, define ${\bf{S}}_n(R^{-\frac{1}{n}})$ to be the following collection of canonical moment curve blocks at scale $R$ which partition $\mc{M}^n(R)$:
\[ \bigsqcup\limits_{l=0}^{R^{\frac{1}{n}}-1}\mc{M}^n(R)\cap\{\xi\in\R^N:lR^{-\frac{1}{n}}\le \xi_1<(l+1)R^{-\frac{1}{n}}\}.  \]
\end{definition}

If $\tau\in{\bf{S}}_n(R^{-\frac{1}{n}})$ is the $\ell$th moment curve block, then $\tau$ is comparable to the set
\[ \{\g_n(lR^{-\frac{1}{n}})+\sum_{i=1}^n\lambda_i\g_n^{(i)}(lR^{-\frac{1}{n}}): |\lambda_i|\le R^{-\frac{i}{n}}\} . \]
By comparable, we mean that there is an absolute constant $C>0$ for which $C^{-1}\tau$ is contained in the displayed set and $C\tau$ contains the displayed set, where the dilations are taken with respect to the centroid of $\tau$. Since the moment curve blocks in $\Theta^n(R)$ are comparable to those in ${\bf{S}}_n(R^{-\frac{1}{n}})$, it suffices to prove a square function estimate with either definition of moment curve blocks.

For $i\ge 1$, let $\tilde{\g}_n^{(i)}(t)$ denote the projection of $\g_n^{(i)}(t)$ onto the orthogonal complement of $\text{Span}(\g_n(t),\ldots,\g_n^{(i-1)}(t))$. Define the dual set $\tau^*$ by 
\begin{equation}\label{dualdef}
\tau^* = \{\sum_{i=1}^n\lambda_i\tilde{\g}_n^{(i)}(t):|\lambda_i|\le R^{\frac{i}{n}}\}.  
\end{equation}
We sometimes refer to the set $\tau^*$ as well as its translates as wave packets.

Next, we fix some notation for the scales. Let $\e>0$. To prove Proposition \ref{momcurveinduct}, it suffices to assume that $R$ is larger than a constant which depends on $\e$. Consider scales $R_k\in 8^\N$ closest to $R^{k\e}$, for $k=1,\ldots,N$ and $R_N\le R\le R^\e R_N$. Since $R$ differs from $R_N$ at most by a factor of $R^\e$, we will assume that $R=R_N$. The relationship between the parameters is
\[ 1=R_0\le R_k\le R_{k+1}\le R_N=R. \]

Fix notation for moment curve blocks of various sizes. 
\begin{enumerate}
    \item Let $\theta$ denote elements of ${\bf{S}}_n(R^{-1/n})$. 
    \item Let $\tau_k$ denote elements of ${\bf{S}}_n(R_k^{-1/n})$. 
\end{enumerate}
The definitions of $\theta,\tau_k$ provide the additional property that if $\tau_k\cap\tau_{k+m}\not=\emptyset$, then $\tau_{k+m}\subset\tau_k$.

We will use two square function constants: $\text{T}_n(r,R)$ and (the weighted version) $\text{T}^w_{n,d}(R)$ \label{const}, where the weights are defined in Definition \ref{M3ballweight}.

\begin{definition} \label{TnRw} Let $n,d\in\N_{>0}$. Let $1\le r\le R$. Let $\emph{T}^w_{n,d}(R)$ be the infimum of $A>0$ such that
\[ \int_{\R^n}|f|^{p} \le A\int_{\R^n}|\sum_{\theta\in{\bf{S}}_n(R)}|f_\theta|^2*\w_{\theta,d}|^{\frac{p}{2}} \]
for any $2\le p\le p_n$ and any Schwartz function $f:\R^n\to\C$ with Fourier transform supported in $\mc{M}^n(R)$. 
\end{definition}

\begin{definition} \label{TnR} Let $R\ge 1$. Let $\emph{T}_n(R)$ be the infimum of $B>0$ such that
\[ \int_{\R^n}|f|^{p}\le B\int_{\R^n}|\sum_{\theta\in {\bf{S}}_n(R)} |f_\theta|^2|^{\frac{p}{2}} \]
for any $2\le p\le p_n$ and any Schwartz function $f:\R^n\to\C$ with Fourier transform supported in $\mc{M}^n(R)$.  

\end{definition}

Fix a ball $B_R\subset\R^n$ of radius $R$ as well as a Schwartz function $f:\R^n\to\C$ with Fourier transform supported in $\mc{M}^n(R)$. Also fix $D\in\N_{>0}$. The parameters $\a,\b>0$ describe the set 
\[ U_{\a,\b}=\{x\in B_{R}:|f(x)|\ge \a,\quad\frac{\b}{2}\le \sum_{\theta\in{\bf{S}}_n(R^{-1/n})}|f_\theta|^2*\w_{\theta,D}(x)\le \b\}.\]
The weight function $\w_{\theta,D}$ is defined in Definition \ref{M3ballweight} below. We assume throughout this section (and until \textsection\ref{M3pigeon}) that the $f_\theta$ satisfy the extra condition that
\begin{equation}\label{unihyp} 
\frac{1}{2}\le \|f_\theta\|_{L^\infty(\R^n)}\le 2\qquad\text{or}\qquad \|f_\theta\|_{L^\infty(\R^n)}=0. \end{equation}

\subsection{A pruning step \label{prusec}}

We define wave packets associated to $f_{\tau_k}$ and sort them according to an amplitude condition which depends on the parameters $\a$ and $\b$. 

For each $\tau_k$, let $\T_{\tau_k}$ be the collection of $\tau_k^*$ its translates $T_{\tau_k}$ which form a tiling of $\R^n$. Fix an auxiliary function $\p(\xi)$ which is a bump function supported in $[-\frac{1}{4},\frac{1}{4}]^n$. For each $m\in\Z^n$, let 
\[ \s_m(x)=c\int_{[-\frac{1}{2},\frac{1}{2}]^n}|\widecheck{\p}|^2(x-y-m)dy, \]
where $c$ is chosen so that $\sum_{m\in\Z^n}\s_m(x)=c\int_{\R^n}|\widecheck{\p}|^2=1$. Since $|\widecheck{\p}|$ is a rapidly decaying function, for any $k\in\N$, there exists $C_k>0$ such that
\[ \s_m(x)\le c\int_{[-\frac{1}{2},\frac{1}{2}]^3}\frac{C_k}{(1+|x-y-m|^2)^n}dy \le \frac{\tilde{C}_k}{(1+|x-m|^2)^k}. \]
Define the partition of unity $\s_{T_{\tau_k}}$ associated to ${\tau_k}$ to be $\s_{T_{\tau_k}}(x)=\s_m\circ A_{\tau_k}$, where $A_{\tau_k}$ is a linear transformations taking $\tau_k^*$ to $[-\frac{1}{2},\frac{1}{2}]^n$ and $A_{\tau_k}(T_{\tau_k})=m+[-\frac{1}{2},\frac{1}{2}]^n$. The important properties of $\s_{T_{\tau_k}}$ are (1) rapid decay off of $T_{\tau_k}$ and (2) Fourier support contained in $\tau_k$ translated to the origin. We sort the wave packets $\T_{\tau_k}=\T_{\tau_k}^g\sqcup\T_{\tau_k}^b$ into ``good" and ``bad" sets, and define corresponding versions of $f$, as follows. 


\begin{rmk} In the following definitions, let $K\ge 1$  be a large parameter which will be used to define the broad set in Proposition \ref{mainprop}. Also, $A=A(\e)\gg 1$ is a large enough constant (determined by Lemma \ref{ftofk}) which also satisfies $A\ge \tilde{D}$, where $\tilde{D}$ is from Lemma \ref{low}.
\end{rmk}

\begin{definition}[Pruning with respect to $\tau_k$]\label{taukprune} Let $f^{N}=f$, $f^{N}_{\tau_N}=f_{\theta}$. 
For each $1\le k\le N-1$, let 
\begin{align*} \T_{\tau_k}^{g}&=\{T_{\tau_k}\in\T_{\tau_{k}}:\|\s_{T_{\tau_{k}}}^{1/2}f_{\tau_{k}}^{k+1}\|_{L^\infty(R^3)}\le K^3A^{N-k+1}\frac{\b}{\a}\}, \\
f_{\tau_{k}}^{k}=\sum_{T_{\tau_k}\in\T_{\tau_k}^{g}}&\s_{T_{\tau_k}}f^{k+1}_{\tau_k}\qquad\text{and}\qquad  f_{\tau_{k-1}}^{k}=\sum_{\tau_k\subset\tau_{k-1}}f_{\tau_k}^k .
\end{align*}
\end{definition}
For each $k$, the $k$th version of $f$ is $f^k=\underset{\tau_k}{\sum} f_{\tau_k}^k$.

\begin{rmk}
We may assume that $\a\lesssim R^{C_0 \e}\b$. This will be discussed in Proposition \ref{wpd} and Corollary \ref{wpdcor}, which involve pigeonholing the wave packets of $f$.
\end{rmk}

\begin{lemma}[Properties of $f^k$] \label{pruneprop}
\begin{enumerate} 
\item\label{item1} $| f_{\tau_{k}}^k (x) | \le |f_{ \tau_{k}}^{k+1}(x)|\lesssim \#\theta\subset\tau_k.$
\item \label{item2} $\| f_{\tau_k}^k \|_{L^\infty(\R^n)} \le K^3A^{N-k+1}\frac{\b}{\a}$.
\item\label{item3} For $R$ sufficiently large depending on $\e$, $\text{supp} \widehat{f_{\tau_k}^{k}}\subset3\tau_k. $
\end{enumerate}
\end{lemma}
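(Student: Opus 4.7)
The plan is to proceed by downward induction on $k$, starting from the base case $k=N$ where $f_{\tau_N}^N=f_\theta$ makes all three items immediate (using \eqref{unihyp} for (1)--(2) and the assumption $\supp\widehat{f}_\theta\subset\theta$ for (3)). For the inductive step, I would assume the three items hold at level $k+1$ and derive them at level $k$, using only the definition of the pruning and basic facts about the partition of unity $\{\s_{T_{\tau_k}}\}$.

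For (1), the first inequality follows by factoring: since $0\le\sum_{T\in\T_{\tau_k}^g}\s_T(x)\le\sum_{T\in\T_{\tau_k}}\s_T(x)\equiv 1$, one gets $|f_{\tau_k}^k(x)|=|f_{\tau_k}^{k+1}(x)|\sum_{T\in\T_{\tau_k}^g}\s_T(x)\le|f_{\tau_k}^{k+1}(x)|$. For the bound by $\#\{\theta\subset\tau_k\}$, expand $f_{\tau_k}^{k+1}=\sum_{\tau_{k+1}\subset\tau_k}f_{\tau_{k+1}}^{k+1}$ and apply the triangle inequality and the inductive hypothesis (with the base case $k+1=N$ using $\|f_\theta\|_\infty\le 2$ directly). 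For (2), the defining condition of $\T_{\tau_k}^g$ yields $\s_T(x)|f_{\tau_k}^{k+1}(x)|\le K^3A^{N-k+1}(\beta/\alpha)\,\s_T^{1/2}(x)$ for each good $T$; summing gives
\[
|f_{\tau_k}^k(x)|\le K^3A^{N-k+1}\frac{\beta}{\alpha}\sum_{T\in\T_{\tau_k}^g}\s_T^{1/2}(x)\lesssim K^3A^{N-k+1}\frac{\beta}{\alpha},
\]
where uniform boundedness of $\sum_T\s_T^{1/2}(x)$ follows from Schwartz decay of $|\widecheck\p|^2$: one can choose any polynomial decay rate strictly greater than $2n$ to make the resulting geometric lattice sum converge uniformly in $x$.

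For (3), the inductive hypothesis at level $k+1$ gives $\supp\widehat{f_{\tau_k}^{k+1}}\subset\bigcup_{\tau_{k+1}\subset\tau_k}3\tau_{k+1}$. A direct Taylor-expansion computation comparing the basis $\{\g_n^{(j)}(t_1)\}$ adapted to $\tau_{k+1}$ with the basis $\{\g_n^{(j)}(t_0)\}$ adapted to $\tau_k$, combined with the scale separation $R_{k+1}/R_k=R^\e$, shows that each $3\tau_{k+1}$ has, in the $\tau_k$-frame, transverse extent $3R_{k+1}^{-j/n}\ll R_k^{-j/n}$, while in the $\xi_1$ direction the union fills $[lR_k^{-1/n},(l+1)R_k^{-1/n}]$ with only $O(R_{k+1}^{-1/n})$ slack at the endpoints. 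Once $R^{\e/n}$ exceeds a fixed absolute constant this yields $\bigcup_{\tau_{k+1}\subset\tau_k}3\tau_{k+1}\subset 2\tau_k$. Finally, because $|\widecheck\p|^2$ has Fourier transform $\p*\bar\p$ compactly supported in $[-1/2,1/2]^n$, $\widehat{\s_{T_{\tau_k}}}$ is compactly supported in $A_{\tau_k}^{t}([-1/2,1/2]^n)$, a box of shape comparable to $\tfrac12\tau_k$ centered at the origin; multiplication by $\s_T$ convolves the Fourier side by this small set, giving $\supp\widehat{f_{\tau_k}^k}\subset 2\tau_k+\tfrac12\tau_k\subset 3\tau_k$.

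The main obstacle will be the geometric containment $\bigcup_{\tau_{k+1}\subset\tau_k}3\tau_{k+1}\subset 2\tau_k$ needed for (3); this is precisely where ``$R$ sufficiently large depending on $\e$'' is consumed, as the $R^\e$ scale gap must be large enough to absorb both the $3$-dilation in every transverse direction and the small Fourier-side enlargement introduced by the partition of unity within the margin between $2\tau_k$ and $3\tau_k$. Items (1) and (2), by contrast, are purely bookkeeping from the definition of the pruning.
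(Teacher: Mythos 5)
Your proposal is correct and follows essentially the same approach as the paper: item (1) by iterating the partition-of-unity bound down to $\theta$-scale and invoking~\eqref{unihyp}, item (2) by peeling off $\s_T = \s_T^{1/2}\cdot\s_T^{1/2}$ and using the good-wave-packet threshold together with the uniform bound on $\sum_T \s_T^{1/2}$, and item (3) by tracking the Fourier support under multiplication by $\s_T$. Your treatment of (3) spells out the geometric containment $\bigcup_{\tau_{k+1}\subset\tau_k}3\tau_{k+1}\subset 2\tau_k$ and the compact Fourier support of $\s_T$ more explicitly than the paper, which leaves those details terse, but the underlying argument is the same.
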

\begin{proof} For the first property, recall that $\sum_{T_{\tau_k} \in \T_{\tau_k}}\s_{T_{\tau_k}}$ is a partition of unity so we may iterate the inequalities 
\begin{align*}
|f_{\tau_k}^k|\le |f_{\tau_k}^{k+1}|&\le \sum_{\tau_{k+1}\subset\tau_k}|f_{\tau_{k+1}}^{k+1}|\le\cdots\le \sum_{\tau_N\subset\tau_k}|f_{\tau_N}^N|= \sum_{\theta \subset\tau_k}|f_{\theta}|.  
\end{align*}
The first property follows from our assumption \eqref{unihyp} that each $\|f_\theta\|_{L^\infty(\R^n)}\lesssim 1$. For the $L^\infty$ bound in the second property, write
\[ |f_{ \tau_k}^k(x)| = |\sum_{\substack{T_{\tau_k} \in \T_{\tau_k^h}}} \s_{T_{\tau_k}}(x) f_{ \tau_k}^{k+1}(x)|\le \sum_{\substack{T_{\tau_k} \in \T_{\tau_k^h}}} \s_{T_{\tau_k}}^{1/2}(x) \|\s_{T_{\tau_k}}^{1/2}f_{ \tau_k}^{k+1}\|_\infty\lesssim \|\s_{T_{\tau_k}}^{1/2}f_{ \tau_k}^{k+1}\|_\infty. \]
\noindent By the definition of $\T_{\tau_k}^h$, $\|\s_{T_{\tau_k}}^{1/2}f_{\tau_k}^{k+1}\|_\infty\le K^3 A^{N-k+1}\frac{\b}{\a}$.

The third property depends on the Fourier support of $\s_{T_{\tau_k}}$, which is contained in $\tau_k$ shifted to the origin. Note if each $f_{\tau_k}^{k+1}$ has Fourier support in $\cup_{\tau_{k+1}\subset\tau_k}3\tau_{k+1}$, then $\supp\widehat{f_{\g_k}^k}$ is contained in $3\tau_k$. 

\end{proof}

\begin{definition} \label{M3ballweight} Let $d\in\N_{>0}$ and let $W^{n,d}$ be the weight function defined in \textsection\ref{wtsec}. Since we consider one dimension $n$ at a time, we suppress the $n$ notation in the weights. Let $B_0\subset\R^n$ denote the unit ball centered at the origin. For any set $U=T(B_0)$ where $T$ is an affine transformation $T:\R^n\to\R^n$, define
\[ w_{U,d}(x)=|U|^{-1}W^{n,d}(T^{-1}(x)). \]
For each $\tau_k$, let $A_{\tau_k}$ be a linear transformation mapping $\tau_k^*$ to the unit cube and define $\w_{\tau_k,d}$ by
\[  \w_{\tau_k,d}(x)=|\tau_k^*|^{-1}W^{n,d}(A_{\tau_k}(x)). \]
Assume that $d=1$ whenever the $d$ is not specified in a weight function.
\end{definition}
Let the capital-W version of weight functions denote the $L^\infty$-normalized (as opposed to $L^1$-normalized) versions, so for example, for any ball $B_s$, $W_{B_s,d}(x)=|B_s|w_{B_s,d}(x)$. If a weight function replaces the convex set with only a scale, say $s$, then the functions $w_{s,d},W_{s,d}$ are weight functions localized to the $s$-ball centered at the origin. 

Next, we record the locally constant property. By locally constant property, we mean that if a function $f$ has Fourier transform supported in a convex set $A$, then $|f|$ is bounded above by an averaged version of $|f|$ over a dual set $A^*$. 

\begin{lemma}[Locally constant property]\label{locconst} For each $\tau_k$ and $T_{\tau_k}\in\T_{\tau_k}$, 
\begin{align*} 
\|f_{\tau_k}\|_{L^\infty(T_{\tau_k})}^2\lesssim_d |f_{\tau_k}|^2*\w_{\tau_k,d}(x)\qquad\text{for any}\quad x\in T_{\tau_k} .\end{align*}
Also, for any $R_k^{1/n}$-ball $B_{R_k^{1/n}}$, 
\begin{align*} 
\|\sum_{\tau_k}|f_{\tau_k}|^2\|_{L^\infty(B_{R_k^{1/n}})}\lesssim_d |f_{\tau_k}|^2*w_{B_{R_k^{1/n}},d}(x)\qquad\text{for any}\quad x\in B_{R_k^{1/n}} .\end{align*}
\end{lemma}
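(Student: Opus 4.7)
The plan is to prove both bounds by the standard locally constant principle, differing only in the scale at which we apply it. Both statements reduce to the identity $F=F*\widecheck{\chi}$ for a suitable Fourier-side bump $\chi$, combined with the observation that $|\widecheck{\chi}|$ is dominated by the corresponding weight and with a translation-comparability property of the weights at the relevant sub-scale.

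For the first inequality, I would use that $f_{\tau_k}$ has Fourier support in (a harmless dilate of) $\tau_k$---for the pruned version, in $3\tau_k$ by Lemma~\ref{pruneprop}\eqref{item3}. Choose an $L^1$-normalized Schwartz function $\eta_{\tau_k}$ with $\widehat{\eta_{\tau_k}}\equiv 1$ on $3\tau_k$ and supported in $4\tau_k$. Then $f_{\tau_k}=f_{\tau_k}*\eta_{\tau_k}$, so by Cauchy--Schwarz (pointwise in $y$) and $\|\eta_{\tau_k}\|_1\lesssim 1$,
\[ |f_{\tau_k}(y)|^2\le \bigl(|f_{\tau_k}|^2 * |\eta_{\tau_k}|\bigr)(y). \]
Since $|\eta_{\tau_k}|\lesssim_d \w_{\tau_k,d'}$ for arbitrarily large $d'$ (this is where the dimensions $\tau_k^*$ enter), we obtain $|f_{\tau_k}(y)|^2\lesssim_d |f_{\tau_k}|^2 * \w_{\tau_k,d}(y)$.

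To upgrade this pointwise estimate to an $L^\infty(T_{\tau_k})$ estimate evaluated at any $x\in T_{\tau_k}$, I invoke the comparability property of $W^{n,d}$: from the definition, $W^{n,d}(a)\sim W^{n,d}(b)$ whenever $|a-b|\lesssim 1$. Because $A_{\tau_k}$ maps $\tau_k^*$ to the unit cube and $T_{\tau_k}$ is a translate of $\tau_k^*$, for any $x,y\in T_{\tau_k}$ and any $z\in\R^n$ we have $|A_{\tau_k}(z-x)-A_{\tau_k}(z-y)|\lesssim 1$, and hence $\w_{\tau_k,d}(z-y)\sim \w_{\tau_k,d}(z-x)$. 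Therefore $|f_{\tau_k}|^2*\w_{\tau_k,d}(y)\sim |f_{\tau_k}|^2 * \w_{\tau_k,d}(x)$, and taking the supremum over $y\in T_{\tau_k}$ yields the first claim.

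The second inequality (which I read as having a typographical omission of $\sum_{\tau_k}$ on the right) follows from the same template applied to $|f_{\tau_k}|^2$ rather than $f_{\tau_k}$. The Fourier support of $|f_{\tau_k}|^2$ lies in $3\tau_k-3\tau_k$, which is contained in a ball of radius $O(R_k^{-1/n})$ (since $R_k^{-1/n}$ is the largest side of $\tau_k-\tau_k$). Pick a Fourier-side bump $\chi$ equal to $1$ on this ball and supported in a slightly larger ball of the same dual scale; then $|f_{\tau_k}|^2=|f_{\tau_k}|^2*\widecheck{\chi}$. Since $\widecheck{\chi}$ is signed but $|f_{\tau_k}|^2\ge 0$, the elementary inequality $F*g\le F*|g|$ for non-negative $F$ gives
\[ |f_{\tau_k}(y)|^2 \le |f_{\tau_k}|^2 * |\widecheck{\chi}|(y)\lesssim_d |f_{\tau_k}|^2 * w_{B_{R_k^{1/n}},d}(y). \]
The same sub-scale comparability argument (now applied to $w_{B_{R_k^{1/n}},d}$, which is essentially invariant under translations of size $R_k^{1/n}$) shows that the right-hand side evaluated at $x$ is comparable to its value at $y$ when $x,y\in B_{R_k^{1/n}}$. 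Summing over $\tau_k$ and taking $L^\infty(B_{R_k^{1/n}})$ of the sum on the left finishes the proof.

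The whole argument is essentially bookkeeping; the only place to be careful is the signed/positive distinction for $\widecheck{\chi}$ and the verification that $W^{n,d}$ is indeed invariant (up to a constant depending on $d$) under unit translations in its natural coordinates, both of which are direct from the definitions in Section~\ref{wtsec}.
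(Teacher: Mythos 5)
Your proof is correct and follows essentially the same approach as the paper: write the function as its convolution with a Fourier-side bump, dominate the inverse transform of the bump by the weight, and use translation-comparability of $W^{n,d}$ at the scale of the dual set to upgrade the pointwise bound to an $L^\infty$ one. The only cosmetic difference is in the second part, where you convolve $|f_{\tau_k}|^2$ directly with a bump on its Fourier support (of size $\sim R_k^{-1/n}$) rather than convolving $f_{\tau_k}$ and applying Cauchy--Schwarz as the paper does; both routes are equivalent, and you correctly identified the missing $\sum_{\tau_k}$ on the right-hand side of the stated second inequality.
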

Because the pruned versions of $f$ and $f_{\tau_k}$ have essentially the same Fourier supports as the unpruned versions, the locally constant lemma applies to the pruned versions as well.

\begin{proof}[Proof of Lemma \ref{locconst}] For the first claim, we write the argument for $f_{\tau_k}$ in detail. Let $\rho_{\tau_k}$ be a bump function equal to $1$ on $\tau_k$ and supported in $2\tau_k$. Then using Fourier inversion and H\"{o}lder's inequality, 
\[ |f_{\tau_k}(y)|^2=|f_{\tau_k}*\widecheck{\rho_{\tau_k}}(y)|^2\le\|\widecheck{\rho_{\tau_k}}\|_1 |f_{\tau_k}|^2*|\widecheck{\rho_{\tau_k}}|(y). \]
Since $\rho_{\tau_k}$ may be taken to be an affine transformation of a standard bump function adapted to the unit ball, $\|\widecheck{\rho_{\tau_k}}\|_1$ is a constant. The function $\widecheck{\rho_{\tau_k}}$ decays rapidly off of $\tau_k^*$, so $|\widecheck{\rho_{\tau_k}}|\lesssim_d \w_{{\tau_k},d}$.
Since for any $T_{\tau_k}\in\T_{\tau_k}$, $\w_{\tau_k,d}(y)\sim\w_{\tau_k}(y')$ for all $y,y'\in T_{\tau_k}$, we have
\begin{align*} \sup_{x\in T_{\tau_k}}|f_{\tau_k}|^2*\w_{\tau_k,d}(x)&\le \int|f_{\tau_k}|^2(y)\sup_{x\in T_{\tau_k}}\w_{\tau_k,d}(x-y)dy\\
&\sim \int|f_{\tau_k}|^2(y)\w_{\tau_k,d}(x-y)dy\qquad \text{for all}\quad x\in T_{\tau_k}. 
\end{align*}

For the second part of the lemma, repeat analogous steps as above, except begin with $\rho_{\tau_k}$ which is identically $1$ on a ball of radius $2R_k^{-1/n}$ containing $\tau_k$. Then 
\[  \sum_{\tau_k}|f_{\tau_k}(y)|^2=\sum_{\tau_k}|f_{\tau_k}*\widecheck{\rho_{\tau_k}}(y)|^2\lesssim \sum_{\tau_k}|f_{\tau_k}|^2*|\widecheck{\rho_{R_k^{-1/n}}}|(y),\]
where we used that each $\rho_{\tau_k}$ is a translate of a single function $\rho_{R^{-1/n}}$. The rest of the argument is analogous to the first part. 
\end{proof}

The following local $L^2$-orthogonality lemma is Lemma 3 in \cite{M3smallcap}.  
\begin{lemma}[Local $L^2$ orthogonality]\label{L2orth} Let $U=T(B)$ where $B$ is the unit ball centered at the origin and $T:\R^n\to\R^n$ is an affine transformation. Let $h:\R^n\to\C$ be a Schwartz function with Fourier transform supported in a disjoint union $X=\sqcup_k X_k$, where $X_k\subset B$ are Lebesgue measurable. If the maximum overlap of the sets $X_k+U^*$ is $L$, then
\[ \int |h_X|^2w_{U,d}\lesssim L\sum_{X_k}\int|h_{X_k}|^2w_{U,d}, \]
where $h_{X_k}=\int_{X_k}\widehat{h}(\xi)e^{2\pi i x\cdot\xi}d\xi$.
\end{lemma}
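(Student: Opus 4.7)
The plan is to reduce the statement to Plancherel's theorem by expressing the weight $w_{U,d}$ as a sum of squares of functions with compact Fourier support comparable to $U^*$, so that the disjointness of the $X_k$ on the Fourier side converts to a local bound via the overlap hypothesis. By the definition of $W^{n,d}$ from \textsection\ref{wtsec},
\[ w_{U,d}(x) \;=\; |U|^{-1}\sum_{j\ge 0} 2^{-100n^2jd}\,|\widecheck{\sigma}_n(2^{-j}T^{-1}x)|^2, \]
so it suffices to bound $\int |h_X|^2 |g_j|^2$ for each $g_j(x):=\widecheck{\sigma}_n(2^{-j}T^{-1}x)$ and sum.

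A direct change of variables shows that $\widehat{g_j}$ is supported in a dilate of $(T^t)^{-1}(B_1)$ by $2^{-j}$, which (up to a dimensional constant) is contained in $U^*$ for every $j\ge 0$. Consequently, for each $j$, the function $h_{X_k}\cdot g_j$ has Fourier transform $\widehat{h_{X_k}}*\widehat{g_j}$ supported in $X_k+CU^*$ for an absolute constant $C$. By the overlap hypothesis (at the cost of replacing $L$ by a dimensional multiple), at most $L$ of these translated sets meet at any point $\xi$, so pointwise Cauchy--Schwarz yields
\[ |\widehat{h_X}*\widehat{g_j}(\xi)|^2 \;\le\; L\sum_k |\widehat{h_{X_k}}*\widehat{g_j}(\xi)|^2. \]
Integrating and invoking Plancherel on both sides gives $\int |h_X g_j|^2 \le L\sum_k \int |h_{X_k} g_j|^2$.

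Summing this inequality in $j$ with the weights $2^{-100n^2 jd}|U|^{-1}$ recovers $w_{U,d}$ on both sides and yields the claimed bound. The only technical point to keep track of is that the Fourier support of $g_j$ is a dilate of $U^*$ by an absolute constant rather than $U^*$ itself, but this only inflates the overlap count by a factor depending on $n$ and $d$, which is absorbed into the implicit constant in $\lesssim$. There is no genuine obstacle: the argument is just Plancherel combined with the fact that the weight is, by construction, a nonnegative combination of squares of functions frequency-localized to $U^*$.
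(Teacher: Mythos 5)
Your proof is correct and rests on the same core idea as the paper's: Plancherel plus the fact that the weight is frequency-localized to a copy of $U^*$, so that the overlap hypothesis converts into an $\ell^2$ almost-orthogonality bound. The only difference is technical execution — the paper applies Plancherel once to the bilinear form $\int\widehat{h_X}\,\overline{\widehat{h_X}*\widehat{w_U}}$ and argues that cross terms $\int\widehat{h_{X_k}}\,\overline{\widehat{h_{X_{k'}}}*\widehat{w_U}}$ vanish for all but $\lesssim L$ values of $k'$, whereas you exploit the explicit structure of $w_{U,d}$ as a weighted sum of squares $\sum_j 2^{-100n^2jd}|g_j|^2$ with each $g_j$ band-limited to a dilate of $U^*$, and then run a pointwise Cauchy--Schwarz in frequency at each level $j$. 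Your version has the mild advantage that the overlap count is applied pointwise and directly in the form stated in the hypothesis, avoiding the need to translate the hypothesis into a "for each $k$ there are $\lesssim L$ interacting $k'$" statement, but both are instances of the same standard argument.
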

Here, we may take $\{x:|x\cdot\xi|\le 1\quad\forall \xi\in U-U\}$ as the definition of $U^*$. We will include a sketch of the proof for future reference. 
\begin{proof} By Plancherel's theorem, we have
\begin{align*}
    \int|h_X|^2w_U&=\int h_X \overline{h_Xw_U}=\int \widehat{h_X}\overline{\widehat{h_X}*\widehat{w_U}}.
\end{align*}
Since $\widehat{h_X}=\sum_k\widehat{h_{X_k}}$, $\int \widehat{h_X}\overline{\widehat{h_X}*\widehat{w_U}}=\sum_{X_k}\sum_{X_k'}\int\widehat{h_{X_k}}\overline{\widehat{h_{X_k'}}*\widehat{w_U}}$. For each $X_k$, the integral on the right hand side vanishes except for $\lesssim L$ many choices of $X_k'$. 

\end{proof}

\subsection{High-low frequency decomposition of square functions}

\begin{definition}[Auxiliary functions] Let $\eta:\R^n\to[0,\infty)$ be a radial, smooth bump function satisfying $\eta(x)=1$ on $B_{1/2}$ and $\supp\eta\subset B_1$. Then for each $s>0$, let 
\[ \eta_{\le s}(\xi) =\eta(s^{-1}\xi) .\]
We will sometimes abuse notation by denoting $h*\widecheck{\eta}_{>s}=h-h*\widecheck{\eta}_{\le s}$, where $h$ is some Schwartz function. Also define $\eta_{s}(x)=\eta_{\le s}-\eta_{\le s/2}$. 
\end{definition}

\vspace{3mm}
Fix $N_0<N-1$ which will be specified in \textsection\ref{ind}. Also fix $d_0=d_0(n,\e,D)\in\N_{>0}$ which will be specified in Proposition \ref{algo}. Let $d_k=(4\e^{-1}-k)d_0$
\begin{definition} For $N_0\le k\le N-1$, let 
\[ g_k(x)=\sum_{\tau_k}|f_{\tau_k}^{k+1}|^2*\w_{\tau_k,d_k}, \qquad g_k^{\ell}(x)=g_k*\widecheck{\eta}_{\le R_{k+1}^{-1/n}}, \qquad\text{and}\qquad g_k^h=g_k-g_k^{\ell}. \]
\end{definition}
\vspace{3mm}

In the following definition, $A\gg 1$ is the same constant that goes into the pruning definition of $f^k$. 
\begin{definition} \label{impsets}Define the high set by 
\[ H=\{x\in U_{\a,\b}: A \b \le g_{N-1}(x)\}. \]
For each $k=N_0,\ldots,N-2$, let $H=\Omega_{N-1}$ and let
\[ \Omega_k=\{x\in U_{\a,\b}\setminus \cup_{l=k+1}^{N-1}\Omega_{l}: A^{N-k}\b\le g_k(x) \}. \] 
Define the low set to be
\[ L=U_{\a,\b}\setminus[\cup_{k=N_0}^{N-1}\Omega_k]. \]
\end{definition}
\vspace{3mm}

\begin{lemma}[Low lemma]\label{low} There is an absolute constant $\tilde{D}>0$ so that for each $x$, $|g_k^\ell(x)|\le \tilde{D} g_{k+1}(x)$. 
\end{lemma}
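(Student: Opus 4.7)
The plan is to expand $|f_{\tau_k}^{k+1}|^2$ into cross terms, use Fourier support considerations to show that the low-frequency cutoff kills all but $O(1)$ pairs per $\tau_{k+1}$, and then absorb convolutions using the weight-function calculus of \textsection\ref{wtsec}.

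First I would write
\[
g_k^\ell = \sum_{\tau_k}\Big(\sum_{\tau_{k+1},\tau_{k+1}'\subset\tau_k} f_{\tau_{k+1}}^{k+1}\overline{f_{\tau_{k+1}'}^{k+1}}\Big)*\widecheck{\eta}_{\le R_{k+1}^{-1/n}}*\w_{\tau_k,d_k},
\]
and analyze the Fourier support of each cross term. By Lemma \ref{pruneprop}(3), $f_{\tau_{k+1}}^{k+1}\overline{f_{\tau_{k+1}'}^{k+1}}$ has Fourier support in $3\tau_{k+1}-3\tau_{k+1}'$. Multiplication by $\eta_{\le R_{k+1}^{-1/n}}$ restricts this to the ball of radius $R_{k+1}^{-1/n}$ around the origin. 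Looking at the first coordinate, the blocks $\tau_{k+1}$ and $\tau_{k+1}'$ have first coordinates in intervals of length $R_{k+1}^{-1/n}$, so their difference has first coordinate of magnitude $\lesssim R_{k+1}^{-1/n}$ only when the two blocks are adjacent (within $O(1)$ indices). Consequently each $\tau_{k+1}$ participates in only $O(1)$ nonvanishing pairs.

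Next I would apply $2|ab|\le|a|^2+|b|^2$ termwise to the remaining pairs, giving
\[
|g_k^\ell(x)| \lesssim \sum_{\tau_k}\sum_{\tau_{k+1}\subset\tau_k}|f_{\tau_{k+1}}^{k+1}|^2 * |\widecheck{\eta}_{\le R_{k+1}^{-1/n}}|*\w_{\tau_k,d_k}(x) = \sum_{\tau_{k+1}}|f_{\tau_{k+1}}^{k+1}|^2 * \big(|\widecheck{\eta}_{\le R_{k+1}^{-1/n}}|*\w_{\tau_k(\tau_{k+1}),d_k}\big)(x),
\]
where $\tau_k(\tau_{k+1})$ denotes the unique $\tau_k$ containing $\tau_{k+1}$. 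Now $|\widecheck{\eta}_{\le R_{k+1}^{-1/n}}|$ is an (essentially) $L^1$-normalized weight localized to the ball $B_{R_{k+1}^{1/n}}$, while $\w_{\tau_k,d_k}$ is an $L^1$-normalized weight localized to $\tau_k^*$. Since $R_k\le R_{k+1}$, the set $\tau_k^*$ has each dimension $R_k^{i/n}\le R_{k+1}^{i/n}$, so $\tau_k^*\subset C\tau_{k+1}^*$; and $B_{R_{k+1}^{1/n}}\subset\tau_{k+1}^*$ because the smallest dimension of $\tau_{k+1}^*$ is $R_{k+1}^{1/n}$. Property (3c) of weight functions from \textsection\ref{wtsec} then gives $|\widecheck{\eta}_{\le R_{k+1}^{-1/n}}|*\w_{\tau_k,d_k}\lesssim \w_{\tau_{k+1},d_{k+1}}$ for a suitable choice $d_{k+1}\le d_k$.

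Combining this with $|f_{\tau_{k+1}}^{k+1}|\le|f_{\tau_{k+1}}^{k+2}|$ from Lemma \ref{pruneprop}(1) yields
\[
|g_k^\ell(x)|\lesssim \sum_{\tau_{k+1}}|f_{\tau_{k+1}}^{k+2}|^2*\w_{\tau_{k+1},d_{k+1}}(x) = g_{k+1}(x),
\]
with an absolute implicit constant $\tilde{D}$, which is the claim. The only mildly delicate step is verifying the $O(1)$ overlap count from the Fourier support analysis; this uses only the first-coordinate structure of the canonical blocks in ${\bf S}_n(R_{k+1}^{-1/n})$ and is comparable to the analogous step in \cite{locsmooth,maldagueM3}.
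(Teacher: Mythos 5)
Your decomposition into cross terms, the Fourier-support analysis that kills all but $O(1)$ pairs per $\tau_{k+1}$, and the AM--GM step to reduce to a diagonal sum all mirror the paper and are fine. The gap is in the last step, where you claim
\[
|\widecheck{\eta}_{\le R_{k+1}^{-1/n}}|*\w_{\tau_k,d_k}\lesssim \w_{\tau_{k+1},d_{k+1}}.
\]
This pointwise inequality is false. Both $|\widecheck{\eta}_{\le R_{k+1}^{-1/n}}|$ and $\w_{\tau_k,d_k}$ are $L^1$-normalized, so their convolution is also $L^1$-normalized, but it is concentrated on $B_{R_{k+1}^{1/n}}+\tau_k^*$, whose dimensions are $R_{k+1}^{1/n}\times R_k^{2/n}\times\cdots\times R_k$. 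That set is a proper subset of $\tau_{k+1}^*$ (dimensions $R_{k+1}^{1/n}\times\cdots\times R_{k+1}$), with volume smaller by a factor of roughly $(R_{k+1}/R_k)^{(n(n+1)/2-1)/n}\gg 1$. Hence the left side has $L^\infty$ norm strictly larger than that of $\w_{\tau_{k+1},d_{k+1}}$ near the origin, and no choice of $d_{k+1}$ repairs this. You also cannot literally invoke property (3c) here: it requires one of the two convolved weights to already live on the target set $\tau_{k+1}^*$, and neither $B_{R_{k+1}^{1/n}}$ nor $\tau_k^*$ \emph{equals} $\tau_{k+1}^*$; containment of both in $\tau_{k+1}^*$ is not the hypothesis of (3c).

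The missing ingredient is to first apply the locally constant property to the integrand itself. Since $\widehat{|f_{\tau_{k+1}}^{k+2}|^2}$ is supported in $2(\tau_{k+1}-\tau_{k+1})$, one has $|f_{\tau_{k+1}}^{k+2}|^2\lesssim |f_{\tau_{k+1}}^{k+2}|^2*\w_{\tau_{k+1},d_k}$ pointwise. This inserts the $L^1$-normalized weight $\w_{\tau_{k+1},d_k}$ that is genuinely localized to $\tau_{k+1}^*$, after which the triple convolution $\w_{\tau_{k+1},d_k}*\w_{\tau_k,d_k}*|\widecheck{\eta}_{\le R_{k+1}^{-1/n}}|\lesssim\w_{\tau_{k+1},d_{k+1}}$ does follow by two applications of property (3c), using $\tau_k^*\subset\tau_{k+1}^*$ and $B_{R_{k+1}^{1/n}}\subset\tau_{k+1}^*$. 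With that insertion, your proof closes and coincides with the paper's; without it, the final bound fails at the level of $L^\infty$ normalization.
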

\begin{proof} We perform a pointwise version of the argument in the proof of local/global $L^2$-orthogonality (Lemma \ref{L2orth}). For each $\tau_k^{k+1}$, by Plancherel's theorem,
\begin{align}
|f_{\tau_k}^{k+1}|^2*\widecheck{\eta}_{<R_{k+1}^{-1/n}}(x)&= \int_{\R^n}|f_{\tau_k}^{k+1}|^2(x-y)\widecheck{\eta}_{<R_{k+1}^{-1/n}}(y)dy \nonumber \\
&=  \int_{\R^n}\widehat{f_{\tau_k}^{k+1}}*\widehat{\overline{f_{\tau_k}^{k+1}}}(\xi)e^{-2\pi i x\cdot\xi}\eta_{<R_{k+1}^{-1/n}}(\xi)d\xi \nonumber \\
&=  \sum_{\tau_{k+1},\tau_{k+1}'\subset\tau_k}\int_{\R^n}e^{-2\pi i x\cdot\xi}\widehat{f_{\tau_{k+1}}^{k+1}}*\widehat{\overline{f_{\tau_{k+1}'}^{k+1}}}(\xi)\eta_{<R_{k+1}^{-1/n}}(\xi)d\xi .\label{dis2}\nonumber
\end{align}
The integrand is supported in $(2\tau_{k+1}-2\tau_{k+1}')\cap B_{R_{k+1}^{-1/n}}$. This means that the integral vanishes unless $\tau_{k+1}$ is within $\sim  R_{k+1}^{-1/n}$ of $\tau_{k+1}'$, in which case we write $\tau_{k+1}\sim\tau_{k+1}'$. Then 
\[\sum_{\tau_{k+1},\tau_{k+1}'\subset\tau_k}\int_{\R^2}e^{-2\pi i x\cdot\xi}\widehat{f}_{\tau_{k+1}}^{k+1}*\widehat{\overline{f}_{\tau_{k+1}'}^{k+1}}(\xi)\eta_{<R_{k+1}^{-1/n}}(\xi)d\xi=\sum_{\substack{\tau_{k+1},\tau_{k+1}'\subset\tau_k\\
\tau_{k+1}\sim\tau_{k+1}'}}\int_{\R^2}e^{-2\pi i x\cdot\xi}\widehat{f}_{\tau_{k+1}}^{k+1}*\widehat{\overline{f}_{\tau_{k+1}'}^{k+1}}(\xi)\eta_{<R_{k+1}^{-1/n}}(\xi)d\xi. \]
Use Plancherel's theorem again to get back to a convolution in $x$ and conclude that $|g_k*\widecheck{\eta}_{<R_{k+1}^{-1/n}}(x)|$ equals
\begin{align*}
&\Big|\sum_{\substack{\tau_{k+1},\tau_{k+1}'\subset\tau_k\\
\tau_{k+1}\sim\tau_{k+1}'}}(f_{\tau_{k+1}}^{k+1}\overline{f_{\tau_{k+1}'}^{k+1}})*\w_{\tau_k,d_k}*\widecheck{\eta}_{<R_{k+1}^{-1/n}}(x) \Big|\lesssim \sum_{\tau_k} \sum_{\tau_{k+1}\subset\tau_k}|f_{\tau_{k+1}}^{k+1}|^2*\w_{\tau_k,d_k}*|\widecheck{\eta}_{<R_{k+1}^{-1/n}}|(x)
. 
\end{align*}
By the locally constant property (Lemma \ref{locconst}) and \eqref{item1} of Lemma \ref{pruneprop}, the right hand side above is  
\[  \lesssim \sum_{\tau_k} \sum_{\tau_{k+1}\subset\tau_k}|f_{\tau_{k+1}}^{k+2}|^2*\w_{\tau_{k+1},d_{k}}*\w_{\tau_k,d_k}*|\widecheck{\eta}_{<R_{k+1}^{-1/n}}|(x)\lesssim g_{k+1}(x). \]
It remains to note that
\[ w_{\tau_{k+1},d_{k+1}}*\w_{\tau_k,d_k}*|\widecheck{\eta}_{<R_{k+1}^{-1/n}}|(x)\lesssim w_{\tau_{k+1},d_{k+1}}(x) \]
since $\tau_k^*\subset\tau_{k+1}^*$ and $\widecheck{\eta}_{<R_{k+1}^{-1/n}}$ is an $L^1$-normalized function that is rapidly decaying away from $B_{R_{k+1}^{1/n}}(0)$. 

\end{proof}

\begin{corollary}[High-dominance on $\Omega_k$]\label{highdom} For $R$ large enough depending on $\e$, $g_k(x)\le 2|g_k^h(x)|$ for all $x\in\Omega_k$. 
\end{corollary}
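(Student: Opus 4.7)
The plan is a short deduction combining Lemma \ref{low} with the nested structure of the sets $\Omega_l$. Fix $x \in \Omega_k$. Unpacking Definition \ref{impsets}, we have $x \in U_{\a,\b}$, $g_k(x) \ge A^{N-k}\b$, and $x \notin \bigcup_{l=k+1}^{N-1}\Omega_l$. For $N_0 \le k \le N-2$, the key observation is that since $x \in U_{\a,\b} \setminus \bigcup_{l=k+2}^{N-1}\Omega_l$ but $x \notin \Omega_{k+1}$, the complementary inequality from the definition of $\Omega_{k+1}$ is forced to hold, namely $g_{k+1}(x) < A^{N-k-1}\b$. Lemma \ref{low} then gives
\[ |g_k^\ell(x)| \le \tilde{D}\, g_{k+1}(x) < \tilde{D}\,A^{N-k-1}\b. \]
Recall from the remark preceding Definition \ref{taukprune} that $A$ was chosen with $A \ge \tilde{D}$, and by enlarging $A$ (absorbed into the freedom to choose $A = A(\e)$ large) we may further assume $A \ge 2\tilde{D}$. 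Therefore $|g_k^\ell(x)| \le \tfrac{1}{2}A^{N-k}\b \le \tfrac{1}{2}g_k(x)$, and the triangle inequality applied to $g_k = g_k^\ell + g_k^h$ yields
\[ |g_k^h(x)| \ge g_k(x) - |g_k^\ell(x)| \ge \tfrac{1}{2}g_k(x), \]
which is the claimed bound.

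For the boundary case $k = N-1$, I would rerun the proof of Lemma \ref{low}, observing that no pruning correction is needed at the top level because $f^N_\theta = f_\theta$. The output is the bound
\[ |g_{N-1}^\ell(x)| \lesssim \sum_{\theta} |f_\theta|^2 * \w_{\theta,d_{N-1}}(x). \]
Choosing the parameter $d_0$ in the definition of $d_k = (4\e^{-1}-k)d_0$ appropriately relative to the decay parameter $D$ used in the definition of $U_{\a,\b}$, we can compare $\w_{\theta,d_{N-1}}$ with $\w_{\theta,D}$ up to a harmless absolute constant $C$; this is where the hypothesis ``$R$ large enough depending on $\e$'' enters, since the number of scales $N$ is determined by $R$ and $\e$. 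The defining bound $\sum_\theta |f_\theta|^2 * \w_{\theta,D} \le \b$ on $U_{\a,\b}$ then gives $|g_{N-1}^\ell(x)| \le C\b$, and enlarging $A$ to satisfy $A \ge 2C$ completes the argument exactly as above.

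The main obstacle is purely bookkeeping: the corollary is essentially a one-line application of the Low Lemma, and the real content is verifying that the single constant $A$ (already chosen in service of the pruning step and of Lemma \ref{ftofk}) can simultaneously be taken large enough to dominate both $\tilde{D}$ and the comparison constant at the top level $k = N-1$. No new analytic ideas are required.
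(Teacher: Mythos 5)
Your proof is correct and follows the same approach as the paper: a direct application of Lemma \ref{low} combined with the definitions of $\Omega_k$ and $U_{\a,\b}$, with $A$ chosen large enough relative to $\tilde{D}$. The paper argues by contradiction where you argue directly, and you are somewhat more explicit about the boundary case $k=N-1$ (where the paper tacitly treats $g_N = \sum_\theta|f_\theta|^2*\w_{\theta,d_N}$ and uses $d_N \ge D$ to conclude $g_N \le \b$ on $U_{\a,\b}$), but the underlying reasoning is the same.
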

\begin{proof}
This follows directly from Lemma \ref{low}. Indeed, since $g_k(x)=g_k^{\ell}(x)+g_k^h(x)$, the inequality $g_k(x)>2|g_k^h(x)|$ implies that $g_k(x)<2|g_k^{\ell}(x)|$. Then by Lemma \ref{low}, $|g_k(x)|<2\tilde{D} g_{k+1}(x)$. Since $x\in\Omega_k$, $g_{k+1}(x)\le A^{N-k-1}\b$, which altogether gives the upper bound
\[ g_k(x)\le 2\tilde{D} A^{N-k-1}\b. \]
The contradicts the property that on $\Omega_k$, $A^{N-k}\b\le g_k(x)$, for $A$ sufficiently larger than $\tilde{D}$, which finishes the proof. 

\end{proof}

\begin{lemma}[Pruning lemma]\label{ftofk} For any $s\ge R^{-\e/n}$ and $\tau\in{\bf{S}}_n(s)$, 
\begin{align*} 
|\sum_{\tau_k\subset\tau}f_{\tau_k}-\sum_{\tau_k\subset\tau}f_{\tau_k}^{k+1}(x)|&\le \frac{\a}{A^{1/2}K^3} \qquad\text{for all $x\in \Omega_k$}, \qquad N_0\le k\le N-1,\\
\text{and}\qquad |\sum_{\tau_B\subset\tau}f_{\tau_B}-\sum_{\tau_B\subset\tau}f_{\tau_B}^{B}(x)|&\le \frac{\a}{A^{1/2}K^3}\qquad \text{ for all $x\in L$}. \end{align*}
\end{lemma}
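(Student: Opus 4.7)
The plan is to telescope the single-scale pruning defect into a sum of bad-wave-packet contributions across all scales $j > k$, and then combine a sparsity bound on those bad wave packets with the inequality $g_j(x) < A^{N-j}\beta$ forced by membership in $\Omega_k$. The fundamental identity, coming from $\sum_T \sigma_T \equiv 1$ together with the definition $f_{\tau_j}^{j} = \sum_{T \in \T_{\tau_j}^{g}} \sigma_T f_{\tau_j}^{j+1}$, is
\[
f_{\tau_j} - f_{\tau_j}^{j} = \sum_{T \in \T_{\tau_j}^{b}} \sigma_T\, f_{\tau_j}^{j+1} + \bigl(f_{\tau_j} - f_{\tau_j}^{j+1}\bigr).
\]
Iterating this from $j = k+1$ down to $j = N-1$ and invoking the base case $f_{\tau_N}^{N} = f_\theta = f_{\tau_N}$ to terminate the recursion cleanly, it telescopes to
\[
\sum_{\tau_k \subset \tau}\bigl(f_{\tau_k} - f_{\tau_k}^{k+1}\bigr) = \sum_{j=k+1}^{N-1}\sum_{\tau_j \subset \tau}\sum_{T \in \T_{\tau_j}^{b}} \sigma_T\, f_{\tau_j}^{j+1}.
\]

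Next I would fix $x \in \Omega_k$ and bound each scale-$j$ term separately. The conditions $x \in U_{\alpha,\beta}$ and $x \notin \Omega_l$ for every $l > k$ force $g_j(x) < A^{N-j}\beta$ for all $k < j \leq N-1$. The key sparsity step is this: whenever $\tau_j \subset \tau$ admits a bad wave packet $T \in \T_{\tau_j}^{b}$ with $\sigma_T(x)$ not negligible, the defining inequality $\|\sigma_T^{1/2} f_{\tau_j}^{j+1}\|_\infty > K^3 A^{N-j+1}\beta/\alpha$ combined with the locally constant property (Lemma~\ref{locconst}) forces
\[
|f_{\tau_j}^{j+1}|^2 * \w_{\tau_j, d_j}(x) \gtrsim \bigl(K^3 A^{N-j+1}\beta/\alpha\bigr)^2,
\]
since the locally constant property transfers the large pointwise value of $|f_{\tau_j}^{j+1}|^2$ at the extremizer of $\sigma_T^{1/2} f_{\tau_j}^{j+1}$ to all points in the same wave packet $T$. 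Summing this lower bound over such $\tau_j$ and comparing against $g_j(x) < A^{N-j}\beta$ caps the number of such $\tau_j$ by $\lesssim \alpha^2/(K^6 A^{N-j+2}\beta)$. A Cauchy--Schwarz with this count, combined with the pointwise bound $\sum_{\tau_j} |f_{\tau_j}^{j+1}(x)|^2 \lesssim g_j(x)$ (again from locally constant), then produces
\[
\Bigl|\sum_{\tau_j \subset \tau}\sum_{T \in \T_{\tau_j}^{b}} \sigma_T(x) f_{\tau_j}^{j+1}(x)\Bigr| \lesssim \Bigl(\tfrac{\alpha^2}{K^6 A^{N-j+2}\beta}\Bigr)^{1/2} (A^{N-j}\beta)^{1/2} = \frac{\alpha}{K^3 A}.
\]
Summing over $j = k+1, \ldots, N-1$ gives a total of $\lesssim N\alpha/(K^3 A)$, which is at most $\alpha/(A^{1/2}K^3)$ provided $A \gtrsim N^2 \sim \e^{-2}$ -- a permissible constraint since $A = A(\e)$ is exactly what this lemma is allowed to determine.

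The main technical obstacle will be making the ``bad near $x$'' count precise in the presence of the Schwartz tails of the partition functions $\sigma_T$: bad wave packets whose bulk lies far from $x$ still contribute small amounts through $\sigma_T(x)$, and one must verify that these tails are absorbed by the decay built into $\w_{\tau_j, d_j}$. This is precisely why the decay parameters $d_j = (4\e^{-1} - j)d_0$ grow with the iteration, and why $d_0$ must be chosen sufficiently large depending on $(n, \e, D)$; careful bookkeeping of the weight convolutions $\w_{\tau_j, d_j} * \w_{\tau_{j'}, d_{j'}}$ across scales is needed. The second inequality for $x \in L$ follows by the identical argument applied with the telescope running from the base pruning level $B$ through $N-1$: on $L$ one has $g_j(x) < A^{N-j}\beta$ for \emph{every} such $j$, so the same per-scale bound $\alpha/(K^3 A)$ holds and the total is again controlled by $N\alpha/(K^3 A) \leq \alpha/(A^{1/2}K^3)$.
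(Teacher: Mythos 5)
Your telescoping identity is exactly the one the paper uses, and the inputs you invoke---the bad wave packet lower bound $\|\sigma_T^{1/2}f_{\tau_j}^{j+1}\|_\infty > K^3A^{N-j+1}\beta/\alpha$, the locally constant property, and the inequality $g_j(x)\le A^{N-j}\beta$ on $\Omega_k$ for $j>k$---are the right ones. But the per-scale step is where you genuinely diverge from the paper, and it is where a real gap appears. You propose a \emph{counting} argument: extract a bound on the \emph{number} of $\tau_j$ admitting a bad wave packet near $x$, then apply Cauchy--Schwarz against $\sum_{\tau_j}|f_{\tau_j}^{j+1}(x)|^2\lesssim g_j(x)$. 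The difficulty you acknowledge with Schwartz tails is not a cosmetic technicality; it breaks the near/far dichotomy. To make the count at scale $j$ give a usable constant, you need $x$ to be within an $O(1)$ multiple of the wave packet $T$ (otherwise the lower bound for $|f_{\tau_j}^{j+1}|^2*\omega_{\tau_j,d_j}(x)$ degrades by a power of the distance parameter, inflating the count and then the Cauchy--Schwarz loss). But to dismiss the far contributions $\sum_{T\text{ far}}\sigma_T(x)|f_{\tau_j}^{j+1}(x)|$---where $|f_{\tau_j}^{j+1}(x)|$ can be as large as $\#\theta\subset\tau_j\lesssim R^{1/n}$ and there are up to $R$ many such $\tau_j$---you need the ``far'' threshold to be $R^{\delta}$ or larger. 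These two requirements on the threshold are in conflict, and choosing the decay parameters $d_j$ large does not resolve it, because the \emph{count} is degraded and inflated by the same decay factor.

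The paper avoids the conflict by never doing a hard count. Instead of splitting into near/far and estimating a cardinality, it inserts the bad condition pointwise: from $\|\sigma_T^{1/2}f_{\tau_m}^{m+1}\|_\infty > K^3A^{N-m+1}\beta/\alpha$ one gets $\|\sigma_T^{1/2}f_{\tau_m}^{m+1}\|_\infty\le\frac{\alpha}{K^3A^{N-m+1}\beta}\|\sigma_T^{1/2}f_{\tau_m}^{m+1}\|_\infty^2$, which upgrades the $\ell^1$-type bound $\sum_T\sigma_T^{1/2}(x)\|\sigma_T^{1/2}f_{\tau_m}^{m+1}\|_\infty$ to a quantity quadratic in $f$. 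Expanding $\|\sigma_T|f_{\tau_m}^{m+1}|^2\|_\infty$ over a tiling $\tilde T$, using the pointwise inequality $\sum_T\|\sigma_T\|_{L^\infty(\tilde T)}\sigma_T^{1/2}(x)\lesssim_{d_m}|\tau_m^*|\omega_{\tau_m,d_m}(x-c_{\tilde T})$, and the locally constant property, the whole sum over $T$ and $\tilde T$ collapses to $|f_{\tau_m}^{m+1}|^2*\omega_{\tau_m,d_m}(x)$; the tails are absorbed automatically in that convolution, since it sums over \emph{every} $T$ with the correct decay weight and never requires a near/far decision. Summing over $\tau_m$ gives $g_m(x)$, and the bound $g_m(x)\le A^{N-m}\beta$ finishes with the same $\alpha/(K^3A)$ per-scale gain you compute, but without the Cauchy--Schwarz or a count. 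If you instead replace your hard count $\#\{\tau_j\}$ with the weighted count $\sum_{\tau_j}\sum_{T\in\T_{\tau_j}^b}\sigma_T(x)$, you can estimate that quantity by exactly the same weight-convolution manipulations, and your Cauchy--Schwarz step then closes; at that point, however, the content of your argument is the paper's argument in a slightly different algebraic order, and the counting picture is no longer doing any independent work.
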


\begin{proof} 
Begin by proving the first claim about $\Omega_k$. By the definition of the pruning process, we have 
\begin{equation}\label{diffs} f_{\tau}=f^{N-1}_{\tau}+(f_{\tau}^N-f^{N-1}_{\tau})=\cdots=f^{k+1}_{\tau}(x)+\sum_{m=k+1}^{N-1}(f^{m+1}_{\tau}-f^{m}_{\tau})\end{equation}
where formally, the subscript $\tau$ means $f_\tau=\sum_{\theta\subset\tau}f_\theta$ and $f_{\tau}^m=\sum_{\tau_m\subset\tau}f_{\tau_m}^m$. We will show that each difference in the sum is much smaller than $\a$.
For each $N-1\ge m\ge k+1$ and $\tau_m$, 
\begin{align*}
    |f_{\tau_m}^m(x)-f_{\tau_m}^{m+1}(x)|&=|\sum_{T_{\tau_m}\in\T_{\tau_m}^{b}}\s_{T_{\tau_m}}(x)f_{\tau_m}^{m+1}(x)|  = \sum_{T_{\tau_m}\in T_{\tau_m}^b} |\s_{T_{\tau_m}}^{1/2}(x)f_{\tau_m}^{m+1}(x)|\s_{T_{\tau_m}}^{1/2}(x) \\
     & \le\sum_{T_{\tau_m}\in \T_{\tau_m}^b}  K^{-3}A^{-(N-m+1)}\frac{\a}{\b}  \| \s_{T_{\tau_m}}^{1/2}f_{{\tau_m}}^{m+1} \|_{L^\infty(\R^n)}^2  \s_{T_{\tau_m}}^{1/2}(x) \\
     & \lesssim K^{-3}A^{-(N-m+1)}\frac{\a}{\b}\sum_{T_{\tau_m}\in \T_{\tau_m}^b}
      \sum_{\tilde{T}_{{\tau_m}}\in\T_{\tau_m}} \| \s_{T_{\tau_m}}|f_{{\tau_m}}^{m+1}|^2 \|_{L^\infty(\tilde{T}_{{\tau_m}})} \s_{T_{\tau_m}}^{1/2}(x) \\
     & \lesssim K^{-3}A^{-(N-m+1)}\frac{\a}{\b} \sum_{T_{\tau_m},\tilde{T}_{\tau_m}\in \T_{\tau_m}} \| \s_{T_{\tau_m}}\|_{L^\infty(\tilde{T}_{\tau_m})}\||f_{{\tau_m}}^{m+1} |^2\|_{{L}^\infty(\tilde{T}_{{\tau_m}})} \s_{T_{\tau_m}}^{1/2}(x) .
\end{align*}
Let $c_{\tilde{T}_{\tau_m}}$ denote the center of $\tilde{T}_{\tau_m}$ and note the pointwise inequality
\[ \sum_{{T}_{\tau_m}}\|\s_{T_{\tau_m}}\|_{L^\infty(\tilde{T}_{\tau_m})}\s_{T_{\tau_m}}^{1/2}(x)\lesssim_{d_m} |\tau_m^*|\w_{\tau_m,d_m}(x-c_{\tilde{T}_{\tau_m}}) ,\]
which means that
\begin{align*}
|f_{\tau_m}^m(x)-f_{\tau_m}^{m+1}(x)| & \lesssim_{d_m} K^{-3}A^{-(N-m+1)}\frac{\a}{\b} |\tau_m^*|\sum_{\tilde{T}_{\tau_m}\in \T_{\tau_m}} \w_{\tau_m,{d_m}}(x-c_{\tilde{T}_{\tau_m}})\||f_{{\tau_m}}^{m+1} |^2\|_{{L}^\infty(\tilde{T}_{{\tau_m}})} \\
&\lesssim_{d_m} K^{-3}A^{-(N-m+1)}\frac{\a}{\b}|\tau_m^*| \sum_{\tilde{T}_{\tau_m}\in \T_{\tau_m}} \w_{\tau_m,{d_m}}(x-c_{\tilde{T}_{\tau_m}})|f_{{\tau_m}}^{m+1} |^2*\w_{\tau_m,{d_m}}(c_{\tilde{T}_{\tau_m}})\\
&\lesssim_{d_m} K^{-3}A^{-(N-m+1)}\frac{\a}{\b} |f_{{\tau_m}}^{m+1} |^2*\w_{\tau_m,{d_m}}(x)
\end{align*}
where we used the locally constant property in the second to last inequality. The last inequality is justified by the fact that $\w_{\tau_m,{d_m}}(x-c_{\tilde{T}_{\tau_m}})\sim \w_{\tau_m,{d_m}}(x-y)$ for any $y\in\tilde{T}_{\tau_m}$, and we have the pointwise relation $\w_{\tau_m,{d_m}}*\w_{\tau_m,{d_m}}\lesssim \w_{\tau_m,{d_m}}$. 
Then 
\[
    |\sum_{\tau_m\subset\tau}(f_{\tau_m}^m(x)-f_{\tau_m}^{m+1}(x))|\lesssim_{d_m} K^{-3}A^{-(N-m+1)}\frac{\a}{\b}\sum_{\tau_m\subset\tau}|f_{\tau_m}^{m+1}|^2*\w_{\tau_m,{d_m}}(x)\sim K^{-3}A^{-(N-m+1)}\frac{\a}{\b}g_m(x). \]
We choose $A$ sufficiently large, determined by the proof of Corollary \ref{highdom} and the proof of Proposition \ref{algo} (where we choose $d_0=d_0(\e)$), so that if $g_m(x)\le A^{N-m}\b$, then the above inequality implies that
\[ |\sum_{\tau_m\subset\tau}(f_{\tau_m}^m(x)-f_{\tau_m}^{m+1}(x))|\le  \e K^{-3}A^{-1/2}\a  .\]
This finishes the proof since the number of terms in \eqref{diffs} is bounded by $N\le \e^{-1}$. The argument for the pruning on $L$ is analogous.  
\end{proof}

\subsection{High-frequency analysis }

Now that we have identified moment curve blocks with cone planks and moment curve wave envelopes with cone wave envelopes, we are prepared to use a square function estimate for a cone.

\begin{lemma}[High lemma]\label{high1} Let $4\le p\le p_{n-1}$. For each $\d>0$, there is $B_\d\in(0,\infty)$ so that the following holds. We have 
\[ \int_{\R^n}|g_k^h|^{\frac{p}{2}}\lesssim_\e R^{2\e} \int_{\R^{n}}(\sum_{\tau_k}||f_{\tau_k}^{k+1}|^2*\w_{\tau_k,d_k}*\widecheck{\eta}_{>R_{k+1}^{-1/n}}|^2)^{\frac{p}{4}}.\]
\end{lemma}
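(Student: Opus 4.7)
The plan is to identify the Fourier support of $g_k^h$ with (a union of linear images of) blocks of a Taylor cone $\Gamma_0^n$, and then apply the hypothesis $\mathbb{C}_0^n(R)\lesssim_\e R^\e$.

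\textbf{Step 1 (setup and geometric identification).} Write $g_k^h=\sum_{\tau_k}G_{\tau_k}$ with $G_{\tau_k}:=|f_{\tau_k}^{k+1}|^2*\w_{\tau_k,d_k}*\widecheck{\eta}_{>R_{k+1}^{-1/n}}$, so that each $G_{\tau_k}$ is Fourier supported in $(\tau_k-\tau_k)\cap\{|\xi|>R_{k+1}^{-1/n}/2\}$. The set $\tau_k-\tau_k$ is comparable to $\{\sum_{i=1}^n\lambda_i\g_n^{(i)}(t_{\tau_k}):|\lambda_i|\le R_k^{-i/n}\}$. Using the elementary identity $\g_n^{(i)}(t)=D\,\phi_{n-1}^{(i-1)}(t)$ where $D=\operatorname{diag}(1!,2!,\ldots,n!)$, applying $D^{-1}$ realizes $\bigcup_{\tau_k}(\tau_k-\tau_k)$ as a subset of a Taylor-cone-type region over the curve $\phi_{n-1}$. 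After an additional dilation by $R_k^{1/n}$, the elements match the shape of blocks of $\Gamma_0^n(R_k^{(n-1)/n})$ except that the constraint $|\lambda_0|\ge 1/2$ (from the definition of $\Xi_0^n$) is replaced by $|\lambda_0|\gtrsim(R_k/R_{k+1})^{1/n}$, which is weaker.

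\textbf{Step 2 (dyadic decomposition and cone estimate).} To remedy the mismatched constraint, dyadically decompose $G_{\tau_k}=\sum_\mu G_{\tau_k,\mu}$ where $G_{\tau_k,\mu}$ has Fourier support in the annular piece $\{|\xi|\sim \mu\}\cap(\tau_k-\tau_k)$, for $R_{k+1}^{-1/n}\lesssim \mu\lesssim R_k^{-1/n}$; there are $O(\log R)$ values. For each fixed $\mu$, after dilating by $\mu^{-1}$ (and applying $D^{-1}$), each individual piece is comparable to a single block of $\Xi_0^n(\widetilde R_\mu)$ for some $\widetilde R_\mu\le R_k^{(n-1)/n}\le R$, with the correspondence respecting the parameter $t_{\tau_k}$. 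Since $4\le p\le p_{n-1}$ gives $2\le p/2\le p_{n-1}$, the hypothesis \eqref{conecon2} with $m=0$ and (a localized form such as Lemma~\ref{loclem} of) $\mathbb{C}_0^n(\widetilde R_\mu)\lesssim_\e \widetilde R_\mu^\e\le R^\e$ applies to yield
\[ \int_{\R^n}\Big|\sum_{\tau_k}G_{\tau_k,\mu}\Big|^{p/2}\lesssim_\e R^\e \int_{\R^n}\Big(\sum_{\tau_k}|G_{\tau_k,\mu}|^2\Big)^{p/4}. \]

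\textbf{Step 3 (recombination).} Summing over $\mu$ by the triangle inequality loses $(\log R)^{O(1)}$, giving $\int|g_k^h|^{p/2}\lesssim_\e (\log R)^{O(1)}R^\e\sum_\mu\int(\sum_{\tau_k}|G_{\tau_k,\mu}|^2)^{p/4}$. To return from the dyadic pieces to $G_{\tau_k}$, write $G_{\tau_k,\mu}=G_{\tau_k}*\widecheck{\psi_\mu}$ for a bump $\psi_\mu$ adapted to $\{|\xi|\sim\mu\}$ with $\|\widecheck{\psi_\mu}\|_1\lesssim 1$; Cauchy--Schwarz on the convolution gives $|G_{\tau_k,\mu}|^2\lesssim |G_{\tau_k}|^2*|\widecheck{\psi_\mu}|$. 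Since $p/4\ge 1$, Jensen's inequality applied to the probability density $\|\widecheck{\psi_\mu}\|_1^{-1}|\widecheck{\psi_\mu}|$ yields
\[ \int_{\R^n}\Big(\sum_{\tau_k}|G_{\tau_k}|^2*|\widecheck{\psi_\mu}|\Big)^{p/4}\lesssim \int_{\R^n}\Big(\sum_{\tau_k}|G_{\tau_k}|^2\Big)^{p/4}. \]
Collecting the $O(\log R)$ factors and absorbing them into $R^{2\e}$ (for $R$ large depending on $\e$) completes the argument.

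The main obstacle is the geometric identification in Steps 1--2: tracking the precise scale $\widetilde R_\mu$ across all dyadic slabs and verifying that the Fourier support of each $G_{\tau_k,\mu}$ is indeed comparable to a canonical block of $\Xi_0^n(\widetilde R_\mu)$ uniformly in $\mu$, so that the cone square function estimate applies uniformly. Once this identification is pinned down, the remaining steps are standard.
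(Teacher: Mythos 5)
Your overall strategy --- identify the high-frequency Fourier support with Taylor-cone blocks, decompose dyadically into annuli, and invoke the hypothesis $\C_0^n(\cdot)\lesssim_\e R^\e$ --- is the right one and matches the paper's. However, there is a genuine gap in Step~2: the claimed geometric identification of the Fourier support of $G_{\tau_k,\mu}$ with a single block of $\Xi_0^n(\widetilde R_\mu)$ does not hold uniformly over the dyadic range of $\mu$.

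Concretely, after applying $D^{-1}$ and dilating by $\mu^{-1}$, the Fourier support of $G_{\tau_k,\mu}$ is contained in
$\{\sum_{j=0}^{n-1}\lambda_j\phi_{n-1}^{(j)}(t_{\tau_k}):\ |\lambda_0|\sim 1,\ |\lambda_j|\lesssim \min(1,\, R_k^{-(j+1)/n}/\mu)\ \text{for}\ j\ge1\}$.
For this to sit inside a block of $\Xi_0^n(\widetilde R)$ with $t$-width $\lesssim R_k^{-1/n}$ (so that distinct $\tau_k$ land in distinct cone blocks, which is what the square function estimate needs), one must take $\widetilde R\gtrsim R_k^{(n-1)/n}$, hence $\widetilde R^{-j/(n-1)}\lesssim R_k^{-j/n}$. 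But when $\mu$ is near the lower end $R_{k+1}^{-1/n}\ll R_k^{-1/n}$ of the dyadic range, one has $R_k^{-(j+1)/n}/\mu \gg R_k^{-j/n}$ for $j\ge1$ (and in fact $\min(1,\cdot)=1$ for several $j$), so the support is genuinely larger than any such cone block. No choice of a single $\widetilde R_\mu$ makes the anisotropic constraints scale compatibly, because the exponents $-(j+1)/n$ do not form a geometric progression in $j$ once you fix a dilation by $\mu^{-1}$ rather than by $R_k^{-1/n}$. Consequently, the inequality $\int|\sum_{\tau_k}G_{\tau_k,\mu}|^{p/2}\lesssim_\e R^\e\int(\sum_{\tau_k}|G_{\tau_k,\mu}|^2)^{p/4}$ is not an instance of the hypothesis $\C_0^n(\cdot)\lesssim_\e R^\e$ for these $\mu$.

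The paper avoids this by decomposing the \emph{convolution factors} rather than the \emph{convolution}: it writes $g_k^h*\widecheck\eta_s$ via the pointwise identity \eqref{ptwiselo} as a sum over pairs $\tau_s\sim\tau_s'$ of moment-curve arcs of $t$-width $s$ (the annulus radius), and the Fourier support of each product $f_{\tau_s}\overline{f_{\tau_s'}}$ (restricted to the annulus $|\xi|\sim s$) is genuinely comparable to a cone block of $\Xi_0^n$ at the matching scale, because $\tau_s-\tau_s$ has the correct anisotropic geometry $|\lambda_j|\lesssim s^j$. Your approach keeps the $\tau_k$ index and merely slices by $|\xi|$, which does not change the widths $R_k^{-j/n}$ in the directions $j\ge2$; that mismatch is exactly what the decomposition into finer $\tau_s$ pieces fixes. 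So the missing ingredient is to further Fourier-decompose $|f_{\tau_k}^{k+1}|^2$ at the annulus scale before attempting the cone identification.
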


\begin{proof} First describe the Fourier support of $g_k^h$. By \eqref{item3} of Lemma \ref{pruneprop}, the support of $\widehat{|f_{\tau_k}^{k+1}|^2}$ is $2(\tau_k-\tau_k)$. The high-frequency cutoff removes a ball of radius $R_{k+1}^{-1/n}$, so $g_k^h$ is Fourier supported within the annulus $R_{k+1}^{-1/n}\le |\xi|\le 10 R_k^{-1/n}$. By dyadic pigeonholing, there is some dyadic $s\in[R_{k+1}^{-1/n},2R_k^{-1/n}]$ for which 
\[ \int|g_k^h|^4\lesssim(\log R)  \int|g_k^h*\widecheck{\eta}_s|^4, \]
where $\eta_s:\R^n\to[0,\infty)$ is a smooth function supported in the annulus $s/4\le |\xi|\le s$. In the proof of Lemma \ref{low}, we showed the pointwise equality
\begin{equation}\label{ptwiselo}
g_k^h*\widecheck{\eta}_s(x)=\sum_{\tau_s}\sum_{\tau_s'\sim\tau_s}(f_{\tau_s}^{k+1}\overline{f_{\tau_s'}^{k+1}})*\w_{\tau_k,d_k}*\widecheck{\eta}_{>R_{k+1}^{-1/n}}*\widecheck{\eta}_s(x)
\end{equation}
where $\tau_s\in\Theta^n(s^{-n})$ and $\tau_s'\sim\tau_s$ means that $\tau_s'\in\Theta^n(s^{-n})$ and $\text{dist}(2\tau_s,2\tau_s')\le  2s$. For each $\tau_s$, the sub-sum on the right hand side has Fourier transform supported in $2(\tau_{2s}-\tau_{2s})\setminus B_s(0)$ where $\tau_{2s}\in\Theta^n((2s)^{-n})$ contains $\tau_s$. Now write 
\begin{equation}\label{formg} 
g_k^h*\widecheck{\eta}_s(x)=|\det T|^{-1}(\widehat{g_k^h}\eta_s\circ T^{-1})^{\widecheck{\,\,\,}}((T^{-1})^t x) , 
\end{equation}
where $T$ is an affine transformation mapping $\supp\widehat{g_k^h}{\eta}_s$ to $\Gamma_0^n(s^{-n})$. It suffices to bound 
\[ \int|(\widehat{g_k^h}\eta_s\circ T^{-1})^{\widecheck{\,\,\,}}(x)|^{\frac{p}{2}}dx. \]
We may view the sub-sums corresponding to each $\tau_k$ on the right hand side of \eqref{ptwiselo} as having Fourier support contained in an element of $\Xi_0^n(s^{-n})$, dilated by a factor of $s$. Therefore, since $\C_0^n(s^{-1})\lesssim_\e R^\e$, we have 
\begin{align*}
\int|(\widehat{g_k^h}\eta_s*\circ T^{-1})^{\widecheck{\,\,\,}}&(x)|^{\frac{p}{2}}dx\lesssim_\e R^{C\e}\int(\sum_{\tau_k}||\sum_{\tau_s\subset\tau_{2s}}\sum_{\tau_s'\sim\tau_s}(\widehat{|f_{\tau_k}^{k+1}|^2}\widehat{\w}_{\tau_k,d_k}{\eta}_{>R_{k+1}^{-1/n}}{\eta}_s\circ T^{-1})^{\widecheck{\,\,\,}}(x)|^2)^{\frac{p}{4}}. 
\end{align*}
After undoing the change of variables, the lemma is proved.

\end{proof}

\section{Key iterations that unwind the pruning process\label{keyalgo}}

In the process of unwinding the pruning process, we will encounter expressions of the form 
\[ \int_{\R^n}(\sum_{\tau\in\Theta^n(s^{-n})}|f_\tau|^{\tilde{p}_l}*\w_{\tau,d})^{\frac{p}{\tilde{p}_l}} \]
for various values of $d$ and $R^{-\frac{1}{n}}<s<1$, and where $\frac{p}{\tilde{p}_l}\ge 1$. To analyze this expression, we further decompose the Fourier support of each summand, which is contained in (a constant dilate of) $\tau-\tau$, or the set 
\[ \{\sum_{i=1}^{n}\lambda_i\g_n^{(i)}(a):|\lambda_i|\le s^{i}\,\,\forall i\}, \]
where $a$ is the initial point of $I(\tau')$. Fix $s<\sigma_0<1$. Decompose the above set according to the dyadic parameter $\sigma_0<\sigma<1$ into subsets 
\begin{equation}\label{summands} \{\sum_{i=1}^{n}\lambda_i \g_n^{(i)}(a): |\lambda_i|\le \min(1,\sigma^{l+1-i})s^{i}\quad\forall i\}\setminus\{\sum_{i=1}^{n}\lambda_i \g_n^{(i)}(a): |\lambda_i|\le \min(1,(\sigma/2)^{l+1-i})s^{i}\quad\forall i\}   \end{equation}
and $\{\sum_{i=1}^{n}\lambda_i \g_n^{(i)}(a): |\lambda_i|\le \min(1,\sigma_0^{l+1-i})s^{i}\quad\forall i\}$. We carry out this decomposition using smooth bump functions $\eta_{\tau,\sigma}^l$ which are part of a partition of unity and constructed analogously to those introduced before Lemma \ref{multilem1}. The function $\eta_{\tau,\sigma}^l$ is supported in 
\[ \Big(\{\sum_{i=1}^{l}\lambda_i \g_l^{(i)}(a): |\lambda_i|\le \sigma^{l+1-i}s^{i}\,\,\forall i\}\setminus\{\sum_{i=1}^{l}\lambda_i \g_l^{(i)}(a): |\lambda_i|\le (\sigma/2)^{l+1-i}s^{i}\,\,\forall i\}\Big)\times[-1,1]^{n-l}  \] 
if $\sigma_0< \sigma\le 1$ and supported in  
\[ \{\sum_{i=1}^{l}\lambda_i \g_l^{(i)}(a): |\lambda_i|\le \sigma_0^{l+1-i}s^{i}\quad\forall i\}\times[-1,1]^{n-l} \]
if $\sigma=\sigma_0$. Write $\tilde{W}_{\tau,\sigma}^l$ for the $L^1$-normalized weight function centered at the origin that is Fourier supported in $\{\sum_{i=1}^{l}\lambda_i \g_l^{(i)}(a): |\lambda_i|\le \sigma^{l+1-i}s^{i}\,\,\forall i\}\times[-1,1]^{n-l}$.

\subsection{Incorporating square function estimates for lower dimensional moment curves. }

We use the following cylindrical, pointwise version of square function estimates for lower dimensional moment curves. 



\begin{lemma}\label{algo1} Suppose that $\mb{M}^{m_1}(R)\lesssim_\d R^\d$ for all $R\ge 2$. Let $R_k^{-\frac{1}{n}}<s\le R_{k-1}^{-\frac{1}{n}}$. For each $0<\sigma<1$, and $\tau'\in{\bf{S}}_n(s)$, let $\tilde{W}_{\tau',\sigma}^{m_1}$ be a weight function that is Fourier supported in $\{\sum_{i=1}^{m_1}\lambda_i\g_{m_1}^{(i)}(a):|\lambda_i|\le \sigma^{m_1+1-i}\quad\forall i\}\times[-1,1]^{n-m_1}$, where $a$ is the initial point of $I(\tau')$. For any $s\le r\le 1$, $\tau\in{\bf{S}}(r)$ with $\tau'\subset\tau$, and $2\le p\le p_{m_1}\le p_n$, we have 
\begin{align*}
|f_{\tau'}^{k}|^{p}*\w_{\tau,d}*\tilde{W}_{\tau',\sigma}^{m_1} \lesssim_{\d,\e} R^\d  (\sum_{\substack{\tau''\subset\tau'\\\tau''\in{\bf{S}}_n(\sigma^{\frac{1}{m_1}}s)}}|f_{\tau''}^{k_m}|^2*\w_{\tau,2p_n^{-1}d})^{\frac{p}{2}}* \tilde{W}_{\tau,\sigma}^{m_1} 
\end{align*}
where $k_m\ge k$ satisfies $R_{k_m}^{-\frac{1}{n}}< \sigma^{\frac{1}{m_1}}s\le R_{k_m-1}^{-\frac{1}{n}}$. 
\end{lemma}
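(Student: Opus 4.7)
The plan is to prove the pointwise inequality by reducing, via a cylindrical application of the hypothesized estimate $\M^{m_1}(\sigma^{-m_1})\lesssim_\d\sigma^{-m_1\d}$, to a square function estimate in the directions $\g_n^{(1)}(a),\ldots,\g_n^{(m_1)}(a)$, where $a$ is the initial point of $I(\tau')$. First, an affine change of variables standardizes the geometry: $\tau'$ becomes a canonical unit-scale block of $\mc{M}^n(1)$, each sub-block $\tau''\in{\bf{S}}_n(\sigma^{1/m_1}s)$ with $\tau''\subset\tau'$ becomes a canonical block at scale $\sigma^{-n/m_1}$, and the Fourier support of $\tilde{W}_{\tau',\sigma}^{m_1}$ (which is pointwise comparable to $\tilde{W}_{\tau,\sigma}^{m_1}$ up to a harmless small rotation, since $\tau'\subset\tau$ and the base points of $I(\tau)$, $I(\tau')$ are close) becomes a cylindrical region: a $\sigma^{-m_1}$-neighborhood of the $m_1$-dim moment curve in the first $m_1$ coordinates, times the unit cube in the last $n-m_1$ coordinates. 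The weight $\w_{\tau,d}$ rescales to a weight dual to the rescaled $\tau$, whose spatial scale in the last $n-m_1$ directions is at least unit.

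Second, I would apply the $m_1$-dimensional square function estimate cylindrically. Writing $x=(y,z)\in\R^{m_1}\times\R^{n-m_1}$ in the rescaled coordinates, the cylindrical structure of $\tilde{W}_{\tau',\sigma}^{m_1}$ combined with Fubini gives, for each fixed $z$, an $m_1$-dimensional weighted $L^p$ expression of $f_{\tau'}^k(\cdot,z)$ whose Fourier support lies in the projection of the rescaled $\tau'$ onto its first $m_1$ coordinates — a subset of a $\sigma^{-m_1}$-neighborhood of $\mc{M}^{m_1}$. Applying the hypothesized inequality $\M^{m_1}(\sigma^{-m_1})\lesssim_\d\sigma^{-m_1\d}\le R^\d$ in a cylindrical form (as in Theorem~4.1 of \cite{maldagueM3}) bounds each slice pointwise by the $L^{p/2}$-norm of a square function over the $m_1$-dim canonical blocks at scale $\sigma^{-m_1}$; after undoing the rescaling, these correspond exactly to the $\tau''\in{\bf{S}}_n(\sigma^{1/m_1}s)$ appearing on the right-hand side. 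The locally constant property (Lemma~\ref{locconst}) converts the sliced $L^2$ averages into convolutions with $\w_{\tau,d'}$ for an appropriate $d'$, and the standard pointwise properties ($\w_{\tau,d}*\w_{\tau,d}\lesssim\w_{\tau,d-c_n}$, and $\w_{\tau,d}*\tilde{W}_{\tau',\sigma}^{m_1}\lesssim\tilde{W}_{\tau,\sigma}^{m_1}$ using $\tau^*\subset(\tau')^*$ and the cylindrical nature of the second weight) let us consolidate everything into the single factor $\w_{\tau,2p_n^{-1}d}*\tilde{W}_{\tau,\sigma}^{m_1}$. The factor $2p_n^{-1}$ reflects the decay budget consumed, which is linear in the (bounded by $p_n$) number of weight convolutions performed.

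The main obstacle is aligning the two nested weights through the rescaling and the cylindrical slicing: $\w_{\tau,d}$ is tied to the larger block $\tau$ at scale $r$, while the cylindrical weight $\tilde{W}_{\tau',\sigma}^{m_1}$ is tied to the smaller block $\tau'$ at scale $s$, and the slice-wise $\M^{m_1}$-application is naturally associated to $\tau'$ rather than $\tau$. One must verify that the $z$-direction localization of $\w_{\tau,d}$ is preserved through the $m_1$-dim square function estimate applied in the $y$-variable, and that the subsequent consolidation of weights costs only a bounded factor of the decay parameter. Choosing $d$ at the outset large enough relative to $p_n$ (so that the $d\to 2p_n^{-1}d$ loss is affordable) makes the bookkeeping uniform in the scales $r$, $s$, and $\sigma$.
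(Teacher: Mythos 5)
Your proposal applies $\mb{M}^{m_1}$ cylindrically once, at the full scale $\sigma^{-m_1}$, to the function $f_{\tau'}^k$. A single such application produces, slice by slice, the Fourier restrictions of $f_{\tau'}^k$ to the blocks $\tau''\in{\bf{S}}_n(\sigma^{1/m_1}s)$. But the right-hand side of the lemma involves $f_{\tau''}^{k_m}$, which is a genuinely different function: it is pruned at generation $k_m$, not $k$. Pruning (Definition \ref{taukprune}) is a spatial wave-packet truncation with thresholds that depend on the generation index, not a Fourier projection; Fourier-restricting the $k$-pruned function $f_{\tau'}^k$ to a block at scale $R_{k_m}^{-1/n}$ does not recover the $k_m$-pruned $f_{\tau''}^{k_m}$, and there is no pointwise, $L^2$, or $L^\infty$ comparison between $(f_{\tau'}^k)_{\tau''}$ and $f_{\tau''}^{k_m}$ that you could invoke. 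Nor can you upgrade $|f_{\tau'}^k|$ to $|f_{\tau'}^{k_m}|$ before applying $\mb{M}^{m_1}$, because the pointwise monotonicity $|f_{\tau_j}^j|\le|f_{\tau_j}^{j+1}|$ from Lemma \ref{pruneprop}(1) holds only block by block at scale $R_j^{-1/n}$ and does not sum to a pointwise inequality at the coarser scale $s$.

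The paper's proof circumvents this by iterating $\mb{M}^{m_1}$ one pruning generation at a time. The first application is only from scale $s$ down to $R_k^{-1/n}$ (a ratio at most $R^\e$), so that the Fourier restriction of $f_{\tau'}^k$ to a block $\tau_k$ at this scale is exactly $f_{\tau_k}^k$ by Lemma \ref{pruneprop}(3) ($\supp\widehat{f_{\tau_k}^k}\subset 3\tau_k$). Then Lemma \ref{pruneprop}(1) upgrades $|f_{\tau_k}^k|\le|f_{\tau_k}^{k+1}|$, Khintchine's inequality converts $(\sum_{\tau_k}|f_{\tau_k}^{k+1}|^2)^{p/2}$ back into $|\sum_{\tau_k}e_{\tau_k}f_{\tau_k}^{k+1}|^p$ so that $\mb{M}^{m_1}$ can be applied again at the next finer scale $R_{k+1}^{-1/n}$, and the process repeats, climbing from generation $k$ to $k_m$ while refining the block scale to $\sigma^{1/m_1}s$. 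This takes at most $\e^{-1}$ iterations, each costing $R^{\d\e}$, giving the stated $R^\d$. You would need to insert this interleaved iteration (square function at matching block scale, per-block pruning upgrade, Khintchine re-randomization) before your closing weight-consolidation step, which is otherwise consistent with what the paper does.
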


\begin{proof}[Proof of Lemma \ref{algo1}]
The $k=k_m$ case follows easily from the argument when $k<k_m$, so assume that $k<k_m$. By the same argument as was used in the proof of Lemma \ref{ptwise} (using moment curve rescaling in place of cone rescaling and invoking the definition of $\mb{M}^{m_1}(\cdot)$ in place of $\C_m^{n+k}(\cdot)$), we have
\[ |f_{\tau'}^{k}|^{p}*\w_{\tau,d}*\tilde{W}_{\tau',\sigma}^{m_1} \lesssim_{\d,\e}R^{\d\e}  (\sum_{\substack{\tau''\subset\tau'\\\tau''\in{\bf{S}}_n(R_k^{-\frac{1}{n}})}}|f_{\tau''}^{k}|^2)^{\frac{p}{2}}*\w_{\tau,d} *\tilde{W}_{\tau',\sigma}^{m_1} 
. \]
By Lemma \ref{pruneprop}, the right hand side is bounded above by $C_{\d,\e}R^{\d\e}  (\sum_{\substack{\tau''\subset\tau'\\\tau''\in{\bf{S}}_n(R_k^{-\frac{1}{n}})}}|f_{\tau''}^{k+1}|^2)^{\frac{p}{2}}*\w_{\tau,d} *\tilde{W}_{\tau',\sigma}^{m_1}$. By Khintchine's inequality, there is a choice of signs $e_{\tau''}\in\{\pm1\}$ (permitted to depend on the point we are evaluating the two-fold convolution at) which satisfy 
\[ (\sum_{\substack{\tau''\subset\tau'\\\tau''\in{\bf{S}}_n(R_k^{-\frac{1}{n}})}}|f_{\tau''}^{k+1}|^2)^{\frac{p}{2}}*\w_{\tau,d} *\tilde{W}_{\tau',\sigma}^{m_1} 
\sim |\sum_{\substack{\tau''\subset\tau'\\\tau''\in{\bf{S}}_n(R_k^{-\frac{1}{n}})}}e_{\tau''}f_{\tau''}^{k+1}|^{p}*\w_{\tau,d} *\tilde{W}_{\tau',\sigma}^{m_1} .
\]
Then use a cylindrical version of $\mb{M}^{m_1}(\cdot)$ again to bound the right hand side above by 
\[ C_{\d,\e}R^{\d\e}(\sum_{\substack{\tau''\subset\tau'\\\tau''\in{\bf{S}}_n(R_{k+1}^{-\frac{1}{n}})}}|f_{\tau''}^{k+1}|^2)^{\frac{p}{2}}*\w_{\tau,d} *\tilde{W}_{\tau',\sigma}^{m_1} .
\]
Iterate this process (which is at most $\e^{-1}$ many steps) until we obtain the inequality
\[ |f_{\tau'}^{k}|^{p}*\w_{\tau,d}*\tilde{W}_{\tau',\sigma}^{m_1} \lesssim_{\d,\e}R^{\d}  (\sum_{\substack{\tau''\subset\tau'\\\tau''\in{\bf{S}}_n(\sigma^{\frac{1}{m_1}}s)}}|f_{\tau''}^{k_m}|^2)^{\frac{p}{2}}*\w_{\tau,d} *\tilde{W}_{\tau',\sigma}^{m_1} 
. \]

Then, by the locally constant property, we have
\[ (\sum_{\substack{\tau''\subset\tau'\\\tau''\in{\bf{S}}_n(\sigma^{\frac{1}{m_1}}s)}}|f_{\tau''}^{k_m}|^2)^{\frac{p}{2}}*\w_{\tau,d} *\tilde{W}_{\tau',\sigma}^{m_1}\lesssim (\sum_{\substack{\tau''\subset\tau'\\\tau''\in{\bf{S}}_n(\sigma^{\frac{1}{m_1}}s)}}|f_{\tau''}^{k_m}|^2*|\widecheck{\rho}_\tau|)^{\frac{p}{2}}*\w_{\tau,d} *\tilde{W}_{\tau,\sigma}^{m_1}  \]
where $\rho_\tau$ is a bump function localized to $\tau$, which contains all of the $\tau''\subset\tau'$. It suffices to show that 
\[ (\sum_{\substack{\tau''\subset\tau'\\\tau''\in{\bf{S}}_n(\sigma^{\frac{1}{m_1}}s)}}|f_{\tau''}^{k_m}|^2*|\widecheck{\rho}_\tau|)^{\frac{p}{2}}*\w_{\tau,d} \lesssim (\sum_{\substack{\tau''\subset\tau'\\\tau''\in{\bf{S}}_n(\sigma^{\frac{1}{m_1}}s)}}|f_{\tau''}^{k_m}|^2*\w_{\tau,p_n^{-1}d})^{\frac{p}{2}} .\] 
Let $T\|\tau^*$ denote a tiling of $\R^n$ by translates of $\tau^*$. Then, using $\frac{p}{2}\ge 1$ and $\|\cdot\|_{\ell^{p/2}}\le\|\cdot\|_1$, we have
\begin{align*}
(\sum_{\substack{\tau''\subset\tau'\\\tau''\in{\bf{S}}_n(\sigma^{\frac{1}{m_1}}s)}}|f_{\tau''}^{k_m}|^2&*|\widecheck{\rho}_\tau|)^{\frac{p}{2}}*\w_{\tau,d}(x)\le \sum_{T\|\tau^*}\|\sum_{\substack{\tau''\subset\tau'\\\tau''\in{\bf{S}}_n(\sigma^{\frac{1}{m_1}}s)}}|f_{\tau''}^{k_m}|^2*|\widecheck{\rho}_\tau|(x-y)\|_{L^\infty_y(T)}^{\frac{p}{2}} \|\w_{\tau,d}\|_{L^\infty(T)}|T| \\
    &\le \Big(\int_{\R^n}\sum_{\substack{\tau''\subset\tau'\\\tau''\in{\bf{S}}_n(\sigma^{\frac{1}{m_1}}s)}}|f_{\tau''}^{k_m}|^2(z)\sum_{T\|\tau^*}\||\widecheck{\rho}_\tau|(x-y-z)\|_{L^\infty_y(T)} \|\w_{\tau,d}\|_{L^\infty(T)}^{\frac{2}{p}}|T|^{\frac{2}{p}} dz\Big)^{\frac{p}{2}} \\
    &\lesssim_d \Big(\int_{\R^n}\sum_{\substack{\tau''\subset\tau'\\\tau''\in{\bf{S}}_n(\sigma^{\frac{1}{m_1}}s)}}|f_{\tau''}^{k_m}|^2(z)\w_{\tau,2p^{-1}d}*\w_{\tau,2p^{-1}d}(x-z)  dz\Big)^{\frac{p}{2}}\\
    &\lesssim_d \Big(\int_{\R^n}\sum_{\substack{\tau''\subset\tau'\\\tau''\in{\bf{S}}_n(\sigma^{\frac{1}{m_1}}s)}}|f_{\tau''}^{k_m}|^2*\w_{\tau,2p_n^{-1}d}(x)\Big)^{\frac{p}{2}}, 
\end{align*}
as desired.

\end{proof}

\subsection{Auxiliary estimates related to Taylor cones. }

\begin{lemma}\label{algo2} Assume that $\M^{n'}(R)\lesssim_\d R^\d$ and $\C^{n'+1}_{m'}(R)\lesssim_\d R^\d$ for all $n'<n$, $0\le m'\le n'-1$, $R\ge 1$, and $\d>0$. Suppose that $2\le \tilde{p}_l\le p\le p_n$ and $2\le \frac{p}{\tilde{p}_l}$. For $R_{k}^{-\frac{1}{n}}< s\le R_{k-1}^{-\frac{1}{n}}$, there exists some dyadic $s'$, $R^{-\frac{1}{n}}\le s'\le s$ such that 
\begin{align}
\int_{\R^n}(&\sum_{\substack{\tau'\in{\bf{S}}_n(s)}}|f_{\tau'}^{k}|^{\tilde{p}_l}*\w_{\tau',d})^{\frac{p}{\tilde{p}_l}} \le (C\log R)^{C}B_{\d}R^{\d}
\label{LHS}\\
&\times \left[  \int_{\R^n}(\sum_{\substack{\tau'\in{\bf{S}}_n(s)}}\sum_{\substack{\tau\subset\tau'\\\tau\in{\bf{S}}_n(\max(R^{-\frac{1}{n}},R^{-\frac{\e^3}{n}}s))}} |f_{\tau}^{k_m}|^2*\w_{\tau',p_n^{-1}d})^{\frac{p}{2}} +R^{C\e^3}\int_{\R^n}(\sum_{\substack{\tau'\in{\bf{S}}_n(s)}}|f_{\tau'}^{k}|^{\tilde{p}_{l+1}}*\w_{\tau',d})^{\frac{p}{\tilde{p}_{l+1}}} \right] \nonumber
\end{align}
where $k_m\ge k$ satisfies $R_{k_m}^{-\frac{1}{n}}< \max(R^{-\frac{1}{n}},R^{-\frac{\e^3}{n}}s)\le R_{k_m-1}^{-\frac{1}{n}}$.
\end{lemma}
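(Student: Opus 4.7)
The plan is to apply a single high/low frequency decomposition to the Fourier support of each summand $|f_{\tau'}^{k}|^{\tilde{p}_l} * \w_{\tau',d}$ on the left hand side, in the spirit of Lemma \ref{multilem1} but specialized to one iteration step. Since $\tilde{p}_l$ is even, this Fourier support is contained (up to a harmless constant dilation) in $\tau' - \tau' \subset \{\sum_{i=1}^n \lambda_i \g_n^{(i)}(a) : |\lambda_i| \lesssim s^i\}$, where $a$ is the initial point of $I(\tau')$. Using the partition of unity $\{\eta_{\tau',\sigma}^l\}_\sigma$ from the preceding subsection---indexed by dyadic $\sigma \in [\sigma_0, 1]$ with a remainder $\eta_{\tau',\sigma_0}^l$ at the threshold $\sigma_0 := R^{-\e^3 m_1/n}$, where $m_1$ is the smallest integer with $\tilde{p}_l \le p_{m_1}$---we write
\[ |f_{\tau'}^{k}|^{\tilde{p}_l} * \w_{\tau',d} = \sum_\sigma \bigl(|f_{\tau'}^{k}|^{\tilde{p}_l} * \w_{\tau',d}\bigr) * \widecheck{\eta}_{\tau',\sigma}^l. \]
By the triangle inequality and dyadic pigeonholing (costing $(C\log R)^C$), it suffices to bound the expression with a single dyadic $\sigma$ in place of the sum; this $\sigma$ determines the scale $s'$ in the lemma statement.

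\emph{Case 1, $\sigma = \sigma_0$ (low case).} Using $|\widecheck{\eta}_{\tau',\sigma_0}^l| \lesssim \tilde{W}_{\tau',\sigma_0}^l$, apply Lemma \ref{algo1} pointwise with the chosen $m_1$. Since $\sigma_0^{1/m_1} s = R^{-\e^3/n} s$ by construction, this produces the square-function refinement to scale $R^{-\e^3/n} s$. After using Khintchine and $\|\cdot\|_{\ell^{\tilde{p}_l/2}} \le \|\cdot\|_{\ell^1}$ to return to a square function summed in $\tau'$, and absorbing the convolution with $\tilde{W}_{\tau',\sigma_0}^l$ against $\w_{\tau',d}$ via standard weight properties, this gives the first right-hand side term---the floor at $R^{-1/n}$ reflecting the fact that one cannot refine below the finest block scale.

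\emph{Case 2, $\sigma > \sigma_0$ (high case).} After an appropriate affine rescaling, the union over $\tau'$ of the Fourier supports of $|f_{\tau'}^k|^{\tilde{p}_l}*\w_{\tau',d}*\widecheck{\eta}_{\tau',\sigma}^l$ is identified (cylindrically) with an element of a Taylor cone $\Xi_{m'}^{l+1}(r)$ for an appropriate $m'$ and $r \sim (\sigma s)^{-n}$. Lemma \ref{pprops}(4) gives $2 \le p/\tilde{p}_l \le p_{n-l}$, so by the hypothesized bound $\C_{m'}^{l+1}(\cdot) \lesssim_\delta (\cdot)^\delta$ together with its localized cylindrical version (Lemma \ref{loclem}),
\[ \int\!\Bigl(\sum_{\tau'} |f_{\tau'}^k|^{\tilde{p}_l}*\w_{\tau',d}*\widecheck{\eta}_{\tau',\sigma}^l\Bigr)^{p/\tilde{p}_l} \lesssim_\delta R^{\delta}\int\!\Bigl(\sum_{\tau'}\bigl||f_{\tau'}^k|^{\tilde{p}_l}*\w_{\tau',d}*\widecheck{\eta}_{\tau',\sigma}^l\bigr|^2\Bigr)^{p/(2\tilde{p}_l)}. \]
Since $\tilde{p}_{l+1}/\tilde{p}_l \le 2$ by Lemma \ref{pprops}(3), applying $\|\cdot\|_{\ell^2} \le \|\cdot\|_{\ell^{\tilde{p}_{l+1}/\tilde{p}_l}}$ replaces the inner exponent $2$ by $\tilde{p}_{l+1}/\tilde{p}_l$. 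The bound $|\widecheck{\eta}_{\tau',\sigma}^l| \lesssim R^{C\e^3}\tilde{W}_{\tau',\sigma}^l$ (valid since $\sigma > \sigma_0 = R^{-\e^3 m_1/n}$, and responsible for the $R^{C\e^3}$ factor in the lemma statement), combined with Cauchy--Schwarz pointwise inside the convolution and the weight absorption $\w_{\tau',d}*\tilde{W}_{\tau',\sigma}^l \lesssim \w_{\tau',d'}$, yields the pointwise estimate
\[ \bigl||f_{\tau'}^k|^{\tilde{p}_l}*\w_{\tau',d}*\widecheck{\eta}_{\tau',\sigma}^l\bigr|^{\tilde{p}_{l+1}/\tilde{p}_l} \lesssim R^{C\e^3}\, |f_{\tau'}^k|^{\tilde{p}_{l+1}} * \w_{\tau',d}, \]
which delivers the second right-hand side term.

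The main obstacle is the precise identification, in the high case, of the rescaled Fourier-support geometry with a specific Taylor cone $\Xi_{m'}^{l+1}$, so as to know which hypothesized bound $\C_{m'}^{l+1}$ to invoke, together with the verification via Lemma \ref{pprops}(4) that the outer exponent $p/\tilde{p}_l$ lies in the admissible range $[2, p_{n-l}]$ for that cone. The supporting bookkeeping---tracking weights through the various convolutions, reconciling the pigeonholed $\sigma$ with the scale $s'$ in the statement, and handling the cylindrical extension when invoking Lemma \ref{loclem}---is technical but follows the templates already developed in Lemmas \ref{multilem1}, \ref{ptwise}, and \ref{multilem3}.
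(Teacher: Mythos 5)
Your proposal captures the essential architecture of the paper's proof: a dyadic decomposition of the Fourier support of each $|f_{\tau'}^k|^{\tilde p_l}*\w_{\tau',d}$ using the $\eta^l_{\tau',\sigma}$ bump functions, with the low piece handled via Lemma~\ref{algo1} (giving the square-function refinement term) and the high pieces handled via a Taylor cone square function estimate (giving the $\tilde p_l\to\tilde p_{l+1}$ advancement). The use of Lemma~\ref{pprops}(3) and (4), the Jensen-type pointwise step, and the treatment of the $R^{C\e^3}$ factor all match the paper. Your choice $\sigma_0=R^{-\e^3 m_1/n}$ is, if anything, more consistent with the stated refinement scale $R^{-\e^3/n}s$ than what the paper writes.

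There is, however, a genuine gap in the high case: you identify the Fourier supports ``(cylindrically) with an element of a Taylor cone $\Xi_{m'}^{l+1}(r)$.'' This geometry is wrong. As $\tau'$ varies the Frenet vectors $\g_n^{(1)}(a),\ldots,\g_n^{(l)}(a)$ rotate within $\R^n$, and the remaining coordinates $i>l$ are \emph{not} free in the support of $\widecheck\eta^l_{\tau',\sigma}$; so the union of supports is a genuinely $n$-dimensional Taylor cone, not a cylinder over one in $\R^{l+1}$. The paper identifies the supports (after a fixed affine dilation) with elements of $\Xi_{l-1}^{n}(\cdot)$, and the exponent bookkeeping only closes with this choice: $\C_{l-1}^n$ has range $2\le p\le p_{n-l}$, exactly matching the bound $p/\tilde p_l\le p_{n-l}$ you invoke from Lemma~\ref{pprops}(4). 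By contrast $\C_{m'}^{l+1}$ has range $2\le p\le p_{l-m'}$, and $p_{l-m'}\ge p_{n-l}$ forces $m'\le 2l-n$, which is impossible for $l<n/2$. So with the cone as you wrote it, the square function hypothesis would not cover the required exponent.

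A smaller concern: in the low case, the residual convolution against the $\tau'$-dependent weight $\tilde W^{m_1}_{\tau',\sigma_0}$ cannot be removed by ``standard weight properties'' and Cauchy--Schwarz alone, since these weights live at a scale coarser than $\tau'^*$ and vary with $\tau'$; the paper eliminates them by a separate square function estimate (Proposition~\ref{multilem3pf}). Your reference to Lemma~\ref{loclem} in the high case is also superfluous, as the integral in Lemma~\ref{algo2} is already global.
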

\begin{proof}[Proof of Lemma \ref{algo2}]  We use the bump functions 
 $\eta_{\tau',\sigma}^{n-1}$ introduced at the beginning of \textsection\ref{keyalgo} to decompose the Fourier support of the integrand. Using dyadic pigeonholing, let $\sigma$ be a dyadic parameter, $R^{-\e^3}\le \sigma\le 1$, which satisfies  
\[ \int_{\R^n}(\sum_{\substack{\tau'\in{\bf{S}}_n(s)}}|f_{\tau'}^{k}|^{\tilde{p}_{l}}*\w_{\tau',d})^{\frac{p}{\tilde{p}_{l}}}\lesssim (\log R)^C\int_{\R^n}|\sum_{\substack{\tau'\in{\bf{S}}_n(s)}}|f_{\tau'}^k|^{\tilde{p}_{l}}*\w_{\tau,d}*\widecheck{\eta}_{\tau',\sigma}^{l}|^{\frac{p}{\tilde{p}_{l}}}. \]
If $\sigma= R^{-\e^3}$, then by Lemma \ref{algo1}, 
\[ \int_{\R^n}|\sum_{\substack{\tau'\in{\bf{S}}_n(s)}}|f_{\tau'}^{k}|^{\tilde{p}_{l}}*\w_{\tau',d}*\widecheck{\eta}_{\tau',\sigma}^{l}|^{\frac{p}{\tilde{p}_{l}}}\lesssim_\d R^{\d} \int_{\R^n}|\sum_{\substack{\tau'\in{\bf{S}}_n(s)}}|\sum_{\substack{\tau\subset\tau'\\ \tau\in{\bf{S}}_n(R^{-\frac{\e^3}{n}}s)}} |f_{\tau}^{k_m}|^2*\w_{\tau',p_n^{-1}d}|^{\frac{\tilde{p}_{l}}{2}}*\tilde{W}_{\tau',\sigma}^{l}|^{\frac{p}{\tilde{p}_{l}}}.  \]
Apply Proposition \ref{multilem3pf} to eliminate the weights $\tilde{W}_{\tau,\sigma}^{n-1}$, producing the upper bound
\[ D_\d R^\d\int_{\R^n}|\sum_{\substack{\tau'\in{\bf{S}}_n(s)}}|\sum_{\substack{\tau\subset\tau'\\ \tau\in{\bf{S}}_n(R^{-\frac{\e^3}{n}}s)}} |f_{\tau}^{k_m}|^2*\w_{\tau',p_n^{-1}d}|^{\frac{\tilde{p}_{l}}{2}}|^{\frac{p}{\tilde{p}_{l}}},  \]
which, since $\|\cdot\|_{\ell^{\tilde{p}_l/2}}\lesssim 1$, is bounded by 
\[ D_\d R^\d\int_{\R^n}\big(\sum_{\substack{\tau'\in{\bf{S}}_n(s)}}\sum_{\substack{\tau\subset \tau'\\ \tau\in{\bf{S}}_n(R^{-\frac{\e^3}{n}}s)}} |f_{\tau}^{k_m}|^2*\w_{\tau',p_n^{-1}d} \big)^{\frac{p}{2}}.  \]

It remains to consider the case that $\sigma>R^{-\e^3}$. The Fourier supports of the summands are contained in sets \eqref{summands}, which are a fixed dilate of elements of $\Xi_{l-1}^{n}(\sigma^{-\frac{n-1}{n}}s^{-\frac{n-1}{n}})$. Since we assumed that $2\le \frac{p}{\tilde{p}_l}$, by Lemma \ref{pprops} we have $2\le \frac{p}{\tilde{p}_l}\le p_{n-l}$. Therefore, by the hypothesized boundedness of $\C_{l-1}^n(\cdot)$, we have
\[ \int_{\R^n}|\sum_{\substack{\tau'\in{\bf{S}}_n(s)}}|f_{\tau'}^{k}|^{\tilde{p}_{l}}*\w_{\tau',d}*\widecheck{\eta}_{\tau',\sigma}^{l}|^{\frac{p}{\tilde{p}_{l}}}\lesssim_\d R^\d \int_{\R^n}|\sum \sum_{\tau\in{\bf{S}}_n(\sigma^{-1}s)}|\sum_{\substack{\tau'\subset\tau \\
\tau' \in{\bf{S}}_n(s)}}|f_{\tau'}^{k}|^{\tilde{p}_{l}}*\w_{\tau',d}*\widecheck{\eta}_{\tau',\sigma}^{l}|^2|^{\frac{p}{2\tilde{p}_{l}}}.  \]
Since $\sigma>R^{-\e^3}$, we have the pointwise inequality $\w_{\tau',d}*|\widecheck{\eta}_{\tau',\sigma}^l|\lesssim_\e R^{C\e^3}\w_{\tau',d}$. Then by Cauchy-Schwarz, the right hand side above is bounded by 
\[ D_\d R^\d R^{C\e^3}\int_{\R^n}\big(\sum_{\substack{
\tau' \in{\bf{S}}_n(s)}}||f_{\tau'}^{k}|^{\tilde{p}_{l}}*\w_{\tau',d}|^2\big)^{\frac{p}{2\tilde{p}_{l}}}.   \]
Use that $1\le \frac{\tilde{p}_{l+1}}{\tilde{p}_l}\le 2$ and the pointwise inequality $||f_{\tau'}^k|^{\tilde{p}_l}*\w_{\tau',d}|^{\frac{\tilde{p}_{l+1}}{\tilde{p}_l}}\lesssim |f_{\tau'}^k|^{\tilde{p}_l}*\w_{\tau',d}$ to bound the right hand side above by 
\[D_\d R^\d R^{C\e^3}\int_{\R^n}\big(\sum_{\substack{
\tau' \in{\bf{S}}_n(s)}}|f_{\tau'}^{k}|^{\tilde{p}_{l+1}}*\w_{\tau',d}\big)^{\frac{p}{\tilde{p}_{l+1}}}, \] 
as desired. 

\end{proof}

\subsection{Algorithm to fully unwind the pruning process \label{hisec}}


\begin{proposition}\label{algo} Let $D\in\N_{>0}$. There exists $d_0=d_0(n,\e,D)$ such that the following holds. Suppose that $\M^{m_1}(r)\lesssim_\d r^\d$ for all $1\le m_1<n$. For each $k$ and each $2\le p\le p_n$, we have 
\begin{align}
\label{algline1}
&\int_{\R^n}|\sum_{\tau_k}|f_{\tau_k}^{k+1}|^{2}*\w_{\tau_k,d_k}|^{\frac{p}{2}}\le C_\e R^{3\e^2}\emph{T}_n(R^\e)^{N-k+1}\int_{\R^n}
|\sum_\theta|f_\theta|^2*\w_{\theta,D}|^{\frac{p}{2}}.
\end{align}
\end{proposition}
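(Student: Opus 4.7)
The plan is to argue by downward induction on $k$, from a trivial base case at $k=N$ (where $\tau_N=\theta$, $f^{N+1}_{\tau_N}=f_\theta$ by convention, and provided $d_N\ge D$ the LHS already equals the RHS up to constants) down to the smallest $k$ needed. The inductive step is the one-scale reduction
\[ \int_{\R^n}(\sum_{\tau_k}|f_{\tau_k}^{k+1}|^2*\w_{\tau_k,d_k})^{p/2} \lesssim_\e R^{C\e^3}\,\text{T}_n(R^\e)\int_{\R^n}(\sum_{\tau_{k+1}}|f_{\tau_{k+1}}^{k+2}|^2*\w_{\tau_{k+1},d_{k+1}})^{p/2}. \]
Iterating this $N-k$ times telescopes to the full proposition: the total product of $\text{T}_n(R^\e)$ factors is at most $\text{T}_n(R^\e)^{N-k+1}$, and the total constant is $R^{(N-k)C\e^3}\le R^{3\e^2}$ since $N\le\e^{-1}$.

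To prove the one-scale reduction, I run an inner iteration alternating Lemmas \ref{algo1} and \ref{algo2}. Starting from the LHS with exponent index $l=1$ and scale $s=R_k^{-1/n}$, each invocation of Lemma \ref{algo2} yields either (a) the low-frequency branch, in which the scale is refined by a factor of $R^{\e^3/n}$, the pruning index is advanced to $k_m$ with $R_{k_m}^{-1/n}<R^{-\e^3/n}s\le R_{k_m-1}^{-1/n}$, and the integrand returns to $\ell^2$ form, or (b) the high-frequency branch, in which $s$ is unchanged, $l$ is incremented to $l+1$, and the cost is $R^{C\e^3}$. Between consecutive (a)-events only $O(1)$ (b)-events can occur (once $\tilde p_l>p/2$ the hypothesis $2\le p/\tilde p_l$ of Lemma \ref{algo2} fails, and Lemma \ref{algo1} must be invoked at that point via the cylindrical lower-dimensional square-function estimates $\M^{m_1}$, forcing an (a)-event); only $O(\e^{-3})$ (a)-events are needed to carry $s$ from $R_k^{-1/n}$ down to $R_{k+1}^{-1/n}$.

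The factor $\text{T}_n(R^\e)$ itself enters at the final step of the inner iteration: once the scale has been refined to $R_{k+1}^{-1/n}$ and the pruning index advanced past $k+1$, I perform the moment-curve rescaling that maps each $\tau_k$-block to the unit region. In the rescaled coordinates, $f^{k+1}_{\tau_k}$ is Fourier supported in a fixed dilate of $\mc M^n(R^\e)$ and the sub-blocks $\tau_{k+1}\subset\tau_k$ become canonical $R^\e$-scale moment-curve blocks; one direct application of Definition \ref{TnR} then yields the square-function estimate at the finer scale. The pruning lemma \ref{ftofk} finally replaces $f^{k+1}_{\tau_{k+1}}$ by $f^{k+2}_{\tau_{k+1}}$ up to error much smaller than $\a$.

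The main obstacle is the simultaneous tracking of (i) the decay index $d$, which erodes with each convolution in the iteration and must remain $\ge D$ at termination, forcing the choice $d_0=d_0(n,\e,D)$ with $d_N=(4\e^{-1}-N)d_0\ge D$ and enough additional slack to survive the $O(\e^{-4})$ cumulative operations of the full induction; and (ii) the non-monotone evolution of the pruning index across the inner iteration, since in case (a) of Lemma \ref{algo2} the index jumps from $k+1$ to a variable $k_m\ge k+1$ depending on the current scale. One must verify that after the entire inner iteration for a single induction step the pruning index has advanced to at least $k+2$, which follows from the fact that once the scale reaches $R_{k+1}^{-1/n}$ the defining inequality $R_{k_m}^{-1/n}<R_{k+1}^{-1/n}$ forces $k_m\ge k+2$.
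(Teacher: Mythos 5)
Your plan correctly identifies the ingredients — an inner iteration through Lemmas \ref{algo1}/\ref{algo2}, the factor $\text{T}_n(R^\e)$ tied to parabolic rescaling on moment‑curve blocks, the decay‑index bookkeeping forcing $d_0=d_0(n,\e,D)$ — but the ``one‑scale reduction'' you propose to iterate is not actually provable in the form you display, and this breaks the downward induction on $k$. The reason: the inner iteration does not terminate at the canonical scale $R_{k+1}^{-1/n}$. When the low‑frequency branch of Lemma \ref{algo2} fires, the scale $s$ is refined by a fixed factor $R^{-\e^3/n}$, so it can land strictly \emph{below} $R_{k+1}^{-1/n}$; and when the low‑frequency branch fires repeatedly, $s$ can sail past $R_{k+1}^{-1/n}$, $R_{k+2}^{-1/n},\ldots$ all the way to $R^{-1/n}$ with the $\text{T}_n(R^\e)$ factor never appearing. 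In either case you are left with a square function $\sum_{\tau'\in{\bf S}_n(s_m)}|f_{\tau'}^{k_m}|^2$ at a non‑canonical scale $s_m<R_{k+1}^{-1/n}$, and to feed this into the next application of your reduction you would need to coarsen it back to $\sum_{\tau_{k+1}}|f_{\tau_{k+1}}^{k+2}|^2$. That coarsening goes the wrong way: $\sum_{\tau_{k+1}}|f_{\tau_{k+1}}|^2$ can be much \emph{smaller} than $\sum_{\tau'}|f_{\tau'}|^2$ because of cancellation among the $f_{\tau'}$ inside each $\tau_{k+1}$, so there is no pointwise or $L^{p/2}$ bound of the finer square function by the coarser one. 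The paper avoids this entirely by not reorganizing into per‑$k$ blocks; it maintains the single running inequality \eqref{stepm} with a floating pair $(s_m,k_m)$ and terminates only when $s_m=R^{-1/n}$.

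A second, smaller confusion: you say $\text{T}_n(R^\e)$ ``enters at the final step of the inner iteration: once the scale has been refined to $R_{k+1}^{-1/n}$ and the pruning index advanced past $k+1$.'' The causality is reversed. In the paper's argument $\text{T}_n(R^\e)$ is the \emph{mechanism} that performs the refinement: in the high‑frequency sub‑case at the bottom exponent level (where $1\le p/\tilde p_l\le 2$, i.e.\ after several (b)‑events), one uses $\C^n_{n-2}$, Cauchy–Schwarz and Young to arrive at $\sum_{\tau}\int|f^{k_{m-1}}_\tau|^p$ with $\tau\in{\bf S}_n(s_{m-1})$, and it is at \emph{that} point — at scale $s_{m-1}$, before any refinement — that rescaling each $\tau$ to the unit region and invoking $\text{T}_n(R^\e)$ produces the $\ell^2$‑sum over $\tau_{k_{m-1}}\subset\tau$, simultaneously refining the scale to $R_{k_{m-1}}^{-1/n}$ and advancing the index to $k_m=k_{m-1}+1$. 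Relatedly, your claim that once $\tilde p_l>p/2$ ``Lemma \ref{algo1} must be invoked, forcing an (a)‑event'' misses exactly this alternative: at the bottom exponent level there is still a high/low dichotomy (via the $\eta^{n-1}_{\tau,\sigma}$ decomposition), and it is the \emph{high} branch that produces the $\text{T}_n(R^\e)$ factor, not Lemma \ref{algo1}. Once you replace your ``one‑scale reduction'' by the paper's single‑pass intermediate inequality and relocate the $\text{T}_n(R^\e)$ step to the high sub‑case, the count ``at most one $\text{T}_n(R^\e)$ per unit advance of the pruning index, hence at most $N-k$ total'' becomes precise and the termination analysis you sketch goes through.
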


Propositon \ref{algo} will follow from an algorithm which uses Lemmas \ref{algo1} and \ref{algo2} as building blocks. 

\begin{proof}[Proof of Proposition \ref{algo}] We will define an algorithm which at intermediate step $m$, produces an inequality 
\begin{align}\label{stepm} 
(\text{L.H.S. of \eqref{algline1}})\le (C_\e\log R)^{4\e^{-1}m}&(R^{\e^7})^{m}(R^{C\e^3}\text{T}_n(R^\e))^{k_m-k}  \int_{\R^n}(\sum_{\substack{\tau\in{\bf{S}}_n(s_m)}}|f_{\tau}^{k_m}|^{2}*\w_{\tau,p_n^{-m}d_k})^{\frac{p}{2}} 
\end{align}
in which $0\le a\le m$ and $R^{-\frac{1}{n}}\le s_m\le \max(R_k^{-\frac{1}{n}}R^{-\frac{\e^3}{n}a},R_{k_{m-1}}^{-\frac{1}{n}})$ and $N\ge k_m\ge k+1$ satisfies $R_{k_m}^{-\frac{1}{n}}< s_m\le R_{k_m-1}^{-\frac{1}{n}}$. Notice that \eqref{stepm} clearly holds with $m=0$, taking $k_m=k+1$ and $s_m=R_k^{-\frac{1}{n}}$. Assuming \eqref{stepm} holds for $m-1$, we will show that either the algorithm terminates and the proposition is proved or \eqref{stepm} holds for $m\ge 1$. 

\noindent\fbox{Step m:} Suppose that \eqref{stepm} holds with $m-1$, so the left hand side of \eqref{algline1} is bounded by
\begin{align*} 
(C_\e\log &R)^{4\e^{-1}(m-1)}(R^{\e^7})^{m-1}(R^{\e^3}\text{T}_n(R^\e))^{k_{m-1}-k}  \int_{\R^n}(\sum_{\substack{\tau\in{\bf{S}}_n(s_{m-1})}}|f_{\tau}^{k_{m-1}}|^{2}*\w_{\tau,p_n^{-(m-1)}d_k})^{\frac{p}{2}} \end{align*}
in which $0\le a\le m-1$ and $R^{-\frac{1}{n}}\le s_{m-1}\le \max(R_k^{-\frac{1}{n}}R^{-\frac{\e^3}{n}a},R_{k_{m-1}}^{-\frac{1}{n}})$ and $N\ge k_{m-1}\ge k+1$ satisfies $R_{k_{m-1}}^{-\frac{1}{n}}< s_{m-1}\le R_{k_{m-1}-1}^{-\frac{1}{n}}$. Apply Lemma \ref{algo2} to the integral, yielding the inequality
\begin{align}\label{stephyp} 
&(\text{L.H.S. of \eqref{algline1}})\le (C_\e\log R)^{4\e^{-1}m}(R^{\e^7})^{m-1+\frac{1}{n}}(R^{C\e^3}\text{T}_n(R^\e))^{k_{m-1}-k} \\ 
&\times \left[  \int_{\R^n}(\sum_{\substack{\tau'\in{\bf{S}}_n(R^{-\frac{\e^3}{n}}s_{m-1})}}|f_{\tau'}^{k_m}|^2*\w_{\tau',p_n^{-m}d_k})^{\frac{p}{2}} +R^{C\e^3}\int_{\R^n}(\sum_{\substack{\tau\in{\bf{S}}_n(s_{m-1})}} |f_{\tau}^{k_{m-1}}|^{\tilde{p}_{2}}*\w_{\tau,p_n^{-(m-1)}d_k})^{\frac{p}{\tilde{p}_{2}}}\right] \nonumber
\end{align}
where $R_{k_m}^{-\frac{1}{n}}<R^{-\frac{\e^3}{n}}s_{m-1}\le R_{k_m-1}^{-\frac{1}{n}}$, $N\ge k_m\ge k+1$. If the first term on the right hand side dominates, then the inner loop terminates, producing the step $m$ inequality 
\[ (\text{L.H.S. of \eqref{algline1}})\le (C_\e\log R)^{4\e^{-1}m}(R^{\e^7})^{m}(R^{\e^3}\text{\text{T}}_n(R^\e))^{k_{m-1}-k}  \int_{\R^n}(\sum_{\substack{\tau\in{\bf{S}}_n(R^{-\frac{\e^3}{n}}s_{m-1})}}|f_{\tau}^{k_m}|^2*\w_{\tau,p_n^{-m}d_k})^{\frac{p}{2}} . \]
If the second term dominates, we have the following upper bound for the left hand side of \eqref{algline1}:
\[(C_\e\log R)^{4\e^{-1}m}(R^{\e^7})^{m-1+\frac{1}{n}}(R^{C\e^3}\text{T}_n(R^\e))^{k_{m-1}-k} R^{n^{-1}C\e^3}\int_{\R^n}(\sum_{\substack{\tau\in{\bf{S}}_n(s_{m-1})}}|f_{\tau}^{k_{m-1}}|^{\tilde{p}_{2}}*\w_{\tau,p_n^{-(m-1)}d_k})^{\frac{p}{\tilde{p}_{2}}} . \]
Note that we may again apply Lemma \ref{algo2} to the integral above. We iterate this process until either the inner loop terminates with proving step $m$ or we have shown that the left hand side of \eqref{algline1} is bounded by 
\[ (C_\e\log R)^{4\e^{-1}m}(R^{\e^7})^{m-1+\frac{l}{n}}(R^{C\e^3}\text{T}_n(R^\e))^{k_{m-1}-k} R^{ln^{-1}C\e^3}\int_{\R^n}(\sum_{\substack{\tau\in{\bf{S}}_n(s_{m-1})}}|f_{\tau}^{k_{m-1}}|^{\tilde{p}_{l}}*\w_{\tau,p_n^{-(m-1)}d_k})^{\frac{p}{\tilde{p}_{l}}} \]
with $1\le \frac{p}{\tilde{p}_l}\le 2$ and $2\le \frac{p}{\tilde{p}_{l-1}}$, which means that $l\le n$. As in the proof of Lemma \ref{algo2}, use bump functions $\eta_{\tau,\sigma}^{n-1}$ and suppose that 
\[ \int_{\R^n}(\sum_{\substack{\tau\in{\bf{S}}_n(s_{m-1})}}|f_{\tau}^{k_{m-1}}|^{\tilde{p}_{l}}*\w_{\tau,p_n^{-(m-1)}d_k})^{\frac{p}{\tilde{p}_{l}}}\lesssim (\log R)^C\int_{\R^n}|\sum_{\substack{\tau\in{\bf{S}}_n(s_{m-1})}}|f_{\tau}^{k_{m-1}}|^{\tilde{p}_{l}}*\w_{\tau,p_n^{-(m-1)}d_k}*\widecheck{\eta}_{\tau,\sigma}^{n-1}|^{\frac{p}{\tilde{p}_{l}}}. \]
If $\sigma= R^{-\e^3}s_{m-1}$, then by Lemma \ref{algo1}, 
\begin{align*} 
\int_{\R^n}|\sum_{\substack{\tau\in{\bf{S}}_n(s_{m-1})}}&|f_{\tau}^{k_{m-1}}|^{\tilde{p}_{l}}*\w_{\tau,p_n^{-(m-1)}d_k}*\widecheck{\eta}_{\tau,\sigma}^{n-1}|^{\frac{p}{\tilde{p}_{l}}}\\
&\lesssim C_\e R^{n^{-1}\e^7} \int_{\R^n}|\sum_{\substack{\tau\in{\bf{S}}_n(s_{m-1})}} |\sum_{\substack{\tau'\subset\tau\\ \tau'\in{\bf{S}}_n(R^{-\frac{\e^3}{n}}s_{m-1})}} |f_{\tau'}^{k_m}|^2*\w_{\tau,p_n^{-m}d_k}|^{\frac{\tilde{p}_{l}}{2}}*\tilde{W}_{\tau,\sigma}^{n-1}|^{\frac{p}{\tilde{p}_{l}}}.  \end{align*}
By the locally constant property and $\w_{\tau',p_n^{-m}d_k}*\w_{\tau,2p_n^{-m}d_k}\lesssim\w_{\tau',p_n^{-m}d_k}$ whenever $\tau'\subset \tau$, so we have $|f_{\tau'}^{k_m}|^2*\w_{\tau,2p_n^{-1}d_k}\lesssim |f_{\tau'}^{k_m}|^2*\w_{\tau',p_n^{-m}d_k}$ for each $\tau'$. Finally, apply Proposition \ref{multilem3pf} to eliminate $\tilde{W}_{\tau,\sigma}^{n-1}$, concluding step $m$.

The other case is that $\sigma>R^{-\e^3}s_{m-1}$. Using the boundedness of $\C^n_{n-2}(\cdot)$ and Cauchy-Schwarz, we have
\[ \int_{\R^n}|\sum_{\substack{\tau\in\Theta^n(s_{m-1}^{-n})}}|f_{\tau}^{k_{m-1}}|^{\tilde{p}_{l}}*\w_{\tau,p_n^{-(m-1)}d_k}*\widecheck{\eta}_{\tau,\sigma}^{n-1}|^{\frac{p}{\tilde{p}_{l}}}\le C_\e R^{n^{-1}\e^7}\int_{\R^n}\sum_{\tau\in{\bf{S}}_n(s_{m-1})} ||f_{\tau}^{k_{m-1}}|^{\tilde{p}_{l}}*\w_{\tau,p_n^{-(m-1)}d_k}*\widecheck{\eta}_{\tau,\sigma}^{n-1}|^{\frac{p}{\tilde{p}_{l}}} .\]
By Young's convolution inequality, the right hand side is bounded by 
\[ C_\e R^{n^{-1}\e^7}\int_{\R^n}\sum_{\tau\in{\bf{S}}_n(s_{m-1})} |f_{\tau}^{k_{m-1}}|^{p} ,\]
which, by the definition of $\text{T}_n(\cdot)$ and rescaling, is itself bounded by 
\[ C_\e R^{n^{-1}\e^7}\text{T}_n(R^\e)\int_{\R^n}\sum_{\tau\in{\bf{S}}_b(s_{m-1})} (\sum_{\tau_{k_{m-1}}\subset\tau}|f_{\tau_{k_{m-1}}}^{k_{m-1}}|^2 )^{\frac{p}{2}} . \]
By Lemma \ref{pruneprop}, $\|\cdot\|_{\ell^{p/2}}\le \|\cdot\|_{\ell^1}$, and the locally constant property, this expression is bounded by 
\begin{align*} 
C_\e R^{n^{-1}\e^7}\text{T}_n(R^\e)\int_{\R^n}\sum_{\tau\in\Theta^n(s_{m-1}^{-n})} &(\sum_{\tau_{k_{m-1}}\subset\tau}|f_{\tau_{k_{m-1}}}^{k_{m-1}+1}|^2)^{\frac{p}{2}}\\
&\le C_\e R^{n^{-1}\e^7}\text{T}_n(R^\e)\int_{\R^n}(\sum_{\tau\in\Theta^n(s_{m-1}^{-n})} |f_{\tau_{k_{m-1}}}^{k_{m-1}+1}|^2*\w_{\tau_{k_{m-1}},p_n^{-m}d_k})^{\frac{p}{2}}.  
\end{align*}
Taking $k_m=k_{m-1}+1$, this concludes the justification of step $m$.

\noindent\fbox{Termination criteria.} By taking $m$ large enough so that $s_m=R^{-\frac{1}{n}}$, the algorithm terminates with the inequality
\begin{equation}\label{term1} (\text{L.H.S. of \eqref{algline1}})\le (C_\e\log R)^{4\e^{-1}m}(R^{\e^7})^{m+1}(R^{C\e^3}\text{T}_n(R^\e))^{k_m-k} \int_{\R^n}(\sum_{\theta\in{\bf{S}}_n(R^{-\frac{1}{n}})}|f_\theta|^2*\w_{\theta,p_n^{-m}d_k})^{\frac{p}{2}} \end{equation}
where $0\le a<m$ and $R^{-\frac{1}{n}}= s_m\le \max(R_k^{-\frac{1}{n}}R^{-\frac{\e^3}{n}a},R_{k_{m-1}}^{-\frac{1}{n}})$. Then $R^{-\frac{1}{n}}\le R_k^{-\frac{1}{n}}R^{-\frac{\e^3}{n}a}$ implies that $a\le\e^{-3}(N-k)$ and $N\ge k_m\ge k+1$ implies that $(R^{C\e^3}\text{T}_n(R^\e))^{k_m-k}\le(R^{C\e^3}\text{T}_n(R^\e))^{N-k}$. The total number of steps $m$ is bounded by $2\e^{-3}$ since each step either refines $s_m$ by a factor of $R^{-\frac{\e^3}{n}}$ or replaces $s_m$ by $R_{k_m}^{-\frac{1}{n}}$, with $k_m>k_{m-1}$. Thus the constants in the upper bound from \eqref{term1} are bounded by 
\begin{align*} 
(C_e\log R)^{4\e^{-1}m+2\e^{-1}+1}&(R^{\e^7})^{m-1}(R^{C\e^3}\text{T}_n(R^\e))^{N-k} \\
&\le (C_\e R^{\e^7})^{\e^{-5}}R^{\e^2}\text{T}_n(R^\e)^{N-k}\le C_\e R^{3\e^2}\text{T}_n(R^\e)^{N-k+1} .
\end{align*}
Finally, since $m\le 2\e^{-3}$, it suffices to choose $d_0$ (from the definition of $d_k$) satisfying $p_n^{-2\e^{-3}}d_0\ge D$, so the proposition is proved.

\end{proof}

\section{Proof of Proposition \ref{momcurveinduct} \label{mainsec}}

We use the set-up for the high-low method to prove a broad estimate, Proposition \ref{mainprop}. Then, we prove Proposition \ref{S1bd}, which says that $\text{T}_{n,D}^w(R)\lesssim_\e R^\e$. This follows using various reductions from pigeonholing, a broad-narrow argument, and Proposition \ref{mainprop}. Finally, in \textsection\ref{S2}, we use induction to show that Proposition \ref{S1bd} implies that $\text{T}_n(R)\lesssim_\e R^\e$, which is equivalent to Proposition \ref{momcurveinduct}.

\subsection{Bounding the broad part of $U_{\a,\b}$ \label{broad} }

For $n$ canonical blocks $\tau^1,\ldots,\tau^n$ (with dimensions $\sim R^{-\e/n}\times R^{-2\e/n}\times \cdots\times R^{-\e}$) which are pairwise $\ge R^{-\e/n}$-separated, define the broad part of $U_{\a,\b}$ to be
\[ \text{Br}_{\a,\b}^K=\{x\in U_{\a,\b}: \a\le K|\prod_{i=1}^nf_{\tau^i}(x)|^{\frac{1}{n}},\quad\max_{\tau^i}|f_{\tau^i}(x)|\le \a\}. \]

We bound the broad part of $U_{\a,\b}$ in the following proposition. Recall that the parameter $N_0$ was used in the definition of the sets $\Omega_k$ and $L$. 

\begin{proposition}\label{mainprop} Let $R,K\ge1$ and $2\le p\le p_n$. Suppose that $\|f_\theta\|_{L^\infty(\R^n)}\le 2$ for all $\theta\in{\bf{S}}_n(R^{-\frac{1}{n}})$. Then  
\begin{equation*} 
\a^{p}|\text{\emph{Br}}_{\a,\b}^{K}|\le \big[CR^{10 \e N_0}A^{\e^{-1}} + K^{50}R^{4\e^2+10\e}A^{\e^{-1}}  \emph{T}_n(R^\e)^{\e^{-1}-N_0}\big]\int\Big|\sum_\theta|f_\theta|^2*\w_{\theta,D}\Big|^{\frac{p}{2}} .
\end{equation*} 
\end{proposition}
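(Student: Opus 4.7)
The plan is to decompose $\text{Br}_{\alpha,\beta}^K = (\text{Br}\cap L)\cup\bigsqcup_{k=N_0}^{N-1}(\text{Br}\cap\Omega_k)$ and bound each piece separately. The contributions from the $\Omega_k$'s will be handled via high-frequency dominance together with Proposition~\ref{algo}, producing the second term of the claimed bound (with its factor $\text{T}_n(R^\epsilon)^{\epsilon^{-1}-N_0}$), while the low set $L$ produces the first (weaker) term directly from the level-set definition of $L$.

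For each $\text{Br}\cap\Omega_k$, I would first use the Pruning Lemma (Lemma~\ref{ftofk}) to replace each $f_{\tau^i}$ in the broad condition by $f_{\tau^i}^{k+1}$; the resulting error is of size $\alpha/(A^{1/2}K^3)$, and by choosing $A$ large relative to $K$ it is absorbed into absolute constants. Thus on $\text{Br}\cap\Omega_k$,
\[ \alpha \lesssim K\bigl|\prod_{i=1}^n f_{\tau^i}^{k+1}(x)\bigr|^{1/n}, \]
and applying AM-GM to the $2/n$-th powers gives $\alpha^p\lesssim K^p(\tfrac{1}{n}\sum_i |f_{\tau^i}^{k+1}|^2)^{p/2}$. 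Since the $\tau^i$'s are $\ge R^{-\epsilon/n}$-separated, a pointwise-convolution form of local $L^2$-orthogonality (Lemma~\ref{L2orth}) combined with the locally constant property yields $\sum_i|f_{\tau^i}^{k+1}|^2\lesssim g_k$ pointwise. On $\Omega_k$ we have high-frequency dominance $g_k\lesssim |g_k^h|$ from Corollary~\ref{highdom}, so $\alpha^p|\text{Br}\cap\Omega_k| \lesssim K^p\int|g_k^h|^{p/2}$. The High Lemma (Lemma~\ref{high1}) then bounds the right-hand side by $R^{2\epsilon}$ times an $\ell^2$-expression in $|f_{\tau_k}^{k+1}|^2*\w_{\tau_k,d_k}*\widecheck\eta_{>R_{k+1}^{-1/n}}$; using $\|\cdot\|_{\ell^2}\le\|\cdot\|_{\ell^1}$ together with the $L^1$-boundedness of $\widecheck\eta_{>R_{k+1}^{-1/n}}$ reduces this expression to the left-hand side of Proposition~\ref{algo}. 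Invoking Proposition~\ref{algo} then produces the factor $C_\epsilon R^{3\epsilon^2}\text{T}_n(R^\epsilon)^{N-k+1}$ times $\int(\sum_\theta|f_\theta|^2*\w_{\theta,D})^{p/2}$.

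The analysis on $\text{Br}\cap L$ is parallel but simpler: pruning with $f^{N_0}$ and AM-GM give $\alpha^p\lesssim K^p g_{N_0}^{p/2}$ pointwise via the same local-orthogonality step, but now the very definition of $L$ supplies the pointwise bound $g_{N_0}(x)<A^{N-N_0}\beta$, and on $U_{\alpha,\beta}$ we have $\beta\lesssim\sum_\theta|f_\theta|^2*\w_{\theta,D}$. Integrating yields the first term of the claimed estimate; the $R^{10\epsilon N_0}$ factor absorbs crude losses from the orthogonality steps at scale $R_{N_0}$, while the $A^{\epsilon^{-1}}$ factor absorbs the $(N-N_0)$-th power of $A$ introduced by the level-set bound on $g_{N_0}$. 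Summing the $\Omega_k$ estimates over the at most $\epsilon^{-1}$ values $N_0\le k\le N-1$ (the largest $\text{T}_n$-exponent, $N-k+1\sim\epsilon^{-1}-N_0$, occurring at $k=N_0$) produces the second term.

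The step I expect to be the main obstacle is establishing the pointwise inequality $\sum_i|f_{\tau^i}^{k+1}|^2\lesssim g_k$: a pointwise-convolution form of Lemma~\ref{L2orth} must be combined with the locally constant property so that the spatial averages produced at the $\tau^i$-scale are absorbed by the $\w_{\tau_k,d_k}$ appearing in $g_k$, without picking up large $R$-powers from the disparity between the $\tau^i$-scale and the $\tau_k$-scale weights. Alongside this, careful bookkeeping of the $A$-losses from the pruning and high-frequency dominance steps is needed so that they accumulate to at most $A^{\epsilon^{-1}}$ rather than a larger power, which will dictate how large $A$ must be chosen at the outset relative to $K$ and the decay constants $\tilde D$ in Lemma~\ref{low}.
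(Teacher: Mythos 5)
Your overall scaffolding — decomposing $\text{Br}_{\a,\b}^K$ into $L$ and the $\Omega_k$'s, pruning via Lemma~\ref{ftofk}, high-dominance via Corollary~\ref{highdom}, the High Lemma, and Proposition~\ref{algo} to unwind — matches the paper's, and your treatment of $L$ is essentially fine (the loss from the Cauchy--Schwarz/orthogonality step there is $\sim R^{C\e N_0}$, which the proposition's first term tolerates). But the step you correctly flag as the main obstacle, the pointwise inequality $\sum_i|f_{\tau^i}^{k+1}|^2\lesssim g_k$, is in fact false with a loss that grows polynomially in $R$ for $k$ near $N$, and this is fatal.

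Here is the obstruction. Writing $f_{\tau^i}^{k+1}=\sum_{\tau_k\subset\tau^i}f_{\tau_k}^{k+1}$, there are $M\sim R^{(k-1)\e/n}$ summands. Cauchy--Schwarz gives $|f_{\tau^i}^{k+1}|^2\le M\sum_{\tau_k\subset\tau^i}|f_{\tau_k}^{k+1}|^2$, and this is sharp at scattered points where the blocks interfere constructively. Local $L^2$-orthogonality (Lemma~\ref{L2orth}) removes the factor $M$ only after averaging against a weight whose dual set has diameter $\lesssim R_k^{-1/n}$, i.e.\ on $R_k^{1/n}$-balls. But $|f_{\tau^i}^{k+1}|^2$ is locally constant only at the coarser $R^{\e/n}$-scale, since its Fourier support $\tau^i-\tau^i$ has width $R^{-\e/n}$ in the first coordinate, and $R^{\e/n}\ll R_k^{1/n}$ once $k>1$. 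You therefore cannot convert the ball-average orthogonality into a pointwise bound without re-incurring the factor $M$. For $k$ near $N\sim\e^{-1}$ this loss is $\sim R^{1/n}$, a genuine positive power of $R$, which after raising to $p/2$ destroys the estimate. The pairwise $R^{-\e/n}$-separation of $\tau^1,\ldots,\tau^n$ that you invoke is transversality \emph{between} the blocks and has no bearing on the orthogonality \emph{within} a single $\tau^i$.

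The paper circumvents exactly this by using the local multilinear restriction inequality, Theorem~\ref{trirestprop}, on each ball $B=B_{R_k^{1/n}}$: it converts the broad condition directly into
\[ \a^r|\text{Br}_{\a,\b}^K\cap B|\lesssim_\e R^\e|B|\prod_{i=1}^n\Bigl(|B|^{-1}\int|f_{\tau^i}^{k+1}|^2\,W_{B,d}\Bigr)^{r/(2n)}, \qquad r=\min(2n,p), \]
so that transversality is exploited \emph{before} any pointwise reduction is made, at the cost of only $R^\e$. The right-hand side already contains $L^2$-averages over $B$, so Lemma~\ref{L2orth} at the $R_k^{1/n}$ scale enters with bounded overlap, and the locally constant property for $|f_{\tau_k}^{k+1}|^2$ at scale $\tau_k^*$ converts the average back to $g_k(x)$ losslessly. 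Your AM--GM reduction to $(\sum_i|f_{\tau^i}^{k+1}|^2)^{p/2}$ discards the product structure, and hence the transversality, before it can be used — this is the missing idea.

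Two secondary points. First, your application of Lemma~\ref{high1} with exponent $p/2$ on $g_k^h$ requires $p\ge 4$ (the square function estimate for $\Gamma_0^n$ needs exponent in $[2,p_{n-1}]$); the paper avoids this by taking $r=\min(2n,p)$ so that when $r=p<2n$ the remaining power of $g_k$ is handled directly by the level-set upper bound on $\Omega_k$ and no high-frequency analysis occurs. Second, the $A^{\e^{-1}}$ factor in the $\Omega_k$ contribution is not incidental bookkeeping: after the High Lemma produces an $\ell^2$-expression to the power $q=p-n$, the paper must pull out $\|f_{\tau_k}^{k+1}\|_\infty^{q-n}$ using the pruning $L^\infty$ bound from Lemma~\ref{pruneprop}~(2) to land on the exponent $p/2$, and that is where $A^{\e^{-1}}$ enters. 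Your route with exponent $p/2$ throughout would avoid that step but is blocked by the pointwise orthogonality gap above.
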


We will use the following version of a (well-known) local multilinear restriction inequality for the moment curve. The weight function $W_{B_r,d}$ is defined in Definition \ref{M3ballweight}. 
\begin{thm}\label{trirestprop}
Let $s\ge 10r\ge10$ and let $f:\R^n\to\C$ be a Schwartz function with Fourier transform supported in $\mc{N}_{r^{-1}}(\mc{M}^n)$, the $r^{-1}$-neighborhood of the moment curve in $\R^n$. Suppose that $\tau^1,\tau^2,\ldots,\tau^n\in{\bf{S}}_n(R^{-\e/n})$ satisfy  $\text{dist}(\tau^i,\tau^j)\ge s^{-1}$ for $i\not=j$. Then 
\[  \a^p |\emph{Br}_{\a,\b}^K\cap B_r| \lesssim_d  s^n|B_r| \prod_{i=1}^n\big(|B_r|^{-1}\int|f_{\tau^i}|^2W_{B_r,d}\big)^{\frac{p}{2n}} \]
for any $2\le p\le 2n$ and any Schwartz function $f:\R^n\to\C$ with Fourier transform supported in $\mc{M}^n(r)$. 
\end{thm}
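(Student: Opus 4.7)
The plan is to follow the standard strategy for multilinear restriction on the moment curve: reduce to the endpoint $p=2n$ by interpolation, and prove the endpoint via a Plancherel-based $L^2$ estimate that exploits the Vandermonde-like transversality at $s^{-1}$-separated points.

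\textbf{Reduction to the endpoint $p=2n$.} For $p=2$, since $\max_i|f_{\tau^i}|\le\a$ on $\text{Br}_{\a,\b}^K$, I would observe that $\a^2|\text{Br}_{\a,\b}^K\cap B_r|\le\int|f_{\tau^i}|^2\,W_{B_r,d}$ for every $i$, so by AM--GM
\[
\a^2|\text{Br}_{\a,\b}^K\cap B_r|\le\prod_{i=1}^n\Big(\int|f_{\tau^i}|^2 W_{B_r,d}\Big)^{1/n}=|B_r|\prod_{i=1}^n\Big(|B_r|^{-1}\int|f_{\tau^i}|^2 W_{B_r,d}\Big)^{1/n},
\]
which is the claim at $p=2$ (with $s^n\ge 1$ absorbed). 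Intermediate exponents $2<p<2n$ follow by log-convexity in $p$ (H\"older applied to the factorization $|f_{\tau^i}|^{p/n}$). At $p=2n$, the broad condition $\a^n\le K^n\prod_i|f_{\tau^i}|$ gives $\a^{2n}|\text{Br}_{\a,\b}^K\cap B_r|\lesssim\int\prod_{i=1}^n|f_{\tau^i}|^2\,W_{B_r,d}$ (with $K^{2n}$ absorbed into the implicit constant, treating $K$ as fixed), so the problem reduces to the multilinear $L^2$ bound
\[
\int\prod_{i=1}^n|f_{\tau^i}|^2\,W_{B_r,d}\lesssim_d\frac{s^n}{|B_r|^{n-1}}\prod_{i=1}^n\int|f_{\tau^i}|^2\,W_{B_r,d'}.
\]

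\textbf{The $L^2$ estimate via Plancherel and Vandermonde.} To prove this bound, I would write $W_{B_r,d}\sim|\phi|^{2}$ for a bump $\phi$ adapted to $B_r$ whose Fourier transform sits in an $O(r^{-1})$-ball, set $F_i:=f_{\tau^i}\phi^{1/n}$ so that $\widehat{F_i}$ is supported in an $O(r^{-1})$-fattening $\widetilde\tau{}^i$ of $\tau^i$, and apply Plancherel:
\[
\int\prod_i|f_{\tau^i}|^2 W_{B_r,d}\sim\|F_1\cdots F_n\|_2^2=\|\widehat{F_1}*\cdots*\widehat{F_n}\|_2^2.
\]
Pointwise Cauchy--Schwarz on the convolution bounds the right-hand side by $M\cdot\prod_i\|\widehat{F_i}\|_2^2$, where $M$ is the supremum over $\xi$ of the measure of the constraint set $\{(\eta_1,\ldots,\eta_{n-1})\in\widetilde\tau{}^1\times\cdots\times\widetilde\tau{}^{n-1}:\xi-\sum_{i<n}\eta_i\in\widetilde\tau{}^n\}$. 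Undoing Plancherel on each factor reduces matters to showing $M\lesssim s^n/|B_r|^{n-1}$. This is a transversality bound: the Jacobian of the sum map $(\eta_1,\ldots,\eta_n)\mapsto\sum_i\eta_i$, computed in the moment-curve frames at the base points $t^i$ of the blocks, is proportional to the Vandermonde product $\prod_{i<j}|t^i-t^j|$, and $\operatorname{dist}(\tau^i,\tau^j)\ge s^{-1}$ forces $|t^i-t^j|\gtrsim s^{-1}$, hence the Jacobian is $\gtrsim s^{-\binom{n}{2}}$; unwinding this against the anisotropic dimensions of the $\widetilde\tau{}^i$ produces the desired bound on $M$.

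\textbf{Main obstacle.} The main difficulty is the careful bookkeeping of anisotropic scales: the block dimensions $R^{-\e/n}\times\cdots\times R^{-\e}$, the $r^{-1}$-fattening from $\phi$, and the Vandermonde determinant must combine to yield exactly the factor $s^n/|B_r|^{n-1}$; the interaction of the weight $W_{B_r,d}$ with the convolution (and hence the precise $d'$ one ends up with) is a minor but persistent technical point. Otherwise the argument is routine moment-curve Fourier analysis.
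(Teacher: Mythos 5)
Your overall strategy is sound and close to the paper's, but the two reductions to the endpoint differ, and one of your intermediate steps is stated incorrectly. The paper treats only the $p=2n$ multilinear bound as the substantive input (cited to an $n$-dimensional adaptation of Proposition 6 of \cite{M3smallcap}, which is precisely the Plancherel-plus-Vandermonde argument you sketch), and then handles $2\le p\le 2n$ by rearranging that one bound into the form $\a^p|\mathrm{Br}\cap B_r|\lesssim s^n|B_r|\prod_i Y_i^{p/2n}\cdot[\a^{-1}\prod_i Y_i^{1/2n}]^{2n-p}$ and splitting into the two cases $\prod_i Y_i^{1/2n}\le\a$ (bracket $\le 1$) and $>\a$ (use the trivial $|\mathrm{Br}\cap B_r|\le|B_r|$). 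You instead establish a separate $p=2$ estimate and interpolate, which also works and is slightly sharper, but your stated justification for the $p=2$ bound is backwards: $\max_i|f_{\tau^i}|\le\a$ on $\mathrm{Br}^K_{\a,\b}$ gives $\int_{\mathrm{Br}}|f_{\tau^i}|^2 W\le\a^2|\mathrm{Br}\cap B_r|$, the \emph{reverse} of what you need. The inequality you want, $\a^2|\mathrm{Br}\cap B_r|\lesssim_K\int|f_{\tau^i}|^2 W_{B_r,d}$ for each $i$, is true, but it requires combining \emph{both} broad conditions: $\a^n\le K^n\prod_j|f_{\tau^j}|$ together with $|f_{\tau^j}|\le\a$ for $j\ne i$ forces $|f_{\tau^i}|\ge\a/K^n$ pointwise on the broad set. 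With that correction your interpolation goes through, so this is a fixable slip rather than a structural gap.
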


\begin{proof} In the case that $p=2n$, 
\[\a^{2n}|\emph{Br}_{\a,\b}^K\cap B_r|\lesssim_d  s^n|B_r|^{-(n-1)}\prod_{i=1}^n\big(\int|f_{\tau^i}|^2W_{B_r,d}\big) \]
follows from a straightforward adaptation of the proof of Proposition 6 from \cite{M3smallcap} to $n$-dimensions. The $2\le p\le 2n$ case follows from rearranging the above inequality:
\begin{equation}\label{intp} \a^{p}|\emph{Br}_{\a,\b}^K\cap B_r|\lesssim_d  s^n|B_r|\left(\prod_{i=1}^n\big(|B_r|^{-1}\int|f_{\tau^i}|^2W_{B_r,d}\right)^{\frac{p}{2n}}\left[\frac{1}{\a}\prod_{i=1}^n\big(|B_r|^{-1}\int|f_{\tau^i}|^2W_{B_r,d}\big)^{\frac{1}{2n}}\right]^{2n-p}. \end{equation}
If 
\[ \prod_{i=1}^n\big(|B_r|^{-1}\int|f_{\tau^i}|^2W_{B_r,d}\big)^{\frac{1}{2n}}\le \a, \]
then the theorem follows from \eqref{intp}. If the above inequality does not hold, then we have
\begin{align*}
    \a^p|\text{Br}_{\a,\b}^K\cap B_r|\le |B_r| \prod_{i=1}^n\big(|B_r|^{-1}\int|f_{\tau^i}|^2W_{B_r,d}\big)^{\frac{p}{2n}},
\end{align*}
which proves the theorem directly. 

\end{proof}

\begin{proof}[Proof of Proposition \ref{mainprop}]
Fix $2\le p\le p_n$ and the decay rate $D\in\N_{>0}$ for the weights. Note that
\[ \text{Br}_{\a,\b}^K=(L\cap \text{Br}_{\a,\b}^K)\cup( \sqcup_{k=B}^{N-1}\Omega_k\cap \text{Br}_{\a,\b}^K)\]
We bound each of the sets $\text{Br}_{\a,\b}^K\cap\Omega_k$ and $\text{Br}_{\a,\b}^K\cap L$ in separate cases. It suffices to consider the case that $R$ is at least some constant depending on $\e$ since if $R\le C_\e$, we may prove the proposition using trivial inequalities. 

\vspace{2mm}
\noindent\underline{Case 1: bounding $| \text{Br}_{\a,\b}^{K}\cap\Omega_k|$. } 
By Lemma \ref{ftofk}, 
\[ |\text{Br}_{\a,\b}^K\cap\Omega_k|\le|\{x\in  U_{\a,\b}\cap\Omega_k:\a\lesssim K|\prod_{i=1}^nf_{\tau^i}^{k+1}(x)|^{\frac{1}{n}},\quad\max_{\tau^i}|f_{\tau^i}(x)|\le \a\}|.\]
By Lemma \ref{pruneprop}, the Fourier supports of the $f_{\tau^i}^{k+1}$ are contained in $2\tau^i$, which are pairwise $\ge5 R^{-\e}$-separated blocks of the moment curve. Let $\{B_{R_k^{\frac{1}{n}}}\}$ be a finitely overlapping cover of $\text{Br}_{\a,\b}^{K}\cap\Omega_k$ by $R_k^{\frac{1}{n}}$-balls. For $R$ large enough depending on $\e$, apply Theorem \ref{trirestprop} with $r:= \min(2n,p)$ to get
\begin{align*}
    \a^{r}|\text{Br}_{\a,\b}^K\cap B_{R_k^{\frac{1}{n}}}|&\lesssim_\e R^{\e} |B_{R_k^{\frac{1}{n}}}|\prod_{i=1}^n\Big(|B_{R_k^{\frac{1}{n}}}|^{-1}\int|f_{\tau^i}^{k+1}|^2W_{B_{R_k^{\frac{1} {n}}},d_k+1}\Big)^{\frac{r}{2n}}.
\end{align*}
Using local $L^2$-orthogonality (Lemma \ref{L2orth}), each integral on the right hand side above is bounded by 
\[ \lesssim \int\sum_{\tau_k}|f_{\tau_k}^{k+1}|^2W_{B_{R_k^{\frac{1}{n}},d_k+1}}. \]
If $x\in \text{Br}_{\a,\b}^{K}\cap\Omega_k\cap B_{R_k^{\frac{1}{n}}}$, then the above integral is bounded by 
\[ \lesssim  \int \sum_{\tau_k}|f_{\tau_k}^{k+1}|^2*\w_{\tau_k,d_k+1}W_{B_{R_k^{\frac{1}{n}}},d_k+1}\lesssim C |B_{R_k^{\frac{1}{n}}}| \sum_{\tau_k}|f_{\tau_k}^{k+1}|^2*\w_{\tau_k,d_k}(x) \]
by the locally constant property (Lemma \ref{locconst}) and properties of the weight functions. The summary of the inequalities so far is that 
\[ \a^{r}|\text{Br}_{\a,\b}^{K}\cap\Omega_k\cap B_{R_k^{\frac{1}{n}}}|\lesssim_\e R^\e K^{2n} |B_{R_k^{\frac{1}{n}}}|g_k(x)^{\frac{r}{2}} \]
where $x\in \text{Br}_{\a,\b}^{K}\cap\Omega_k\cap B_{R_k^{\frac{1}{n}}}$. 

Recall that since $x\in\Omega_k$, we have the lower bound $A^{M-k}\b\le g_k(x)$ (where $A$ is from Definition \ref{impsets}), which leads to the inequality
\[ \a^{r}|\text{Br}_{\a,\b}^{K}\cap\Omega_k\cap B_{R_k^{\frac{1}{n}}}|\lesssim_\e K^{2n} R^{\e} \frac{1}{[A^{M-k}\b]^{q-\frac{r}{2}}}|B_{R_k^{\frac{1}{n}}}|g_k(x)^{q} ,\]
where $q$ is defined by $\frac{r}{2}+q=p$. There are two possibilities for $q$, depending on whether $r=p$ or $r=2n$. If $q=\frac{p}{2}$, then $r=p$ and since $g_k\lesssim R^{C\e}g_{k+1}\lesssim_\e R^{C\e}\b$ on $\Omega_k$, we have 
\[ \sum_{B_{R_k^{\frac{1}{n}}}} \a^p|\text{Br}_{\a,\b}^K\cap B_{R_k^{\frac{1}{n}}}|\lesssim_\e R^{C\e}K^{2n} |\mc{N}_{R_k^{\frac{1}{n}}}(\text{Br}_{\a,\b}^K)|\b^{\frac{p}{2}}\lesssim_\e R^{C\e}K^{2n} \int_{\R^n}|\sum_\theta|f_\theta|^2*\w_{\theta,D}|^{\frac{p}{2}} ,\]
where we used the locally constant property in the final inequality. 

It remains to treat the case that $r=2n$, meaning that $n+q=p\ge 2n$ and $q$ therefore satisfies $2\le q\le p_{n-1}$. By Corollary \ref{highdom}, we also have the upper bound $|g_k(x)|\le 2|g_k^h(x)|$, so that 
\[ \a^{2n}|\text{Br}_{\a,\b}^{K}\cap\Omega_k\cap B_{R_k^{\frac{1}{n}}}|\lesssim_\e K^{2n} R^{\e} \frac{1}{[A^{M-k}\b]^{q-n}} |B_{R_k^{\frac{1}{n}}}||g_k^h(x)|^{q} .\]
By the locally constant property applied to $g_k^h$, we have $|g_k^h|^{q}\lesssim_\e |g_k^h*w_{ B_{R_k^{\frac{1}{n}}}}|^{q}$. By Cauchy-Schwarz, we also have $|g_k^h*w_{ B_{R_k^{\frac{1}{n}}}}|^{q}\lesssim |g_k^h|^{q}*w_{B_{R_k^{\frac{1}{n}}}}$. Combine this with the previous displayed inequality to get 
\[ \a^{2n}|\text{Br}_{\a,\b}^{K}\cap\Omega_k\cap B_{R_k^{\frac{1}{n}}}|\lesssim_\e K^{2n} R^{\e} \frac{1}{[A^{M-k}\b]^{q-n}}\int|g_k^h|^{q}W_{ B_{R_k^{\frac{1}{n}}}} .\]
Summing over the balls $B_{R_k^{\frac{1}{n}}}$ in our finitely-overlapping cover of $\text{Br}_{\a,\b}^{K}\cap\Omega_k$, we conclude that
\begin{equation}\label{peqn} \a^{2n}|\text{Br}_{\a,\b}^{K}\cap\Omega_k|\lesssim_\e K^6 R^{\e} \frac{1}{[A^{M-k}\b]^{q-n}}\int_{\R^n}|g_k^h|^{q} .\end{equation}
We are done using the properties of the set $\text{Br}_{\a,\b}^{K}\cap\Omega_k$, which is why we now integrate over all of $\R^n$ on the right hand side. We will now use Lemma \ref{high1} to analyze the high part $g_k^h$. In particular, Lemma \ref{high1} gives 
\begin{equation}\label{highapp} 
\int|g_k^h|^{q} \lesssim_\e R^{\e} \int_{\R^n}\big|\sum_{\tau_k}||f_{\tau_k}^{k+1}|^2*\w_{\tau_k,d_k}*\widecheck{\eta}_{R_{k+1}^{-\frac{1}{n}}}|^2\big|^{\frac{q}{2}} . 
\end{equation}
By Young's convolution inequality, since $\widecheck{\eta}_{>R_{k+1}^{-\frac{1}{n}}}$ is $L^1$-normalized, the right hand side above is bounded by 
\[ C_\e R^{\e} \int_{\R^n}\big|\sum_{\tau_k}||f_{\tau_k}^{k+1}|^2*\w_{\tau_k,d_k}|^2\big|^{\frac{q}{2}}\]
Next use \eqref{item2} from Lemma \ref{pruneprop} to note that $\|f_{\tau_k}^{k+1}\|_\infty\le \sum_{\tau_{k+1}\subset\tau_k}\|f_{\tau_{k+1}}^{k+1}\|_\infty\lesssim R^\e A^{\e^{-1}}K^n\frac{\b}{\a}$: 
\begin{align*}
&\int_{\R^n}\big|\sum_{\tau_k}||f_{\tau_k}^{k+1}|^2*\w_{\tau_k,d_k}|^2\big|^{\frac{q}{2}} \lesssim_\e[(A^{\e^{-1}}R^{\e}K^n\frac{\b}{\a})]^{q-n}\int_{\R^n}\big|\sum_{\tau_k}||f_{\tau_k}^{k+1}|^2*\w_{\tau_k,d_k}|^{2-\frac{q-n}{q}}\big|^{\frac{q}{2}}. 
\end{align*} 
Since the exponent $2-\frac{q-n}{q}=\frac{p}{q}\ge 1$, the integral on the right hand side above is bounded by 
\[ \int_{\R^n}|\sum_{\tau_k}|f_{\tau_k}^{k+1}|^2*\w_{\tau_k,d_k}|^{\frac{p}{2}}. \]
Combining this with \eqref{peqn}, \eqref{highapp}, and Proposition \ref{algo}, the summary of the argument from this case is 
\[\a^{p}|U_{\a,\b}|\lesssim_\e K^{2n} R^{2\e}(A^{\e^{-1}}R^{\e}K^n)\big(\text{T}_n(R^\e)\big)^{N-k+1}\int_{\R^n}\big|\sum_{\theta\in{\bf{S}}_n(R^{-1/n})}|f_\theta|^2*\w_{\theta,D}\big|^{\frac{p}{2}}. \]
Since $k>N_0$, this upper bound has the desired form.

\noindent\underline{Case 2: bounding $|U_{\a,\b}\cap L|$.} 
Begin by using Lemma \ref{ftofk} to bound
\[ \a^{p}|\text{Br}_{\a,\b}^K\cap L|\lesssim K^{p} \int_{U_{\a,\b}\cap L}|f^{N_0+1}|^{p}. \]
Then use Cauchy-Schwarz and the locally constant property for $g_{N_0}$ :
\[ \int_{U_{\a,\b}\cap L}|f^{N_0+1}|^{p}\lesssim (R^{ N_0\e/n})^{p/2} \int_{U_{\a,\b}\cap L}(\sum_{\tau_{N_0}}|f_{\tau_{N_0}}^{N_0+1}|^2)^{p/2}\lesssim (R^{ N_0\e/n})^{p/2} \int_{U_{\a,\b}\cap L}(g_{N_0})^{p/2}.\]
Using the definition the definition of $L$, we bound the factors of $g_{N_0}$ by
\[ \int_{U_{\a,\b}\cap L} (A^{\e^{-1}}\b)^{p/2}. \]
Finally, by the definition of $U_{\a,\b}$, conclude that
\[    \a^{p}|\text{Br}_{\a,\b}^K\cap L|\lesssim_\e K^{p}R^{2N_0\e}A^{\e^{-1}}\int_{\R^n}|\sum_\theta|f_\theta|^2*\w_{\theta,D}|^{p/2}. \]

\end{proof}

\subsection{Proof of Proposition \ref{momcurveinduct} from Proposition \ref{mainprop} }

\begin{proposition}\label{S1bd} For any $\e>0$, $R\ge 1$, and $D\in\N_{>0}$, 
\[ \emph{T}_{n,D}^w(R)\lesssim_{\e,D} R^\e. \]
\end{proposition}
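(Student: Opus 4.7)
The plan is to prove $\emph{T}^w_{n,D}(R) \le \tilde C_{\e,D}\,R^\e$ by induction on the scale $R$, combining the broad estimate of Proposition~\ref{mainprop} with a narrow case dispatched by moment curve rescaling and the inductive hypothesis. The base case $R\le R_0(\e,D)$ is handled by Cauchy--Schwarz. For the inductive step I would first perform the two standard pigeonholing reductions anticipated after Definition~\ref{taukprune} and carried out later in \textsection\ref{M3pigeon}: a wave packet pigeonholing that normalizes $\|f_\theta\|_\infty$ to satisfy the uniform hypothesis~\eqref{unihyp}, followed by a dyadic level set pigeonholing that reduces the $L^p$-integral to controlling $\alpha^p|U_{\alpha,\beta}|$ for a single pair $(\alpha,\beta)$.

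The main step is a broad-narrow decomposition at scale $R^{-\e/n}$. At each $x\in U_{\alpha,\beta}$, either $x$ lies in the broad set $\text{Br}^K_{\alpha,\beta}$ relative to some $n$-tuple of sufficiently separated canonical blocks from ${\bf S}_n(R^{-\e/n})$, or else there is a single such block $\tau$ with $|f_\tau(x)|\gtrsim\alpha/K^n$. For the narrow portion, moment curve rescaling sends each $f_\tau$ (whose Fourier support lies in a block of dimensions $R^{-\e/n}\times R^{-2\e/n}\times\cdots\times R^{-\e}$) to a function with Fourier transform in $\mc{M}^n(R^{1-\e})$, and transports the weighted square function appropriately; applying the inductive hypothesis at scale $R^{1-\e}$ and summing over $\tau$ via $\|\cdot\|_{\ell^{p/2}}\le\|\cdot\|_{\ell^1}$ (valid since $p\ge 2$) yields a narrow bound of the form
\[
K^{Cp}\,\tilde C_{\e,D}\,R^{\e-\e^2}\int_{\R^n}\Big|\sum_\theta|f_\theta|^2*\w_{\theta,D}\Big|^{p/2}.
\]
The broad part is bounded directly by Proposition~\ref{mainprop}.

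To close the induction it remains to select $K$, $N_0$, and $\tilde C_{\e,D}$ judiciously. The comparison $\emph{T}_n(r)\lesssim_D \emph{T}^w_{n,D}(r)$ (which follows from the locally constant property together with Young's inequality, and is the content of the short reduction in \textsection\ref{S2}) combined with the inductive hypothesis gives $\emph{T}_n(R^\e)\le C_D\tilde C_{\e,D}\,R^{\e^2}$, whence
\[
\emph{T}_n(R^\e)^{\e^{-1}-N_0}\le (C_D\tilde C_{\e,D})^{\e^{-1}-N_0}R^{\e(1-\e N_0)}.
\]
Choosing $N_0$ close to $\e^{-1}$ forces the exponent of $R$ in the second broad term to be small and bounds the exponent $\e^{-1}-N_0$ of $\tilde C_{\e,D}$ by a bounded constant, so that with $K$ a sufficiently small power of $R$ (say $K\le R^{\e^{10}}$) all three terms may be absorbed into $\tilde C_{\e,D}\,R^\e$ once $\tilde C_{\e,D}$ is chosen sufficiently large in terms of $\e$, $D$, and the absolute constants $A,C$ appearing in Proposition~\ref{mainprop}.

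The main obstacle is precisely this parameter balancing: the factor $R^{10\e N_0}$ in the first bracketed term of Proposition~\ref{mainprop} pushes $N_0$ to be small, whereas the factor $\emph{T}_n(R^\e)^{\e^{-1}-N_0}$ pushes it to be close to $\e^{-1}$. Reconciling the two constraints requires working throughout with a smaller exponent $\e'\ll\e$ in place of $\e$ (so that $R^{10\e' N_0}\le R^\e$ remains true even for $N_0$ as large as $\e'^{-1}$), and then returning to the original $\e$ at the end. Verifying that the power of $\tilde C_{\e,D}$ accumulated in the broad term genuinely does not exceed $\tilde C_{\e,D}$ itself — so that the induction closes with a single universal constant rather than a constant that grows with $R$ — is the delicate bookkeeping step of the argument.
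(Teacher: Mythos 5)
Your approach assembles the right ingredients (the pigeonholing reductions of \textsection\ref{M3pigeon}, a broad--narrow decomposition, Proposition~\ref{mainprop} for the broad part, moment-curve rescaling for the narrow part), which is the same skeleton as the paper's Lemma~\ref{multi-scale'}. The genuine gap is in the closing step, and the issue you flag in your final paragraph as ``delicate bookkeeping'' is in fact a structural obstruction to the direct induction you propose, not a matter of choosing constants carefully.

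Concretely: Proposition~\ref{mainprop} gives a broad term containing the factor $\text{T}_n(R^\e)^{\e^{-1}-N_0}$. If you feed in an inductive hypothesis $\text{T}^w_{n,D}(R')\le\tilde C_{\e,D}(R')^\e$ for $R'<R$, this becomes $(C_D\tilde C_{\e,D})^{\e^{-1}-N_0}R^{\e^2(\e^{-1}-N_0)}$. To close the induction you would need $\tilde C_{\e,D}^{\e^{-1}-N_0}\lesssim\tilde C_{\e,D}$. But the \emph{other} broad term $R^{10\e N_0}$ forces $N_0$ to be much smaller than $\e^{-1}$ (the paper takes $N_0=\e^{-1/2}$, and already $R^{10\e^{1/2}}$ barely fits under $R^\eta$ after choosing $\e$ tiny); so $\e^{-1}-N_0$ is of size $\sim\e^{-1}$, and $\tilde C_{\e,D}^{\e^{-1}}\gg\tilde C_{\e,D}$ for any $\tilde C_{\e,D}>1$. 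Taking $\tilde C_{\e,D}$ ``sufficiently large'' makes this strictly worse, not better, and replacing $\e$ by $\e'\ll\e$ does not help either since $\tilde C_{\e',D}^{(\e')^{-1}}$ only grows. There is no value of the parameters for which the direct induction you describe closes.

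The paper sidesteps this entirely with the infimum/contradiction argument in the proof of Proposition~\ref{S1bd}: set $\eta=\inf\{\delta\ge 0:\sup_R\text{T}^w_{n,D}(R)/R^\delta<\infty\}$ and suppose $\eta>0$. Since $\eta+\e_1$ is an \emph{admissible} exponent, $\sup_R\text{T}^w_{n,D}(R^\e)/R^{\e(\eta+\e_1)}$ is a finite constant independent of $R$, so raising it to the power $\e^{-1}-N_0$ costs only a fixed ($R$-independent) constant rather than a constant whose $R$-dependence defeats the induction. The remaining $R$-powers can then be made negative by a careful choice of $\e$, $N_0$, $K$, and $\e_1$, yielding $\eta-\e_1\in\mc{S}$ and a contradiction. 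This replacement of the inductive constant by the a priori finite supremum is the key move missing from your proposal, and it is not a refinement of your argument but a different one. (Two smaller points: the paper's narrow alternative gives $|f_\tau(x)|\gtrsim\a/(2n)$, not $\a/K^n$, so no $K^{Cp}$ should appear in the narrow bound; and the broad--narrow cut is made at the coarse scale $K^{-1}=R^{-\e_1}$ with $\e_1\ll\e$, not at $R^{-\e/n}$, which is what makes $K^{53}$ harmless.)
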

In order to make use of Proposition \ref{mainprop}, we need to reduce to the case that our function $f$ is localized to a ball, its wave packets have been pigeonholed so that $\|f_\theta\|_\infty\lesssim 1$ for all $\theta\in{\bf{S}}_n(R^{-\frac{1}{n}})$, and we have approximated $\|f\|_p$ by an expression involving a superlevel set. This is the content of the following subsection. 

\subsection{Wave packet decomposition and pigeonholing \label{M3pigeon}}

We describe standard reductions from dyadic pigeonholing. The following lemmas have direct analogues in \cite{maldagueM3}, so we omit their proofs. 

The following spatial localization lemma is analogous to Lemma 6.2 in \cite{maldagueM3}. 
\begin{lemma}\label{loc} For any $R$-ball $B_R\subset\R^n$, suppose that 
\[     \|f\|_{L^p(B_R)}^p\lesssim_{\e,D}  R^\e\int\big|\sum_{\theta\in{\bf{S}}_n(R^{-\frac{1}{n}})}|f_\theta|^2*\w_{\theta,D}\big|^{\frac{p}{2}} \]
for any $2\le p\le p_n$ and any Schwartz function $f:\R^n\to\C$ with Fourier transform supported in $\mc{M}^n(R)$.  Then Proposition \ref{S1bd} is true. 
\end{lemma}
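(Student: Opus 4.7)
The plan is to derive the global estimate from the hypothesized local one by localizing $f$ to $R$-balls and summing. First I would fix a nonnegative Schwartz function $\phi:\R^n\to\R$ with $\phi\ge 1$ on the unit ball, rapid decay, and $\widehat{\phi}$ supported in a small neighborhood of the origin. Given a tiling $\{B\}$ of $\R^n$ by $R$-balls centered on an $R$-lattice, set $\phi_B(x):=\phi((x-c_B)/R)$; then $\phi_B\ge 1$ on $B$, $\phi_B$ decays rapidly at scale $R$, and $\widehat{\phi_B}$ is supported in a ball of radius $O(R^{-1})$. Since each block $\theta\in{\bf{S}}_n(R^{-1/n})$ has minimum thickness $\sim R^{-1}$, the Minkowski sum $\mc{M}^n(R)+B(0,CR^{-1})$ is contained in a fixed dilate of $\mc{M}^n(R)$ that may be identified with $\mc{M}^n(R)$ at the cost of harmless constants. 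Thus the hypothesis applies to $f\phi_B$ in place of $f$.

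Using $\phi_B\ge 1$ on $B$ and applying the hypothesis to $f\phi_B$ on the ball $B$,
\[
\int_B |f|^p\le \|f\phi_B\|_{L^p(B)}^p\lesssim_{\e,D} R^\e \int \Big|\sum_\theta |(f\phi_B)_\theta|^2 *\w_{\theta,D}\Big|^{p/2}.
\]
A standard finite-overlap argument on the Fourier side, using that $\widehat{\phi_B}$ has support much narrower than the thickness of each $\theta$, identifies $(f\phi_B)_\theta$ with $\sum_{\theta'\sim\theta}f_{\theta'}\phi_B$ for boundedly many neighboring blocks $\theta'$; after Cauchy-Schwarz this is absorbed into the square function. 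Summing over the tiling $\{B\}$ yields
\[
\int_{\R^n}|f|^p\lesssim_{\e,D} R^\e \sum_B\int \Big|\sum_\theta |f_\theta|^2\phi_B^2 *\w_{\theta,D}\Big|^{p/2}.
\]

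The remaining step is to establish
\[
\sum_B \int \Big|\sum_\theta |f_\theta|^2\phi_B^2 *\w_{\theta,D}\Big|^{p/2}\lesssim \int \Big|\sum_\theta |f_\theta|^2 *\w_{\theta,D'}\Big|^{p/2}
\]
for some $D'\le D$; any fixed loss is harmless since $D$ may be taken large at the outset. Since each $\theta^*$ has dimensions at most $R$ in every direction and $\phi_B$ is essentially constant at scales $\ll R$, one commutes $\phi_B^2$ past the convolution with $\w_{\theta,D}$ pointwise, obtaining $|f_\theta|^2\phi_B^2 *\w_{\theta,D}(x)\lesssim \phi_B^2(x)\,|f_\theta|^2 *\w_{\theta,D-O(1)}(x)$ up to rapidly decaying tails. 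Summing over $B$ then uses the bounded-overlap estimate $\sum_B\phi_B^p\lesssim 1$, which follows from the rapid decay of $\phi$ off the unit ball, to yield the desired global bound. The main obstacle will be precisely this final commutation and tail-control step, together with summing cleanly over the $R$-lattice; it is the same type of weight bookkeeping carried out repeatedly in \S\ref{tools}--\S\ref{keyalgo}, and I would follow those arguments closely.
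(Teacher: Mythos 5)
Your proposal is correct and is essentially the standard localization argument that the paper is referring to when it says "the following lemmas have direct analogues in \cite{maldagueM3}, so we omit their proofs" (Lemma 6.2 there): patch with nonnegative Schwartz weights $\phi_B\ge 1$ on $B$ with $\widehat{\phi_B}$ supported at scale $R^{-1}$, apply the local estimate to $f\phi_B$, use that the Fourier blocks mix only with boundedly many neighbors, commute $\phi_B^2$ through the convolution by paying a fixed loss in the weight exponent, and sum using $\sum_B\phi_B^p\lesssim 1$.

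Two places deserve a bit more care in a full write-up. First, the ``identification'' $(f\phi_B)_\theta\approx\sum_{\theta'\sim\theta}f_{\theta'}\phi_B$ is not a pointwise statement (the sharp cutoff $\chi_\theta$ has non-integrable inverse Fourier transform); the safe route is to work with a weighted $L^2$ estimate on $R$-balls via local $L^2$-orthogonality (Lemma~\ref{L2orth}) or to pass to a smooth frequency partition, which changes the square function constant only by an absolute factor. Second, the commutation $|f_\theta|^2\phi_B^2*\w_{\theta,D}\lesssim\phi_B^2\cdot(|f_\theta|^2*\w_{\theta,D-O(1)})$ requires $\phi_B>0$ everywhere together with the ratio bound $\phi_B(y)/\phi_B(x)\lesssim(1+|x-y|/R)^{M}$; this is available because one may take $\phi$ to be a fixed-decay-rate, strictly positive weight with compactly supported Fourier transform (e.g.\ built as in the construction of $W^{n,d}$ in \textsection\ref{wtsec}) and then choose $D$ large relative to $M$, a freedom the statement of the lemma permits since the constants are allowed to depend on $D$.
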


The following weak, level-set version of Proposition \ref{S1bd} is analogous to Lemma 6.3 in \cite{maldagueM3}.  
\begin{lemma}\label{alph} For each $B_R$ and Schwartz function $f:\R^n\to\C$ with Fourier transform supported in $\mc{M}^n(R)$, and each $2\le p\le p_n$, there exists $\a>0$ such that
\[ \|f\|_{L^p(B_R)}^p\lesssim (\log R)\a^p|\{x\in B_R:\a\le |f(x)|\}|+R^{-500n}\int|\sum_{\theta\in{\bf{S}}_n(R^{-\frac{1}{n}})} |f_\theta|^2*\w_{\theta,D}|^{p/2}. \]

\end{lemma}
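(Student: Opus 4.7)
The approach is to dyadically pigeonhole on the super-level sets of $|f|$ restricted to $B_R$. Writing
\[
\|f\|_{L^p(B_R)}^p \sim \sum_{\alpha \in 2^{\Z}}\alpha^p\,\bigl|\{x \in B_R : \alpha \le |f(x)| < 2\alpha\}\bigr|,
\]
the plan is to show that the effective range of dyadic $\alpha$ contains $O(\log R)$ values, with contributions from outside this range absorbable into the $R^{-500n}$ error; pigeonholing on the window then produces the desired $\alpha$.

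For the upper cutoff, Cauchy--Schwarz applied in the $\#\theta \sim R$ many summands, combined with the locally constant property (Lemma \ref{locconst}), gives $|f(x)|^2 \lesssim R^{1/n}\sum_\theta|f_\theta|^2*\omega_{\theta,D}(x)$ pointwise. Setting $M:=\sup_{x\in B_R}\bigl(\sum_\theta|f_\theta|^2*\omega_{\theta,D}(x)\bigr)^{1/2}$, the super-level set $\{|f|\ge\alpha\}\cap B_R$ is empty once $\alpha\gtrsim R^{1/(2n)}M$, giving $\alpha_{\max}\sim R^{1/(2n)}M$.

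For the lower cutoff, I would take $\alpha_{\min}:=R^{-C_n}\|f\|_{L^2(B_R)}$ for a large dimensional constant $C_n$ (the case $\|f\|_{L^2(B_R)}=0$ being trivial). The contribution from $\alpha\le\alpha_{\min}$ to $\|f\|_{L^p(B_R)}^p$ is bounded by $\alpha_{\min}^p|B_R|$; to absorb this into $R^{-500n}\int(\sum_\theta|f_\theta|^2*\omega_{\theta,D})^{p/2}$, apply Jensen's inequality ($p/2\ge 1$) over an $R^{O(1)}$-ball containing the effective support of $\sum_\theta|f_\theta|^2*\omega_{\theta,D}$, then use the $L^1$-normalization of $\omega_{\theta,D}$ together with $|f|^2\le(\#\theta)\sum_\theta|f_\theta|^2$, to obtain
\[
\int_{\R^n}\Bigl(\sum_\theta|f_\theta|^2*\omega_{\theta,D}\Bigr)^{p/2}\gtrsim R^{-C'_n}\,\|f\|_{L^2(B_R)}^p.
\]
Taking $C_n$ sufficiently large dominates the $R^{500n}$ penalty and closes this step.

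For the range count, the matching upper bound $M\lesssim R^{C''_n}\|f\|_{L^2(B_R)}$ follows from $\|f_\theta\|_{L^\infty}^2\le|\theta|\|f_\theta\|_{L^2}^2$ (Cauchy--Schwarz in the frequency integral with $|\theta|\sim R^{-(n+1)/2}$) and Plancherel. Hence $\alpha_{\max}/\alpha_{\min}\le R^{C'''_n}$, so only $O(\log R)$ dyadic $\alpha$ lie in the effective window; pigeonholing selects one dominant $\alpha$ carrying at least a $(\log R)^{-1}$ fraction of the total. The main obstacle is ensuring that $\sum_\theta|f_\theta|^2*\omega_{\theta,D}$ is effectively concentrated on an $R^{O(1)}$-ball around $B_R$ so that the Jensen step loses only polynomially in $R$; for Schwartz $f$ whose mass may lie far from $B_R$, this is handled by splitting off the distant contributions via the rapid decay of $\omega_{\theta,D}$, which is standard but requires attention.
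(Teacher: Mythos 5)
Your overall strategy — dyadic pigeonholing on super-level sets of $|f|$ over $B_R$, with polynomial cutoffs $\alpha_{\min},\alpha_{\max}$ so that only $O(\log R)$ dyadic scales are in play and the tail is absorbed into the $R^{-500n}$ error — is the right one and is essentially what the paper (via the reference to Lemma 6.3 of \cite{maldagueM3}) has in mind. However, your range count does not close as written. You claim $M\lesssim R^{C''_n}\|f\|_{L^2(B_R)}$, but the inequality $\|f_\theta\|_{L^\infty}^2\le|\theta|\|f_\theta\|_{L^2}^2$ followed by Plancherel gives $M\lesssim R^{C}\|f\|_{L^2(\R^n)}$, not $\|f\|_{L^2(B_R)}$. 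Since $f$ is merely Schwartz, $\|f\|_{L^2(\R^n)}$ can be arbitrarily larger than $\|f\|_{L^2(B_R)}$ (almost all of $f$'s mass may sit far from $B_R$), so the ratio $\alpha_{\max}/\alpha_{\min}$ is not bounded by any fixed power of $R$ with your choices, and the $O(\log R)$ count fails. The Jensen step has a related, milder issue: you do not need (and should not try to assume) that $\sum_\theta|f_\theta|^2*\omega_{\theta,D}$ is concentrated on an $R^{O(1)}$-ball; you should apply Jensen directly over $B_R$, using $|f|^2\lesssim R^{1/n}\sum_\theta|f_\theta|^2*\omega_{\theta,D}$ pointwise to get $\int_{B_R}\bigl(\sum_\theta|f_\theta|^2*\omega_{\theta,D}\bigr)^{p/2}\ge |B_R|^{1-p/2}\bigl(\int_{B_R}\sum_\theta|f_\theta|^2*\omega_{\theta,D}\bigr)^{p/2}\gtrsim R^{-C'_n}\|f\|_{L^2(B_R)}^p$.

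The clean fix is to avoid $\|f\|_{L^2(B_R)}$ altogether. Set $\alpha_{\max}:=\sup_{B_R}|f|$ (so levels above $\alpha_{\max}$ contribute nothing) and $\alpha_{\min}:=R^{-C}\alpha_{\max}$ for a large constant $C$; the ratio is then automatically $R^C$, giving $O(\log R)$ dyadic scales. To absorb the contribution from $\alpha<\alpha_{\min}$, note $\alpha_{\max}^p\lesssim R^{p/(2n)}\sup_{B_R}\bigl(\sum_\theta|f_\theta|^2*\omega_{\theta,D}\bigr)^{p/2}$ by the pointwise Cauchy--Schwarz plus locally constant bound, and then the locally constant property of $G:=\sum_\theta|f_\theta|^2*\omega_{\theta,D}$ (whose Fourier support lies in a ball of radius $\sim R^{-1/n}$) together with H\"older gives $\sup_{B_R}G\lesssim G*w_{R^{1/n},d}(x)\le\|G\|_{L^{p/2}}\|w_{R^{1/n},d}\|_{L^{(p/2)'}}\sim R^{-2/p}\|G\|_{L^{p/2}}$, hence $\alpha_{\max}^p\lesssim R^{p/(2n)-1}\int G^{p/2}$. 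Therefore $\alpha_{\min}^p|B_R|\lesssim R^{-Cp+n+p/(2n)-1}\int G^{p/2}$, which is $\lesssim R^{-500n}\int G^{p/2}$ once $C$ is chosen large enough, closing the lemma.
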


Continue to use the notation 
\[ U_{\a}=\{x\in B_R:\a\le |f(x)|\}. \]
We will show that to estimate the size of $U_{\a}$, it suffices to replace $f$ with a version whose wave packets at scale $\theta$ have been pigeonholed. Write 
\begin{align}\label{sum} f=\sum_\theta\sum_{T\in\T_\theta}\s_Tf_\theta \end{align}
where for each $\theta\in{\bf{S}}_n(R^{-\frac{1}{n}})$, $\{\s_T\}_{T\in\T_\theta}$ is the partition of unity from \textsection\ref{prusec}. If $\a\le C_\e R^{-100n}\max_\theta\|f_\theta\|_{\infty}$, then using a similar argument that bounds the second expression in the proof of Lemma \ref{alph}, the inequality
\[ \a^p|U_{\a}|\lesssim_\e R^\e\int|\sum_\theta|f_\theta|^2*\w_{\theta,D}|^{p/2} \]
is trivial. 

The following proposition about wave packet decomposition is analogous to Lemma 6.4 in \cite{maldagueM3}. 
\begin{proposition}[Wave packet decomposition] \label{wpd} Let ${\a}>C_\e R^{-100n}\max_\theta\|f_\theta\|_{L^\infty(\R^n)}$. There exist subsets $\tilde{\T}_\theta\subset\T_\theta$, as well as a constant $A>0$ with the following properties:
\begin{align} 
|U_{\a}|\lesssim (\log R)|\{x\in U_{\a}:\,\,{{\a}}&\lesssim |\sum_{\theta\in{\bf{S}}_n(R^{-\frac{1}{n}})}\sum_{T\in\tilde{\T}_\theta}\s_T(x)f_\theta (x)|\,\,\}|, \\
R^\e T\cap U_{\a}\not=\emptyset\qquad&\text{for all}\quad\theta\in{\bf{S}}_n(R^{-\frac{1}{n}}),\quad T\in\tilde{\T}_\theta\\
A\lesssim \|\sum_{T\in\tilde{\T}_\theta}\s_Tf_\theta\|_{L^\infty(\R^n)}&\lesssim R^{3\e} A\qquad\text{for all}\quad  \theta\in{\bf{S}}_n(R^{-\frac{1}{n}})\label{propM}\\
\|\s_Tf_\theta\|_{L^\infty(\R^n)}&\sim A\qquad\text{for all}\quad  \theta\in{\bf{S}}_n(R^{-\frac{1}{n}}),\quad T\in\tilde{\T}_\theta. \label{prop'M}
\end{align}

\end{proposition}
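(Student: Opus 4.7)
The plan is to execute three successive dyadic pigeonholing passes --- one to spatially localize wave packets, one to equalize amplitudes at the tube level, and one to equalize amplitudes at the cap level --- each losing at most a $\log R$ factor in measure, and then to absorb all cumulative losses into the single $(\log R)$ of property (1) by coupling the argument with one more dyadic pigeonhole on the $L^\infty$ level.

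I would first discard wave packets whose tubes are far from $U_{\a}$. For $x \in U_{\a}$ and $T \in \T_\theta$ with $R^\e T \cap U_{\a} = \emptyset$, $x$ is separated from $T$ by at least $R^\e$ times the sidelength of $T$ in every direction, so the rapid decay of the partition of unity $\s_T$ constructed in \textsection\ref{prusec} yields $|\s_T(x) f_\theta(x)| \le R^{-1000n}\|f_\theta\|_\infty$. Summing over the $R^{O(1)}$ many tubes and caps gives a total far-contribution $\le \a/4$ on $U_{\a}$, thanks to the standing hypothesis $\a > C_\e R^{-100n}\max_\theta\|f_\theta\|_\infty$ with $C_\e$ large. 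Replacing each $\T_\theta$ by its near subcollection $\T_\theta^{\mathrm{near}} := \{T \in \T_\theta : R^\e T \cap U_{\a} \neq \emptyset\}$ preserves the superlevel set up to halving $\a$ and secures property (2).

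Next, I would pigeonhole the individual tube amplitudes. For each retained tube, $\|\s_T f_\theta\|_{L^\infty} \in [R^{-1000n}\a,\; \max_\theta\|f_\theta\|_\infty]$, after a trivial discard of even smaller tubes by the same summability bound. This interval spans only $O(\log R)$ dyadic scales, so a dyadic pigeonhole across the pairs $(\theta,T)$ selects a single amplitude level $A$, retaining $\T_\theta^{(1)} := \{T \in \T_\theta^{\mathrm{near}} : \|\s_T f_\theta\|_\infty \sim A\}$ and losing one $\log R$ factor in the residual superlevel set; this yields property (4). I would then pigeonhole at the cap level: the quantity $\|\sum_{T \in \T_\theta^{(1)}}\s_T f_\theta\|_{L^\infty}$ is bounded above by $O(A)$ (each point lies in only boundedly many tubes where $\s_T$ is not rapidly small) and is $\gtrsim A$ whenever $\T_\theta^{(1)} \neq \emptyset$. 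A further dyadic pigeonhole on $\theta$ selects a single class $\Theta^\ast$ on which this $L^\infty$ norm lies in one fixed dyadic window; set $\tilde{\T}_\theta := \T_\theta^{(1)}$ for $\theta \in \Theta^\ast$ and $\tilde{\T}_\theta := \emptyset$ otherwise. After relabeling $A$, this delivers property (3), with the $R^{3\e}$ slack comfortably absorbing the width of the retained window.

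The cumulative loss from the three pigeonholes is $(\log R)^{O(1)}$. To compress this to the single $\log R$ displayed in property (1), I would run the entire procedure uniformly on one dyadic level set $U_{\a_j} := \{x : \a_j \le |f(x)| < 2\a_j\}$ with $\a_j \in [\a,\|f\|_\infty]$ dyadic, in the spirit of Lemma \ref{alph}; the $(\log R)^{O(1)}$ losses then compile into the single $\log R$ recorded in the comparison of $|U_\a|$ with the measure of the subset on which $\a \lesssim |\sum_\theta\sum_{T \in \tilde{\T}_\theta}\s_T f_\theta|$. The whole argument is routine dyadic pigeonholing and presents no analytic obstacle; the only step requiring quantitative care is the rapid-decay estimate for $\s_T$ off $T$ in the first step, which follows directly from the explicit construction $\s_{T_{\tau_k}}(x) = \s_m \circ A_{\tau_k}(x)$ with $\s_m$ rapidly decaying away from $m$, as already recorded in \textsection\ref{prusec}.
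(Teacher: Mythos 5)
The overall strategy is the right one (it is essentially the standard dyadic pigeonhole wave packet decomposition used in \cite{maldagueM3} and \cite{gmw}), but the justification of the upper bound in property \eqref{propM} contains a genuine gap, and this is precisely the step where the $R^{3\e}$ loss enters.

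You claim in the third pigeonhole that $\|\sum_{T\in\T_\theta^{(1)}}\s_Tf_\theta\|_{L^\infty}=O(A)$ because ``each point lies in only boundedly many tubes where $\s_T$ is not rapidly small.'' This observation controls the number of \emph{near} tubes, but it does not control the contribution at a point $x$ lying in a \emph{discarded} tube $T_0$ with large amplitude $\|\s_{T_0}f_\theta\|_\infty = B \gg A$. At such a point $|f_\theta(x)|$ can be of size $B$, and while each retained $T$ satisfies $\s_T(x)|f_\theta(x)| \le \|\s_Tf_\theta\|_\infty \sim A$ individually, the sum over retained $T$ at intermediate distances from $x$ need not be $O(A)$. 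Quantitatively, using $\s_T(x)\lesssim_k (1+d(x,T))^{-k}$ together with the per-tube bound one only gets
\[
\sum_{T\in\T_\theta^{(1)}}\s_T(x)\,|f_\theta(x)| \;\lesssim_k\; \Big(\tfrac{B}{A}\Big)^{n/k}\,A,
\]
where the transition between the two bounds occurs at distance $\sim (B/A)^{1/k}$. Since $B/A$ can be as large as $\|f_\theta\|_\infty/A \lesssim R^{O(n)}$ (the discard of far or off-amplitude tubes puts no upper bound on the amplitudes of what is thrown away), a finite decay rate $k$ cannot close this to $O(A)$: one must take $k$ large depending on $\e$, and the resulting constant is $\lesssim R^{O(\e)}A$. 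That is exactly what the $R^{3\e}$ in \eqref{propM} is encoding; it is not, as you say, a slack to ``comfortably absorb the width of the retained window'' (a dyadic window has width $2$). Your reading of the $R^{3\e}$ as discretionary leads you to make an incorrect intermediate claim, and the third pigeonhole over $\theta$ then does no work, which signals that something is off.

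A second, smaller, issue is the final paragraph. You correctly observe that running the three passes independently costs $(\log R)^{O(1)}$ in measure, but the proposed fix --- restricting to one dyadic level set $U_{\a_j}$ --- does not by itself compress $(\log R)^{O(1)}$ into a single $\log R$; each dyadic sort of the wave packet data loses its own $\log R$ factor regardless of whether $|f|$ has been pinned to a level set. Either one should do a single simultaneous pigeonhole over the product of the (near/far), (tube amplitude), and (cap amplitude) dyadic indices (which still costs $(\log R)^{O(1)}$, harmless for the application), or one must carry out a sharper argument that makes the losses after the first pigeonhole measure-free. As written, the compression is asserted, not proved. Neither issue is fatal to the approach, but as stated the proof does not establish \eqref{propM} and the justification of the $\log R$ factor in the first display is incomplete.
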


The following is analogous to Corollary 6.5 in \cite{maldagueM3}. 
\begin{corollary}\label{wpdcor} Let $f$, $\a$, $\tilde{T}_\theta$ and $A>0$ be as in Proposition \ref{wpd}. Then for each $x\in U_\a$,
\[ \a\le R^{103n\e}\frac{1}{A} \sum_{\theta\in\mc{S}}|\sum_{T\in\tilde{\T}_\theta}\s_T f_\theta|^2*\w_{\theta,D}(x). \]
\end{corollary}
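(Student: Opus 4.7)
My plan is to derive the pointwise estimate from two facts. First, that for every $x \in U_\a$ (not merely on the subset appearing in the first clause of Proposition \ref{wpd}) one has $\a \lesssim |\sum_\theta g_\theta(x)|$, where $g_\theta := \sum_{T \in \tilde{\T}_\theta} \s_T f_\theta$. Second, that for each $\theta$ and each $x \in U_\a$ there is a pointwise trade $A\,|g_\theta(x)| \lesssim R^{C\e}\, |g_\theta|^2*\w_{\theta,D}(x)$. Granted these, summing in $\theta$ yields exactly the claimed inequality, with the exponent $103n\e$ coming from bookkeeping the various $R^\e$ losses.

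For the first fact, I would decompose $f(x) = \sum_\theta g_\theta(x) + E(x)$, where $E$ collects the wave packets $\s_T f_\theta$ with $T\in\T_\theta\setminus\tilde{\T}_\theta$. The discarded tubes fall into two groups: those for which $R^\e T \cap U_\a=\emptyset$, and those for which $\|\s_T f_\theta\|_\infty$ is below the single dyadic range retained by the pigeonholing in Proposition \ref{wpd}. For the spatially distant tubes, the rapid decay of $\s_T$ off $T$ (controlled by $D$) together with $\|f_\theta\|_\infty \le 2$ and the crude count $\#\T_\theta \le R^{O(1)}$ shows that each such $\s_T(x)$ is at most $R^{-100n}$ for $x\in U_\a$, and the summed contribution is $\ll R^{-100n}$. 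The small-amplitude tubes each contribute $\le \|\s_T f_\theta\|_\infty \le R^{-100n}$ by construction of the pigeonholing. Since the hypothesis of Proposition \ref{wpd} already requires $\a > C_\e R^{-100n}\max_\theta \|f_\theta\|_\infty$, this forces $|E(x)| \le \a/2$ for every $x\in U_\a$, and hence $\a \lesssim \sum_\theta |g_\theta(x)|$ pointwise on $U_\a$.

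For the second fact, I would split the indices $\theta$ at $x$ into \emph{active} (the tube $T_\theta(x) \in \T_\theta$ containing $x$ lies in $\tilde{\T}_\theta$) and \emph{inactive} (it does not). Inactive $\theta$'s produce $|g_\theta(x)|$ coming only from tubes $T \neq T_\theta(x)$ in $\tilde{\T}_\theta$ whose $\s_T(x)$ is tiny by rapid decay, yielding $|g_\theta(x)| \le R^{-100n}A$, which is absorbed into the error discussed above. For active $\theta$, Property (4) of Proposition \ref{wpd} gives $\|\s_{T_\theta(x)} f_\theta\|_\infty \sim A$, and since $f_\theta$ is essentially constant at scale $\theta^* = T_\theta(x)$ by Fourier-support considerations, one has $|g_\theta(y)| \sim A$ on a definite fraction of $T_\theta(x)$. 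Because $\w_{\theta,D}$ is $L^1$-normalized on a set comparable to $\theta^*$ and $x \in T_\theta(x)$, this gives the lower bound
\[
|g_\theta|^2 * \w_{\theta,D}(x) \;\gtrsim\; R^{-C\e} A^2 .
\]
Combined with the universal ceiling $|g_\theta(x)| \le R^{3\e}A$ from Property (3), this yields the trade $A\,|g_\theta(x)| \lesssim R^{C\e}\, |g_\theta|^2 * \w_{\theta,D}(x)$ on active indices. Summing over active $\theta$ and discarding the negligible inactive contribution then proves the corollary.

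The main obstacle I anticipate is the first fact: upgrading the measure-theoretic first clause of Proposition \ref{wpd} to a pointwise statement on all of $U_\a$. This is exactly where the standing hypothesis $\a > C_\e R^{-100n}\max_\theta\|f_\theta\|_\infty$ is used, and where one must coordinate the weight-decay parameter $D$ with the spatial cutoff $R^{-100n}$ so that both types of discarded wave packets are controlled simultaneously; the remaining $R^\e$-bookkeeping to extract the precise exponent $103n$ is routine.
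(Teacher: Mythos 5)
Your ``second fact'' is the crux of the argument and you have the right idea: for each $\theta$ where $x$ lies in (or immediately next to) some retained tube $T\in\tilde{\T}_\theta$, the pointwise inequality $|g_\theta|\ge|\s_T f_\theta|$ (valid since the $\s_{T'}\ge 0$ form a partition of unity, so $g_\theta=\chi_\theta f_\theta$ with $\chi_\theta\ge\s_T$), the locally constant property for $\s_T f_\theta$, and \eqref{prop'M} give $|g_\theta|^2*\w_{\theta,D}(x)\ge|\s_Tf_\theta|^2*\w_{\theta,D}(x)\gtrsim\|\s_Tf_\theta\|_\infty^2\sim A^2$; combined with $|g_\theta(x)|\lesssim R^{3\e}A$ from \eqref{propM} this yields the trade $A|g_\theta(x)|\lesssim R^{3\e}|g_\theta|^2*\w_{\theta,D}(x)$, and summing over such $\theta$ finishes the proof. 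One small repair: the ``inactive'' set you define by ``$T_\theta(x)\notin\tilde{\T}_\theta$'' does not produce $|g_\theta(x)|\le R^{-100n}A$, because $\s_{T'}(x)$ can be of size $\sim 1$ when $T'\in\tilde{\T}_\theta$ is a neighbor of $T_\theta(x)$. But for exactly such $\theta$ the same locally constant argument still gives $|g_\theta|^2*\w_{\theta,D}(x)\gtrsim A^2$, so the fix is only to redefine ``active'' to mean that $\s_{T'}(x)$ is non-negligible for some $T'\in\tilde{\T}_\theta$.

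The genuine gap is in your ``first fact.'' You propose to establish the pointwise lower bound $\a\lesssim\sum_\theta|g_\theta(x)|$ for \emph{every} $x\in U_\a$ by showing $|E(x)|\le\a/2$, via a dichotomy of discarded wave packets into ``far from $U_\a$'' and ``small amplitude.'' But Proposition \ref{wpd} asserts no such dichotomy, and in the standard pigeonholing there is a third category: tubes belonging to $\theta$'s that are dropped wholesale by the $\theta$-level pigeonhole. Such tubes can have $R^\e T\cap U_\a\neq\emptyset$ and amplitude $\gtrsim A$, yet contribute to $E(x)$ at generic $x\in U_\a$ and are not controlled pointwise by anything in the statement of Proposition \ref{wpd}. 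So the pointwise lower bound on all of $U_\a$ is not available; only the measure-theoretic comparison in property (1) is. This is not merely a defect in your write-up: the corollary as literally phrased cannot follow from the listed properties of Proposition \ref{wpd} at \emph{every} $x\in U_\a$. It is applied in the proof of Lemma \ref{multi-scale'} only after the normalization $f\mapsto\tilde f=\frac{1}{A}\sum_\theta g_\theta$, at which point $U_\a$ \emph{is} the set where $\a\lesssim|\sum_\theta g_\theta(x)|$, and the ``first fact'' is a given rather than something to re-derive. Your instinct that this is the delicate point is correct, but the resolution is to take the lower bound as the standing hypothesis, not to try to upgrade property (1) of Proposition \ref{wpd} to a pointwise statement on $U_\a$.
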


Using the locally constant property for $\sum_\theta|f_\theta|^2$ and dyadic pigeonholing, we have the following, which is analogous to Lemma 6.6 in \cite{maldagueM3}. 
\begin{lemma}\label{bet} For each $2\le p\le p_n$, $\a>0$, $B_R$, and Schwartz function $f:\R^n\to\C$ with Fourier transform supported in $\mc{M}^n(R)$, there exists $\b>0$ such that $\a^p|\{x\in B_R:\a\le|f(x)|\}|$ is bounded by 
\[C(\log R)\a^{p}|\{x\in B_R:\a\le|f(x)|,\quad\b/2\le\sum_{\theta}|f_\theta|^2*\w_{\theta,D}(x)\le \b\}|+R^{-500n}\int|\sum_{\theta}|f_\theta|^2*\w_{\theta,D}|^{p/2} .\]
\end{lemma}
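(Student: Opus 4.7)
The plan is to partition the superlevel set $U_\alpha := \{x \in B_R : \alpha \le |f(x)|\}$ into pieces on which $g(x) := \sum_\theta |f_\theta|^2 * \w_{\theta, D}(x)$ is dyadically constant, and show that only $O(\log R)$ such pieces contribute in a way not absorbed by the small error term. This is a routine pigeonholing argument in the same spirit as Lemma 6.6 of \cite{maldagueM3}.

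First I would establish a lower bound for $g$ on $U_\alpha$. Since the number of blocks $\theta$ is $R^{1/n}$, Cauchy-Schwarz together with the pointwise locally constant property (Lemma \ref{locconst}) give
\[ |f(x)|^2 \le R^{1/n} \sum_\theta |f_\theta(x)|^2 \lesssim_D R^{1/n} g(x), \]
so $g(x) \gtrsim_D R^{-1/n}\alpha^2$ for every $x \in U_\alpha$. Next I would truncate the high-$g$ portion via Chebyshev's inequality: for any $M > 0$,
\[ \alpha^p |\{x \in U_\alpha : g(x) > M\}| \le \alpha^p M^{-p/2} \int g^{p/2}, \]
and since $p \ge 2$, the choice $M := \alpha^2 R^{1000 n/p}$ bounds the right-hand side by $R^{-500n}\int g^{p/2}$, exactly the error term permitted in the statement.

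On the remaining set $U_\alpha \cap \{c_D R^{-1/n}\alpha^2 \le g(x) \le \alpha^2 R^{1000 n/p}\}$, the values of $g$ span at most $O(\log R)$ dyadic scales, so dyadic pigeonholing produces some $\beta$ in this range such that the band $\{x \in U_\alpha : \beta/2 \le g(x) \le \beta\}$ carries at least a $1/(C \log R)$-fraction of the remaining measure. Multiplying through by $\alpha^p$ and summing the three contributions yields the claimed inequality. No serious obstacle arises; the argument combines Cauchy-Schwarz, the locally constant property already at our disposal, and a crude Chebyshev tail bound on $g$, and the factor $R^{-500n}$ is comfortable enough that we need not optimize the value of $M$.
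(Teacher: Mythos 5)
Your proposal is correct and follows the same dyadic pigeonholing argument that the paper invokes (the paper simply cites the analogue of Lemma 6.6 in \cite{maldagueM3} without spelling it out). The two ingredients — the lower bound $g(x) \gtrsim_D R^{-1/n}\alpha^2$ on $U_\alpha$ via Cauchy–Schwarz over the $\sim R^{1/n}$ blocks and the locally constant property, and the Chebyshev tail cutoff with $M=\alpha^2 R^{1000n/p}$ so that $\alpha^p M^{-p/2}=R^{-500n}$ — pin $g$ to a range of $O(\log R)$ dyadic scales, and pigeonholing selects the required $\beta$.
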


\subsection{A multi-scale inequality for $\text{T}_{n,D}^w(R)$ implying Proposition \ref{S1bd} \label{ind} }

First we use a broad/narrow analysis to prove a multi-scale inequality for $\text{T}_n^w(R)$. 

\begin{lemma}\label{multi-scale'} For any $D\in\N_{>0}$, $1\le K^n\le R$, and $1\le N_0\le \e^{-1}$,
\[ \emph{T}_{n,D}^w(R)\lesssim(\log R)^2\left( K^{53}\big[R^{10 \e N_0}A^{\e^{-1}} + R^{4\e^2+200\e}A^{\e^{-1}}  \emph{T}_{n,D}^w(R^\e)^{\e^{-1}-N_0}\big]+\emph{T}_{n,D}^w(R/K^3)\right) .\]
\end{lemma}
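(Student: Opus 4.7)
The plan is to apply a $K$-broad/narrow dichotomy after performing the pigeonholing reductions of \textsection\ref{M3pigeon}: the broad contribution is controlled directly by Proposition \ref{mainprop}, while the narrow contribution is reduced, via moment-curve parabolic rescaling, to the constant $\text{T}_{n,D}^w$ at the smaller scale $R/K^3$.

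The first step is to reduce to estimating a level set $\a^p|U_{\a,\b}|$ that satisfies the hypotheses of Proposition \ref{mainprop}. Starting from an arbitrary Schwartz $f$ with $\supp\widehat{f}\subset\mc{M}^n(R)$, Lemma \ref{loc} localizes to a single $R$-ball $B_R$; Lemma \ref{alph} replaces $\|f\|_{L^p(B_R)}^p$ by a single superlevel set $\a^p|\{|f|\ge\a\}|$, absorbing the $R^{-500n}$ tail into the desired right-hand side; Proposition \ref{wpd} and Corollary \ref{wpdcor} pigeonhole the wave packets so that $\|\s_Tf_\theta\|_\infty\sim A$ and hence (after rescaling by $A$) $\|f_\theta\|_\infty\le 2$, verifying the normalization required by Proposition \ref{mainprop}; and Lemma \ref{bet} further restricts to the set $U_{\a,\b}$ on which $\sum_\theta|f_\theta|^2*\w_{\theta,D}\sim\b$. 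Each of these reductions costs at most a factor of $\log R$, accounting for the $(\log R)^2$ in the claim.

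Next I would apply the broad/narrow dichotomy. For every $x\in U_{\a,\b}$, either $x$ lies in the broad set $\text{Br}_{\a,\b}^K$ of \textsection\ref{broad}, in which case Proposition \ref{mainprop} delivers exactly the first bracketed expression, once $\text{T}_n(R^\e)$ is replaced by $\text{T}_{n,D}^w(R^\e)$ (up to constants absorbable into $K^{53}$) via the locally constant property; or else $x$ is narrow, meaning that after absorbing bounded multiplicity one has the pointwise bound $|f(x)|\lesssim K^{O(n)}\max_{\tau\in{\bf S}_n(K^{-3/n})}|f_\tau(x)|$. On the narrow set this yields
\[ \a^p|\text{Nar}_{\a,\b}^K|\lesssim K^{O(n)}\sum_{\tau\in{\bf S}_n(K^{-3/n})}\int|f_\tau|^p, \]
and applying the moment-curve rescaling associated to each $\tau$ transforms the portion of $\mc{M}^n(R)$ inside $\tau$ into a copy of $\mc{M}^n(R/K^3)$, with the weight functions $\w_{\theta,D}$ transforming to the corresponding weights at the smaller scale. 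The definition of $\text{T}_{n,D}^w(R/K^3)$ then gives
\[ \int|f_\tau|^p\le\text{T}_{n,D}^w(R/K^3)\int\Big|\sum_{\theta\subset\tau}|f_\theta|^2*\w_{\theta,D}\Big|^{p/2}, \]
and summing in $\tau$ using $\|\cdot\|_{\ell^{p/2}}\le\|\cdot\|_{\ell^1}$ (valid since $p\ge2$) produces the second piece of the claimed bound.

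The main obstacle will be the careful bookkeeping of normalizations across the pigeonholing and broad/narrow steps: in particular ensuring that the amplitude $A$ from the wave-packet pigeonholing is compatible with the $A^{\e^{-1}}$ appearing in Proposition \ref{mainprop}, and that the weight decay parameter $D$ survives each pointwise inequality and rescaling. A secondary technical point is justifying the passage $\text{T}_n(R^\e)\lesssim\text{T}_{n,D}^w(R^\e)$ (up to constants absorbable into $K^{53}$), which follows from the locally constant property, and verifying that the narrow pointwise estimate is compatible with the weighted square function on the right-hand side of the target inequality rather than its unweighted counterpart.
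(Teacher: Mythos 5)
Your proposal follows essentially the same line of argument as the paper: the same sequence of pigeonholing reductions (Lemmas \ref{loc}, \ref{alph}, \ref{bet}, Proposition \ref{wpd}, Corollary \ref{wpdcor}), the same broad/narrow split with the broad part handled by Proposition \ref{mainprop} and the narrow part handled by parabolic rescaling and $\|\cdot\|_{\ell^{p/2}}\le\|\cdot\|_{\ell^1}$, and the same observation that $\text{T}_n(\cdot)\lesssim_D\text{T}_{n,D}^w(\cdot)$ via the locally constant property. The only cosmetic difference is your choice of cap scale ${\bf S}_n(K^{-3/n})$ versus the paper's ${\bf S}_n(K^{-1})$, which is a relabeling of $K$ and does not change the substance of the argument.
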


\begin{proof}[Proposition \ref{mainprop} implies Lemma \ref{multi-scale'}]
Let $f:\R^n\to\C$ be a Schwartz function with Fourier transform supported in $\mc{M}^n(R)$. Let $2\le p\le p_n$. By Lemma \ref{loc}, it suffices to bound $\|f\|_{L^p(B_R)}^p$ instead of $\|f\|_{L^p(\R^n)}^p$. By Lemma \ref{alph}, we may fix $\a>0$ so that $\|f\|_{L^p(B_R)}^p\lesssim(\log R)^2\a^p|U_{\a}|$. By Proposition \ref{wpd}, we may replace $\a$ by $\a/A$ and replace $f$ by $\tilde{f}=\frac{1}{A}\sum_{\theta\in{\bf{S}}_n(R^{-\frac{1}{n}})}\sum_{T\in\tilde{\T}_\theta}\s_Tf_\theta$ where $\tilde{\T}_\theta$ satisfies the properties in that proposition. From here, we will take $f$ to mean $\tilde{f}$. By Lemma \ref{bet}, we may fix $\b>0$ so that $\a^p|U_\a|\lesssim (\log R)\a^p|U_{\a,\b}|$. Finally, 
by Corollary \ref{wpdcor}, we have $\a\lesssim R^{103n\e}$. 

Write $f=\sum_{\tau\in{\bf{S}}_n(K^{-1})}f_\tau$. The broad-narrow inequality is
\begin{align}\label{brnar}
    |f(x)|&\le 2n\max_{\tau\in{\bf{S}}_n(K^{-1})}|f_{\tau}(x)|+K^3\max_{\substack{d(\tau^i,\tau^j)\ge K^{-1}\\\tau^i\in{\bf{S}}_n(K^{-1})}}|\prod_{i=1}^nf_{\tau^i}(x)|^{\frac{1}{n}}  .
\end{align}
Indeed, suppose that the set $\{\tau\in{\bf{S}}_n(K^{-1}):|f_{\tau}(x)|\ge K^{-1}\max_{\tau'\in{\bf{S}}_n(K^{-1})}|f_{\tau'}(x)|\}$ has at least $2n-1$ elements. Then we can find $n$ many $\tau^i$ which are pairwise $\ge K^{-1}$-separated and satisfy $|f(x)|\le K^3|\prod_{i=1}^nf_{\tau^i}(x)|^{\frac{1}{n}}$. If there are fewer than $2n-1$ elements, then $|f(x)|\le 2n \max_{\tau\in{\bf{S}}_n(K^{-1})}|f_{\tau}(x)|$.

The broad-narrow inequality leads to two possibilities. In one case, we have  
\begin{equation}\label{case1brn} |U_{\a,\b}|\lesssim |\{x\in U_{\a,\b}:|f(x)|\le 2n\max_{\tau\in{\bf{S}}_n(K^{-1})}|f_{\tau}(x)|\}| . \end{equation}
Then the summary of inequalities from this case is
\begin{align*}
    \int_{B_R}|f|^p\lesssim(\log R)^2\a^p|U_{\a,\b}|\lesssim(\log R)^2\sum_{\tau\in{\bf{S}}_n(K^{-1})}\int_{\R^n}|f_{\tau\in{\bf{S}}_n(K^{-1})}|^p .
\end{align*}
By rescaling for the moment curve and the definition of $\text{T}_{n,D}^w(\cdot)$, we may bound each integral in the final upper bound by 
\[ \int_{\R^n}|f_{\tau}|^p\le \text{T}_{n,D}^w(R/K^n)\int_{\R^n}|\sum_{\theta\subset\tau}|f_\theta|^2*\w_{\theta,D}|^{p/2}. \]
Noting that $\sum_\tau\int_{\R^n}|\sum_{\theta\subset\tau}|f_\theta|^2*\w_{\theta,D}|^{7/2}\le\int_{\R^n}|\sum_{\theta}|f_\theta|^2*\w_{\theta,D}|^{7/2} $ finishes this case.

The remaining case from the broad-narrow inequality is that
\[ |U_{\a,\b}|\lesssim |\{x\in U_{\a,\b}:|f(x)|\le K^3 \max_{\substack{
d(\tau^i,\tau^j)\ge K^{-1}\\\tau^i\in{\bf{S}}_n(K^{-1})}}|\prod_{i=1}^nf_{\tau^i}(x)|^{\frac{1}{n}}\}| .\]
We may further assume that 
\[ |U_{\a,\b}|\lesssim |\{x\in U_{\a,\b}:|f(x)|\le K^3 \max_{\substack{
d(\tau^i,\tau^j)\ge K^{-1}\\\tau^i\in{\bf{S}}_n(K^{-1})}}|\prod_{i=1}^nf_{\tau^i}(x)|^{\frac{1}{n}},\quad \max_{\tau\in{\bf{S}}_n(K^{-1})}|f_\tau(x)|\le \a\}| \]
since otherwise, we would be in the first case \eqref{case1brn}. The size of the set above is now bounded by a sum over pairwise $K^{-1}$-separated $n$-tuples $(\tau^1,\ldots,\tau^n)$ which form $|\text{Br}_{\a,\b}^{K^3}|$ from \textsection\ref{broad}. Using Proposition \ref{mainprop} to bound $|\text{Br}_{\a,\b}^{K^3}|$, the summary of the inequalities from this case is 
\[\int_{B_R}|f|^p\lesssim(\log R)^2  K^{53}\big[R^{10 \e N_0}A^{\e^{-1}} + R^{4\e^2}A^{\e^{-1}}  \text{T}_{n,D}^w(R^\e)^{\e^{-1}-N_0}\big]\int|\sum_\theta|f_\theta|^2*\w_{\theta,D}|^{p/2}, \]
where we used that $\text{T}_n(r)\lesssim_D\text{T}_{n,D}^w(r)$, which follows from the locally constant property.

\end{proof}

With Lemma \ref{multi-scale'} in hand, we may now prove Proposition \ref{S1bd}. 
\begin{proof}[Proof of Proposition \ref{S1bd}] 
Let $\eta$ be the infimum of the set
\[ \mc{S}=\{\d\ge 0:\sup_{R\ge 1}\frac{\text{T}_{n,D}^w(R)}{R^\d}<\infty\}. \]
Suppose that $\eta>0$. Let $\e_1$, $\eta>\e_1>0$, be a parameter we will specify later. By Lemma \ref{multi-scale'}, we have
\begin{align*} 
\sup_{R\ge 1}\frac{\text{T}_{n,D}^w(R)}{R^{\eta-\e_1}}&\lesssim_\e\sup_{R\ge 1}\frac{1}{R^{\eta-\e_1}}\Big[(\log R)^2\left( K^{53}\big[R^{10 \e N_0} + R^{4\e^2+10\e}  \text{T}_{n,D}^w(R^\e)^{\e^{-1}-N_0}\big]+\text{T}_{n,D}^w(R/K^n)\right)  \Big] 
\end{align*}
where we are free to choose $\e>0$, $1\le N_0\le\e^{-1}$, and $1\le K^3\le R$. Continue to bound the expression on the right hand side by
\begin{align*}
\sup_{R\ge 1}(\log R)^2\Big( K^{53}\frac{R^{10 \e N_0}}{R^{\eta-\e_1}} +& K^{53}\frac{R^{4\e^2+10\e}}{R^{N_0\e(\eta+\e_1)-2\e_1}}  \big[\frac{\text{T}_{n,D}^w(R^\e)}{R^{\e(\eta+\e_1)}}\big]^{\e^{-1}-N_0}+\frac{1}{K^{3(\eta+\e_1)}R^{-2\e_1}}\frac{{\text{T}_{n,D}^w(R/K^n)}}{(R/K^n)^{\eta+\e_1}}  \Big).
\end{align*}
By definition of $\eta$, 
\[ \sup_{R\ge 1}\frac{\text{T}_{n,D}^w(R^\e)}{R^{\e(\eta+\e_1)}}+\sup_{R\ge 1}\frac{{\text{T}_{n,D}^w(R/K^n)}}{(R/K^n)^{\eta+\e_1}} <\infty, \]
so it suffices to check that 
\begin{align*}
\sup_{R\ge 1}(\log R)^2\Big( K^{53}\frac{R^{10 \e N_0}}{R^{\eta-\e_1}} +& K^{53}\frac{R^{4\e^2+10\e}}{R^{N_0\e(\eta+\e_1)-2\e_1}}  +\frac{1}{K^{n(\eta+\e_1)}R^{-2\e_1}}  \Big)<\infty
\end{align*}
to obtain a contradiction. From here, choose $N_0=\e^{-1/2}$ and $K=R^{\e_1}$ so that it suffices to check 
\begin{align*}
\sup_{R\ge 1}(\log R)^2\Big( \frac{1}{R^{\eta-10\e^{1/2}-54\e_1}} +& \frac{1}{R^{\e^{1/2}\eta-4\e^2-10\e-55\e_1}}  +\frac{1}{R^{(n-2)\e_1}}  \Big)<\infty .
\end{align*}
This is clearly true if we choose $\e>0$ to satisfy $\min(\eta-10\e^{1/2},\eta-4\e^{3/2}-10\e^{1/2})>\eta/2$ and then choose $\e_1$ to be smaller than $\frac{1}{55}\e^{1/2}\eta/4$. Our reasoning has shown that $\eta-\e_1\in\mc{S}$, which is a contradiction. Conclude that $\eta=0$, as desired.

\end{proof}

\subsection{Proof of Proposition \ref{momcurveinduct} \label{S2}}
We will show that $\text{T}_{n,D}^w(R)\lesssim_\e R^\e$ implies $\text{T}_n(R)\lesssim_\e R^\e$, which proves Proposition \ref{momcurveinduct}. See Definitions \ref{TnRw} and \ref{TnR} at the beginning of \textsection\ref{tools} for the definitions of $\text{T}_n(R)$ and $\text{T}_n^w(R)$. The following is a multi-scale inequality bounding $\text{T}_n(R)$ by $\text{T}_{n,D}^w(\cdot)$, and $\text{T}_n(\cdot)$ evaluated at parameters smaller than $R$. 
\begin{proposition}\label{multi-scaleS2} For $R\ge 10$ and $D\ge p_n^{n\e^{-n}}$,  
\[\emph{T}_n(R)\lesssim_\e R^\e \emph{T}_{n,D}^w(R^{\frac{1}{n}})\max_{1\le \lambda\le R^{1-\frac{1}{n}}}\emph{T}_n(\lambda). \]
\end{proposition}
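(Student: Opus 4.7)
The strategy combines the coarse-scale weighted square function estimate $T^w_{n,D}(R^{1/n})$ with moment curve rescaling to invoke $T_n$ at the finer scale $R^{1-1/n}$.

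First, let $f$ be Schwartz with Fourier support in $\mc{M}^n(R)$. Since $\mc{M}^n(R)\subset\mc{M}^n(R^{1/n})$, the defining inequality of $T^w_{n,D}(R^{1/n})$ yields
\[ \int_{\R^n} |f|^{p} \le \text{T}^w_{n,D}(R^{1/n}) \int_{\R^n} \Big(\sum_{\tau} |f_\tau|^2 * \w_{\tau, D}\Big)^{p/2}, \]
where $\tau$ ranges over canonical blocks at scale $R^{1/n}$ (of dimensions $\sim R^{-1/n^2}\times\cdots\times R^{-1/n}$). For each such $\tau$, the piece $f_\tau$ has Fourier support in the thin set $\tau\cap\mc{M}^n(R)$. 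By the moment curve's affine rescaling symmetry, there is an invertible affine map $L_\tau$ that sends (a translate of) this thin set onto $\mc{M}^n(R^{1-1/n})$ while simultaneously carrying the sub-blocks $\theta\subset\tau$ (at scale $R$) onto canonical blocks at scale $R^{1-1/n}$. The range $1\le\lambda\le R^{1-1/n}$ in the statement accommodates this natural sub-scale and any intermediate scales that the iteration produces.

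The main work is to establish
\[ \int \Big(\sum_\tau |f_\tau|^2 * \w_{\tau, D}\Big)^{p/2} \lesssim_\e R^\e\, \max_{1\le \lambda\le R^{1-1/n}}\text{T}_n(\lambda) \int \Big(\sum_\theta |f_\theta|^2\Big)^{p/2}. \]
I would prove this by adapting the pointwise iteration of \S\ref{keyalgo}. The key building block is a Lemma \ref{algo1}-type pointwise bound, obtained by applying Jensen's inequality $(|f_\tau|^2 * \tilde\w)^{p/2}\le |f_\tau|^{p}*\tilde\w$ (with $L^1$-normalized weights, using $p/2\ge 1$) followed by the rescaled $T_n(R^{1-1/n})$ estimate, giving
\[ (|f_\tau|^2 * \w_{\tau,d})^{p/2} \lesssim_\e R^\e\, \text{T}_n(R^{1-1/n}) \Big(\sum_{\theta\subset\tau} |f_\theta|^2 * \w_{\tau, d/p_n}\Big)^{p/2} * \widetilde W \]
for an auxiliary $L^\infty$-normalized weight $\widetilde W$ Fourier-supported in $\tau-\tau$. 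Substituting this pointwise inequality into the integrand, iterating as in Proposition \ref{algo}, and applying Lemma \ref{multilem3pf} at each step to eliminate the outer convolution $*\widetilde W$ (losing a factor $p_n^{-1}$ in the decay rate per iteration), then finally invoking $\sum_\tau\big(\sum_{\theta\subset\tau}|f_\theta|^2\big)^{p/2}\le\big(\sum_\theta |f_\theta|^2\big)^{p/2}$ (valid for $p/2\ge 1$ by $\|\cdot\|_{\ell^{p/2}}\le\|\cdot\|_{\ell^1}$), consolidates the estimate.

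The main obstacle is carefully tracking how the weights evolve through the iteration without incurring the wasteful pointwise loss $(\sum_\tau a_\tau)^{p/2}\le (\#\tau)^{p/2-1}\sum_\tau a_\tau^{p/2}$. This is handled by keeping the $\sum_\tau$ structure intact (as a convolution of a sum-of-squares, not a sum of $p/2$-powers) throughout the iteration, and by applying the elimination lemma only after the weight decay rate has become compatible with finer scales. The hypothesis $D\ge p_n^{n\e^{-n}}$ is precisely what is needed to accommodate the $O(\e^{-n})$ iterations, each of which divides the weight's decay rate by $p_n$.
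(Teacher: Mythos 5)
Your opening move — applying $\text{T}^w_{n,D}(R^{1/n})$ to reach the square function over $\tau\in{\bf{S}}_n(R^{-1/n^2})$ — matches the paper. However, the central claimed building block
\[ (|f_\tau|^2 * \w_{\tau,d})^{p/2} \lesssim_\e R^\e\, \text{T}_n(R^{1-1/n}) \Big(\sum_{\theta\subset\tau} |f_\theta|^2 * \w_{\tau, d/p_n}\Big)^{p/2} * \widetilde W \]
is not achievable, and this is a genuine gap. The weight $\w_{\tau,d}$ is $L^1$-normalized on $\tau^*$, whose dimensions are $R^{1/n^2}\times R^{2/n^2}\times\cdots\times R^{1/n}$; after the affine rescaling $L_\tau$ sending $\tau\cap\mc{M}^n(R)$ to $\mc{M}^n(R^{1-1/n})$, this dual box becomes a unit ball. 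But a local version of the $n$-dimensional square function estimate $\text{T}_n(R^{1-1/n})$ requires spatial localization at scale at least $(R^{1-1/n})^{1/n}=R^{(n-1)/n^2}$ (the short side of a $\theta^*$ in the rescaled picture), not scale $1$. Since $|f_\tau|^p$ is locally constant only at scale $\tau^*$ and fluctuates nontrivially at scale $\theta^*$, you cannot enlarge $\w_{\tau,d}$ to the needed scale without genuine loss, and you cannot apply $\text{T}_n$ at scale $\tau^*$. Lemma \ref{algo1} evades this precisely because it uses the \emph{cylindrical} weight $\tilde{W}_{\tau',\sigma}^{m_1}$, Fourier-supported in a box that is small only in the first $m_1\le n-1$ coordinates and extends over $[-1,1]^{n-m_1}$; this matches the localization scale of $\mb{M}^{m_1}$, which is a strictly lower-dimensional estimate. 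There is no analogous pointwise version of the full $n$-dimensional $\text{T}_n$.

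The paper's proof takes a structurally different route. It first performs a high-low decomposition of $\sum_\tau|f_\tau|^2*\w_{\tau,D}$ at frequency $R^{-1/n}$; the low part terminates via local $L^2$ orthogonality, the observation $\w_{\tau,D}*|\widecheck{\eta}_{\le R^{-1/n}}|\lesssim w_{R^{1/n}}$, and Young's inequality. For the high part, the iteration proceeds via Lemma \ref{algo2}, which mixes cone square function estimates $\C_{l-1}^n(\cdot)$ (applied at the $L^{p/\tilde{p}_l}$ level) with the lower-dimensional pointwise bounds of Lemma \ref{algo1}. The quantity $\text{T}_n(\lambda)$ enters only at a termination step, after $\C_{n-2}^n(\cdot)$ and Cauchy-Schwarz have reduced the outer sum to $\sum_\tau\int|f_\tau|^p$, and is then invoked at the $L^p$ level via moment curve rescaling — not pointwise. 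If you want to salvage your outline, you would need to replace the claimed pointwise bound with the high-low decomposition and cone-estimate machinery, which is essentially the whole content of the paper's argument.
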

Proposition \ref{momcurveinduct} follows directly from Proposition \ref{multi-scaleS2} by the proof of Theorem 1 in \cite{maldagueM3}. The proof of Proposition \ref{multi-scaleS2} has the same overall structure as the proof of Proposition 6.8 in \cite{maldagueM3}. 
\begin{proof}[Proof of Proposition \ref{multi-scaleS2}] 
Let $2\le p\le p_n$. Let $f:\R^n\to\C$ be a Schwartz function with Fourier transform supported in $\mc{M}^n(R)$. By the defining inequality for $\text{T}_{n,D}^w(R^{\frac{1}{n}})$, 
\begin{equation}\label{S1app}
    \int_{\R^n}|f|^p\le \text{T}_n^w(R^{\frac{1}{n}})\int_{\R^n}|\sum_{\tau\in{\bf{S}}_n(R^{-\frac{1}{n^2}})}|f_\tau|^2*\w_{\tau,D}|^{p/2}. 
\end{equation}
We choose the scale $R^{\frac{1}{n}}$ because each $\w_{\tau}$ is localized to an $R^{\frac{1}{n}^2}\times R^{2/n^2}\times\cdots\times R^{n/n^2}$ plank, which is contained in an $R^{\frac{1}{n}}$ ball. The square function we are aiming for, $\sum_{\theta\in{\bf{S}}_n(R^{-\frac{1}{n}})}|f_\theta|^2$, is locally constant on $R^{\frac{1}{n}}$ balls, so we will be able to eliminate the weights and therefore obtain a bound for $\text{T}_n(R)$. The idea for going from the right hand side of \eqref{S1app} to our desired right hand side is to perform a simplified version of the proof of Proposition \ref{algo}. 

Begin with the assumption that 
\begin{equation}\label{assS2}
\int_{\R^n}|\sum_{\tau\in{\bf{S}}_n(R^{-\frac{1}{n^2}})}|f_\tau|^2*\w_{\tau,D}|^{p/2}\lesssim  \int_{\R^n}|\sum_{\tau\in{\bf{S}}_n(R^{-\frac{1}{n^2}})}|f_\tau|^2*\w_{\tau,D}*\widecheck{\eta}_{>R^{-\frac{1}{n}}}|^{p/2}.\end{equation}
Indeed, if this does not hold, then 
\[\int_{\R^n}|\sum_{\tau\in{\bf{S}}_n(R^{-\frac{1}{n^2}})}|f_\tau|^2*\w_{\tau,D}|^{p/2}\lesssim  \int_{\R^n}|\sum_{\tau\in{\bf{S}}_n(R^{-\frac{1}{n^2}})}|f_\tau|^2*\w_{\tau,D}*\widecheck{\eta}_{\le R^{-\frac{1}{n}}}|^{p/2}.\]
This is a termination criterion case since by local $L^2$-orthogonality, for each $\tau\in{\bf{S}}_n(R^{-\frac{1}{n^2}})$,
\[ ||f_\tau|^2*\w_{\tau,D}*\widecheck{\eta}_{\le R^{-\frac{1}{n}}}(x)|\lesssim \sum_{\theta\subset\tau}|f_\theta|^2*\w_{\tau,D}*|\widecheck{\eta}_{R^{-\frac{1}{n}}}|(x).  \]
Finally, simply note that $\w_{\tau,D}*|\widecheck{\eta}_{R^{-\frac{1}{n}}}|\lesssim w_{R^{\frac{1}{n}}}$ and by Young's convolution inequality, 
\[ \int_{\R^n}|\sum_\theta|f_\theta|^2*w_{R^{\frac{1}{n}}}|^{p/2}\lesssim \int_{\R^n}|\sum_\theta|f_\theta|^2|^{p/2}. \]
From here on, assume that \eqref{assS2} holds.

Now we describe the simplified algorithm. Let $\d>0$ be a constant that we specify later in the proof. At intermediate step $m$, we have the inequality 
\begin{align}\label{stepmS2} 
\text{(R.H.S. of \eqref{assS2})}
\le (C_\d R^{C\d})^m s_m^{-C\e}\int_{\R^n}(\sum_{\tau\in{\bf{S}}_n(R^{-\frac{1}{n^2}})} \sum_{\substack{\tau'\subset\tau \\\tau'\in{\bf{S}}_n(s_m)}}|f_{\tau'}|^2*\w_{\tau,p_n^{-m}D} \Big)^{\frac{p}{2}}
\end{align}
in which $R^{-\frac{1}{n}}\le s_m\le R^{-\frac{1}{n^2}}$ and $s_m\le R^{-\e^n/n}s_{m-1}$. Note that \eqref{stepmS2} clearly holds with $m=0$ and $s_m=R^{-\frac{1}{n^2}}$. We will show that assuming \eqref{stepmS2} for $m-1$, either \eqref{stepmS2} holds for $m$ or the iteration terminates. 

\noindent\fbox{Step m:} Suppose that \eqref{stepmS2} holds with $m-1$, so 
\begin{align*} 
(\text{L.H.S. of \eqref{assS2}})\le (C_\d R^{C\d})^{m-1}s_{m-1}^{-C\e}  &  \int_{\R^n}(\sum_{\substack{\tau\in{\bf{S}}_n(R^{-\frac{1}{n^2}})}}\sum_{\substack{\tau'\subset\tau\\\tau'\in{\bf{S}}_n(s_{m-1})}} |f_{\tau'}|^{2}*\w_{\tau,p_n^{-(m-1)}D})^{\frac{p}{2}} .\end{align*}
Apply Lemma \ref{algo2} to the integral, yielding 
\begin{align}\label{stephypS2} 
(\text{L.H.S. of \eqref{assS2}})&\le (C_\d R^{2\d})^{m-1}R^{\d/n} s_{m-1}^{-C\e} \left[  \int_{\R^n}(\sum_{\tau\in{\bf{S}}_n(R^{-\frac{1}{n^2}})}\sum_{\substack{\tau'\subset\tau\\ \tau'\in{\bf{S}}_n(R^{-\frac{\e^n}{n}}s_{m-1})}}|f_{\tau'}|^2*\w_{\tau,p_n^{-m}D})^{\frac{p}{2}}\right.\\
    & \left.+R^{C\e^n/n}\int_{\R^n}(\sum_{\tau\in{\bf{S}}_n(R^{-\frac{1}{n^2}})}\sum_{\substack{\tau'\subset\tau\\ \tau'\in{\bf{S}}_n(s_{m-1})}}|f_{\tau'}|^{\tilde{p}_2}*\w_{\tau,p_n^{-(m-1)}D})^{\frac{p}{\tilde{p}_{2}}}\right] .\nonumber
\end{align}
If the first term on the right hand side dominates, then we have produced the step $m$ inequality 
\[ (\text{L.H.S. of \eqref{assS2}})\le (C_\d R^{2\d})^{m} s_m^{-\e}\int_{\R^n}(\sum_{\substack{\tau\in{\bf{S}}_n(R^{-\frac{1}{n}})}}\sum_{\substack{\tau'\subset\tau\\\tau'\in{\bf{S}}_n(R^{-\frac{\e^n}{n}}s_{m-1})}}|f_{\tau'}|^2*\w_{\tau,p_n^{-m}D})^{\frac{p}{2}} . \]
If the second term dominates, we have
\[ (\text{L.H.S. of \eqref{assS2}})\le (C_\d R^{2\d})^{m-1}R^{\d/n}s_{m-1}^{-C\e}R^{C\e^n/n}\int_{\R^n}(\sum_{\tau\in{\bf{S}}_n(R^{-\frac{1}{n^2}})}\sum_{\substack{\tau'\subset\tau \\\tau'\in{\bf{S}}_n(s_{m-1})}}|f_{\tau'}|^{\tilde{p}_{2}}*\w_{\tau,p_n^{-(m-1)}D})^{\frac{p}{\tilde{p}_{2}}} . \]
Note that we may again apply Lemma \ref{algo2}, with $\e^n$ replaced by $\e^{n-1}$, to the integral on the right hand side. We iterate this process until we have one of two outcomes. The first outcome is that for some $j<n$, 
\[ (\text{L.H.S. of \eqref{assS2}})\le (C_\d R^{2\d})^{m-1}R^{j\d/n}s_{m-1}^{-C\e}R^{C\e^{n-j}/n} \int_{\R^n}(\sum_{\tau\in{\bf{S}}_n(R^{-\frac{1}{n^2}})}\sum_{\substack{\tau'\subset\tau\\ \tau'\in{\bf{S}}_n(s_{m})}}|f_{\tau'}|^2*\w_{\tau,p_n^{-m}D})^{\frac{p}{2}} \]
where $R^{C\e^{n-j}/n}=(s_{m-1}/s_m)^{C\e}$. Step $m$ is proved in this case. 

The second outcome is that 
\[ (\text{L.H.S. of \eqref{assS2}})\le (C_\d R^{2\d})^{m-1}R^{j\d/n}s_{m-1}^{-C\e}R^{C\e^{n+2-l}} \int_{\R^n}(\sum_{\tau\in{\bf{S}}_n(R^{-\frac{1}{n^2}})}\sum_{\substack{\tau'\subset\tau\\ \tau'\in{\bf{S}}_n(s_{m-1})}}|f_{\tau'}|^{\tilde{p}_{l}}*\w_{\tau,D})^{\frac{p}{\tilde{p}_{l}}} \]
with $1\le \frac{p}{\tilde{p}_l}\le 2$ and $2\le \frac{p}{\tilde{p}_{l-1}}$, which means that $l< n$. As in the proof of Lemma \ref{algo2}, use bump functions $\eta_{\tau',\sigma}^{n-1}$ and suppose that 
\begin{align*} 
\int_{\R^n}(\sum_{\tau\in{\bf{S}}_n(R^{-\frac{1}{n^2}})}&\sum_{\substack{\tau'\subset\tau\\ \tau'\in{\bf{S}}_n(s_{m-1})}}|f_{\tau'}|^{\tilde{p}_{l}}*\w_{\tau,p_n^{-(m-1)}D})^{\frac{p}{\tilde{p}_{l}}}\lesssim (\log R)^C\\
    &\times \int_{\R^n}|\sum_{\tau\in{\bf{S}}_n(R^{-\frac{1}{n^2}})}\sum_{\substack{\tau'\subset\tau\\ \tau'\in{\bf{S}}_n(s_{m-1})}}|f_{\tau'}|^{\tilde{p}_{l}}*\w_{\tau,p_n^{-(m-1)}D}*\widecheck{\eta}_{\tau',\sigma}^{n-1}|^{\frac{p}{\tilde{p}_{l}}}. 
\end{align*}
If $\sigma\le  R^{-\e^{n+1-l}}s_{m-1}$, then by Lemma \ref{algo1}, 
\begin{align*} 
\int_{\R^n}|&\sum_{\tau\in{\bf{S}}_n(R^{-\frac{1}{n^2}})}\sum_{\substack{\tau'\subset\tau\\ \tau'\in{\bf{S}}_n(s_{m-1})}}|f_{\tau'}|^{\tilde{p}_{l}}*\w_{\tau,p_n^{-(m-1)}D}*\widecheck{\eta}_{\tau',\sigma}^{n-1}|^{\frac{p}{\tilde{p}_{l}}} \\
    &\le  C_\d R^{\d/n} \int_{\R^n}|\sum_{\tau\in{\bf{S}}_n(R^{-\frac{1}{n^2}})}\sum_{\substack{\tau'\subset\tau \\\tau'\in{\bf{S}}_n(s_{m-1})}}|\sum_{\substack{\tau''\subset\tau'\\ \tau''\in{\bf{S}}_n(R^{-\frac{\e^{n+1-l}}{n}}s_{m-1})}} |f_{\tau''}|^2*\w_{\tau,p_n^{-m}D}|^{\frac{\tilde{p}_{l}}{2}}*\tilde{W}_{\tau',\sigma}^{n-1}|^{\frac{p}{\tilde{p}_{l}}}.  
\end{align*}
Apply Proposition \ref{multilem3pf} to get rid of $\tilde{W}_{\tau,\sigma}^{n-1}$, concluding step $m$.

The other case is that $\sigma>R^{-\e^{n+1-l}}s_{m-1}$. Using the boundedness of $\C^n_{n-2}(\cdot)$ and Cauchy-Schwarz, we have
\begin{align*} 
\int_{\R^n}|\sum_{\tau\in{\bf{S}}_n(R^{-\frac{1}{n^2}})}\sum_{\substack{\tau'\subset\tau\\\tau'\in{\bf{S}}_n(s_{m-1})}}&|f_{\tau'}|^{\tilde{p}_{l}}*\w_{\tau,D}*\widecheck{\eta}_{\tau',\sigma}^{n-1}|^{\frac{p}{\tilde{p}_{l}}} \\
&\lesssim_\e R^{C\e^2}\int_{\R^n}\sum_{\tau\in{\bf{S}}_n(R^{-\frac{1}{n^2}})} \sum_{\substack{\tau'\subset\tau \\ \tau'\in{\bf{S}}_n(s_{m-1})}} ||f_{\tau'}|^{\tilde{p}_{l}}*\w_{\tau,D}*\widecheck{\eta}_{\tau,\sigma}^{n-1}|^{\frac{p}{\tilde{p}_{l}}} .
\end{align*}
By Young's convolution inequality, the right hand side is bounded by 
\[ C_\e R^{C\e^2}\int_{\R^n}\sum_{\tau\in{\bf{S}}_n(R^{-\frac{1}{n^2}})}\sum_{\substack{\tau'\subset\tau\\\tau'\in{\bf{S}}_n(s_{m-1})}} |f_{\tau'}|^{p} ,\]
which, by the definition of $\text{T}_n(\cdot)$ and rescaling, is itself bounded by 
\[ C_\e R^{C\e^2}\text{T}_n(s_{m-1}^nR)\int_{\R^n}\sum_{\tau\in{\bf{S}}_n(s_{m-1})} (\sum_{\substack{\theta\subset\tau\\\theta\in{\bf{S}}_n(R^{-\frac{1}{n}})}}|f_{\theta}|^2 )^{\frac{p}{2}} . \]
By Lemma \ref{pruneprop}, $\|\cdot\|_{\ell^{p/2}}\le \|\cdot\|_{\ell^1}$, this expression is bounded by 
\begin{align*} 
C_\e R^{C\e^2}\text{T}_n(s_{m-1}^n R)\int_{\R^n}(\sum_{\theta\in{\bf{S}}_n(R^{-\frac{1}{n}})} |f_{\theta}|^2)^{\frac{p}{2}},  
\end{align*}
which terminates the algorithm. This concludes the justification of step $m$.

\noindent\fbox{Termination criteria.} There are two outcomes of the algorithm. The first, as we have just seen, is that
\[\text{(R.H.S. of \eqref{assS2})}
\le (C_\d R^{2\d})^{m}s_{m-1}^{-\e} C_\e R^{C\e^2}\text{T}_n(s_{m-1}^nR) \int_{\R^n}(\sum_{\theta\in{\bf{S}}_n(R^{-\frac{1}{n}})} |f_{\theta}|^2)^{\frac{p}{2}}
 \]
in which $mn\e^{-n}$ and $s_{m-1}^{-\e}\le R^\e$. Choose $\d<\e^n$ guarantees an upper bound of the form $C_\e R^\e \text{T}_n(s_{m-1}^nR)$. The second outcomes is that in fewer than $n\e^{-n}$ many steps, the algorithm terminates with the inequality 
\[\text{(R.H.S. of \eqref{assS2})}
\le (C_\d R^{C\d})^m R^\e\int_{\R^n}\Big(\sum_{\tau\in{\bf{S}}_n(R^{-\frac{1}{n^2}})} \sum_{\substack{\theta\subset\tau \\\theta\in{\bf{S}}_n(R^{-\frac{1}{n}})}}|f_{\theta}|^2*\w_{\tau,p_n^{-n\e^{-n}}D} \Big)^{\frac{p}{2}}. 
 \]
By the locally constant property, we have
\[ \int_{\R^n}(\sum_{\tau\in{\bf{S}}_n(R^{-\frac{1}{n^2}})} \sum_{\substack{\theta\subset\tau \\\theta\in{\bf{S}}_n(R^{-\frac{1}{n}})}}|f_{\theta}|^2*\w_{\tau,p_n^{-n\e^{-n}}D} \Big)^{\frac{p}{2}}\lesssim \int_{\R^n}|\sum_{\tau\in{\bf{S}}_n(R^{-\frac{1}{n^2}})} \sum_{\substack{\theta\subset\tau \\\theta\in{\bf{S}}_n(R^{-\frac{1}{n}})}}|f_{\theta}|^2*\w_{\tau,p_n^{-n\e^{-n}}D}*\widecheck{\eta}_{\le R^{-\frac{1}{n}}} |^{\frac{p}{2}}. \]
As before, $\w_{\tau,p_n^{-n\e^{-n}}D}*|\widecheck{\eta}_{\le R^{-\frac{1}{n}}}|\lesssim w_{R^{-\frac{1}{n}}}$ for each $\tau$. Therefore, by Young's convolution inequality, 
\[ \int_{\R^n}|\sum_{\tau\in{\bf{S}}_n(R^{-\frac{1}{n^2}})} \sum_{\substack{\theta\subset\tau \\\theta\in{\bf{S}}_n(R^{-\frac{1}{n}})}}|f_{\theta}|^2*\w_{\tau,p_n^{-n\e^{-n}}D}*\widecheck{\eta}_{\le R^{-\frac{1}{n}}} |^{\frac{p}{2}}\lesssim \int_{\R^n}| \sum_{\substack{\theta\in{\bf{S}}_n(R^{-\frac{1}{n}})}}|f_{\theta}|^2|^{\frac{p}{2}}, \]
which finishes the proof. 
\end{proof}

\bibliographystyle{alpha}
\bibliography{arxivdraft}

\newcommand{\etalchar}[1]{$^{#1}$}
\begin{thebibliography}{GGP{\etalchar{+}}21}

\bibitem[C\'82]{cordoba}
Antonio C\'{o}rdoba.
\newblock Geometric {F}ourier analysis.
\newblock {\em Ann. Inst. Fourier (Grenoble)}, 32(3):vii, 215--226, 1982.

\bibitem[Fef73]{feffL4}
Charles Fefferman.
\newblock A note on spherical summation multipliers.
\newblock {\em Israel J. Math.}, 15:44--52, 1973.

\bibitem[GGP{\etalchar{+}}21]{gpsqfn}
Philip~T. Gressman, Shaoming Guo, Lillian~B. Pierce, Joris Roos, and Po-Lam Yung.
\newblock Reversing a philosophy: from counting to square functions and decoupling.
\newblock {\em J. Geom. Anal.}, 31(7):7075--7095, 2021.

\bibitem[GM22]{M3smallcap}
Larry Guth and Dominique Maldague.
\newblock Small cap decoupling for the moment curve in $\mathbb{R}^3$, 2022.

\bibitem[GMW20]{gmw}
Larry Guth, Dominique Maldague, and Hong Wang.
\newblock Improved decoupling for the parabola, 2020.

\bibitem[GWZ20]{locsmooth}
Larry Guth, Hong Wang, and Ruixiang Zhang.
\newblock A sharp square function estimate for the cone in {$\Bbb {R}^3$}.
\newblock {\em Ann. of Math. (2)}, 192(2):551--581, 2020.

\bibitem[Mal22]{maldagueM3}
Dominique Maldague.
\newblock A sharp square function estimate for the moment curve in $\mathbb{R}^3$, 2022.

\end{thebibliography}

\end{document}